\newcommand{\longmono}{\mbox{\;$\lhook\joinrel\longrightarrow$\;}}
\newcommand{\longepi}{\mbox{\;$\relbar\joinrel\twoheadrightarrow$\;}}
 \newtheorem{teor}{Theorem}[section]
 \newtheorem{prop}[teor]{Proposition}
 \newtheorem{lem}[teor]{Lemma}
 \newtheorem{assumption}[teor]{Assumption}
 \newtheorem*{teoA}{Theorem 1}
 \newtheorem*{teoB}{Theorem 2}
 \theoremstyle{definition}
 \newtheorem{defi}[teor]{Definition}
 \newtheorem{rmk}[teor]{Remark}
\DeclareSymbolFont{cyrletters}{OT2}{wncyr}{m}{n}
\DeclareMathSymbol{\Sha}{\mathalpha}{cyrletters}{"58}
\DeclareMathOperator{\End}{End}
\DeclareMathOperator{\Hom}{Hom}
\DeclareMathOperator{\Gal}{Gal}
\DeclareMathOperator{\GL}{GL}
\DeclareMathOperator{\SL}{SL}
\DeclareMathOperator{\Ima}{im}
\DeclareMathOperator{\Pic}{Pic}
\DeclareMathOperator{\ord}{ord}
\DeclareMathOperator{\Cor}{Cor}
\DeclareMathOperator{\Res}{Res}
\DeclareMathOperator{\Sel}{Sel}
\DeclareMathOperator{\Iw}{Iw}
\DeclareMathOperator{\cris}{cris}
\DeclareMathOperator{\loc}{loc}
\DeclareMathOperator{\cl}{cl}
\DeclareMathOperator{\dR}{dR}
\DeclareMathOperator{\et}{\text{\'et}}
\DeclareMathOperator{\frob}{Frob}
\DeclareMathOperator{\Red}{red}
\DeclareMathOperator{\Sch}{Sch}
\DeclareMathOperator{\Sets}{\mathscr{S}\textit{et}}
\DeclareMathOperator{\Fil}{Fil}
\DeclareMathOperator{\AJ}{AJ}
\DeclareMathOperator{\BK}{BK}
\DeclareMathOperator{\Sym}{Sym}
\DeclareMathOperator{\Spec}{Spec}
\DeclareMathOperator{\Spf}{Spf}
\DeclareMathOperator{\LL}{\text{\calligra{L}}}
\DeclareMathOperator{\cyc}{cyc}
\DeclareMathOperator{\CH}{CH}
\DeclareMathOperator{\id}{id}
\DeclareMathOperator{\crys}{crys}
\DeclareMathOperator{\W}{\mathcal{W}}
\DeclareMathOperator{\V}{\mathcal{V}}
\DeclareMathOperator{\ab}{ab}
\newcommand{\bmu}{\mbox{$\raisebox{-0.59ex}
		{$l$}\hspace{-0.18em}\mu\hspace{-0.88em}\raisebox{-0.98ex}{\scalebox{2}
			{$\color{white}.$}}\hspace{-0.416em}\raisebox{+0.88ex}
		{$\color{white}.$}\hspace{0.46em}$}{}}
\begin{document}

	\title{Generalized Heegner cycles and $p$-adic $L$-functions\\in a quaternionic setting}
	\author{Paola Magrone}
	\address{Dipartimento di Matematica, Universit\`a di Genova, Via Dodecaneso 35, 16146 Genova, Italy}
\email{magrone@dima.unige.it}
\subjclass[2010]{11F11, 14G35}
\keywords{Modular forms, Galois representations, Selmer groups, Shimura curves}

\begin{abstract} 
In a recent paper, Castella and Hsieh proved 
results for Selmer groups associated with Galois representations attached to newforms twisted by Hecke characters of an imaginary quadratic field. These results are obtained under the so-called Heegner hypothesis that the imaginary quadratic field satisfies with respect to the level of the modular form. In particular, Castella and Hsieh prove the rank 0 case of the Bloch–Kato conjecture for $L$-functions of modular forms in their setting. The key point of the work of Castella and Hsieh is a remarkable link between generalized Heegner cycles and $p$-adic $L$-functions. In this paper, several of the results of Castella–Hsieh are extended to a quaternionic setting, that is, the setting that arises when one works under a “relaxed” Heegner hypothesis. More explicitly, we prove vanishing and one-dimensionality results for Selmer groups. Crucial ingredients in our strategy are Brooks's results on generalized Heegner cycles over Shimura curves.
\end{abstract}
\maketitle
\section{Introduction}

Starting from a modular form $f \in S_k^{\text{new}}(\Gamma_0(N))$ with $k \geq 4$, we consider the two-dimensional $p$-adic Galois representation
$
V_f : G_{\mathbb{Q}} \longrightarrow \GL_2(F)
$
associated with $f$, where $G_{\mathbb{Q}}$ is the absolute Galois group $\Gal(\overline{\mathbb{Q}}/\mathbb{Q})$ of $\mathbb{Q}$ and $F$ is a $p$-adic field containing the Fourier coefficient of $f$. We are interested in studying the representation 
\[V_{f,\chi} := V_f(k/2) \otimes \chi,\]
defined as the twist of the self-dual Tate twist $V_f(k/2)$ of $V_f$ by a Galois character $\chi$.

In \cite{BDP}, Bertolini, Darmon and Prasanna introduced a distinguished collection of algebraic cycles, called \emph{generalized Heegner cycles}, coming from graphs of isogenies between elliptic curves, lying in the product of the Kuga--Sato variety with a power of a fixed elliptic curve. 
Later, Castella and Hsieh constructed in \cite{CH} Euler systems for generalized Heegner cycles; they proved, among other results, a theorem that establishes, under suitable hypotheses, the vanishing of the Selmer group $\text{Sel}(K,V_{f,\chi})$ associated with the representation $V_{f,\chi}$ as a $G_K:= \Gal(\overline{\mathbb{Q}}/K)$-representation, where $K$ is an imaginary quadratic field satisfying the so-called \emph{Heegner hypothesis} relative to $N$:
\begin{itemize}
    \item all the primes dividing $N$ split in $K$.
\end{itemize}
This proves the Bloch--Kato conjecture in this case:
\begin{displaymath}
\dim_F \text{Sel}(K,V_{f,\chi}) = \text{ord}_{s=k/2} L(f,\chi,s) = 0.
\end{displaymath}
Building upon results from \cite{BDP}, the proof by Castella--Hsieh is based on a link between generalized Heegner cycles and a certain $p$-adic $L$-function attached to $f$, and on a generalization of Kolyvagin's method. We emphasize that the Heegner hypothesis is essential in \cite{CH}.

\subsubsection*{The quaternionic setting: relaxing the Heegner hypothesis}

What happens if we want to weaken the Heegner hypothesis? More explicitly, we would like to generalize the work of Castella and Hsieh to the case of an imaginary quadratic field $K$ that does not satisfy the classical Heegner hypothesis, but instead satisfies the following \emph{generalized Heegner hypothesis} relative to the level $N$ of the modular form $f$:
\begin{itemize}
    \item no prime factor of $N$ ramifies in $K$, if a prime $\ell$ is inert in $K$ then $\ell^2$ does not divide $N$ and the number of prime factors of $N$ that are inert in $K$ is \emph{even}.
\end{itemize}
In this setting, we cannot work with Kuga--Sato varieties over classical modular curves, as we are not able to construct Heegner cycles on these varieties without the Heegner hypothesis. The right substitutes for modular curves in this context are Shimura curves, so it is natural to work with Kuga--Sato varieties fibered over Shimura curves.

In \cite{Brooks}, Brooks introduced a collection of generalized Heegner cycles on a Kuga--Sato variety over a Shimura curve $Sh$, coming from graphs of isogenies between abelian surfaces. The curve $Sh$ has the form of a quotient of the complex upper half plane under the action of a group that is determined by an order in an indefinite quaternion algebra over $\mathbb{Q}$ and parametrizes abelian surfaces.
Brooks proved results that generalize (some of) those in \cite{BDP} to this quaternionic setting.

Building on the work of Brooks, our goal is to generalize to a quaternionic context the key result of \cite{CH} relating their $p$-adic $L$-function to the system of generalized Heegner classes. As said before, this is a crucial point for the proof of the vanishing of the Selmer group. We construct a system of generalized Heegner cycles on the Kuga--Sato variety over the Shimura curve $Sh$ and a $p$-adic $L$-function defined as a $p$-adic measure given as a sum of values of a variation of $f$, as a modular form over our Shimura curve, at certain CM abelian surfaces. With these ingredients at hand, we will prove results on the Selmer group $\Sel(K, V_{f,\chi})$, generalizing several of Castella--Hsieh's results.

It is worth remarking that we expect the results of this paper to play a key role in the proof of a generalization of Castella's specialization results (\cite{Cas}) for Howard's big Heegner points in Hida families (\cite{How}) to the quaternionic big Heegner points introduced by Longo and Vigni (\cite{LV}). We plan to address this question in a future project.

\subsubsection*{Main results}

First of all, we fix some notation. Let $f \in S^{\text{new}}_{k}(\Gamma_0(N))$ be a newform of weight $k=2r+2 \geq 4$ and level $N$. Fix an odd prime $p \nmid N$ and a field embedding
$i_p : \overline{\mathbb{Q}} \hookrightarrow \mathbb{C}_p$, where $\mathbb{C}_p$ is the completion of the chosen algebraic closure of $\mathbb{Q}_p$ . Let
$F$ be a finite extension of $\mathbb{Q}_p$ containing the image of the Fourier coefficients of $f$ under $i_p$ and let $K$ be an imaginary
quadratic field of discriminant $D_K$ and ring of integers $\mathcal O_K$ in which $p$ splits as $p\mathcal O_K = \mathfrak{p}\overline{\mathfrak{p}}$ splits, with $\mathfrak{p}$ determined by $i_p$. Let $\chi : \Gal(K_{c_0p^{\infty}}/K) \rightarrow \mathcal{O}^{\times}_F$ be a locally algebraic anticyclotomic character of infinity type $(j, -j)$ and conductor $c_0p^s\mathcal{O}_K$ (see section \ref{char}). Denote by  $V_{f,\chi} := V_f(k/2) \otimes \chi$ the twist of $V_{f}(k/2)$ by $\chi$ seen as a representation of $\Gal(\overline{\mathbb{Q}}/K)$, by $L(f,\chi,s)$ the associated Rankin $L$-series and by $\Sel(K,V_{f,\chi})$ the Block--Kato Selmer group associated with $V_{f,\chi}$ and $K$. Assume that:
\begin{itemize}[noitemsep]
	\item[1.] $p \nmid 2N\phi(N^+)$ (where $\phi$ is Euler's function);
	\item[2.] $c_0$ is prime to $N$, i.e., the conductor of $\chi$ is prime to $N$;
	\item[3.] either $D_K > 3$ is odd or $8 \mid D_K$;
	\item[4.] $p = \mathfrak{p} \overline{\mathfrak{p}}$ splits in $K$.
\end{itemize}
Moreover, assume that $K$ satisfies the generalized Heegner hypothesis relative to $N$ as described above, and factor $N$ as $N = N^+N^-$, where $N^+$ is a product of primes that split in $K$ and $N^-$ is a (necessarily square-free) product of an \emph{even} number of primes that are inert in $K$.

Our first theorem on Selmer groups, which corresponds to Theorem \ref{selmer-1}, is a vanishing result.

\begin{teoA}%[\textbf{A of [CH]}]
\label{A}
	If $f$ is $p$-ordinary and $L(f,\chi,k/2)\neq 0$, then
	\begin{displaymath}
	\dim_F\Sel(K,V_{f,\chi}) = 0.
	\end{displaymath}
\end{teoA}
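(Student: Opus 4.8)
The plan is to follow the Euler system strategy of Kolyvagin, in the form adapted to generalized Heegner cycles by Castella--Hsieh, but with the quaternionic generalized Heegner cycles on the Kuga--Sato variety over the Shimura curve $Sh$ (in the sense of Brooks) playing the role of the cycles over the classical Kuga--Sato variety. First I would recall the construction of the family of generalized Heegner classes $z_{f,\chi,c} \in H^1(K_c, V_{f,\chi})$ indexed by integers $c$ prime to $N p D_K$, obtained by applying the $p$-adic étale Abel--Jacobi map to Brooks's cycles attached to CM abelian surfaces with extra level structure, and twisting by $\chi$. The key input -- the quaternionic analogue of the main result of \cite{CH}, which the present paper establishes -- is the explicit reciprocity law identifying the image of the bottom class $z_{f,\chi} := z_{f,\chi,1}$ under the Bloch--Kato dual exponential (equivalently, the Perrin-Riou big logarithm) at $\mathfrak{p}$ with the central value $L(f,\chi,k/2)$, up to an explicit nonzero $p$-adic period. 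Granting this, the hypothesis $L(f,\chi,k/2)\neq 0$ forces $z_{f,\chi}$ to be \emph{non-crystalline} at $\mathfrak{p}$, hence in particular nonzero and not locally trivial.

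Next I would verify that the classes $\{z_{f,\chi,c}\}$ satisfy the norm-compatibility (Euler system) relations: the norm from $K_{c\ell}$ to $K_c$ sends $z_{f,\chi,c\ell}$ to $a_\ell(f)\cdot$(a twist of) $z_{f,\chi,c}$ for $\ell$ inert, and the analogous relation for $\ell$ split, with the Hecke polynomial at $\ell$ appearing. These follow from the distribution relations among Brooks's CM cycles under the correspondences on $Sh$ coming from degeneracy maps between Shimura curves of adjacent level, exactly as in the classical case; the input here is compatibility of the étale Abel--Jacobi map with pushforward along these correspondences. With the Euler system in hand, I would then run the Kolyvagin derivative argument: using the reduction maps at Kolyvagin primes $\ell$ (chosen via Chebotarev so that $\mathrm{Frob}_\ell$ acts suitably on the residual representation $\bar V_{f,\chi}$), one builds classes in $H^1(K, \bar V_{f,\chi})$ annihilating, under the local Tate pairing, any putative nonzero element of the dual Selmer group; since the bottom class is locally non-trivial at $\mathfrak{p}$, one gets a contradiction unless $\Sel(K, V_{f,\chi}) = 0$. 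The $p$-ordinarity hypothesis enters to pin down the relevant local conditions at $p$ (the Panchishkin/ordinary filtration on $V_f$) and to make the Bloch--Kato Selmer condition at $\mathfrak{p}$ match the one propagated by the Euler system.

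A number of auxiliary hypotheses are needed to make Kolyvagin's machine run: irreducibility and largeness of the image of $\bar V_{f,\chi}$ (so that Chebotarev produces enough Kolyvagin primes and $H^1(K(\bar V_{f,\chi})/K, \bar V_{f,\chi}) = 0$), which is where $p \nmid 2 N \phi(N^+)$, the oddness/size condition on $D_K$, and $p$ split in $K$ are used; and the ramification/conductor conditions $p\nmid N$, $c_0$ prime to $N$ ensure the local conditions away from $p$ are unramified and that the Euler system classes live in the correct (relaxed-at-$\mathfrak{p}$, strict elsewhere) Selmer structure. I would state these as a lemma, citing \cite{CH} for the parts that are insensitive to the quaternionic nature of the construction and the residual image results for modular representations. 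Finally, combining the non-vanishing of the localized class with Kolyvagin's bound yields $\dim_F \Sel(K, V_{f,\chi}) = 0$, which together with the known analytic formula $\mathrm{ord}_{s=k/2} L(f,\chi,s) = 0$ gives the Bloch--Kato equality.

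The main obstacle I expect is \emph{not} the abstract Kolyvagin argument -- which is by now formal once the Euler system axioms are checked -- but rather establishing the two geometric inputs in the quaternionic setting with enough precision: (i) the explicit reciprocity law relating $z_{f,\chi}$ to $L(f,\chi,k/2)$, which requires Brooks's $p$-adic Waldspurger/Gross--Zagier-type formula over $Sh$ together with a careful comparison of $p$-adic periods and the interpolation property of the quaternionic $p$-adic $L$-function constructed here; and (ii) the norm relations, which hinge on having a clean theory of Hecke correspondences and degeneracy maps on the integral models of the Shimura curves and their Kuga--Sato fibrations, including good reduction at $p$ (guaranteed by $p\nmid N$ and $p$ split, so that $Sh$ has good reduction at $p$ and the CM points are defined over the expected ring class fields). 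The subtlety that $N^-$ has an even number of inert primes is precisely what makes the relevant quaternion algebra \emph{indefinite}, so that $Sh$ is a curve (not a zero-dimensional Shimura set) and Brooks's cycles and Abel--Jacobi map are available; this is used implicitly throughout.
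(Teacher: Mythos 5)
Your proposal is correct and follows essentially the same route as the paper: the reciprocity law (Theorem \ref{rec}, built from the $p$-adic Gross--Zagier formula, the big logarithm and the interpolation property) shows that $L(f,\chi,k/2)\neq 0$ forces $\loc_{\mathfrak{p}}$ of the Iwasawa class to be nonzero, and then the anticyclotomic Euler system $\{\boldsymbol{z}_{cn,\alpha}^{\chi^{-1}}\}$ with the local condition spanned by that localization, fed into the Kolyvagin-type machine of \cite{CH} together with a Poitou--Tate argument, kills the Bloch--Kato Selmer group. The only step you leave implicit is the computation of the local $\varepsilon$-factors showing that $L(f,\chi,k/2)\neq 0$ forces $\epsilon(V_{f,\chi})=+1$, hence (after replacing $\chi$ by $\chi^{\tau}$ if necessary) that $\chi$ has infinity type $(j,-j)$ with $j\geq k/2$ --- this is exactly what puts one in the dual-exponential branch of Perrin-Riou's big logarithm and makes the reciprocity law applicable.
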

Denote by $\epsilon(V_{f,\chi})$ the sign of the functional equation of $L(f,\chi,s)$. Our second theorem on Selmer groups, which corresponds to Theorem \ref{selmer-2}, is a one-dimensionality result.
\begin{teoB}%[\textbf{B of [CH]}]
\label{B}
	If $\epsilon(V_{f,\chi})=-1$ and $z_{\chi} \neq 0$, then
	\begin{displaymath}
	\Sel(K,V_{f,\chi}) = F\cdot z_{\chi}.
	\end{displaymath}
\end{teoB}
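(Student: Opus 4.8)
The plan is to establish the theorem by Kolyvagin's descent applied to the Euler system of generalized Heegner classes over the Shimura curve $Sh$ constructed above, adapting the argument of \cite{CH} from the modular-curve case. One inequality is immediate: the class $z_\chi$ lies in the Bloch--Kato Selmer group by construction and is nonzero by hypothesis, so $\dim_F\Sel(K,V_{f,\chi})\geq 1$. Everything therefore reduces to the bound $\dim_F\Sel(K,V_{f,\chi})\leq 1$, together with the observation that a one-dimensional $F$-space containing the nonzero vector $z_\chi$ must equal $F\cdot z_\chi$.

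First I would fix a $G_K$-stable $\mathcal{O}_F$-lattice $T\subset V_{f,\chi}$, put $A=V_{f,\chi}/T$, and work with the residual representation $\bar T$ and the finite quotients $T/p^M T$. The running hypotheses ($p$ odd, $p\nmid 2N\phi(N^+)$, the condition on $D_K$, and the generalized Heegner hypothesis) are what is needed to ensure that $\bar T$ has large image and that $H^1(K(A[p^M])/K,A[p^M])$ vanishes, so that Chebotarev's density theorem supplies an ample set of Kolyvagin primes $\ell$ --- inert in $K$, prime to $Npc_0$, with $a_\ell(f)\equiv \ell+1\equiv 0\bmod p^M$, and with prescribed localizations. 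I would then recall the two structural properties of the Euler system $\{z_n\}_n$, for $n$ a squarefree product of such primes: the norm-compatibility relation $\Cor_{K[n\ell]/K[n]}(z_{n\ell})=a_\ell(f)\,z_n$ and the congruence relation expressing $\loc_\ell(z_{n\ell})$ in terms of $z_n$ via the local reciprocity isomorphism, together with the facts that each $z_n$ is crystalline at $p$ and satisfies the expected unramified (or Steinberg, since $N^-$ is squarefree) local conditions at the primes dividing $N^-$. From these one forms the Kolyvagin derivative classes $\kappa_n\in H^1(K,A[p^M])$, which satisfy the Selmer local conditions at all places away from $n$.

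The heart of the matter is the bounding step. Given $s\in\Sel(K,V_{f,\chi})$, after rescaling to work integrally modulo $p^M$ for $M$ large, I would select a Kolyvagin prime $\ell$ for which $\loc_\ell$ detects $s$ while the order of $\loc_\ell(z_\chi)$ is controlled, and then apply the Poitou--Tate global reciprocity law to the cup product of $s$ with $\kappa_\ell$: since $\kappa_\ell$ is unramified --- hence Selmer-dual --- away from $\ell$, the sum of local invariants collapses to the single term at $\ell$, which must therefore vanish. Evaluating that term against the congruence relation shows that $\loc_\ell(s)$ is a multiple of $\loc_\ell(z_\chi)$; letting $\ell$ and $M$ vary and feeding this back into the structure theory of Selmer groups in the Kolyvagin--Nekov\'a\v{r} style forces $s$ to be an $F$-multiple of $z_\chi$. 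Hence the Selmer group is one-dimensional, and the identification $\Sel(K,V_{f,\chi})=F\cdot z_\chi$ follows.

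I expect the genuine obstacle to lie not in the formal descent but in the verification of the inputs it consumes in the quaternionic setting: namely, that Brooks's generalized Heegner cycles on the Kuga--Sato variety over $Sh$ really do satisfy the Euler system norm and congruence relations along the tower of ring class fields, and that their local conditions --- at $p$ in the possibly non-ordinary case, and at the inert primes dividing $N^-$, where the relevant CM points sit on Shimura curves of nontrivial level --- are precisely the Bloch--Kato ones. Checking the large-image requirement for $\bar T$ under the stated hypotheses, and treating the prime $p$ via crystalline rather than ordinary local conditions, are the remaining technical points.
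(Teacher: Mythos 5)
Your proposal follows essentially the same route as the paper: the theorem is deduced from the anticyclotomic Euler system machinery of Castella--Hsieh applied to the collection $\mathbf{c}'=\{z_{cn,\chi^{-1}}\}_{n}$ of generalized Heegner classes, whose norm relations, congruence relations and Bloch--Kato local conditions at $p$ are exactly the inputs you identify as the genuine work (Propositions \ref{4.4}, \ref{4.6}, \ref{4.7} and \ref{c'eulsys} in the text). The only difference is presentational: the paper black-boxes the Kolyvagin descent you sketch as Theorem \ref{keyth} (that is, \cite[Theorem 7.3]{CH}), observes that $\epsilon(V_{f,\chi})=-1$ forces $-k/2<j<k/2$ so that the classes $z_{cn,\chi^{-1}}$ with the Bloch--Kato condition $\mathcal{L}_{BK}$ are the relevant ones, and concludes $\Sel(K,V_{f,\chi})=\Sel_{\mathcal{L}_{BK}^*}(K,V_{f,\chi})=F\cdot z_{\chi}$ directly.
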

In the statement above, $z_{\chi}$ is a suitable cohomology class in $H^1(K,V_{f,\chi})$ that comes from an Euler system of generalized Heegner classes.

Let us conclude this introduction by briefly sketching the structure of the paper.
In \S 2 we
introduce the Shimura curve $Sh$ we will work with and also the notions of modular forms and $p$-adic modular forms over Shimura curves.

In \S 3 we review Serre--Tate theory and study deformations of abelian surfaces, which we will use to get power series expansions at ordinary CM points for modular forms over Shimura curves (for which $q$-expansions are not available).

In \S 4 we define an analytic anticyclotomic $p$-adic $L$-function $\LL_{f, \psi}$ as a measure on the Galois group $\Gal(K_{p^{\infty}}/K)$ with values in the ring of Witt vectors $\W=W(\overline{\mathbb{F}}_p)$, where $K_{p^{\infty}}= \cup_{n} K_{p^n}$ for $K_{p^n}$ the ring class field of conductor $p^{n}$ of $K$, $\overline{\mathbb{F}}_p$ is an algebraic closure of the field $\mathbb{F}_p$ with $p$ elements and $\psi$ is an anticyclotomic Hecke character of infinity type $(k/2,-k/2)$ and conductor $c_0\mathcal{O}_K$ with $(c_0, pN^+) = 1$. We close the chapter with an interpolation formula for our $p$-adic $L$-function, which we will use later to obtain the reciprocity law of \S 7, relating the value of $\LL_{f, \psi}$ at $\phi$ to the central critical value $L(f,\chi,k/2)$, where $\phi$ is an anticyclotomic Hecke character of infinity type $(n,-n)$ with $n \geq 0$ and $p$-power conductor such that $\chi = \psi \phi$.

In \S 5, following Brooks, we introduce a family of generalized Heegner cycles on the generalized Kuga--Sato variety over our Shimura curve $Sh$. More precisely, these cycles live in a Chow group of the generalized Kuga--Sato variety $\mathcal{X}_r = \mathcal{A}^r \times A^r$, where $\mathcal{A}$ is the universal object of the fine moduli problem associated with $Sh$ and $A$ is a fixed abelian surface with CM by $K$. Then we apply a $p$-adic Abel--Jacobi map to obtain cohomology classes from generalized Heegner cycles. In this way, we construct a system of generalized Heegner classes associated with $f$ and $\chi$, and indexed by fractional ideals of $K$, for which we prove compatibility properties.

In \S 6 we establish a relation between values of $\LL_{f, \psi}$ at Galois characters $\phi$ of infinity type $(-k/2-j,k/2+j)$ and Bloch--Kato logarithms of generalized Heegner cycles associated with $\chi$ of infinity type $(j,-j)$, with $ -k/2 < j < k/2$, where $\chi = \psi^{-1} \phi^{-1}$. This relation, for which we refer to Theorem \ref{GZ}, has the form
\[
\LL_{f,\psi}(\phi) = \text{(something)} \cdot \braket{\log_{\mathfrak{p}}(z_{\chi}), *},
\]
where ``something'' is an explicit non-zero coefficient that is comparatively less important than the main terms in the formula. The key ingredient to establish this Gross--Zagier type formula is the work of Brooks: we link our $p$-adic $L$-function to the differential operator $\theta = t \frac{d}{dt}$ on the Serre--Tate coordinates and then we use Brooks's results to obtain a formula suitably relating $\theta$ to our generalized Heegner cycles.

Finally, in \S 7 we use the previous formula and the interpolation property to establish, under a $p$-ordinarity assumption on $f$, a reciprocity law relating the analytic $p$-adic $L$-function $\LL_{f,\psi}$ to an algebraic $p$-adic $L$-function obtained as a sort of image of an Iwasawa cohomology class $\boldsymbol{z}_f \in H^1\bigl(K_{p^{\infty}},V_f(k/2)\bigr)$, obtained as an inverse limit of generalized Heegner classes, under a big logarithm map.
This reciprocity law and the costruction of an anticyclotomic Euler system associated with generalized Heegner classes, combined with an extension of Kolyvagin's method for anticyclotomic Euler systems developed in \cite{CH}, lead to the proof of Theorem A. The proof of Theorem B rests instead on the extension of Kolyvagin's method applied to another anticyclotomic Euler system associated with generalized Heegner classes.

\subsubsection*{Acknowledgements}
The contents of this paper are part of my PhD thesis at Universit\`a degli Studi di Genova. I am very grateful to my advisor, Stefano Vigni, for suggesting this problem, for his support and encouragement and for making every
meeting a pleasant and friendly time. I especially thank him for his patience, his enthusiasm and, above all, for the faith he always had in me.
I would also like to thank Francesc Castella, Ming-Lun Hsieh and Matteo Longo for enlightening conversations and correspondence.

\subsubsection*{Notation}

If $F$ is a number field or a local field, whose ring of integers will be denoted by $\mathcal O_F$, we fix an algebraic closure $\overline{F}$ of $F$ and write $G_F$ for the absolute Galois group $\Gal(\overline{F}/F)$ of $F$.

We denote by $\mathbb{A}_F$ the adele ring of a number field $F$ and by $\hat{F}$ the ring of finite adeles of $F$.

For any prime number $p$, we fix an immersion $i_p : \overline{\mathbb{Q}} \hookrightarrow \mathbb{C}_p$, where $\mathbb{C}_p$ is the completion of the chosen algebraic closure of $\mathbb{Q}_p$.

For an imaginary quadratic field $K$ and an integer $n\geq1$, we denote by $K_n$ the ring class field of $K$ of conductor $n$; in particular, $K_1$ is the Hilbert class field of $K$. We denote also by $K^{\text{ab}}$ the maximal abelian extension of $K$.

For an integer $n\geq1$ we write $\zeta_{n}$ for the primitive $n$-th root of unity $e^{\frac{2\pi i}{n}}\in\mathbb C^\times$ and we denote by $\chi_{\cyc}$ the $p$-adic cyclotomic character.

As usual, the notation for pull-back will be a high asterisk $^{*}$ while the notation for push-forward will be a down asterisk $_{*}$. Finally, unadorned tensor products are always taken over $\mathbb{Z}$.

\section{Shimura curves, CM points and modular forms} \label{shimura-chapter}

In this section we introduce the Shimura curves, attached to quaternion algebras over $\mathbb Q$, we will work with, we construct certain CM points on Shimura curves and we define modular forms and $p$-adic modular forms on Shimura curves. 

\subsection{Shimura curves}
Let $K$ be an imaginary quadratic field of discriminant $D_K$ and consider a positive integer $N$, which will be the level of our modular form, such that $(N,D_K)=1$. Suppose that $K$ satisfies the {\bf generalized Heegner hypothesis} relative to $N$:
\begin{itemize}
\item no prime factor of $N$ ramifies in $K$, if a prime $\ell$ is inert in $K$ then $\ell^2$ does not divide $N$ and the number of prime factors of $N$ that are inert in $K$ is \emph{even}.
\end{itemize}
Factor $N$ as a product $N=N^+ N^-$ where $N^+$ is a product of primes that split in $K$ and $N^-$ is a (necessarily square-free)
product of (an even number of) primes that are inert in $K$.

Let $B$ be the indefinite rational quaternion algebra over $\mathbb{Q}$ of discriminant $D=N^-$ and fix a prime $p\nmid N$ that splits in $K$ and $B$.
%
%
%%%%%%%%%%%%%%%
%%%%%%%%%%%%%%%
%
Fix isomorphisms $\Phi_{\ell} : B_{\ell} \cong M_2(\mathbb{Q}_{\ell})$ for each prime $\ell \nmid D$ and
denote by $\mathcal{O}_B$ a maximal order of $B$ such that each $\Phi_{\ell}$ induces an isomorphism $\mathcal{O}_B \otimes \mathbb{Z}_{\ell} \cong M_2(\mathbb{Z}_{\ell})$. 
Fix also an isomorphism $\Phi_{\infty}: B \otimes \mathbb{R} \cong M_2(\mathbb{R})$.  
Consider the map
\[
\pi_{N^+} : \hat{\mathcal{O}}_{B}^{\times} \twoheadrightarrow\prod_{\ell \mid N^+} (\mathcal{O}_B \otimes \mathbb{Z}_{\ell})^{\times} \cong \prod_{\ell \mid N^+} \GL_2(\mathbb{Z}_{\ell}) \twoheadrightarrow \GL_2(\mathbb{Z}/N^+\mathbb{Z}).
\]
Denote by $\hat{\Gamma}_{1,N^+}$ the open compact subgroup of $\hat{\mathcal{O}}_{B}^{\times}$ composed of the elements $b \in \hat{\mathcal{O}}_{B}^{\times}$ such that 
$\pi_{N^+}(b) \in \left\lbrace \left(\begin{smallmatrix}
	* & * \\ 0 & 1
\end{smallmatrix} \right) \in GL_2(\mathbb{Z}/N^+\mathbb{Z})
\right\rbrace.$
Consider
the space of double cosets
\[
X_{N^+} := B^{\times}
 \backslash \mathcal{H}^{\pm} \times \hat{B}^{\times}  / \hat{\Gamma}_{1,N^+},
\]
where $\mathcal{H}^{\pm}= \mathbb{C} - \mathbb{R}$ is the disjoint union of the upper and lower complex half planes.
Here $\hat{\Gamma}_{1,N^+}$ acts naturally on the right on $\hat{B}^{\times}$ by right multiplication, while $B^{\times}$ acts on the left on $\hat{B}^{\times}$ through the diagonal embedding $B \hookrightarrow\hat{B}$ and on $\mathcal{H}^{\pm}$ under the fixed isomorphism $\Phi_{\infty}$ by the usual linear fractional transformations
\[
\left( \begin{smallmatrix}
a & b \\ c & d
\end{smallmatrix} \right) \cdot \tau := \frac{a\tau +b}{c\tau + d}.
\]
This is the Shimura curve associated with the Shimura datum $G(\mathbb{Q}) = B^{\times}$, $X= \mathcal{H}^{\pm}$ and $K=\hat{\Gamma}_{1,N^+}$. For more details, see \cite{Mi90}.

Because $B$ is indefinite, there is a bijection
\[
X_{N^+} \cong \mathcal{H}/\Gamma_{1,N^+},
\]
where $\mathcal{H}$ is the classical upper half plane and $\Gamma_{1,N^+}$ is the subgroup of matrices in
$\Phi_{\infty}((\hat{\Gamma}_{1,N^+} \cap B)^{\times})$ of determinant $1$ (\cite[\S 1.3]{HMT}). This bijection endows $X_{N^+}$ with a Riemann surface structure and gives, as a consequence, an analytic description of $X_{N^+}$.

The coset space $X_{N^+}$ admits a model over $\mathbb{Q}$, which is the fine moduli scheme classifying abelian surfaces with quaternionic multiplication by $\mathcal{O}_B$ and certain level structures.

\begin{defi}
	Let $S$ be a $\mathbb{Z}[1/D]$-scheme. An \textbf{abelian surface with quaternionic multiplication by $\mathcal{O}_B$} (abelian surface with QM, for short) \textbf{over $S$} is a pair $(A,i)$ where 
	\begin{enumerate}[noitemsep]
		\item $A$ is an abelian scheme $A/S$ of relative dimension 2;
		\item $i$ is an optimal inclusion $i: \mathcal{O}_B \hookrightarrow \End_S(A)$ giving an action of $\mathcal{O}_B$ on $A$.
	\end{enumerate}
	A morphism of abelian surfaces with QM is a morphism of abelian surfaces that respects the action of $\mathcal{O}_B$.
\end{defi} 
Abelian surfaces with quaternionic multiplication are often called \emph{false elliptic curves}.
\begin{defi}
Let $N^+ >0$ be an integer prime to $D$.
A \textbf{level $V_1(N^+)$-structure}, or an arithmetic level $N^+$ structure, on a QM abelian surface $(A, i)$ is an inclusion
\[
\bmu_{N^+} \times \bmu_{N^+} \longmono A[N^+]
\]
of group schemes over $S$, commuting with the action of $\mathcal{O}_B$, where $\bmu_{N^+}$ denotes the group scheme of $N^+$th roots of unity. The action of $\mathcal{O}_B$ on the left hand side is via the isomorphism $\mathcal{O}_B \otimes \mathbb{Z}/N^+\mathbb{Z} \cong M_2(\mathbb{Z}/N^+\mathbb{Z})$ induced by the chosen isomorphisms $\Phi_{\ell} : B_{\ell} \cong M_2(\mathbb{Q}_{\ell})$, through which one has $\mathcal{O}_B \otimes \mathbb{Z}_{\ell} \cong M_2(\mathbb{Z}_{\ell})$ for each $\ell \mid N^+$.
\end{defi}
A morphism of QM abelian surfaces with $V_1(N^+)$-level structure is a morphism of QM abelian surfaces that respects the level structures.

If $A$ is an abelian surface over an algebraically closed field $k$, a $V_1(N^+)$-level structure can be thought
of as an orbit of full level $N^+$ structures, i.e., isomorphisms
$
\mathcal{O}_B \otimes \mathbb{Z}/N^+\mathbb{Z} \cong A[N^+]
$ 
commuting with the action of $\mathcal{O}_B$, under the natural action of the subgroup $\bigl\{\left(\begin{smallmatrix}
* & * \\ 0 & 1
\end{smallmatrix} \right) \in GL_2(\mathbb{Z}/N^+\mathbb{Z})
\bigr\}$ of $GL_2(\mathbb{Z}/N^+\mathbb{Z})$. See \cite[\S 2.2]{Brooks} for details.

The moduli problem of QM abelian surfaces with $V_1(N^+)$-level structure is representable, as asserted by

\begin{teor} \label{moduli-thm}
For $N^+ > 3$, the moduli problem that assigns to a $Z[1/DN^+]$-scheme $S$ the set of isomorphism classes of QM-abelian surfaces over $S$ with $V_1(N^+)$-level
structure is representable by a smooth proper $\mathbb{Z}[1/DN^+]$-scheme $X$.
\end{teor}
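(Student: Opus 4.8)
This is a classical result in the theory of Shimura curves; I sketch the argument, following the account in \cite{Brooks}. The plan is to deduce the statement from Mumford's geometric invariant theory for polarized abelian schemes, by exhibiting the moduli problem — after adjoining an auxiliary rigidifying level — as a locally closed subscheme of a known quasi-projective scheme, and then to verify smoothness and properness over $\mathbb{Z}[1/DN^+]$ separately. I proceed in three steps.

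\textbf{Step 1: rigidity.} The first observation is that for $N^+ > 3$ the moduli problem is rigid, i.e.\ has no non-trivial automorphisms. Indeed, an automorphism of a QM abelian surface $(A,i)$ over a $\mathbb{Z}[1/N^+]$-scheme $S$ respecting a given $V_1(N^+)$-level structure induces the identity on $A[N^+]$, and hence is the identity by the classical fact that an automorphism of an abelian scheme that acts trivially on its $N^+$-torsion, with $N^+ \geq 3$, is trivial. Rigidity guarantees that the functor is an fppf sheaf and, in Step 2, lets us descend from an auxiliary level cover.

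\textbf{Step 2: relative representability.} The crucial input is that a QM abelian surface carries a \emph{canonical} polarization, so that no polarization datum has to be chosen: using the idempotent $e = \left(\begin{smallmatrix} 1 & 0 \\ 0 & 0 \end{smallmatrix}\right) \in M_2(\mathbb{Z}_{\ell}) \cong \mathcal{O}_B \otimes \mathbb{Z}_{\ell}$ at primes $\ell \nmid D$ and the interplay between $\mathcal{O}_B$ and the Rosati involution of any polarization, one produces a distinguished polarization $\lambda_A$ of fixed degree compatible with $i$ (this is done in \cite{Brooks}, following Buzzard and Morita). Granting this, fix an auxiliary integer $n \geq 3$ with $(n, DN^+)=1$. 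The moduli problem of abelian surfaces with a polarization of the relevant degree and a full level-$n$ structure is representable by a quasi-projective $\mathbb{Z}[1/DN^+n]$-scheme $\mathcal{A}$ by Mumford. Over $\mathcal{A}$, the remaining data — an optimal, $\mathcal{O}_B$-equivariant ring homomorphism $i \colon \mathcal{O}_B \to \End_S(A)$ compatible with $\lambda_A$, and a $V_1(N^+)$-level structure, the latter a monomorphism $\bmu_{N^+} \times \bmu_{N^+} \hookrightarrow A[N^+]$ of finite flat group schemes commuting with $\mathcal{O}_B$ — are cut out by locally closed conditions, because the scheme $\underline{\End}(A)$ is unramified over $S$ and the scheme of group-scheme homomorphisms $\bmu_{N^+} \times \bmu_{N^+} \to A[N^+]$ is finite over $S$. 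Hence the problem with auxiliary level $n$ is representable by a quasi-projective scheme; since the original problem is rigid and the level-$n$ cover is finite étale and Galois, passing to the quotient yields the scheme $X$ representing the original problem over $\mathbb{Z}[1/DN^+]$.

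\textbf{Step 3: smoothness and properness.} Smoothness of $X \to \Spec \mathbb{Z}[1/DN^+]$ of relative dimension $1$ is a deformation-theoretic computation: the action of $\mathcal{O}_B$ (via the splitting of the Lie algebra by $e$ and $1-e$) cuts the $4$-dimensional deformation space of the abelian surface down to a line, and the QM-deformation problem is unobstructed. Properness, checked via the valuative criterion, is the part I expect to be the main obstacle. Given a QM abelian surface with $V_1(N^+)$-structure over $\Frac(R)$, for $R$ a discrete valuation ring with residue characteristic prime to $DN^+$: the presence of the $V_1(N^+)$-structure (with $N^+ \geq 3$) forces semistable reduction, so $A$ extends to a semi-abelian scheme over $R$, and $i$ extends by the Néron mapping property; but $B$, being a division algebra (here one uses that $N^-$ is a genuine square-free product of inert primes), cannot act on a semi-abelian surface with nontrivial toric part, so the reduction is good. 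The resulting QM abelian scheme carries the unique extension of the level structure (unique since $N^+$ is invertible on $R$), which gives both the existence and uniqueness halves of the valuative criterion; finite type and separatedness of $X$ are inherited from $\mathcal{A}$ and from the imposed conditions, so $X$ is proper over $\mathbb{Z}[1/DN^+]$.
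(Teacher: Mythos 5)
The paper itself offers no proof of this statement: it is imported from the literature, with pointers to Brooks, Buzzard and Kassaei, and your three-step sketch (rigidification for $N^+>3$, relative representability over Mumford's moduli of polarized abelian schemes via the canonical polarization attached to the $\mathcal{O}_B$-action, then smoothness by deformation theory and properness by the valuative criterion using that $B$ is a division algebra) is exactly the argument carried out in those references. So the route is the right one; two of your justifications, however, need repair.

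First, the rigidity argument in Step 1 does not work as stated. A $V_1(N^+)$-structure is an embedding $\bmu_{N^+}\times\bmu_{N^+}\hookrightarrow A[N^+]$, so an automorphism respecting it is forced to be the identity only on a subgroup of order $(N^+)^2$ inside the group $A[N^+]$, which has order $(N^+)^4$; Serre's lemma, which requires triviality on all of $A[N^+]$, does not apply. The correct argument --- and the reason the hypothesis is $N^+>3$ rather than $N^+\geq 3$ --- is that an $\mathcal{O}_B$-linear automorphism $\zeta$ of a geometric fibre $(A,i)$ is a root of unity of order dividing $4$ or $6$ (a unit of $\mathbb{Z}$, of an order in an imaginary quadratic field, or of an order in a definite quaternion algebra in the supersingular case), and for $\zeta\neq 1$ the endomorphism $\zeta-1$ has norm at most $4$ with $\ker(\zeta-1)$ contained in $A[2]$ or $A[3]$; hence a nontrivial $\zeta$ cannot fix the points of exact order $N^+>3$ supplied by the level structure. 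Second, in Step 3 the level structure does not ``force semistable reduction'': one first invokes Grothendieck's semistable reduction theorem over a finite extension of $\Frac(R)$, rules out a nontrivial toric part because the division algebra $B$ cannot act on a character lattice of rank $\leq 2$, and then descends the resulting good reduction to $R$ itself using the rigidity of Step 1 (N\'eron--Ogg--Shafarevich). Note also that properness genuinely requires $N^->1$: the generalized Heegner hypothesis as stated allows $N^-=1$, in which case $X$ is an open modular curve and properness fails, so the division-algebra property you invoke parenthetically is a standing hypothesis that should be made explicit.
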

For details, see \cite[\S 2.2. and \S 2.3]{Brooks}, \cite[\S 2]{Buz} or \cite[\S 2 and \S 3]{Kas}.
The complex points of $X$ are naturally identified with the compact Riemann surface $X_{N^+} \cong \mathcal{H}/\Gamma_{1,N^+}$.
The $\mathbb{Z}[1/DN^+]$-scheme $X$ from Theorem \ref{moduli-thm} is called the \textbf{Shimura curve of level $V_1(N^+)$} associated to the indefinite quaternion algebra $B$ and we will denote it by $X_{N^+}$, using the same notation for the scheme, the Riemann surface and the double coset space.

We denote by $\pi: \mathcal{A} \rightarrow X_{N^+}$ the universal object of the moduli problem, i.e. the \emph{universal QM abelian surface} over $X_{N^+}$.
For each geometric point $x : \Spec(L) \rightarrow \mathcal{A}$, the fiber $\mathcal{A}_x := \mathcal{A} \times_x \Spec(L)$ is an abelian surface with QM by $\mathcal{O}_B$ and $V_1(N^+)$-level structure defined over $L$, representing the isomorphism class that corresponds to the point $x$.

%%%%%

\subsection{Hecke operators} \label{Heckeop}

The Shimura curve $X_{N^+}$ comes equipped with a ring of Hecke correspondences, which can be introduced by using the adelic description of $X_{N^+}$. See, for example, \cite[\S 1.5]{HMT}; here the construction is for the Shimura curve relative to level structures ``of type $\Gamma_0$'', but it can be done also in our case.
In terms of abelian surfaces the Hecke operators acts in the following way.
For a prime $\ell$, a QM abelian surface $A$ over a field $k$ of characteristic prime to $\ell$ has $\ell + 1$ cyclic $\mathcal{O}_B$-submodules annihilated by $\ell$. Denote them by $C_0, \dots , C_{\ell}$ and consider the isogenies $\psi_i : A \twoheadrightarrow A/C_i$ of QM abelian surfaces. If $\nu_A$ is a $V_1(N^+)$-level structure on $A$ and $\ell \nmid N^+$, then $\psi_i$ induces a $V_1(N^+)$-level structure $ \nu_A \circ \psi_i$ on $A/C_i$. If $\ell \nmid N^+D$, the ``good'' Hecke operator $T_{\ell}$ can be described by
\[
T_{\ell} (A,\iota_A,\nu_A)= \sum_{i=0}^{\ell} (A/C_i,\iota_i,\nu_i).
\]
For more details, see \S \ref{K-S} and \cite[\S 3.6]{Brooks}.

\subsection{Igusa tower}

We are interested in working with $p$-adic modular forms over our Shimura curve, which are defined analogously to Katz's generalized $p$-adic modular forms. Therefore we want to work on a cover of the ordinary locus of the Shimura curve.

Fix a prime $p \nmid N^+D$. Since ${X_{N^+}}$ is a scheme over $\mathbb{Z}[1/N^+D]$, it can be viewed as a scheme over $\mathbb{Z}_{(p)}$.  
For simplicity, denote by $Sh$ the curve
${X_{N^+}}_{/\mathbb{Z}_{(p)}}$. Since $Sh$ is a fine moduli scheme for QM abelian surfaces over $\mathbb{Z}_{(p)}$-schemes with level structures, there is a universal abelian surface $\mathcal{A} \rightarrow Sh$, which is the one associated with $X_{N^+}$ but tensored with $\mathbb{Z}_{(p)}$ over $\mathbb{Z}[1/DN^+]$.

Recall that a QM abelian surface $A$ over a field $k$ of characteristic $p$ is said to be \emph{ordinary} if $A[p](\overline{k}) \cong (\mathbb{Z}/p\mathbb{Z})^{2}$, and \emph{supersingular} otherwise. Indeed, a QM abelian surface in characteristic $p$, is either ordinary or supersingular; equivalently, it is isogenous either to a product of ordinary elliptic curves or to a product of supersingular elliptic curves, respectively.
Consider the \emph{ordinary locus} $Sh^{\ord}$ of $Sh$, i.e., the locus on which the Hasse invariant does not vanish, that is the scheme obtained by removing the supersingular points of $Sh$ in the fiber at $p$, which are those points which correspond in the moduli interpretation to abelian surfaces which have supersingular reduction modulo $p$. See \cite{Kas} for details about the ordinary locus and the Hasse invariant.

Let $\mathcal{A}^{\ord} \rightarrow Sh^{\ord}$ be the universal ordinary QM abelian surface over $Sh^{\ord}$, that is the fiber product $\mathcal{A}^{\ord}= \mathcal{A} \times_{Sh} Sh^{\ord}$.
Consider the functor $ I_n : \Sch_{/{Sh^{\ord}}} \rightarrow \Sets$ that takes an $Sh^{\ord}$-scheme $S$ to the set of closed immersions $\bmu_{p^n} \times \bmu_{p^n} \hookrightarrow \mathcal{A}^{\ord}[p^n]$ of finite flat group schemes over $S$ respecting the $\mathcal{O}_B$-action. This functor is representable by a scheme ${I_n}_{/Sh^{\ord}}$. Then ${I_n}_{/\mathbb{Z}_{(p)}}$ classifies quadruples $(A,i,\nu_{N^+},\nu_{p^n})$, where $A$ is an abelian surface, $\iota$ a quaternionic multiplication, $\nu_{N^+}$ a $V_1(N^+)$-level structure and $\nu_{p^n}$ an $\mathcal{O}_B$-immersion $\bmu_{p^n} \times \bmu_{p^n} \hookrightarrow A[p^n]$.
There is a tower
\[
\dots \longrightarrow I_{n+1} \longrightarrow I_n \longrightarrow I_{n-1} \longrightarrow \cdots.
\]
Consider the formal scheme $I_{/\mathbb{Z}_{(p)}}:= \varprojlim_n {I_n}_{/\mathbb{Z}_{(p)}}$.
This formal scheme parametrizes compatible sequences of isomorphism classes of quadruples $(A,i,\nu_{N^+},\nu_{p^n})$, where $A$ is an ordinary abelian surface, $\iota$ a quaternionic multiplication and $\nu_{N^+}$, $\nu_{p^n}$ respectively a $V_1(N^+)$ and $V_1(p^n)$ level structures.
But a sequence of compatible $V_1(p^n)$-level structures is the same as a $V_1(p^{\infty})$-level structure, that is an immersion $\nu_{p^{\infty}} : \bmu_{p^{\infty}} \times \bmu_{p^{\infty}} \hookrightarrow A[p^{\infty}]$.
Therefore this tower 
parametrizes isomorphism classes of quadruples $(A,i,\nu_{N^+},\nu_{p^{\infty}})$.
There is a bijection
$
I(\mathbb{C}) \cong \varprojlim_n X_{N^+p^n}(\mathbb{C}),
$
between complex points of $I$ and compatible sequences $\{ x_n \}_n$ of complex points $x_n = (A,i,\nu_{N^+},\nu_{p^n}) \in X_{N^+p^n}(\mathbb{C})$.

See \cite{Hi04} for details about Igusa schemes in the case of modular curves (\S 6.2.12) and more in general for Shimura varieties (Ch. 8). See also \cite{Hi09}.

\subsection{CM points on Shimura curves}

In this section we will construct a collection of CM points in our Shimura curves, i.e., points that correspond to abelian surfaces with complex multiplication, indexed by fractional ideals of orders in an imaginary quadratic field.
We denote again by $Sh$ the curve $X_{N^+}$ seen as a scheme over $\mathbb{Z}_{(p)}$ and by $\mathcal{A} \rightarrow Sh$ the universal abelian surface over $Sh$.

\subsubsection{Abelian surfaces with QM and CM over $\mathbb{C}$}

\begin{teor}
	Let $(A,i)$ be an abelian surface with QM by $\mathcal{O}_B$ over $\mathbb{C}$. Then either
	\begin{enumerate}[noitemsep]
		\item $A$ is simple and $\End^0(A) := \End(A) \otimes \mathbb{Q} = B$, or
		\item $A$ is not simple, $A \sim E^2$ is isogenous to the product of an elliptic curve $E$ with CM by an imaginary quadratic field $K$ which embeds in $B$ and $\End^0(A) \cong M_2(K)$.
	\end{enumerate}
\end{teor}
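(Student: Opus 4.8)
The plan is to determine $\End^0(A):=\End(A)\otimes\mathbb Q$ from the fact that $i$ makes $B$ a $\mathbb Q$-subalgebra of $\End^0(A)$, combined with Poincaré reducibility and the classification of endomorphism algebras of abelian surfaces. Throughout I use two elementary facts: $\End^0(A)$ acts faithfully on the $4$-dimensional $\mathbb Q$-vector space $V:=H_1(A,\mathbb Q)$, and $B$ is a division algebra with $\dim_{\mathbb Q}B=4$ (recall that we are in the quaternionic setting, so $N^->1$ and $B$ is ramified).

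First I would treat the case in which $A$ is simple. Then $\End^0(A)$ is a division $\mathbb Q$-algebra, so $V$ is a free $\End^0(A)$-module and $\dim_{\mathbb Q}\End^0(A)$ divides $\dim_{\mathbb Q}V=4$; since $B\subseteq\End^0(A)$ and $\dim_{\mathbb Q}B=4$, we get $\End^0(A)=B$, which is case (1). Next, suppose $A$ is not simple. By Poincaré reducibility $A$ is isogenous to a product $E_1\times E_2$ of elliptic curves, whence
\[
\End^0(A)\cong\left(\begin{smallmatrix}\End^0 E_1 & \Hom^0(E_2,E_1)\\[2pt] \Hom^0(E_1,E_2) & \End^0 E_2\end{smallmatrix}\right).
\]
If $E_1$ and $E_2$ were not isogenous the off-diagonal entries would vanish and $\End^0(A)\cong\End^0 E_1\times\End^0 E_2$ would be commutative, contradicting $B\subseteq\End^0(A)$; hence $E_1\sim E_2=:E$ and $\End^0(A)\cong M_2(\End^0 E)$. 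If $\End^0 E=\mathbb Q$ then $M_2(\mathbb Q)\supseteq B$ with equal $\mathbb Q$-dimension $4$, forcing $B\cong M_2(\mathbb Q)$ and contradicting that $B$ is a division algebra. Therefore $\End^0 E$ is an imaginary quadratic field $K$, i.e.\ $E$ has CM by $K$, $A\sim E^2$, and $\End^0(A)\cong M_2(K)$.

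It then remains to show $K\hookrightarrow B$. Since $K$ is the center of $M_2(K)\cong\End^0(A)$ it commutes with $B$, so $b\otimes\lambda\mapsto b\lambda$ defines a homomorphism of $K$-algebras $B\otimes_{\mathbb Q}K\to M_2(K)$, which is injective because $B\otimes_{\mathbb Q}K$ is simple ($B$ being central simple over $\mathbb Q$) and hence an isomorphism after comparing $K$-dimensions (both equal $4$). Thus $K$ is a splitting field of $B$, and since $[K:\mathbb Q]=2$ equals the degree of $B$, the standard criterion for quaternion algebras gives an embedding $K\hookrightarrow B$. This yields case (2).

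The only genuinely delicate point is the completeness of the case analysis in the non-simple case: one must be sure that the non-commutativity (and division-algebra property) of $B$ really excludes both $E_1\not\sim E_2$ and $E$ without CM. The final step identifying ``$K$ splits $B$'' with ``$K$ embeds in $B$'' is classical; everything else is bookkeeping with $\mathbb Q$-dimensions and Poincaré reducibility.
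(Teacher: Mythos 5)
Your argument is correct and complete: the dimension count via the freeness of $H_1(A,\mathbb Q)$ over the division algebra $\End^0(A)$ in the simple case, the Poincar\'e-reducibility analysis forcing $E_1\sim E_2$ and then CM in the non-simple case, and the identification of ``$K$ splits $B$'' with ``$K$ embeds in $B$'' are all standard and correctly executed. The paper states this theorem as a classical fact with no proof at all, so there is no in-paper argument to compare against; your observation that the statement implicitly requires $B$ to be a division algebra (i.e.\ $N^->1$, since otherwise $E\times E$ with $E$ non-CM is a counterexample) is a worthwhile point that the paper leaves unsaid.
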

In particular, we are interested in the second case of the previous theorem. Abelian surfaces with QM that satisfy that second condition are said to have \textbf{complex multiplication} (CM for short) by $K$.
Suppose that $(A,i)$ is an abelian surface over $\mathbb{C}$ with QM by $\mathcal{O}_B$ and $V_1(N^+p^n)$-level structure. Then the ring
\[
\End_{\mathcal{O}_B}(A) := \bigl\lbrace f \in \End(A) \mid f \circ i(b) = i(b) \circ f\ \text{for all}\ b \in \mathcal{O}_B \bigr\rbrace 
\]
is either $\mathbb{Z}$ or an order in an imaginary quadratic field $K$. If $K$ is an imaginary quadratic field and $\End_{\mathcal{O}_B}(A) = \mathcal{O}_c$, where $\mathcal{O}_c$ is the order of conductor $c$ in $\mathcal{O}_K$, then $A$ is said to have 
complex multiplication
by $\mathcal{O}_c$  
and the point $P=[(A,i)] \in X_{N^+p^n}(\mathbb{C})$ is said to be a \textbf{CM point of conductor $c$}.

%%%%%%
\subsubsection{Products of CM elliptic curves} \label{prodell}

Start with an elliptic curve $E$ over $\mathbb{C}$ with complex multiplication by $\mathcal{O}_K$, take $E:= \mathbb{C} / \mathcal{O}_K$. Consider on $E$ a $\Gamma_1(M)^{\text{arit}}$-level structure given by a morphism
\[
\mu_{M} : \bmu_{M} \longmono E[M],
\]
where $M>3$ is an integer prime to $D$.
Consider now the self product $A := E \times E$ that is an abelian surface over $\mathbb{C}$; then its endomorphism ring is $\End(A) \cong M_2(\End(E)) \cong M_2(\mathcal{O}_K)$.
Since $K$ splits $B$, because of the generalized Heegner hypothesis, we can embed $K$ in $B$ and choose a basis $\left\lbrace b_1, b_2 \right\rbrace $ of $B$ over $K$ with $b_1, b_2 \in \mathcal{O}_B$. Then we have an an immersion $B \hookrightarrow M_2(K) = M_2(\End^0(E)) = \End^0(A)$ such that $\mathcal{O}_B \hookrightarrow M_2(\mathcal{O}_K) = M_2(\End(E)) = \End(A)$. See \cite[\S 2]{Mi79}. Hence $ \iota : \mathcal{O}_B \hookrightarrow \End(A)$ is a quaternionic multiplication for $A$.

Consider the isomorphism $i_K : B \otimes_{\mathbb{Q}} K \cong M_2(K)$ induced by $\iota$ and put \[e:= i_K^{-1}\left( \left( \begin{smallmatrix}
1 & 0 \\ 0 & 0
\end{smallmatrix}
\right) \right) \in B \otimes_{\mathbb{Q}} K,\] which is an idempotent such that $e^*=e$. Then the decomposition of $A$ is induced by $A = eA \oplus (1-e)A = \left( \begin{smallmatrix}
1 & 0 \\ 0 & 0
\end{smallmatrix}
\right)
A \oplus \left( \begin{smallmatrix}
0 & 0 \\ 0 & 1
\end{smallmatrix}
\right)
A \cong E \times E$ (multiplication by $\alpha := \left( \begin{smallmatrix}
0 & 1 \\ 1 & 0
\end{smallmatrix}
\right)$ gives an isomorphism $ eA \ \displaystyle_{\cong}^{\cdot \alpha} \ (1-e)A$).
So $A[M] = eA[M] \oplus (1-e)A[M]$. The choice of a level structure on $eA[M] = E[M]$ induces a $V_1(M)$-level structure on $A[M]$, because of the request of compatibility of the level structure with respect to the action of $\mathcal{O}_B$ (and consequently of $B$). See also the last lines of \cite[\S 2.2]{Brooks}. Thus, the fixed level structure $\mu_{M}$ on $E$ induces a $V_1(M)$-level structure 
\[
\nu_{M} : \bmu_{M} \times \bmu_{M} \longmono A[M]
\]
on $A$.
Hence, starting from a $\Gamma_1(N^+p^{n})^{\text{arit}}$-level structure on $E$, we obtain a quadruple $(A,\iota,\nu_{N^+}, \nu_{p^{n}})$ which determines a CM point in $X_{N^+p^n}(\mathbb{C})$.
Starting from a $\Gamma_1(N^+p^{\infty})^{\text{arit}}$-level structure on $E$, we obtain a quadruple $(A,\iota,\nu_{N^+}, \nu_{p^{\infty}})$ which can be seen as a compatible sequence of CM points in the Shimura tower $X_{N^+p^n}$.

Start now with the elliptic curve $E_c = \mathbb{C} / \mathcal{O}_c$ over $\mathbb{C}$ with complex multiplication by an order $\mathcal{O}_c$ of $K$, with $(c,N)=1$.
The isogeny 
\[ 
\begin{split}
E = \mathbb{C} / \mathcal{O}_K &\longepi E =\mathbb{C} / \mathcal{O}_c\\
\overline{z} &\longmapsto \overline{c z},
\end{split}
\]
induces an isogeny
\[
\phi_c : A = E \times E \longepi A_c:= E_c \times E_c
\]
of complex abelian surfaces. 
Take on $A_c$ the quaternionic multiplication $\iota_c : \mathcal{O}_B \hookrightarrow \End(A_c)$ determined by compatibility with $\phi_c$:
\[
\iota_c(b) (\phi_c(a)) = \phi_c(\iota(b) a),
\]
for any $b \in B$, $a \in A$.
As before, a $\Gamma_1(M)^{\text{arit}}$-level structure $\mu_{c,M}$ on $E_c$, with $M$ prime to $D$, induces a $V_1(M)$-level structure $\nu_{c,M}$ on $A_c$.
Therefore, starting from a $\Gamma_1(N^+p^{n})^{\text{arit}}$-level structure on $E_c$, we obtain a quadruple $(A_c,\iota_c,\nu_{N^+}, \nu_{p^{n}})$ which determines a CM point of conductor $c$ in $X_{N^+p^n}(\mathbb{C})$.
Starting from a $\Gamma_1(N^+p^{\infty})^{\text{arit}}$-level structure on $E$, we obtain a quadruple $(A,i,\nu_{c,N^+}, \nu_{c,p^{\infty}})$ which can be seen as a compatible sequence of CM points in the Shimura tower $X_{N^+p^n}$.

Note that the isogeny $\phi_c$ doesn't necessarily respect the chosen level structures if $p$ divides $c$.

\subsubsection{The action of $\Pic(\mathcal{O}_c)$}

Denote by $\Pic(\mathcal{O}_c)$ the Picard group of the order $\mathcal{O}_c$ of conductor $c$ of an imaginary quadratic field $K$, that is
\[
\Pic(\mathcal{O}_c) = K^{\times} \backslash \hat{K}^{\times} / \hat{\mathcal{O}_c}^{\times} = I_c(\mathcal{O}_c) / P_c(\mathcal{O}_c),
\]
where 
$I_c(\mathcal{O}_c)$ is the group of fractional ideals of $\mathcal{O}_c$ coprime to $c$ and $P_c(\mathcal{O}_c)$ is the subgroup of $I_c(\mathcal{O}_c)$ of principal fractional ideals.

Consider a quadruple $(A,\iota,\nu_{N^+},\nu_{p^{\infty}})$ where $(A,\iota)$ is a QM abelian surface with CM by $\mathcal{O}_c$. There is an action of $\Pic(\mathcal{O}_c)$ on the isomorphism classes of these quadruples, defined by
\[
\mathfrak{a} \star (A,\iota,\nu_{N^+},\nu_{p^{\infty}}) := (A_{\mathfrak{a}},\iota_{\mathfrak{a}},\nu_{\mathfrak{a},N^+},\nu_{\mathfrak{a},{p^{\infty}}}),
\]
where the representative $\mathfrak{a}$ is chosen to be integral and prime to $N^+pc$.
Here $A_{\mathfrak{a}} := A/A[\mathfrak{a}]$, where $A[\mathfrak{a}]$ is the subgroup of the elements of $A$ that are killed by all the endomorphisms in $\mathfrak{a}$.
The quaternionic multiplication $\iota_{\mathfrak{a}}$ and the level structures $\nu_{\mathfrak{a},N^+}, \nu_{\mathfrak{a},p^{\infty}}$ are induced by the ones of $A$. Denote by $\varphi_{\mathfrak{a}}$ the quotient isogeny $A \twoheadrightarrow A/ A[\mathfrak{a}]$, that is an isogeny of degree $N(\mathfrak{a})^2 = (\# \mathcal{O}_c/\mathfrak{a})^2$ and so prime to $N^+p$; define
\[
%\begin{split}
\iota_{\mathfrak{a}} : \mathcal{O}_B %&
\longmono \End(A_{\mathfrak{a}})%\\
, \quad
b %& 
\longmapsto \Big(\varphi_{\mathfrak{a}}(x) \mapsto \varphi_{\mathfrak{a}}\bigl(\iota(b)(x)\bigr)\Big)
%\end{split}
\]
and
\[
%\begin{split}
\nu_{\mathfrak{a},N^+} : \bmu_{N^+} \times \bmu_{N^+} %&
\stackrel{\nu_{N^+}}\longmono A[N^+] \stackrel{\varphi_{\mathfrak{a}}}{\cong} A_{\mathfrak{a}}[N^+], \quad
%\\
\nu_{\mathfrak{a},p^{\infty}} : \bmu_{p^{\infty}} \times \bmu_{p^{\infty}} %&
\stackrel{\nu_{p^{\infty}}}\longmono A[p^{\infty}] \stackrel{\varphi_{\mathfrak{a}}}{\cong} A_{\mathfrak{a}}[p^{\infty}].
%\end{split}
\]
See \cite[\S 2.5]{Brooks}.

If $\sigma_{\mathfrak{a}}\in \Gal(K_c/K)$ corresponds to $\mathfrak{a} \in \Pic(\mathcal{O}_c)$ through the classical Artin reciprocity map, then by Shimura's reciprocity law there is an equality
\[
(A,\iota,\nu_{N^+},\nu_{p^{\infty}})^{\sigma_{\mathfrak{a}}} = 
\mathfrak{a} \star (A,\iota,\nu_{N^+},\nu_{p^{\infty}}).
\]
See, again, \cite[\S 2.5]{Brooks}.

\subsubsection{Construction of CM points} \label{CMpoints}

We want to introduce CM points indexed by ideals of orders of $K$.

Take an element $[\mathfrak{a}] \in \Pic(\mathcal{O}_c)$ and choose the representative $\mathfrak{a}$ to be integral and prime to $N^+pc$.
Consider the elliptic curve $E_{\mathfrak{a}} := \mathbb{C}/\mathfrak{a}^{-1}$ with the $\Gamma_1(N^+p^{\infty})^{\text{arith}}$-level structure defined in \cite[\S 2.3 and \S 2.4]{CH}. Put $A_{\mathfrak{a}}:= E_{\mathfrak{a}} \times E_{\mathfrak{a}}$, which, by the theory of complex multiplication, is an abelian surface defined over the ring class field $K_c$ of $K$ of conductor $c$. The abelian surface $A_{\mathfrak a}$ has QM by $\mathcal{O}_B$ and we can consider the quadruple 
\[ x(\mathfrak{a}) :=(A_{\mathfrak{a}},i_{\mathfrak{a}},\nu_{\mathfrak{a},N^+},\nu_{\mathfrak{a},p^{\infty}}), \]
where $\nu_{\mathfrak{a},N^+},\nu_{\mathfrak{a},p^{\infty}}$ are the level structures induced by the ones of $E_{\mathfrak{a}}$ (as in \S \ref{prodell}). 
We write $x(c) =(A_{c},i_c,\nu_{c,N^+},\nu_{c,p^{\infty}})$ when $\mathfrak{a}=\mathcal{O}_c$.

We have already used the notation above in the previous section for the action of the Picard group  on a quadruple: the reason is that there is an equality
\[
\mathfrak{a} \star (A_{c},i_c,\nu_{c,N^+},\nu_{c,p^{\infty}}) = (A_{\mathfrak{a}},i_{\mathfrak{a}},\nu_{\mathfrak{a},N^+},\nu_{\mathfrak{a},p^{\infty}}). 
\]
Indeed, $\mathfrak{a} \star A_c = A_c/A_c[\mathfrak{a}] \cong E_c/E_c[\mathfrak{a}] \times E_c/E_c[\mathfrak{a}]$ because $\mathfrak{a} \subseteq \mathcal{O}_c \hookrightarrow M_2(\mathcal{O}_c)$ acts diagonally and the level structures are induced by the isogeny $\varphi_{\mathfrak{a}} : A_c \twoheadrightarrow A_c/ A_c[\mathfrak{a}]$ that is the product of the isogeny $E_c \twoheadrightarrow E_c/ E_c[\mathfrak{a}]$.

\subsection{Modular forms on Shimura curves}

We recall here the definitions and some properties of modular forms and $p$-adic modular forms on Shimura curves. The references are \cite{Kas}, \cite{Brooks}, \cite{EdVP}, \cite{Hi04}.

\subsubsection{Geometric modular forms on Shimura curves}\label{geom-mod-forms}

We will need integrality conditions only at $p$, so we define modular forms over algebras $R$ over the localization $\mathbb{Z}_{(p)}$ of $\mathbb{Z}$ at the prime ideal generated by $p$.
Let $(\pi:A \rightarrow \Spec(R), \iota)$ be a QM abelian surface over a $\mathbb{Z}_{(p)}$-algebra $R$. Then $\pi_* \Omega_{A/R}$, where $\Omega_{A/R}$ is the bundle of relative differentials, inherits an action of $\mathcal{O}_B$ which tensored with the scalar action of $\mathbb{Z}_p$ gives an action of $M_2(\mathbb{Z}_p)$ on $\pi_* \Omega_{A/R}$. Write $\underline{\omega}_{A/R}$ for $e\pi_* \Omega_{A/R}$. If $\mathcal{A} \rightarrow Sh$ is the universal QM abelian surface, then $\mathcal{A} \otimes R \rightarrow Sh \otimes R$ is the universal object for $Sh \otimes R$. In  the  particular  case $ \pi : \mathcal{A} \otimes R \rightarrow Sh \otimes R$ of the universal QM abelian surface over a $\mathbb{Z}_p$-algebra $R$,  we just write $\underline{\omega}_R$ for $e\pi_* \Omega_{\mathcal{A} \otimes R/Sh \otimes R}$.

In analogy with the case of elliptic modular forms (see, for example, \cite[\S 1]{BDP}, in particular equation $(1.1.1)$), we give a geometric definition \`a la Katz for modular forms on $Sh$ over a $\mathbb{Z}_{(p)}$-algebra $R$.
For a nice exposition of Katz modular forms in the case of modular curves, see \cite[Chapter 1]{Go}. The geometric definition for modular forms on Shimura curves is due to Kassaei, see \cite[\S 4.1]{Kas}. We closely follow \cite{Brooks}.

\begin{defi}
	A \textbf{modular form with respect to $B$ of weight $k \in \mathbb
	Z$ and level $V_1(N^+)$ over $R$} is a global section of $\underline{\omega}^{\otimes_k}_R$, i.e., an element of $H^0(Sh \otimes R,\underline{\omega}^{\otimes_k}_R)$.
	We denote by $M_k(Sh,R)$ the space of modular forms with respect to $B$, of weight $k \in \mathbb
	Z$ and level $V_1(N^+)$ over $R$.
\end{defi}

Alternatively, one can define modular forms in the following ways.

\begin{defi} 
	Let $R_0$ be  an $R$-algebra. A {\bf test object} is a quadruple $(A/R_0, \iota,\nu,\omega)$ consisting of a QM abelian surface $A$ over $R_0$, a $V_1(N^+)$-level structure $\nu$ on $A$, and a non-vanishing global section of $\underline{\omega}_{A/R_0}$. 
	
	Two test objects $(A/R_0, \iota, \nu,\omega)$ and $(A'/R_0,\iota',\nu',\omega')$ over $R_0$ are {\bf isomorphic} if there is an isomorphism $(A/R_0,\iota,\nu) \rightarrow (A'/R_0,\iota',\nu')$, of QM abelian surfaces with $V_1(N^+)$-level structure, pulling $\omega'$ back to $\omega$. 
	
	A {\bf modular form of weight $k$ and level $V_1(N^+)$ over $R$} is a rule $F$ that assigns to every isomorphism class of test objects $(A/R_0,\iota, \nu,\omega)$ over an $R$-algebra $R_0$ a value $F(A/R_0,\iota, \nu,\omega) \in R_0$ such that 
	\begin{itemize}[leftmargin=*]
		\item (compatibility with base change) if $\varphi:R_0 \rightarrow R'_0$ is a map of $R$-algebras, inducing $\varphi : A \rightarrow A \otimes_{\varphi} R_0'$, then
		$
		F\big((A/R_0, \iota,\nu) \otimes_{\varphi} R'_0 ,\omega\big) = \varphi\big(F(A/R_0,\iota, \nu,\varphi^*(\omega))\big);
		$
		\item (weight condition) for any $\lambda\in R_0$, one has
		$
		F(A/R_0,\iota,\nu,\lambda\omega) =\lambda^{-k}F(A/R_0,\iota, \nu,\omega).
		$ 
	\end{itemize}
\end{defi}

\begin{defi}
	A {\bf modular form of weight $k$ and level $V_1(N^+)$ over $R$} is a rule $G$ that, for any $R$-algebra $R_0$, assigns to any isomorphism class of QM abelian surfaces over $R_0$ with $V_1(N^+)$-level structure $(A/R_0,\iota, \nu)$, a translation-invariant section of $\underline{\omega}^{\otimes_k}_{A/R_0}$, subject to the following base-change axiom: if $\varphi: R_0 \rightarrow R'_0$ is a map of $R$-algebras one has
	\[
	G((A/R_0,\iota, \nu) \otimes_{\varphi} R'_0) = \varphi^* G(A/R_0, \iota, \nu).
	\]
\end{defi}

Given a modular form as in the third definition, we get a modular form as in the first definition by taking the section assigned to the universal QM abelian surface with level structure $\mathcal{A}\otimes R \rightarrow Sh \otimes R$. 
This is an equivalence because $\mathcal{A} \otimes R$ is universal. The last two definitions are related by \[G(A,\iota, \nu) = F(A,\iota,\nu,\omega) \omega^{\otimes_k},\] where $\omega$ is any translation-invariant global section. This expression is independent of the choice of $\omega$.

\subsubsection{$p$-adic modular forms on Shimura curves} \label{pmodform}

Let $R$ be a $p$-adic ring (for $p$-adic ring we mean a complete and separated,
with respect to the p-adic topology, $\mathbb{Z}_p$-algebra). Define the space $V_p(N^+, R)$ of \textbf{$p$-adic modular forms} of level $V_1(N^+)$ by
\[
%\begin{split}
V_p(N^+, R) %&
:= \varprojlim_m H^0 (\varprojlim_n I_n \otimes R/p^mR, \mathcal{O}_{\varprojlim_n I_n \otimes R/p^mR})%\\
%&
\cong \varprojlim_m \varinjlim_n H^0(I_n \otimes R/p^mR, \mathcal{O}_{I_n \otimes R/p^mR}),
%\end{split}
\]
where $\mathcal{O}$ is the structure sheaf. When $n =0$
 one can take in the limit the coordinate ring of the affine scheme obtained from $Sh \otimes R/p^mR$ by deleting the supersingular points, that is $H^0((Sh \otimes R/p^mR)^{\ord}, \mathcal{O}_{(Sh \otimes R/p^mR)^{\ord}})$. If $m=0$, we take $H^0(({I_n}_{/R}, \mathcal{O}_{{I_n}_{/R}})$.
Thus elements in $V_p (N^+, R)$ are formal functions on the tower $I_n$, i.e., $f \in V_p (N^+, R)$ is a rule that assigns to
each quadruple $(A,\iota,\nu_{N^+}, \nu_{p}^{\infty})$, where $(A,\iota,\nu_{N^+}, \nu_{p}^{\infty})$ is a QM abelian surface over an $R$-algebra $R_0$ with $V_1(N^+p^{\infty})$-level structure, a value $f(A,\iota,\nu_{N^+}, \nu_{p}^{\infty}) \in R_0$, which depends only on the isomorphism class and that is compatible with base changes. We say that a $p$-adic modular form $f$ is of weight $k \in \mathbb{Z}_p$, if for every $u \in \mathbb{Z}^{\times}_p$, we have
\[
f (A,\iota,\nu_{N^+}, \nu_{p}^{\infty}) = u^{-k} f (A,\iota,\nu_{N^+}, \nu_{p}^{\infty}u),
\]
where $(A,\iota,\nu_{N^+}, \nu_{p}^{\infty})$ is a QM abelian surface over an $R$-algebra with $V_1(N^+p^{\infty})$-level structure.

If $f$ is a modular form with respect to $B$ of weight $k$ and level $V_1(N^+)$ over $R$ as in \ref{geom-mod-forms}, then we can see it as a $p$-adic modular form $\hat{f}$ as follows. 
The $V_1(N^+p^{\infty})$-level structure on $A/R_0$ determines a point $P \in eT_pA_0^t(k)$, where $A_0$ is the reduction $\text{mod}\ p$ of $A$.
A point $P \in eT_pA_0^t(k)$ determines a differential $\omega_P \in \underline{\omega}_{A/R_0}$.
Indeed, there is an isomorphism
\[
T_p(A_0^t) \cong \Hom_{\mathbb{Z}_p}(\hat{A},\hat{\mathbb{G}}_m).
\]
So, taking the homorphism $\alpha_P$ corresponding to the point $P$, one can consider the pull-back $\omega_P := \alpha_P^*(dT/T) \in \underline{\omega}_{\hat{A}/R_0} = \underline{\omega}_{A/R_0}$, of the standard differential $dT/T$ of $\hat{\mathbb{G}}_m$.
See \cite[\S 3.3]{Ka} or the proof of \cite[Lemma 4.2]{Brooks} for details.
One can define 
\[
\hat{f}(A,\iota,\nu_{N^+}, \nu_{p}^{\infty}) := f(A,\iota,\nu_{N^+}, \omega_P).
\]
It follows from the definition that if $f$ is a geometric modular form of weight $k$ and level $V_1(N^+)$, then $\hat{f}$ is a $p$-adic modular form of weight $k$ and level $V_1(N^+)$.

\subsubsection{Jacquet--Langlands correspondence}

The Jacquet--Langlands correspondence establishes a Hecke-equivariant bijection between automorphic
forms on $\GL_2$ and automorphic
forms on multiplicative groups of quaternion algebras. In our setting, this can be stated as a correspondence between classical modular forms and quaternionic modular forms.
\begin{teor}[Jacquet--Langlands]
There is a canonical (up to
scaling) isomorphism
\[
S_k(\Gamma_1(N), \mathbb{C})^{D\emph{-new}} \xrightarrow[\emph{JL}]{\cong} M_k(Sh,\mathbb{C}),
\]
where $\Gamma_1(N)$ is the standard congruence group 
$
\Gamma_1(N) := \left\lbrace A \in \SL_2(\mathbb{Z}) \mid A \equiv \left( \begin{smallmatrix}
1 & * \\ 0 & 1
\end{smallmatrix}
\right) \mod N \right\rbrace,
$
and $S_k(\Gamma_1(N), \mathbb{C})^{D\emph{-new}}$ is the space of classical cuspidal eigenforms with respect to $\Gamma_1(N)$, of weight $k$ and that are new at $D$.
This bijection is compatible with the Hecke-action and the Atkin-Lehner involutions on each
of the spaces.
\end{teor}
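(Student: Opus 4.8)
The plan is to obtain the stated isomorphism as the incarnation, in the language of geometric modular forms on $Sh$, of the Jacquet--Langlands correspondence at the level of automorphic representations. The deep input --- the existence of the JL transfer --- is a theorem we may invoke, so the work consists entirely in translating between automorphic forms on $B^{\times}$ and global sections of $\underline{\omega}^{\otimes_k}_{\mathbb{C}}$ over $Sh$, and in matching the Hecke structures on the two sides.

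First I would recall the representation-theoretic statement. The Jacquet--Langlands correspondence furnishes a canonical bijection $\pi \mapsto \pi'$ between the cuspidal automorphic representations of $\GL_2(\mathbb{A}_{\mathbb{Q}})$ whose local component at each prime $\ell \mid D$ is a discrete series representation and the automorphic representations of $B^{\times}(\mathbb{A}_{\mathbb{Q}})$, preserving all local components at places $v \nmid D$, matching central characters, and sending the weight-$k$ discrete series at $\infty$ to the corresponding weight-$k$ representation of $B^{\times}(\mathbb{R}) \cong \GL_2(\mathbb{R})$. Since $B$ is ramified precisely at the primes dividing $D = N^-$, a cuspidal eigenform $f \in S_k(\Gamma_1(N),\mathbb{C})$ is $D$-new exactly when the associated representation $\pi$ is discrete series (Steinberg or supercuspidal) at every $\ell \mid D$, i.e.\ exactly when $\pi$ lies in the source of the correspondence; this is what pins down the subspace $S_k(\Gamma_1(N),\mathbb{C})^{D\text{-new}}$ as the correct domain.

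Next I would set up the analytic and adelic dictionary. Using the complex uniformization $X_{N^+}(\mathbb{C}) \cong \mathcal{H}/\Gamma_{1,N^+}$ of Section~\ref{shimura-chapter} --- which itself rests on strong approximation for the indefinite algebra $B$ --- a global section of $\underline{\omega}^{\otimes_k}_{\mathbb{C}}$ over $Sh$ is the same datum as a holomorphic function on $\mathcal{H}$ satisfying the weight-$k$ automorphy relation under $\Gamma_{1,N^+}$; since $Sh$ is proper there are no cusps, so $M_k(Sh,\mathbb{C})$ is automatically ``cuspidal'', matching $S_k$ rather than $M_k$ on the $\GL_2$ side. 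Passing to the adeles via $\mathcal{H}/\Gamma_{1,N^+} \cong B^{\times}\backslash(\mathcal{H}^{\pm}\times\hat{B}^{\times})/\hat{\Gamma}_{1,N^+}$ identifies this space with the space of weight-$k$, right-$\hat{\Gamma}_{1,N^+}$-invariant automorphic forms on $B^{\times}(\mathbb{A})$; here one checks that the moduli-theoretic $V_1(N^+)$-level structure corresponds precisely to the compact open $\hat{\Gamma}_{1,N^+}$ and that the classical automorphy factor matches the weight defined through $\underline{\omega}^{\otimes_k}$. Finally I would transport the Hecke structure: the correspondences on $X_{N^+}$ of Section~\ref{Heckeop} are, under the adelic picture, the double-coset operators (for $\ell \nmid N^+D$ the one attached to $\Phi_\ell^{-1}(\mathrm{diag}(\ell,1))$), which match the classical $T_\ell$ on $S_k(\Gamma_1(N),\mathbb{C})$, and similarly for the operators at $\ell \mid N^+$ and for the Atkin--Lehner involutions $w_q$, $q \mid N^-$, coming from the normalizer of the order. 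Restricting the JL bijection on representations to the line of $\hat{\Gamma}_{1,N^+}$-fixed vectors in each relevant $\pi'$ then yields a Hecke- and Atkin--Lehner-equivariant linear isomorphism $S_k(\Gamma_1(N),\mathbb{C})^{D\text{-new}} \xrightarrow{\cong} M_k(Sh,\mathbb{C})$, the scaling ambiguity being exactly the choice of basis vector in each one-dimensional space of invariants.

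I expect the main obstacle to be organizational rather than substantive: since the JL correspondence is available as a black box, everything reduces to checking compatibilities of normalizations --- the exact match between the geometric level structure $V_1(N^+)$ and the adelic subgroup $\hat{\Gamma}_{1,N^+}$, between the line-bundle weight and the classical automorphy factor, and between the geometric Hecke correspondences and the adelic Hecke algebra. None of these is deep, but each must be pinned down carefully, because the later constructions of the paper (the $p$-adic $L$-function $\LL_{f,\psi}$ and the generalized Heegner classes attached to $f$) depend on viewing $f$ simultaneously as a classical newform and as a modular form on $Sh$ via exactly this identification.
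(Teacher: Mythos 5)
The paper offers no proof of this statement at all: it is quoted as the classical Jacquet--Langlands correspondence and the translation into geometric modular forms on $Sh$ is deferred to the references (notably \cite[\S 2.7]{Brooks}). Your outline --- invoking the automorphic correspondence as a black box, identifying $D$-newforms with representations that are discrete series at $\ell \mid D$, and then matching level structures, weights, Hecke operators and Atkin--Lehner involutions through the adelic uniformization of $X_{N^+}$ --- is exactly the standard derivation, so it is consistent with (and more detailed than) what the paper does; the only caveat is that your remark on properness of $Sh$ implicitly assumes $D>1$, which is the case of interest here.
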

In particular, to each eigenform $f \in S_k (\Gamma_1(N),\mathbb{C}) ^{D\text{-new}}$ 
corresponds a unique (up to scaling) quaternionic form $f_B = \text{JL}(f)\in M_k(Sh,\mathbb{C})$ having the same Hecke eigenvalues as $f$ for the good Hecke operators $T_{\ell}$ for $(\ell,D)=1$ and the Atkin--Lehner involutions. 
Anyway, one can normalize $f_B$.

More precisely, if we start from an eigenform $f \in S_k (\Gamma_1(N),\mathbb{C}) ^{D\text{-new}}$ with Nebentypus $\varepsilon_f$ with respect to the action  of $\Gamma_0(N)$, where $\Gamma_0(N) := \left\lbrace A \in \SL_2(\mathbb{Z}) \mid A \equiv \left( \begin{smallmatrix}
* & * \\ 0 & *
\end{smallmatrix}
\right) \mod N \right\rbrace$, the Jacquet--Langlands correspondence asserts the existence of a holomorphic
function $f_B$ on the upper half plane, determined only up to a scalar multiple, such that $f_B$ is a modular form for the discrete subgroup $\Gamma_{1,N^+}$ of $\GL_2(\mathbb{R})$, of weight $k$, with the same eigenvalues as $f$ for the good Hecke operators and with Nebentypus $\varepsilon_f$ for the action of $\Gamma_{0,N^+}$, where $\hat{\Gamma}_{0,N+}$ is the open compact subgroup of $\hat{\mathcal{O}}_{B}^{\times}$ composed of the elements $b \in \hat{\mathcal{O}}_{B}^{\times}$ such that 
	$\pi_{N^+}(b) \in \left\lbrace \left(\begin{smallmatrix}
	* & * \\ 0 & *
	\end{smallmatrix} \right) \in GL_2(\mathbb{Z}/N^+\mathbb{Z})
	\right\rbrace$, and $\Gamma_{0,N^+}^+$ is the subgroup of matrices in
	$\Phi_{\infty}((\hat{\Gamma}_{0,N^+} \cap B)^{\times})$ of determinant $1$.
In particular, if we start from a classical modular form for $\Gamma_0(N)$ we obtain a quaternionic modular form with trivial Nebentypus with respect to the action of $\Gamma_{0,N^+}$.
	
The function $f_B$ gives rise canonically, as in \cite[\S 2.7]{Brooks}, to a modular form in the sense of the geometric definition seen before, i.e., to a section of $\underline{\omega}_{\mathbb{C}} = e\pi_*\Omega_{\mathcal{A}\otimes\mathbb{C}/ Sh \otimes \mathbb{C}}$. If $f \in S_k (\Gamma_1(N),F) ^{D\text{-new}}$, i.e. its Fourier coefficients lie in the ring of integers $\mathcal{O}_F$ of a number field $F$, then the choice of $f_B$ is ambiguous up to multiplication by a unit in $\mathcal{O}_F[1/N]$. See again \cite[\S 2.7]{Brooks}.

\section{Deformation theory and $t$-expansions for modular forms} 

In order to associate with modular forms over Shimura curves power series expansions at CM points, we are interested in deformation theory. In particular, Serre--Tate deformation theory provides us with a way to do this. Thus, in this section we will study the deformation theory of QM abelian surfaces, which is closely related to the deformation theory of elliptic curves, as is well explained in \cite{Buz}. Then we will define power series expansions for modular forms on Shimura curves.

%%%%%%%%%%%%%%%%%%%%%%%%
%%%%%%%%%%%%%%%%%%%%%%%%

\subsection{Serre--Tate deformation theory}

Following \cite{Ka}, we introduce the Serre--Tate deformation theory for ordinary abelian varieties, which provides a way to attach power series expansions to modular forms on Shimura curves, replacing classical $q$-expansions for ``elliptic'' modular forms that are not available in our case.

Fix an algebraically closed field $k$ of characteristic
$p > 0$ (for our goals, we can take $k = \overline{\mathbb{F}}_p$) and consider an \emph{ordinary} abelian variety $A$ over $k$. Recall that an abelian variety $A$ over $k$ is said to be ordinary if $A[p](k) \cong (\mathbb{Z}/p\mathbb{Z})^{\dim(A)}$. Let $A^t$ be the dual abelian
variety, which is isogenous to $A$ and hence ordinary too. Consider the Tate modules 
$
T_pA := \varprojlim_n A[p^n](k),\ T_pA^t := \varprojlim_n A^t[p^n](k)
$
of $A$ and $A^t$. Because of the ordinarity assumption on $A$, $T_pA$ and $T_pA^t$ are free $\mathbb{Z}_p$-modules of rank $g: = \dim A = \dim A^t$.

\begin{defi}
	If $R$ is an artinian local ring with maximal ideal $\mathfrak{m}_R$ and residue field $k$, a \textbf{deformation} of $A$ to $R$ is
	an abelian scheme $\mathcal{A}$ over $R$ 
	together with an identification $\mathcal{A} \times_R k \cong A$.
\end{defi}

Following a construction due to Serre and Tate, we attach to such a deformation a $\mathbb{Z}_p$-bilinear form
\[
q(\mathcal{A}/R;-,-) : T_pA \times T_pA^t \longrightarrow \widehat{\mathbb{G}}_m(R) =  1 + \mathfrak{m}_R,
\]
where $\widehat{\mathbb{G}}_m := \Spf\bigl(k[T,S]/(TS-1)\bigr)$ is the completion of the multiplicative group scheme $\mathbb{G}_m:=\Spec\bigl(k[T,S]/(TS-1)\bigr)$ over $k$. 
This bilinear map is constructed from the Weil pairings
\[
e_{p^n} : A[p^n] \times A^t[p^n] \longrightarrow \bmu_{p^n}
\]
of $k$-group schemes, as defined in \cite{Oda}. These pairings come from Cartier duality for the $p$-divisible groups $A[p^{\infty}]$ and $A^t[p^{\infty}]$ (duality of abelian schemes is compatible with Cartier duality).
Here $\bmu_{p^n}$ is $\Spec\bigl(k[T]/(T^{p^n}-1)\bigr)$, 
the $k$-group scheme of $p^n$-th roots of unity, which can be seen inside $\mathbb{G}_m$ through the map $k[T,S]/(TS-1) \rightarrow k[T]/(T^{p^n}-1)$ defined by $T \mapsto T$ and $S \mapsto T^{p^n-1}$. For each $k$-algebra $R$, $\bmu_{p^n}(R)$ corresponds to the $p^n$-torsion of $\mathbb{G}_m(R)$. 
For the convenience of the reader, we sketch here the construction of the bilinear map $q(\mathcal{A}/R;-,-)$, because we will use it later. Choose an integer $n\geq0$ such that $\mathfrak{m}_R^n=0$. Since $p \in \mathfrak{m}_R$, $\widehat{\mathcal{A}}(R) := \ker \bigl( \mathcal{A}(R) \rightarrow \mathcal{A}(k) = A(k) \bigr)$ is killed by $p^n$.  
Let $P\in A(k)$; for any lift $\tilde{P} \in \mathcal{A}(R)$ of $P$, since $\widehat{\mathcal{A}}(R)$ is killed by $p^n$, we have that $p^n\tilde{P}$ is independent of the choice of the lift $\tilde{P}$. The existence of a lift $\tilde{P} \in \mathcal{A}(R)$ of $P\in A(k)$ is guaranteed by the smoothness of $\mathcal{A}/R$ (\cite[Corollary 2.13]{Liu}).
Therefore we obtain a map $A(k) \stackrel{``p^n"}{\longrightarrow} \mathcal{A}(R)$. If we take $P \in A[p^n](k)$, then $`` p^n"P \in \widehat{\mathcal{A}}(R)$, so we get
\[
`` p^n" : A[p^n] \longrightarrow \widehat{\mathcal{A}}(R).
\]
Because of the compatibility of the maps $`` p^n"$ when $n\gg0$, we obtain a homomorphism
\[
`` p^n" : T_pA \longepi A[p^n](k) \stackrel{`` p^n"}{\longrightarrow} \widehat{\mathcal{A}}(R)
\]
that is independent of $n$. 

Now, restricting the Weil pairings
\[
e_{p^n} : \widehat{A}[p^n] \times A^t[p^n] \longrightarrow \bmu_{p^n}
\]
for every $n \geq 1$, we obtain a perfect pairing, and then an isomorphism
\[
\widehat{A}[p^n] \stackrel{\cong}{\longrightarrow} \Hom_{\mathbb{Z}_p}\bigl(A^t[p^n],\bmu_{p^n}\bigr)
\]
of $k$-group-schemes. Because of the compatibility of the pairings with respect to $n$, passing to the limit, we deduce an isomorphism
\[
\widehat{A}(k) \stackrel{\cong}{\longrightarrow} \Hom_{\mathbb{Z}_p}\bigl(T_pA^t,\widehat{\mathbb{G}}_{m}\bigr),
\]
of formal groups over $k$.

Since $R$ is artinian, the $p$-divisible group $\mathcal{A}[p^{\infty}]$ has a canonical structure of an extension, as given by
\[
0 \longrightarrow \widehat{\mathcal{A}} \longrightarrow \mathcal{A}[p^{\infty}] \longrightarrow T_pA \times (\mathbb{Q}_p/\mathbb{Z}_p) \longrightarrow 0
\]
of the constant $p$-divisible group $T_pA(k) \times (\mathbb{Q}_p/\mathbb{Z}_p)$ over $R$ by $\widehat{\mathcal{A}}$, which is the unique toroidal formal group over $R$ lifting $\widehat{A}$.
Then the preceding two isomorphisms extend uniquely to isomorphisms of $R$-group schemes
\[
\widehat{\mathcal{A}}[p^n](R) \stackrel{\cong}{\longrightarrow} \Hom_{\mathbb{Z}_p}\bigl(A^t[p^n],\bmu_{p^n}\bigr)
\]
and
\[
\widehat{\mathcal{A}}(R) \stackrel{\cong}{\longrightarrow} \Hom_{\mathbb{Z}_p}\bigl(T_pA^t,\widehat{\mathbb{G}}_{m}\bigr)
\]
(see the proof of \cite[Theorem 2.1]{Ka}), giving pairings
\[
e_{p^n,\mathcal{A}} : \widehat{\mathcal{A}}[p^n](R) \times A^t[p^n] \longrightarrow \bmu_{p^n},
\]
and
\[
e_{\mathcal{A}} : \widehat{\mathcal{A}}(R) \times T_pA^t \longrightarrow \widehat{\mathbb{G}}_{m}.
\]
Finally, the map $q(\mathcal{A}/R;-,-)$ is defined by
\[
q\bigl(\mathcal{A}/R;P,Q^t\bigr) := e_{\mathcal{A}}\bigl(``p^n"P,Q^t\bigr),
\]
for $P \in T_pA$ and $Q^t \in T_pA^t$.

\begin{teor}[Serre--Tate]
	With notation as above, the construction
	\[
	\mathcal{A}/R \longmapsto q(\mathcal{A}/R;-,-) \in \Hom_{\mathbb{Z}_p}\bigl(T_pA\otimes T_p A^t, \widehat{\mathbb{G}}_m(R)\bigr)
	\]
	establishes a bijection 
	\[ 
	\left\lbrace
	\normalsize
	\begin{aligned}
	\emph{isomorphism}\ &\emph{classes of}\ \\
	\emph{deformations of}&\ A/k \ \emph{to}\ R
	\end{aligned} 
	\right\rbrace 
	\stackrel{\cong}{\longrightarrow}
	\Hom_{\mathbb{Z}_p}\big(T_pA( k ) \otimes T_p A^t(k), \widehat{\mathbb{G}}_m(R)\big).
	\]
Furthermore, this correspondence is functorial in $R$, i.e., if $\mathcal{F}$ is the deformation functor from the category $\mathscr{C}$ of artinian local rings with residue field $k$ to the category of sets given by
\[
\mathscr{F} : R \longmapsto \mathscr{F}(R):=\bigl\{ \text{isomorphism classes of deformations of}\ A/k\ \text{to}\ R\bigr\},
\]
then there is an isomorphism of functors
\[ 
\mathscr{F} \sim \Hom_{\mathbb{Z}_p}\bigl(T_pA \otimes T_p A^t, \widehat{\mathbb{G}}_m\bigr).
\]
\end{teor}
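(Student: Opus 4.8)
The plan is to follow Katz's treatment of Serre--Tate theory (\cite{Ka}) and reduce the deformation problem for the abelian variety $A$ to a deformation problem for its $p$-divisible group, which for an \emph{ordinary} $A$ becomes essentially linear-algebraic. The crucial first input is the Serre--Tate rigidity theorem (in the form due to Drinfeld): since $p$ is nilpotent in every object $R$ of $\mathscr{C}$, the functor sending a deformation $\mathcal{A}/R$ of $A$ to the $p$-divisible group $\mathcal{A}[p^{\infty}]/R$ is an equivalence between the category of deformations of $A$ to $R$ and the category of deformations of $A[p^{\infty}]$ to $R$. I would cite this rather than reprove it; everything afterwards is an identification of the resulting deformation functor of $p$-divisible groups.

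Next I would use ordinarity. Over the algebraically closed field $k$ the $p$-divisible group $A[p^{\infty}]$ sits in its connected--\'etale sequence
\[
0 \longrightarrow \widehat{A} \longrightarrow A[p^{\infty}] \longrightarrow \underline{T_pA}\otimes_{\mathbb{Z}_p}(\mathbb{Q}_p/\mathbb{Z}_p) \longrightarrow 0,
\]
in which the \'etale quotient is a constant ind-group scheme and, by Cartier (Weil) duality, the connected part is of multiplicative type, $\widehat{A}\cong\Hom_{\mathbb{Z}_p}(T_pA^t,\widehat{\mathbb{G}}_m)$. Over the artinian local base $R$ the \'etale part lifts uniquely (finite \'etale group schemes are rigid, and the constant one stays constant over a local base), and the multiplicative part lifts uniquely to the formal torus $\widehat{\mathcal{A}}$ (groups of multiplicative type are rigid). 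Hence a deformation of $A[p^{\infty}]$ to $R$ amounts precisely to a lift of the \emph{extension class}, so the deformation functor $\mathscr{F}$ is identified with $R\mapsto\Ext^1_R\bigl(\underline{T_pA}\otimes(\mathbb{Q}_p/\mathbb{Z}_p),\widehat{\mathcal{A}}\bigr)$, the Ext being computed in commutative fppf ($p$-divisible) groups over $R$. Since $\widehat{\mathcal{A}}(R)$ is killed by a power of $p$, the exact sequence $0\to\mathbb{Z}_p\to\mathbb{Q}_p\to\mathbb{Q}_p/\mathbb{Z}_p\to 0$ yields $\Hom(\mathbb{Q}_p,\widehat{\mathcal{A}})=\Ext^1(\mathbb{Q}_p,\widehat{\mathcal{A}})=0$ and hence $\Ext^1(\mathbb{Q}_p/\mathbb{Z}_p,\widehat{\mathcal{A}})\cong\widehat{\mathcal{A}}(R)$; tensoring with $T_pA$ and invoking the lift to $R$ of the Weil-pairing isomorphism recorded above, one obtains
\begin{align*}
\mathscr{F}(R) &\cong \Hom_{\mathbb{Z}_p}\bigl(T_pA,\widehat{\mathcal{A}}(R)\bigr)
\cong \Hom_{\mathbb{Z}_p}\bigl(T_pA,\Hom_{\mathbb{Z}_p}(T_pA^t,\widehat{\mathbb{G}}_m(R))\bigr) \\
&\cong \Hom_{\mathbb{Z}_p}\bigl(T_pA\otimes T_pA^t,\widehat{\mathbb{G}}_m(R)\bigr).
\end{align*}

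It then remains to check that this chain of bijections is exactly the assignment $\mathcal{A}/R\mapsto q(\mathcal{A}/R;-,-)$, which is pure unwinding: the extension class of $\mathcal{A}[p^{\infty}]$, transported through $\Ext^1(\mathbb{Q}_p/\mathbb{Z}_p,\widehat{\mathcal{A}})\cong\widehat{\mathcal{A}}(R)$ by the snake-lemma connecting homomorphism, is by construction the map $T_pA\to\widehat{\mathcal{A}}(R)$ described above (lift a $p^n$-torsion point of $A$ to $\mathcal{A}(R)$ and multiply by $p^n$), and pairing its values against $T_pA^t$ via $e_{\mathcal{A}}$ returns precisely $q(\mathcal{A}/R;P,Q^t)$. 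Functoriality in $R$ is then automatic, since the connected--\'etale sequence, the unique lifts, the $\Ext^1$ computation, the Weil pairings and the multiplication-by-$p^n$ maps are all natural in $R$. The main obstacle is really the first step: one must either grant the Drinfeld/Serre--Tate rigidity equivalence between deformations of $A$ and of $A[p^{\infty}]$ (or develop the crystalline deformation machinery needed to prove it), and one must take care to carry out the $\Ext^1$ computation in the correct category of group schemes, so that ``lift of the extension class'' and ``deformation of the $p$-divisible group'' genuinely coincide.
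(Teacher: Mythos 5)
Your proposal is correct and is essentially the argument of Katz that the paper itself invokes: the proof given in the text is simply a citation of \cite[Theorem 2.1, 1) and 2)]{Ka}, and the surrounding discussion records precisely your two main ingredients, namely the equivalence between deformations of $A/k$ and deformations of $A[p^{\infty}]$, and the construction of $q(\mathcal{A}/R;-,-)$ via the ``$p^n$''-lifting maps and the extended Weil pairings, which is what your final unwinding step identifies with the extension-class description. There is no gap; you have simply written out the proof that the paper delegates to the reference.
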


\begin{proof} This is \cite[Theorem 2.1, 1) and 2)]{Ka}. \end{proof}
 
The proof of the theorem rests on the fact that there is an equivalence
\[
\left\lbrace
\normalsize
\begin{aligned}
\displaystyle
\text{isomorphism}\ &\text{classes of}\ \\
\text{deformations}\ \mathcal{A}&/R \ \text{of}\ A/k
\end{aligned}
\right\rbrace
\stackrel{\cong}{\longrightarrow}
\Large \left\lbrace
\normalsize
\begin{aligned}
\displaystyle
\text{isomorphism}\ &\text{classes of}\ \\
\text{deformations}\ \mathcal{A}[p^{\infty}]&/R \ \text{of}\ A[p^{\infty}]/k
\end{aligned}
\Large\right\rbrace,
\]
so deforming an ordinary abelian variety $A/k$ is the same as deforming its $p$-divisible group $A[p^{\infty}]$.

Taking inverse limits, we can replace the category of artinian local rings with the category of complete noetherian local rings in the preceding discussion. We can do this because of the compatibility of these correspondences with inverse limits and of the fact that every complete noetherian local ring is the inverse limit of artinian local rings (if $R$ is a complete noetherian local ring with maximal ideal $\mathfrak{m}$, then $R \cong \varprojlim_n R/\mathfrak{m}^n$). However, the procedure for computing the pairings $q_{\mathcal{A}/R}$ only makes sense for artinian local rings.

Passing to complete noetherian local ring is useful because the deformation functor is not representable by an artinian local ring in $\mathscr{C}$ but is pro-representable by a complete noetherian local ring.
Namely, the deformation functor $\mathscr{F}$ is pro-represented by a complete local noetherian ring $\mathcal{R}^u$ that is non-canonically isomorphic to the power series ring $\W[[T_{ij}, 1 \leq i,j \leq g]]$, where $\W := W(k)$ is the ring of Witt vectors over $k$ . Therefore, the functor $\mathscr{F}$ can be seen as a formal scheme $\Spf(\mathcal{R}^u)$. 
Denote by $\widehat{\mathcal{A}}^u/\Spf(\mathcal{R}^u)$ the universal formal deformation of $A/k$, i.e., the formal element of $\mathscr{F}$ corresponding to the identity in $\Hom_{\widehat{\mathscr{C}}}(\mathcal{R}^u, \mathcal{R}^u)$.

Given elements $ P \in T_pA$, $P^t \in T_pA^t$, there is a map
\[ 
\begin{split}
\mathscr{F} &\longrightarrow \widehat{\mathbb{G}}_m\\[1mm]
\mathcal{A}/R &\longmapsto q(\mathcal{A}/R; P, P^t).
\end{split}
\]
If we pick $\mathbb{Z}_p$-bases $\{P_1,\dots,P_g\}$ and $\{P_1^t,\dots,P^t_g\}$ of $T_pA(k)$ and $T_pA^t(k)$, respectively, then we have $g^2$ functions
\[
\begin{split}
t_{ij} : \mathscr{F} &\longrightarrow \widehat{\mathbb{G}}_m\\[1mm]
\mathcal{A}/R &\longmapsto q(\mathcal{A}/R; P_i, P_j^t)
\end{split}
\]
called {\bf Serre--Tate coordinates} and $g^2$ elements $t_{ij}(\widehat{\mathcal{A}}^u/\mathcal{R}^u) \in \mathcal{R}^u$.
Writing $T_{ij} := t_{ij} -1$, there is a ring isomorphism
\[
\mathcal{R}^u \cong \W[[\left\lbrace T_{ij}\right\rbrace ]].
\]
We conclude with the following

\begin{prop} \label{liftmorph}
	Let $ f : A \rightarrow B$ be a morphism of ordinary abelian varieties over $k$, let $ f^t : B^t \rightarrow A^t$ be the dual morphism of $f$ and let $\mathcal{A}/R$ and $\mathcal{B}/R$ be deformations of $A/k$ and $B/k$ to $R$. Then $f$ lifts to a morphism $ F : \mathcal{A} \rightarrow \mathcal{B}$ of deformations if and only if 
\[
q\bigl(\mathcal{A}/R;P,f^t(Q^t)\bigr) = q\bigl(\mathcal{B}/R;f(P),Q^t\bigr)
\]
for every $P \in T_pA(k)$ and $Q^t \in T_pB^t(k)$. Furthermore, if a lifting exists, then it is unique.
\end{prop}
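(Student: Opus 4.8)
The plan is to deduce this from the Serre--Tate theorem together with its functoriality and the explicit description of the deformation functor in terms of the bilinear pairing $q$. The key observation is that a morphism $f: A \to B$ of ordinary abelian varieties over $k$ lifts to a morphism of $p$-divisible groups on a deformation precisely when it respects the extension structures of $\mathcal{A}[p^\infty]$ and $\mathcal{B}[p^\infty]$ as extensions of constant étale groups by toroidal formal groups, and the pairing $q$ records exactly this extension class.

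First I would reduce, via the equivalence between deforming $A$ and deforming $A[p^\infty]$ recalled after the Serre--Tate theorem, to the question of lifting $f$ to a morphism of the $p$-divisible groups $\mathcal{A}[p^\infty] \to \mathcal{B}[p^\infty]$. Using the canonical connected-étale sequences
\[
0 \longrightarrow \widehat{\mathcal{A}} \longrightarrow \mathcal{A}[p^{\infty}] \longrightarrow T_pA \times (\mathbb{Q}_p/\mathbb{Z}_p) \longrightarrow 0, \qquad
0 \longrightarrow \widehat{\mathcal{B}} \longrightarrow \mathcal{B}[p^{\infty}] \longrightarrow T_pB \times (\mathbb{Q}_p/\mathbb{Z}_p) \longrightarrow 0,
\]
any lift $F$ is forced on the étale quotients to be induced by $f$ and on the formal parts to be induced by $f$ on $\widehat{A} \to \widehat{B}$ (the toroidal lift is unique). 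The obstruction to assembling these into a genuine morphism of extensions is a class in $\Ext^1\bigl(T_pA \times (\mathbb{Q}_p/\mathbb{Z}_p), \widehat{\mathcal{B}}\bigr)$, i.e. it is the difference of the pullback extension $f^*(\mathcal{B}[p^\infty])$ along $f_{\mathrm{ét}}$ and the pushout extension $f_*(\mathcal{A}[p^\infty])$ along $f_{\mathrm{form}}$. Unwinding the identification $\widehat{\mathcal{B}}(R) \cong \Hom_{\mathbb{Z}_p}(T_pB^t, \widehat{\mathbb{G}}_m)$ (the isomorphism established in the proof of the Serre--Tate theorem), this Ext group is $\Hom_{\mathbb{Z}_p}(T_pA \otimes T_pB^t, \widehat{\mathbb{G}}_m(R))$, and the two extension classes are represented by the bilinear forms $(P, Q^t) \mapsto q(\mathcal{B}/R; f(P), Q^t)$ and $(P, Q^t) \mapsto q(\mathcal{A}/R; P, f^t(Q^t))$ respectively. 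Here one uses that the Weil pairing is compatible with $f$ and its dual $f^t$, which is the Weil-pairing adjunction $e_{p^n}(f(x), y^t) = e_{p^n}(x, f^t(y^t))$; this is the input that makes the pushout along $f_{\mathrm{form}}$ match the formula with $f^t$ inserted on the right. Hence $F$ exists iff these two bilinear forms agree, which is exactly the displayed identity, and uniqueness follows since a morphism of $p$-divisible groups over $R$ reducing to $f$ mod $\mathfrak{m}_R$ is determined by its restriction to the étale quotient and the formal part, both of which are rigid.

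I expect the main obstacle to be the bookkeeping in identifying the two extension classes with the two sides of the displayed formula — that is, carefully tracing through the isomorphism $\widehat{\mathcal{B}}(R) \cong \Hom_{\mathbb{Z}_p}(T_pB^t, \widehat{\mathbb{G}}_m)$ and the definition $q(\mathcal{A}/R; P, Q^t) = e_{\mathcal{A}}(\text{``}p^n\text{''}P, Q^t)$ to check that pushforward along the formal part corresponds to precomposition with $f^t$ on the $T_pB^t$ slot. In practice this is precisely the content of \cite[Theorem 2.1, 4)]{Ka}, so I would either cite it directly or reproduce the short diagram chase; the functoriality of the whole Serre--Tate construction in the abelian variety does the rest, and the "if a lifting exists then it is unique" clause needs only the separatedness of $\mathcal{B}/R$ together with the fact that $\widehat{\mathcal{A}}(R)$ is $p$-power torsion while $f$ is determined on $A(k)$.
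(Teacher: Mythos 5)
Your proposal is correct and, as you yourself note at the end, its content is exactly \cite[Theorem 2.1, 4)]{Ka}; the paper's proof consists solely of that citation. Your sketch of the extension-class argument (pushout of the class of $\mathcal{A}[p^\infty]$ along the formal part versus pullback of the class of $\mathcal{B}[p^\infty]$ along the \'etale part, compared inside $\Hom_{\mathbb{Z}_p}\bigl(T_pA\otimes T_pB^t,\widehat{\mathbb{G}}_m(R)\bigr)$) is a faithful outline of Katz's own proof, so there is nothing to add.
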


\begin{proof} This is \cite[Theorem 2.1, 4)]{Ka}. \end{proof}

\subsection{Serre--Tate coordinates for Shimura curves}

Take now an \emph{ordinary} QM abelian surface $A$  over $k$ with a $V_1(N^+)$-level structure. We want to deform our abelian surface not only as an abelian surface but also with its structures. Thus, we consider the subfunctor
$ \mathcal{M}$ of $\mathscr{F}= \Spf(\mathcal{R}^u)$ which sends an artinian local ring $R$ with residue field $k$ to the set of deformations of $A$ to $R$, where by deformation of $A$ to $R$ we mean a deformation $\mathcal{A}$ of $A$ to $R$ together with an embedding
$\mathcal{O}_B \hookrightarrow \End_{R}(\mathcal{A})$ deforming the given embedding $\mathcal{O}_B \hookrightarrow \End_k(A)$ and a $V_1(N^+)$-level structure on $\mathcal{A}$ deforming the given $V_1(N^+)$-level structure on $A$.
The $V_1(N^+)$-level structure automatically lifts uniquely, as $A[N^+]$ is \'etale over $R$, so we can ignore it in our discussion.

Consider the idempotent $e$ that acts as $\left( \begin{smallmatrix}
1 & 0 \\ 0 & 0
\end{smallmatrix} \right) \in M_2(\mathbb{Z}_p)$ on $T_pA$ ($i_K$ and $\Phi_p$ can be chosen to be compatible, by the choice of $\mathfrak{p}$ over $p=\mathfrak{p}\overline{\mathfrak{p}}$ split in $K$). We can find a
$\mathbb{Z}_p$-basis $\left\lbrace P_1,P_2\right\rbrace $ of $T_pA$ such that $eP_1 = P_1$ and $eP_2 = 0$, indeed $T_pA = eT_pA\oplus (1-e)T_pA$. Then $P_1^t \in (eT_pA)^t$. 

\begin{prop} \label{defthm}
The subfunctor $\mathcal{M}$ of $\mathscr{F}$ is pro-representable by a ring $\mathcal{R}^f$ that is a quotient of $\mathcal{R}^u$. In fact, $\mathcal{R}^f$ is the quotient of $\mathcal{R}^u$ by the closed ideal generated by the relations
\[
q\bigl(\widehat{\mathcal{A}}^u/\mathcal{R}^u;bP,Q^t\bigr) =q\bigl(\widehat{\mathcal{A}}^u/\mathcal{R}^u; P,b^{*}Q^t\bigr)
\]
for any $b\in B, P \in T_pA, Q^t\in T_pA^t$. Furthermore, there is an isomorphism
\[
\mathcal{R}^f \cong \W[[T_{11}]],
\]
where $T_{11}=t_{11} -1$ and $t_{11}$ corresponds to $q\bigl(\widehat{\mathcal{A}}^u/\mathcal{R}^u;P_1,P_1^t\bigr)$.
\end{prop}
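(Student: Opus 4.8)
The plan is to realize $\mathcal{M}$ as a closed subfunctor of $\mathscr{F}=\Spf(\mathcal{R}^u)$ cut out by the lifting criterion of Proposition~\ref{liftmorph}, and then to identify the resulting quotient ring by a Morita-type computation exploiting $\mathcal{O}_B\otimes\mathbb{Z}_p\cong M_2(\mathbb{Z}_p)$.

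First I would use the Serre--Tate theorem recalled above to identify, for each Artinian local ring $R$ with residue field $k$, the set $\mathscr{F}(R)$ with $\Hom_{\mathbb{Z}_p}(T_pA\otimes T_pA^t,\widehat{\mathbb{G}}_m(R))$, a deformation $\mathcal{A}/R$ being recorded by its bilinear form $q(\mathcal{A}/R;-,-)$. Since $A[N^+]$ is \'etale over $R$ the $V_1(N^+)$-level structure lifts uniquely and contributes no extra condition, so a point of $\mathscr{F}(R)$ lies in $\mathcal{M}(R)$ exactly when every endomorphism $\iota(b)$, $b\in\mathcal{O}_B$, deforms along $\mathcal{A}/R$. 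Applying Proposition~\ref{liftmorph} with $f=\iota(b)\colon A\to A$ --- so that $f^{t}=\iota(b)^{t}$, whose action on $T_pA^t$ I denote $Q^t\mapsto b^{*}Q^t$ --- the endomorphism $\iota(b)$ lifts, uniquely (hence compatibly with the ring structure), if and only if
\[
q\bigl(\mathcal{A}/R;bP,Q^t\bigr)=q\bigl(\mathcal{A}/R;P,b^{*}Q^t\bigr)\qquad\text{for all }P\in T_pA,\ Q^t\in T_pA^t .
\]
The ``defect'' $q(bP,Q^t)\,q(P,b^{*}Q^t)^{-1}-1$ is additive in $b$, so it suffices to impose these relations for $b$ ranging over a $\mathbb{Z}_p$-basis of $\mathcal{O}_B\otimes\mathbb{Z}_p$ and $P,Q^t$ over bases of $T_pA$ and $T_pA^t$; they visibly define a closed condition on the pro-representable functor $\mathscr{F}$. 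Hence $\mathcal{M}$ is pro-represented by the quotient $\mathcal{R}^f:=\mathcal{R}^u/I$, where $I$ is the closed ideal generated by the elements $q(\widehat{\mathcal{A}}^u/\mathcal{R}^u;bP,Q^t)-q(\widehat{\mathcal{A}}^u/\mathcal{R}^u;P,b^{*}Q^t)$; this already gives the first two assertions.

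For the explicit shape of $\mathcal{R}^f$ I would use that, since $p\nmid D$, one has $\mathcal{O}_B\otimes\mathbb{Z}_p\cong M_2(\mathbb{Z}_p)$, so that $T_pA$ is a $\mathbb{Z}_p$-torsion-free left $M_2(\mathbb{Z}_p)$-module of $\mathbb{Z}_p$-rank $2$, hence isomorphic to the indecomposable projective $M_2(\mathbb{Z}_p)e$, where $e$ is the idempotent fixed above (acting as $\left(\begin{smallmatrix}1&0\\0&0\end{smallmatrix}\right)$ on $T_pA$); and $T_pA^t$, made a right $M_2(\mathbb{Z}_p)$-module via $w\cdot b:=b^{*}w$ (a genuine module structure because $*$ is an anti-automorphism), is likewise free of rank one, $T_pA^t\cong eM_2(\mathbb{Z}_p)$, where one uses that $e^{*}=e$. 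The relations of the previous step say exactly that $q\colon T_pA\otimes_{\mathbb{Z}_p}T_pA^t\to\widehat{\mathbb{G}}_m$ is $M_2(\mathbb{Z}_p)$-balanced, i.e. factors through
\[
T_pA^t\otimes_{M_2(\mathbb{Z}_p)}T_pA\;\cong\;eM_2(\mathbb{Z}_p)\otimes_{M_2(\mathbb{Z}_p)}M_2(\mathbb{Z}_p)e\;\cong\;eM_2(\mathbb{Z}_p)e\;\cong\;\mathbb{Z}_p ,
\]
a free $\mathbb{Z}_p$-module of rank one generated by the class of $P_1\otimes P_1^t$. Therefore $\mathcal{M}\cong\Hom_{\mathbb{Z}_p}(\mathbb{Z}_p,\widehat{\mathbb{G}}_m)\cong\widehat{\mathbb{G}}_m$ as functors on $\mathscr{C}$, and passing to pro-representing objects yields $\mathcal{R}^f\cong\W[[T]]$ with $1+T$ the universal point of $\widehat{\mathbb{G}}_m$; unwinding the identification, $1+T$ corresponds to $t_{11}=q(\widehat{\mathcal{A}}^u/\mathcal{R}^u;P_1,P_1^t)$, whence $\mathcal{R}^f\cong\W[[T_{11}]]$ with $T_{11}=t_{11}-1$.

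The hardest part will be the bookkeeping around the involution: one must pin down how $\mathcal{O}_B$ acts on $T_pA^t$ through the dual isogenies $\iota(b)^{t}$ and the polarization, check that under $\mathcal{O}_B\otimes\mathbb{Z}_p\cong M_2(\mathbb{Z}_p)$ this exhibits $T_pA^t$ as the standard rank-one right $M_2(\mathbb{Z}_p)$-module, and verify that the fixed idempotent satisfies $e^{*}=e$ --- this last point being precisely what forces the surviving Serre--Tate coordinate to be the diagonal one $t_{11}$ rather than an off-diagonal $t_{ij}$. One must also confirm that the ideal generated by the relations is already closed, so that the naive quotient $\mathcal{R}^u/I$ genuinely pro-represents $\mathcal{M}$. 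Everything else is the routine dictionary between deformations, bilinear forms and Serre--Tate coordinates provided by the results quoted above.
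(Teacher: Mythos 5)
Your argument is correct and takes essentially the same route as the paper, whose proof consists of the observation that the statement follows from Proposition \ref{liftmorph} together with a citation of \cite[Proposition 4.5]{Brooks} and \cite[Proposition 3.3]{Mo}; your derivation of the relations from the lifting criterion and the Morita-theoretic reduction $T_pA^t\otimes_{M_2(\mathbb{Z}_p)}T_pA\cong eM_2(\mathbb{Z}_p)e\cong\mathbb{Z}_p$ is exactly the computation carried out in those references. The only cosmetic slip is calling the defect ``additive'' in $b$ (it is additive for the formal group law on $\widehat{\mathbb{G}}_m$, i.e.\ the relation holds for $b_1+b_2$ whenever it holds for $b_1$ and $b_2$), which does not affect the conclusion that imposing the relations for a $\mathbb{Z}_p$-basis of $\mathcal{O}_B\otimes\mathbb{Z}_p$ suffices.
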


\begin{proof}
This is a consequence of Proposition \ref{liftmorph}. For details, see \cite[Proposition 4.5]{Brooks} and \cite[Proposition 3.3]{Mo}.
\end{proof}

Thus, deformations of the QM abelian surface $A/k$ depend only on the $e$-component $eT_pA$ of $T_pA$.

Since the deformation functor $\mathcal{M}$ is the deformation functor associated with $Sh^{\ord}_{\mid_{\W}}$, i.e., the ordinary part of $Sh_{\mid_{\W}}$, and the point $x \in Sh^{\ord}(k)$ corresponding to the fixed ordinary QM abelian surface $A/k$ with $V_1(N^+)$-level structure, it follows that $\mathcal{M}$ is the formal completion $\widehat{Sh}^{\ord}_x$ of $Sh^{\ord}_{\mid_{\W}}$ at $x$ and so it is the formal spectrum $\Spf(\widehat{\mathcal{O}}_{Sh^{\ord},x})$, where $\mathcal{O}_{Sh^{\ord},x}$ is the local ring of $Sh^{\ord}$ at $x$.

\subsection{Deformations of QM abelian surfaces} \label{defQM}

In the case of QM abelian surfaces, the coordinate ring of the deformation functor has only one coordinate obtained by choosing a point $P \in T_pA$ such that $eP=P$, as we have seen in the previous section. Also in the case of elliptic curves there is only one coordinate obtained by choosing a point $P \in T_pE$. Actually, there is a strict link between deformations of QM abelian surfaces and deformations of elliptic curves.
 
Take an ordinary QM abelian surface $A / k$ where $k$ is again $\overline{\mathbb{F}}_p$. Then, as already pointed out, its deformation theory is equivalent to the deformation theory of the $p$-divisible group $A[p^{\infty}]$ (see the proof of \cite[Theorem 2.1]{Ka}).
The $p$-divisible group $A[p^{\infty}]$ attached to $A$ inherits an action of $\mathcal{O}_B$ and hence of $\mathcal{O}_B \otimes \mathbb{Z}_p$, which is identified with $M_2(\mathbb{Z}_p)$ via the fixed isomorphism $\Phi_p$.
If we set $e = \left( \begin{smallmatrix}
	1 & 0 \\ 0 & 0
\end{smallmatrix}
\right) \in M_2(\mathbb{Z}_p)$ (the idempotent $e$ acts as $\left( \begin{smallmatrix}
	1 & 0 \\ 0 & 0
\end{smallmatrix}
\right) \in M_2(\mathbb{Z}_p)$ on $A[p^{\infty}]$), then $A[p^{\infty}]$ splits as $eA[p^{\infty}] \oplus (1-e)A[p^{\infty}]$. Moreover, $eA[p^{\infty}]$ and $(1-e)A[p^{\infty}]$ are isomorphic via multiplication by $\left( \begin{smallmatrix}
0 & 1 \\ 1 & 0
\end{smallmatrix}
\right)$.
Since $A$ is ordinary, there is an isomorphism $A[p^{\infty}] \cong E[p^{\infty}]^2$ for $E/k$ an ordinary elliptic curve with $E[p^{\infty}] \cong eA[p^{\infty}]$ (see \cite[Corollary 4.6]{Buz}). Following \cite{Buz}, we want to recover the deformation theory of $A$ from the deformation theory of an elliptic curve.
Deforming $A[p^{\infty}]$ with its $\mathcal{O}_B$-action is the same as deforming $A[p^{\infty}]$ with its $M_2(\mathbb{Z}_p) \cong \mathcal{O}_B \otimes \mathbb{Z}_p$-action. According to Theorem \ref{defthm}, this is equivalent to deforming $eA[p^{\infty}]$, 
therefore the deformation theory of $A/k$ (or $A[p^{\infty}]$) is equivalent to the deformation theory of $E/k$ (or $E[p^{\infty}]$).

We want to relate the bilinear map $q_{\mathcal{A}}$, associated with a deformation $\mathcal{A}/R$ of a QM abelian surface $A/k$, to the map $q_\mathcal{E}$ associated with the deformation $\mathcal{E}/R$, corresponding to $\mathcal{A}/R$, of an elliptic curve $E/k$, when there is an isomorphism of $p$-divisible groups
\[
\alpha : eA[p^{\infty}] \stackrel{\cong}{\longrightarrow}E[p^{\infty}]
\]
over $k$. So we start by comparing the Weil pairings. Since the Weil pairing comes from Cartier duality for $p$-divisible groups, there is a commutative diagram for the Weil pairings
\[
\begin{tikzcd}
eA[p^n] \times (eA[p^n])^t \arrow[r, "e_{p^n,A}"] \arrow[d, "\alpha_n \times (\alpha_n^{t})^{-1}"]&  \bmu_{p^n} \arrow[d, "="] \\
E[p^n] \times E^t[p^n] \arrow[r, "e_{p^n,E}"] & \bmu_{p^n} \arrow[u]
\end{tikzcd}
\]
where $\alpha_n$ is the $n$-component of $\alpha$ and the first line in the diagram is the restriction of the Weil pairing associated with $A$ to $eA[p^n] \times (eA[p^n])^t$ (Cartier duality is compatible with duality of abelian schemes, so $(eA[p^n])^t \hookrightarrow (A[p^n])^t \cong A^t[p^n]$ and $(eA[p^n])^t \cong (E[p^n])^t \cong E^t[p^n]$). This means that for each $P\in eA[p^n](k)$ and $Q^t \in E^t[p^n](k)$, we have
\[
e_{p^n,A}\bigl(P,\alpha_n^t(Q^t)\bigr) = e_{p^n,E}\bigl(\alpha_n(P),Q^t\bigr).
\]
The same is true when we take inverse limits.

Considering the completions at the origin and restricting the pairings, we obtain
\[
\begin{tikzcd}
e\widehat{A}[p^n] \times (eA[p^n])^t \arrow[d, "\cong"] \arrow[r, "e_{p^n,A}"] &  \bmu_{p^n} \arrow[dd, "="] \\
\widehat{eA}[p^n] \times (eA[p^n])^t \arrow[d, "\alpha_n \times (\alpha_n^{t})^{-1}"]& &\\
\widehat{E}[p^n] \times E^t[p^n] \arrow[r, "e_{p^n,E}"] & \bmu_{p^n} \arrow[uu]
\end{tikzcd}
\]
because the functor $G \mapsto \widehat{G}$ sending a $p$-divisible group to its completion at the origin is exact and the connected-\'etale sequence is functorial. Then passing to the limits yields pairings between Tate modules and the commutative diagram
\[
\begin{tikzcd}
e\widehat{A}(k) \times (eT_pA)^t \arrow[d, "\alpha \times (\alpha^t)^{-1}"] \arrow[r, "E_{A}"] &  \widehat{\mathbb{G}}_m \arrow[d, "="] \\
\widehat{E}(k) \times T_pE^t \arrow[r, "E_{E}"] & \widehat{\mathbb{G}}_m. \arrow[u]
\end{tikzcd}
\]
When we extend these pairings to $e\widehat{\mathcal{A}}$ and $\widehat{\mathcal{E}}$, everything works well because of the functoriality of the structure of extensions of $p$-divisible groups and the fact that we are deforming also the action of $\mathcal{O}_B$ and so the action of $e$, so that $e\mathcal{A}[p^{\infty}] \cong \mathcal{E}[p^{\infty}]$.

Observe that everything works fine for the $`` p^n"$ maps as well, and there is a commutative diagram
\[
\begin{tikzcd}
T_pE \arrow[r, two heads] \arrow[d, "\cong"] &E[p^n] \arrow[d, "\cong"] \arrow[r, "{`` p^n"}"] &  \widehat{\mathcal{E}}(R) \arrow[d, "\cong"] \\
eT_pA \arrow[r, two heads]  &eA[p^n] \arrow[r, "{`` p^n"}"] & e\widehat{\mathcal{A}}(R),
\end{tikzcd}
\]
%`` p^n"
again because we are deforming also the action of $\mathcal{O}_B$ and so the action of $e$.

In conclusion, computing the bilinear map on $eT_pA \times (eT_pA)^t$ is the same as computing it on $T_pE \times T_pE^t$, that is
\[
q(\mathcal{A};P,Q^t) = q\bigl(E,\alpha(P),(\alpha^t)^{-1}(Q^t)\bigr),
\]
for all $P \in eT_pA$ and $Q^t \in (eT_pA)^t \subseteq T_pA^t$.

\subsection{Deformation at points of $I$}

If $A/k$ is an ordinary QM abelian surface with $A[p] \cong E[p]^2$ as $\mathcal{O}_B$-group schemes, where here $\mathcal{O}_B$ acts via the natural action of $\mathcal{O}_B \otimes \mathbb{F}_p \cong M_2(\mathbb{F}_p)$ on $E[p]^2$, then there is an induced isomorphism between the set of $V_1(p)$- level structures on $A$ and the set of $\Gamma_1^{\text{arith}}(p)$-level structures
on $E$. 
In the same way, when $A[p^{\infty}] \cong E[p^{\infty}]^2$ there is a bijection between the set of $V_1(p^{\infty})$- level structures on $A$ and the set of $\Gamma_1^{\text{arith}}(p^{\infty})$-level structures
on $E$. Thus, the deformation theory of a $k$-point in $I_n(k)$, or in the Igusa tower, is equivalent to the deformation theory of the associated elliptic curve viewed as a $k$-point of the scheme parameterizing elliptic curves with $\Gamma_1^{\text{arith}}(N^+p^n)$- or $\Gamma_1^{\text{arith}}(N^+p^{\infty})$-level structures. 
In light of what we have seen in the previous section, we can use this equivalence to compute Serre--Tate coordinates.

\subsection{$t$-expansions for modular forms}

Let us start from an $\overline{\mathbb{F}}_p$-point $ x $ in the Igusa tower, i.e., the isomorphism class of a quadruple $(A/{\overline{\mathbb{F}}_p}, \iota, \nu_{N^+},\nu_{p^{\infty}})$.
Then the $V_1(p^{\infty})$-level structure on $A_{| \overline{\mathbb{F}}_p}$ determines a point $P^{t} \in (eT_pA)^t$ (cf. \cite[\S 3.1]{CH}). Take $P \in eT_pA$ corresponding to $P^t$ via the principal polarization. We fix the Serre--Tate coordinate $t_x$ around $x$ to be
\[
t_x := q(-;P,P^t).
\]
Denote by $\bigl({\bm{\mathcal{A}}}/\W[[T]], \bm{\iota}, \bm{\nu}_{N^+},\bm{\nu}_{p^{\infty}}\bigr)$ the universal deformation of $x$ and note that we can evaluate every $p$-adic modular form $f\in V_p(N^+,\W)$ at $\bigl({\bm{\mathcal{A}}}/\W[[T]], \bm{\iota}, \bm{\nu}_{N^+},\bm{\nu}_{p^{\infty}}\bigr)$. We call
\[
f(t_x) := f\bigl({\bm{\mathcal{A}}}/\W[[T]], \bm{\iota}, \bm{\nu}_{N^+},\bm{\nu}_{p^{\infty}}\bigr) \in \W[[T]],
\]
where $T:=t_x-1$, the \textbf{$t$-expansion of $f$ at $x$}.

\subsection{On Serre--Tate coordinates at CM points}

In this section we want to obtain a result analogous to \cite[Lemma 3.2]{CH} in our setting.

If $\mathfrak{a}$ is a prime to $cpN$ fractional ideal of $\mathcal{O}_c$ with $p \nmid c$, then $A_{\mathfrak{a}}$ has a model defined over $\mathcal{V} := \W \cap K^{\ab}$.
Here
$x(\mathfrak{a}) =(A_{\mathfrak{a}},\iota_{\mathfrak{a}},\nu_{\mathfrak{a},N^+}, \nu_{\mathfrak{a},p^{\infty}}) \in I(\mathcal{V})$ is as defined in \S \ref{CMpoints}.
Denote by $t$ the Serre--Tate coordinate around $\overline{x}(\mathfrak{a}) := x(\mathfrak{a}) \otimes_{\mathcal{V}} \overline{\mathbb{F}}_p$.
For $u= 1, \dots, p^n-1$ with $(u,p)=1$, set
\[
x(\mathfrak{a}) \star \alpha(u/p^n) := x(cp^n)^{\text{rec}_K(a^{-1}u_{\mathfrak{p}}p_{\mathfrak{p}}^{-n})} \cdot u \in I(\mathcal{V}),
\]
where $\text{rec}_K : K^{\times} \backslash \hat{K}^{\times} \rightarrow \Gal(K^{\ab}/K)$ is the geometrically normalized reciprocity law map, $a \in \hat{K}^{(cp)\times}$ is such that $\mathfrak{a} = a \hat{\mathcal{O}}_c \cap K$ and the subscript $b_{\mathfrak{p}}$ for $b \in \mathbb{Z}_p^{\times}$ denotes its image in $\hat{K}^{\times}$ under the inclusions $\mathbb{Z}_p^{\times} \subseteq K_{\mathfrak{p}}^{\times} \subseteq \hat{K}^{\times}$.

\begin{lem} \label{3.2}
With notation as above, one has that $\big(x(\mathfrak{a}) \star \alpha(u/p^n) \big)\otimes_{\mathcal{V}} \overline{\mathbb{F}}_p= \overline{x}(\mathfrak{a})$ and $t\big(x(\mathfrak{a}) \star \alpha(u/p^n)\big) = \zeta_{p^n}^{-uN(\mathfrak{a})^{-1}\sqrt{-D_K}^{-1}}$.
\end{lem}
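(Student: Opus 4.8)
I would prove this by transporting the statement, via the product decomposition $A_{\mathfrak a}=E_{\mathfrak a}\times E_{\mathfrak a}$ of \S\ref{prodell}--\S\ref{CMpoints} together with the deformation-theoretic dictionary of \S\ref{defQM}, to its counterpart for the CM elliptic curve $E_{\mathfrak a}=\mathbb{C}/\mathfrak a^{-1}$, which is exactly \cite[Lemma 3.2]{CH}. The first point to record is that the quaternionic multiplication $\iota_{\mathfrak a}$, the level structures $\nu_{\mathfrak a,N^+}$ and $\nu_{\mathfrak a,p^{\infty}}$, the $\star$-action of $\Pic(\mathcal O_c)$, and---by Shimura's reciprocity law---the $\Gal(K^{\ab}/K)$-action on the CM point $x(cp^n)$ are all induced \emph{diagonally} from the analogous data attached to the elliptic curves $E_{cp^n}$, resp. $E_{\mathfrak a}$; moreover the bijection of \S\ref{defQM} between $V_1(p^{\infty})$-structures on a QM surface and $\Gamma_1^{\mathrm{arith}}(p^{\infty})$-structures on the underlying elliptic curve intertwines the scalar action of $u\in\mathbb{Z}_p^{\times}$ on $\nu_{p^{\infty}}$ with the scalar action of $u$ on the arithmetic level structure of $E_{\mathfrak a}$. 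Hence, under this dictionary, the point $x(\mathfrak a)\star\alpha(u/p^n)=x(cp^n)^{\mathrm{rec}_K(a^{-1}u_{\mathfrak p}p_{\mathfrak p}^{-n})}\cdot u$ corresponds to the analogous point $x(\mathfrak a)\star\alpha(u/p^n)$ attached to $E_{\mathfrak a}$ in \cite[\S 3]{CH}.

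Granting this identification, the first assertion is immediate: the reduction modulo $\mathfrak p$ of the surface is the square of the reduction of the corresponding elliptic curve, and the latter equals $\overline E_{\mathfrak a}$ with its reduced structures because $E_{\mathfrak a}$ has good ordinary reduction at $\mathfrak p$ (using $p\nmid c$ and $p\nmid\mathfrak a$, which hold by hypothesis, together with the splitting of $p$ in $K$) and because the operators $\mathrm{rec}_K(a^{-1}u_{\mathfrak p}p_{\mathfrak p}^{-n})$ and $\cdot u$ only alter the $p^{\infty}$-level structure, which is locally constant in the formal neighbourhood of $\overline x(\mathfrak a)$; this is precisely the corresponding part of \cite[Lemma 3.2]{CH}. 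In particular $\bigl(x(\mathfrak a)\star\alpha(u/p^n)\bigr)\otimes_{\mathcal{V}}\overline{\mathbb{F}}_p=\overline x(\mathfrak a)$, so it does make sense to evaluate the Serre--Tate coordinate $t$ around $\overline x(\mathfrak a)$ at this point.

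For the second assertion, recall that $t=q(-;P,P^{t})$ with $P\in eT_pA_{\mathfrak a}$, $eP=P$, and $P^{t}\in(eT_pA_{\mathfrak a})^{t}$ the vector matching $P$ under the principal polarization. By \S\ref{defQM}---the identity $q(\mathcal A;P,Q^{t})=q(E;\alpha(P),(\alpha^{t})^{-1}(Q^{t}))$ for $P\in eT_pA_{\mathfrak a}$, $Q^{t}\in(eT_pA_{\mathfrak a})^{t}$, together with the compatibility of $\alpha\colon eT_pA_{\mathfrak a}\xrightarrow{\cong}T_pE_{\mathfrak a}$ with polarizations---the coordinate $t$ agrees, under $\alpha$, with the usual Serre--Tate coordinate of the deformation functor of $E_{\mathfrak a}$. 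Therefore $t\bigl(x(\mathfrak a)\star\alpha(u/p^n)\bigr)$ equals the Serre--Tate coordinate of the corresponding elliptic-curve point in \cite[\S 3]{CH}, which by \cite[Lemma 3.2]{CH} equals $\zeta_{p^n}^{-uN(\mathfrak a)^{-1}\sqrt{-D_K}^{-1}}$; here $N(\mathfrak a)^{-1}$ reflects the degree of the isogeny $E_c\twoheadrightarrow E_{\mathfrak a}$ and $\sqrt{-D_K}$ enters as a generator of the different $\mathfrak d_K$ of $K$ occurring in the Weil pairing defining $q$. This gives the lemma.

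The only input that is not already in \cite{CH}, and the step that will need the most care, is the diagonal dictionary invoked in the first paragraph: one has to check that the $\mathcal O_B$-immersion $\bmu_{p^{\infty}}\times\bmu_{p^{\infty}}\hookrightarrow A_{\mathfrak a}[p^{\infty}]$, the normalizations of the idempotent $e$ and of the principal polarization on $E_{\mathfrak a}^{2}$, the Shimura-reciprocity description of the Galois action on $x(cp^n)$, and the operation $\cdot u$ all correspond, under $eA_{\mathfrak a}[p^{\infty}]\cong E_{\mathfrak a}[p^{\infty}]$, to the data and operations of \cite[\S 3]{CH} \emph{with no spurious scalar}, so that the exponent---in particular its sign and the factor $N(\mathfrak a)^{-1}\sqrt{-D_K}^{-1}$---is carried over unchanged. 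This is a matter of carefully tracking the constructions of \S\ref{prodell}, \S\ref{CMpoints} and \S\ref{defQM}; once it is pinned down, nothing beyond \cite[Lemma 3.2]{CH} remains to be proved.
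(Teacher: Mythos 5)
Your proposal is correct and follows essentially the same route as the paper: both reduce the statement to \cite[Lemma 3.2]{CH} by identifying $eA_x[p^{\infty}]$ with the $p$-divisible group of the corresponding elliptic-curve CM point, using the equivalence of deformation problems from \S\ref{defQM} and the matching of the point $P^t$ determined by the $V_1(p^{\infty})$-level structure with the one coming from the $\Gamma_1^{\mathrm{arith}}(p^{\infty})$-structure on $E_{\mathfrak a}$. The paper's proof is terser but rests on exactly the dictionary you describe, including the compatibility of the Serre--Tate coordinates under $\alpha$ that you invoke for the second assertion.
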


\begin{proof}
The $p$-divisible module $eA_x[p^{\infty}]$, with $A_x$ the QM abelian surface corresponding to $x = x(\mathfrak{a}) \star \alpha(u/p^n)$, is exactly the $p$-divisible module associated with the point $x_{\mathfrak{a}} \star \mathbf{n}(up^{-n})$ considered in \cite[Lemma 3.2]{CH} (cf. \cite[\S 4.5]{CH}). Hence, $x$ in the deformation space of $\overline{x}(\mathfrak{a})$ corresponds to $x_{\mathfrak{a}} \star \mathbf{n}(up^{-n})$ in the deformation space of $x_{\mathfrak{a}}\otimes_{\mathcal{V}} \overline{\mathbb{F}}_p$, where $x_{\mathfrak{a}}$ is the CM point defined in \cite[\S 2.4]{CH}.

Set $\overline{A}_{\mathfrak{a}}:= A_{\mathfrak{a}} \otimes_{\mathcal{V}} \overline{\mathbb{F}}_p$ and $\overline{E}_{\mathfrak{a}}:= E_{\mathfrak{a}} \otimes_{\mathcal{V}} \overline{\mathbb{F}}_p$, with $E_{\mathfrak{a}}$ the elliptic curve corresponding to the CM point $x_{\mathfrak{a}}$. Since the point $P^t \in (eT_p\overline{A}_{\mathfrak{a}})^t$ that is determined by the $V_1(p^{\infty})$-level structure is the same as the point that is determined by the $\Gamma_1^{\text{arith}}(p^{\infty})$-level structure on $T_p\overline{E}_{\mathfrak{a}}^t$, the claim follows from the computations of \S 2.4 and \cite[Lemma 3.2]{CH}.
\end{proof}

\section{Anticyclotomic $p$-adic $L$-functions}

In this section we will define our $p$-adic $L$-function as a measure on $\Gal(K_{p^{\infty}}/K)$ with values in $\W$, which is again the ring of Witt vectors $W(\overline{\mathbb{F}}_p)$, i.e., the ring of integer of the completion of the maximal unramified extension $\mathbb{Q}_p^{\text{ur}}$ of $\mathbb{Q}_p$.

\subsection{Measures on $\mathbb{Z}_p$} \label{meas}

Recall that a \emph{$p$-adic measure} on $\mathbb{Z}_p$ with values in $\W$ is a $\W$-linear function $\mu : \mathcal{C}(\mathbb{Z}_p, \W) \rightarrow \W$ such that there exists a constant $B \geq 0$ with 
    $
	\bigl|\mu(\varphi)\bigr|_p \leq B |\varphi|_p
	$
	for each $\varphi \in \mathcal{C}(\mathbb{Z}_p, \W)$, where $|\varphi|_p := \sup_{x\in\mathbb{Z}_p}\bigl|\varphi(x)\bigr|_p$.
Here with $\mathcal{C}(\mathbb{Z}_p, \W)$ we denote the space of continuous functions from $\mathbb{Z}_p$ to $\W$.
We will write $\int_{\mathbb{Z}_p} \varphi d\mu := \mu(\varphi)$ for the value of a measure $\mu$ on a continuous function $\varphi$. For more details, the reader is referred to \cite[Chapter 3]{Hi93}.

We denote by $M(\mathbb{Z}_p,\W)$ the space of $p$-adic measures on $\mathbb{Z}_p$ with values in $\W$. When equipped with the norm
$
|\mu|_p:=\sup_{\mid\varphi\mid_p =1}\bigl|\mu(\varphi)\bigr|_p,
$
the space $M(\mathbb{Z}_p,\W)$ is a $p$-adic Banach $\W$-module. 
Recall that there is an isomorphism
\[
	\begin{split}
	M(\mathbb{Z}_p,\W) &\stackrel{\cong}{\longrightarrow} \W[[T]]\\
	\mu &\longmapsto \Phi_{\mu}
	\end{split}
\]
where
\[
\Phi_{\mu}(t) := \sum_{n=0}^{\infty} \left(\int_{\mathbb{Z}_p}\binom{x}{n} d \mu \right) T^n \in \W[[T]],\quad\text{with}\ T:=t-1,
\]
and that
\begin{displaymath}
\int_{\mathbb{Z}_p}z^x d \mu = \sum_{n=0}^{\infty} \int_{\mathbb{Z}_p} \binom{x}{n} (z-1)^n d\mu= \Phi_{\mu}(z)\ \text{for}\ z \in \W\ \text{with}\ \mid z-1\mid_p <1.
\end{displaymath}
The space of $p$-adic measures $M(\mathbb{Z}_p,\W)$ is naturally a $\mathcal{C}(\mathbb{Z}_p,\W)$-module in the following way: for $\phi \in \mathcal{C}(\mathbb{Z}_p,\W)$ and $\mu \in M(\mathbb{Z}_p,\W)$, we set
$
\int_{\mathbb{Z}_p}\varphi d \phi\cdot\mu := \int_{\mathbb{Z}_p}\varphi\phi d\mu
$
for any $\varphi \in \mathcal{C}(\mathbb{Z}_p,\W)$. Furthermore, for $\phi \in \mathcal{C}(\mathbb{Z}_p,\W)$ and $\mu \in M(\mathbb{Z}_p,\W)$, we write 
\[
[\phi]\Phi_{\mu}(t) := \Phi_{\phi\mu}(t) = \int_{\mathbb{Z}_p}\phi(x)t^x d \mu \in \W[[t-1]].
\]
Note that for $m \geq 0$
\begin{equation}\label{x^m}
    [x^m]\Phi_{\mu}(t) = \Phi_{x^m\mu} = \left( t\frac{d}{dt} \right) ^m \Phi_{\mu}(t).
\end{equation}
If we consider a locally constant function $\phi \in \mathcal{C}(\mathbb{Z}_p,\W)$ that factors through $\mathbb{Z}_p/p^n\mathbb{Z}_p$, then
\begin{equation}\label{twistphi}
[\phi]\Phi_{\mu} (t) = \Phi_{\phi\mu} (t) = p^{-n} \sum_{b \in \mathbb{Z}/p^n\mathbb{Z}} \phi(b) \sum_{\zeta \in \boldsymbol{\mu}_{p^n}} \zeta^{-b} \Phi_{\mu}(\zeta t) \in \W[[t-1]]
\end{equation}
for $\mu \in M(\mathbb{Z}_p,\W)$ (see \cite[\S3.5]{Hi93}). Observe that the notation $[\phi]\Phi_{\mu}$ coincides with that in \cite[(8.1)]{Bra} and \cite[\S 3.5]{Hi93}, and corresponds to $ \Phi_{\mu} \otimes \phi$ in \cite[\S 3.1]{CH}.
Furthermore, there is an equality
\begin{equation} \label{x^mtheta}
\int_{\mathbb{Z}_p} \phi(x)x^m d\mu = \left( t\frac{d}{dt} \right) ^m {([\phi]\Phi_{\mu})|}_{t=1}.
\end{equation}

\subsection{Measures on $\Gal(K_{c_0p^{\infty}}/K)$}

Let ${\mathfrak{a}_1,\dots,\mathfrak{a}_H}$ be a complete set of representatives for $\Pic(\mathcal O_{c_0})$.
As in \cite[\S 8.2]{Bra}, there is an explicit coset decomposition
$
\Gal(K_{c_0p^{\infty}}/K) = \Pic\mathcal O_{c_0p^\infty} =  \bigsqcup_{j=1}^H \mathfrak{a}_j^{-1} \mathbb{Z}_p^{\times},
$
that allows us to construct a $\W$-valued measure $\mu$ on $\Pic\mathcal O_{c_0p^\infty}$ by constructing $H$ distinct $\W$-valued measures $\mu_{\mathfrak{a}_j}$ on $\mathbb{Z}_p^{\times}$, so that for every continuous function $\varphi : \Pic\mathcal O_{c_0p^\infty} \rightarrow \W$
we have
\[
\int_{\Pic\mathcal O_{c_0p^\infty}} \varphi d\mu_f = \sum_{\mathfrak{a} \in \Pic\mathcal O_{c_0}} \int_{\mathbb{Z}_p^{\times}} \varphi \mid [\mathfrak{a}] d\mu_{f,\mathfrak{a}},
\]
where $\varphi \mid [\mathfrak{a}]$ is $\varphi$ restricted to $\mathfrak{a}^{-1}\mathbb{Z}_p^{\times}$.
Therefore, a measure $\mu$ on $\Gal(K_{c_0p^{\infty}}/K)$ is equivalent to a collection $\left\lbrace \mu_{\mathfrak{a}} \right\rbrace _{\mathfrak{a} \in \Pic(c_0)}$ of $H$ measures on $\mathbb{Z}_p^{\times}$.

\subsection{Measure associated with a modular form}

Let $g$ be a $p$-adic modular form on $Sh$ over $\W$ and let $\mathfrak{a} \in \Pic(\mathcal{O}_{c_0})$.
Define a $\W$-valued measure $\mu_{g,\mathfrak{a}}$ on $\mathbb{Z}_p$ by
\[
\int_{\mathbb{Z}_p} t^x d\mu_{g,\mathfrak{a}} = g(t_{\mathfrak{a}}) \in \W[[t_{\mathfrak{a}}]],
\]
where $t_{\mathfrak{a}}$ is the Serre--Tate coordinate around $x(\mathfrak{a}) \otimes_{\W} \overline{\mathbb{F}}_p$ and $x(\mathfrak{a})$ is as defined in Section \ref{CMpoints}. Indeed, if $\mathfrak{a}$ is a prime-to-$pN$ fractional ideal of $\mathcal{O}_c$ and $p \nmid c$, then $x(\mathfrak{a})$ has a model defined over $\V := \W \cap K^{\ab}$. If the measures $\mu_{g,\mathfrak{a}}$, for $\mathfrak{a} \in \Pic(\mathcal{O}_{c_0})$, are supported on $\mathbb{Z}_p^{\times}$, then we can put them together to obtain a measure $\mu_g$ on $\Gal(K_{c_0p^{\infty}}/K)$.

\subsection{$p$-depletion of a modular form}

In order to obtain measures supported on $\mathbb{Z}_p^{\times}$, now we introduce the $p$-depletion of a modular form. We follow \cite[\S 3.6]{Brooks}.

Recall the operators $U$ and $V$. Take a QM abelian surface $A$ with ordinary reduction over a $p$-adic field $L$. Then there is a unique $p$-torsion cyclic $\mathcal{O}_B$-submodule $C$ of $A$ which reduces
mod $p$ to the kernel of the Frobenius morphism, that is the canonical subgroup (cf. \cite[Theorem 1.1]{Kas}).
Denote by $\phi_i: A \rightarrow A/C_i$, for $i = 0, \dots, p$, the distinct $p$-isogenies of QM abelian surfaces on $A$ ordered in such a way that $C_0$ is the canonical subgroup of $A$. 
If $t : \bmu_{N^+} \times \bmu_{N^+} \hookrightarrow A[N^+]$ is a $V_1(N^+)$-level structure on $A$, then, since $p \nmid N^+$, $t_i = \phi_i \circ t$ is a $V_1(N^+)$-level
structure on $A/C_i$. Also if $\omega$ is a one-form on $A$, then there is a unique one-form $\omega_i$ on
$A/C_i$ such that $\phi_i ^* \omega_i = \omega$.
If $g$ is a modular form, we can define another modular form $g\mid V$ by
\[
g\mid V (A,t, \omega) := g(A / C_0, 1/p t_0, p \omega_0)
\]
and also a modular form $g \mid U$ by
\[
g\mid U (A,t, \omega) := \sum_{i=1}^{p} g(A / C_i, t_i, \omega_i).
\]
If $[p]$ is the operator on modular forms that is given by
$g \mid [p](A, t, \omega) = g(A, pt, 1/p\omega)$, then $U$ and $V$ are related to the usual $T_p$ operator by
\[
T_p = U + 1/p [p]V.
\]
Furthermore, one has $VU = \id$ and the operators $UV$ and $V U - UV$ are idempotent. The {\bf $p$-depletion} of a modular form $g$ is defined to be
 \[
 g^{(p)} := g \mid (\id - UV) = g \mid (\id -  T_p V +  1/p [p]V^2).
 \]
 
\subsection{Hecke characters and $p$-adic Galois characters} \label{char}
 
Let $K$ be our imaginary quadratic field
and let $m$, $n$ be integers. 

\begin{defi}
A \textbf{Hecke character of $K$ of infinity type $(m,n)$} is a continuous homomorphism
\[
\chi : K^{\times} \backslash \mathbb{A}_K^{\times} \longrightarrow \mathbb{C}^{\times}
\]
satisfying
$
\chi(x \cdot z_{\infty}) = \chi(x) z_{\infty}^m\overline{z}_{\infty}^n,
$
for every $z_{\infty} \in K_{\infty}^{\times}$ and $x \in \hat{K}^{\times}$.
\end{defi}
In particular, the infinite component of $\chi$ is given by $\chi_{\infty}(z) = z^m \overline{z}^n$.
For each prime $\mathfrak{q}$ of $K$, denote by $\chi_{\mathfrak{q}} : K_{\mathfrak{q}}^{\times} \rightarrow \mathbb{C}^{\times}$ the $\mathfrak{q}$-component of $\chi$. The \emph{conductor} of $\chi$ is the largest integral ideal $\mathfrak{c}_f$ of $K$ such that $\chi_{\mathfrak{q}}(u) = 1 $ for each element $u \in 1 + \mathfrak{c}_f \mathcal{O}_{K,\mathfrak{q}}$. As it is known, one can identify a Hecke character $\chi$ with a character on fractional ideals of $\mathcal{O}_K$ prime to $\mathfrak{c}_f$ via the formula
$
\chi(\mathfrak{a}) = \prod_{\mathfrak{q} \mid \mathfrak{a}\ \text{prime}} \chi_{\mathfrak{q}} (\pi_{\mathfrak{q}})^{v_{\mathfrak{q}}(\mathfrak{a})},
$
with $\pi_{\mathfrak{q}}$ a uniformizer at $\mathfrak{q}$; the formula is independent of the choice of
the uniformizer.
So, if $\chi$ has conductor $\mathfrak{c}$ and $\mathfrak{a}$ is any fractional ideal prime to $\mathfrak{c}$, we write $\chi(\mathfrak{a})$ for $\chi(a)$, where $a\in \mathbb{A}_K^{\times}$ is an idele with $a\hat{\mathcal{O}}_K \cap K = \mathfrak{a}$ and $a_{\mathfrak{q}} = 1$ for all $\mathfrak{q} \mid \mathfrak{c}$. 

A Hecke character $\chi$ is called \textbf{anticyclotomic} if $\chi$ is trivial on $\mathbb{A}_{\mathbb{Q}}^{\times}$.
The \textbf{$p$-adic avatar} $\hat{\chi} : K^{\times} \backslash 
\hat{K}^{\times} \rightarrow \mathbb{C}_p^{\times}$ of a Hecke character $\chi$ of infinity type $(m, n)$ is defined by
\[
\hat{\chi} (x) = \chi(x) x_{\mathfrak{p}}^m x_{\overline{\mathfrak{p}}}^n
\]
with $x \in \hat{K}^{\times}$ and $\mathfrak{p}$ the chosen prime above $p$ which splits in $K$. Every $p$-adic Galois character $\rho: G_K \rightarrow \mathbb{C}_p^{\times}$ can be seen as a $p$-adic character $ K^{\times} \backslash \hat{K}^{\times} \rightarrow \mathbb{C}_p^{\times}$ via the geometrically normalized reciprocity law map $\text{rec}_K : K^{\times} \backslash \hat{K}^{\times} \rightarrow \Gal(K^{\ab}/K)$.
 
 A $p$-adic Galois character
 is said to be \textbf{locally algebraic} if it is the $p$-adic avatar of some Hecke character. A locally algebraic
 character is called of infinity type $(m, n)$ if the associated Hecke character is of infinity type
 $(m, n)$, and its conductor is the conductor of the associated Hecke character. 

\subsection{Construction of a measure}

Consider now our modular form $f \in S_k^{\text{new}}(\Gamma_0(N))$, with $k\geq 4$, and let
$F/\mathbb{Q}_p$ be a finite extension containing the image of the Fourier coefficients of $f$. Via the Jacquet--Langlands correspondence we can see $f$ as a modular form in $M_k(Sh, \mathcal{O}_F)$. Take the $p$-depletion $f^{(p)}$ of $f$ and then consider it as a $p$-adic modular form $\hat{f}^{(p)}$ in $V_p(N^+,\mathcal{O}_F)$ of weight $k$.

Fix an anticyclotomic Hecke character $\psi$ of infinity type $(k/2, -k/2)$, and let $c_0\mathcal{O}_K$ be the prime to $p$ part of the conductor of $\psi$.
Take a finite extension of $\W$ obtained by adjoining the values of the Hecke character $\psi$ and, with an abuse of notation, still denote it by $\W$. Let $\hat{\psi}$ be the $p$-adic avatar of $\psi$. The $\W$-valued measures $\mu_{\hat{f}^{(p)},\mathfrak{a}}$ are given by
\[
\psi(\mathfrak{a})N(\mathfrak{a})^{-k/2} \psi_{\mathfrak{p}} \mu_{\hat{f}^{(p)}_{\mathfrak{a}}},\]	
where $\mu_{\hat{f}^{(p)}_{\mathfrak{a}}}$ is defined by
\[
\int_{\mathbb{Z}_p} t^x d\mu_{\hat{f}^{(p)}_{\mathfrak{a}}} = \hat{f}^{(p)}_{\mathfrak{a}}(t_{\mathfrak{a}}) := \hat{f}^{(p)} (t_{\mathfrak{a}}^{N(\mathfrak{a}^{-1})\sqrt{(-D_K)}^{-1}}) \in \W[[t_{\mathfrak{a}}-1]],
\]
and $t_{\mathfrak{a}}$ is the Serre--Tate coordinate around $ x(\mathfrak{a}) \otimes_{\W} \overline{\mathbb{F}}_p$.
\begin{rmk}
	The measure $\mu_{\hat{f}^{(p)}_{\mathfrak{a}}}$ associated with $\hat{f}^{(p)}_{\mathfrak{a}}$ is supported on $\mathbb{Z}_p^{\times}$.
	Indeed, $UV \hat{f}^{(p)} =0$ because $VU=\id$.
	Since $UV$ acts on the expansion in Serre--Tate coordinates as
	$UV g(t) = 1/p\sum_{\zeta \in \bmu_p} g(\zeta t)$ (see \cite[Proposition 4.17]{Brooks}), taking $\phi = \boldsymbol{1}_{\mathbb{Z}_p^{\times}}$ to be the characteristic function of $\mathbb{Z}_p^{\times}$ and using \eqref{twistphi} yields
	$
%	\begin{split}
[\phi]g(t) %&
= p^{-1} \sum_{b \in \mathbb{Z}/p\mathbb{Z}} \phi(b) \sum_{\zeta \in \boldsymbol{\mu}_{p}} \zeta^{-b} g(\zeta t) %\\
%&
= [\boldsymbol{1}_{\mathbb{Z}_p}]g(t) - p^{-1} \sum_{\zeta \in \boldsymbol{\mu}_{p}} g(\zeta t).
%	\end{split}
	$
Hence, $\mu_{\hat{f}^{(p)},\mathfrak{a}}$ is supported on $\mathbb{Z}_p^{\times}$ as well.	
\end{rmk}

\begin{defi} \label{Lfct}
Let $\psi$ denote an anticyclotomic Hecke character of infinity type $(k/2, -k/2)$ and conductor $c_0\mathcal{O}_K$ with $(c_0,pN^+)=1$. The {\bf measure $\LL_{f,\psi}$ associated with $f$ and $\psi$} is the $\W$-valued measure given by 
	\[
	\LL_{f,\psi} (\varphi) = \sum_{\mathfrak{a} \in \Pic(\mathcal O_{c_0})} \int_{\mathbb{Z}_p^{\times}} \varphi \big|[\mathfrak{a}]d\mu_{\hat{f}^{(p)},\mathfrak{a}}
	\]
	for any continuous function $\varphi:\Gal(K_{c_0p^{\infty}}/K)\rightarrow\W$.
\end{defi}

	Therefore
	\[
	\begin{split}
	\LL_{f,\psi}(\varphi) &= \sum_{\mathfrak{a} \in \Pic(\mathcal O_{c_0})} \psi(\mathfrak{a})N(\mathfrak{a})^{-k/2} \int_{\mathbb{Z}_p^{\times}} \psi_{\mathfrak{p}}\varphi \big|[\mathfrak{a}]d\mu_{\hat{f}^{(p)}_{\mathfrak{a}}}%\\
	%&
	= \sum_{\mathfrak{a} \in \Pic(\mathcal O_{c_0})} \psi(\mathfrak{a})N(\mathfrak{a})^{-k/2} \Phi_{\psi_{\mathfrak{p}}\varphi \mid[\mathfrak{a}]d\mu_{\hat{f}^{(p)}_{\mathfrak{a}}}} \mid_{t=1} \\
	&= \sum_{\mathfrak{a} \in \Pic(\mathcal O_{c_0})} \psi(\mathfrak{a})N(\mathfrak{a})^{-k/2} [\psi_{\mathfrak{p}}\varphi \big|[\mathfrak{a}]]\hat{f}^{(p)}_{\mathfrak{a}}(t_{\mathfrak{a}}) \mid_{t=1}%\\
	%&
	= \sum_{\mathfrak{a} \in \Pic(\mathcal O_{c_0})} \psi(\mathfrak{a})N(\mathfrak{a})^{-k/2} [\psi_{\mathfrak{p}}\varphi \big|[\mathfrak{a}]]\hat{f}^{(p)}_{\mathfrak{a}}\bigl(x(\mathfrak{a})\bigr).
	\end{split}
	\]
Notice that the second equality holds because $\mu_{\hat{f}^{(p)}_{\mathfrak{a}}}$ is supported on $\mathbb{Z}_p^{\times}$. Indeed, for a measure $\mu$ supported on $\mathbb{Z}_p^{\times}$ one has 
$
\int_{\mathbb{Z}_p^{\times}} 1 d \mu =
\int_{\mathbb{Z}_p} 1 d \mu =\Phi_{\mu} \mid_{t=1}.
$
We are also using the fact that $x(\mathfrak{a})$ is the canonical lifting of $x(\mathfrak{a}) \otimes_{\W} \overline{\mathbb{F}}_p$, that is $t_{\mathfrak{a}}\bigl(x(\mathfrak{a})\bigr)=1$, where $t_{\mathfrak{a}}$ is again the Serre-Tate coordinate around $x(\mathfrak{a}) \otimes_{\W} \overline{\mathbb{F}}_p$.
\begin{rmk}
We are interested in evaluating $\LL_{f,\psi}$ at continuous functions that factor through $\Gal(K_{p^{\infty}}/K)$. In other words, we will view $\LL_{f,\psi}$ as a measure on $\Gal(K_{p^{\infty}}/K)$.
\end{rmk}

Now we state a result that we will use later.
Let $g\in V_p(N^+,\W)$ be a $p$-adic modular form and let $g_{\mathfrak{a}}$ be defined as above, so that $g_{\mathfrak{a}}(t) = g\bigl(t^{N(\mathfrak{a})^{-1}\sqrt{-D_K}^{-1}}\bigr)$ with $t$ the Serre--Tate coordinate around $x(\mathfrak{a}) \otimes_{\W} \overline{\mathbb{F}}_p$.

\begin{prop} \label{3.3}
 If $g\in V_p(N^+,\W)$ and $\phi : (\mathbb{Z}/p^n\mathbb{Z})^{\times} \rightarrow \mathbb{C}^{\times}$ is a primitive Dirichlet character, then
\[
[\phi]g_{\mathfrak{a}}(x(\mathfrak{a})) = p^{-n} G(\phi) \sum_{u \in (\mathbb{Z}/p^n\mathbb{Z})^{\times}} \phi^{-1}(u) g(x(\mathfrak{a}) \star \alpha(u/p^n)),
\]
where $G(\phi) = \sum_{v \in (\mathbb{Z}/p^n\mathbb{Z})^{\times}} \phi(v) \zeta_{p^n}^v$ is the Gauss sum of $\phi$.
\end{prop}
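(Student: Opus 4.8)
The plan is to deduce this from the twisting formula \eqref{twistphi} for $p$-adic measures together with the Serre--Tate computation of Lemma \ref{3.2}, following the argument of \cite{CH}. After enlarging $\W$ so that it contains the (root of unity) values of $\phi$, I would regard $\phi$ as a locally constant $\W$-valued function on $\mathbb{Z}_p$ supported on the units and factoring through $(\mathbb{Z}/p^n\mathbb{Z})^{\times}$, and let $\mu_{g_{\mathfrak{a}}}$ be the $\W$-valued measure on $\mathbb{Z}_p$ determined by $\Phi_{\mu_{g_{\mathfrak{a}}}}(t_{\mathfrak{a}}) = g_{\mathfrak{a}}(t_{\mathfrak{a}})$, where $t_{\mathfrak{a}}$ is the Serre--Tate coordinate around $\overline{x}(\mathfrak{a}) = x(\mathfrak{a})\otimes_{\W}\overline{\mathbb{F}}_p$. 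Since $x(\mathfrak{a})$ is the canonical lift of $\overline{x}(\mathfrak{a})$, i.e. $t_{\mathfrak{a}}(x(\mathfrak{a})) = 1$, evaluating the twist $[\phi]g_{\mathfrak{a}}$ at $x(\mathfrak{a})$ amounts to specializing $\Phi_{\phi\mu_{g_{\mathfrak{a}}}}$ at $t_{\mathfrak{a}} = 1$.

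Applying \eqref{twistphi} then gives
\[
 [\phi]g_{\mathfrak{a}}\bigl(x(\mathfrak{a})\bigr) = p^{-n}\sum_{b \in (\mathbb{Z}/p^n\mathbb{Z})^{\times}} \phi(b) \sum_{\zeta \in \boldsymbol{\mu}_{p^n}} \zeta^{-b}\, g_{\mathfrak{a}}(\zeta),
\]
the $b$-sum running over units because $\phi$ vanishes on non-units. I would then interchange the two sums and, for each $\zeta \in \boldsymbol{\mu}_{p^n}$, evaluate the inner character sum $\sum_{b}\phi(b)\zeta^{-b}$. This is where the \emph{primitivity} of $\phi$ enters: writing $\zeta = \zeta_{p^n}^{c}$, the standard separability property of Gauss sums shows that this sum vanishes whenever $p \mid c$, and equals $\phi(-c^{-1})G(\phi)$ when $(c,p) = 1$. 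Parametrising the surviving roots of unity as $\zeta = \zeta_{p^n}^{-u}$ with $u \in (\mathbb{Z}/p^n\mathbb{Z})^{\times}$, a one-line change of variables turns the inner sum into $\phi^{-1}(u)\,G(\phi)$.

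To finish, I would identify $g_{\mathfrak{a}}(\zeta)$ for $\zeta = \zeta_{p^n}^{-u}$: by definition $g_{\mathfrak{a}}(\zeta) = g\bigl(\zeta_{p^n}^{-uN(\mathfrak{a})^{-1}\sqrt{-D_K}^{-1}}\bigr)$, and by Lemma \ref{3.2} the point $x(\mathfrak{a})\star\alpha(u/p^n)$ reduces to $\overline{x}(\mathfrak{a})$ and has Serre--Tate coordinate $\zeta_{p^n}^{-uN(\mathfrak{a})^{-1}\sqrt{-D_K}^{-1}}$, so that $g_{\mathfrak{a}}(\zeta) = g\bigl(x(\mathfrak{a})\star\alpha(u/p^n)\bigr)$. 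Substituting these two identities into the displayed formula yields exactly
\[
 [\phi]g_{\mathfrak{a}}\bigl(x(\mathfrak{a})\bigr) = p^{-n}G(\phi)\sum_{u \in (\mathbb{Z}/p^n\mathbb{Z})^{\times}} \phi^{-1}(u)\, g\bigl(x(\mathfrak{a})\star\alpha(u/p^n)\bigr).
\]
The main obstacle is purely bookkeeping: getting the vanishing for $p\mid c$ right (which is precisely the point where primitivity is used), and tracking the change of variables carefully enough to land on $\phi^{-1}(u)$ and on $G(\phi)$ with the correct argument, rather than $G(\phi^{-1})$ or a stray factor of $\phi(-1)$; once this is pinned down, the statement follows at once from \eqref{twistphi} and Lemma \ref{3.2}.
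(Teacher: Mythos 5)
Your proposal is correct and follows exactly the route the paper takes: its proof of Proposition \ref{3.3} simply cites the twisting formula \eqref{twistphi} and Lemma \ref{3.2}, and your write-up supplies precisely the intermediate steps (evaluation at the canonical lift $t_{\mathfrak{a}}=1$, the Gauss-sum separability argument using primitivity, and the identification $g_{\mathfrak{a}}(\zeta_{p^n}^{-u})=g\bigl(x(\mathfrak{a})\star\alpha(u/p^n)\bigr)$ via Lemma \ref{3.2}) that the paper leaves implicit. The bookkeeping of signs and of $\phi^{-1}(u)$ versus $G(\phi)$ is carried out correctly.
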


\begin{proof}
The statement follows by applying (\ref{twistphi}) and Lemma \ref{3.2}.
\end{proof}

\subsection{Interpolation properties} \label{interp}

Working on $\hat{B}$ instead of $\GL_2(\hat{\mathbb{Q}})$ and adapting the computations from \cite{Hs}, one can obtain an analogue of \cite[Theorem 3.4]{Hs} in our quaternionic setting and use it, as in \cite{CH}, to get an interpolation formula for our $p$-adic $L$-function evaluated at Galois characters that are $p$-adic avatars of anticyclotomic Hecke characters of infinity type $(n,-n)$ with $n \geq 0$. In particular, one can relate our $p$-adic $L$-function to the Rankin--Selberg $L$-function associated with $f$ and some anticyclotomic Hecke character $\chi$ of infinity type $(k/2+n,k/2-n)$ with $n \geq 0$, i.e., the $L$-function associated with the $G_K$-representation $V_{f,\chi}= V_f(k/2) \otimes \chi$.

In the statement of the following theorem, $\psi$ is, as usual, an anticyclotomic Hecke character of infinity type $(k/2, -k/2)$ and conductor $c_0\mathcal{O}_K$ with $(c_0,pN^+)=1$.

\begin{teor} \label{interpolation-thm}
Let $\hat{\phi}$ be the $p$-adic avatar of an anticyclotomic Hecke character $\phi$ of infinity type $(n,-n)$ with $n \geq 0$ and $p$-power conductor. Then there exists a non-zero constant $C(f, \psi, \phi, K)$ depending on $f$, $\psi$, $\phi$, $K$ such that $C(f, \psi,\phi, K) \cdot L(f, \psi\phi, k/2)$ is an algebraic number and 
\[
\left(\LL_{f,\psi}(\hat{\phi})\right)^2 = C(f, \psi,\phi, K) \cdot L(f, \psi\phi, k/2),
\]
where the equality holds via the fixed embedding $i_p : \overline{\mathbb{Q}} \hookrightarrow \mathbb{C}_p$.
\end{teor}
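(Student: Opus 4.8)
The plan is to unwind the definition of $\LL_{f,\psi}$ at $\hat\phi$, recognize the resulting finite sum as a toric period of the quaternionic form $f_B$ along the torus $K^\times\hookrightarrow B^\times$ twisted by $\chi:=\psi\phi$, and then invoke the explicit Waldspurger/Rankin--Selberg formula obtained by transporting the computations of \cite{Hs} from $\GL_2(\hat{\mathbb Q})$ to $\hat B^\times$. First I would make $\LL_{f,\psi}(\hat\phi)$ explicit: $\chi=\psi\phi$ is an anticyclotomic Hecke character of infinity type $(k/2+n,-k/2-n)$ and conductor $c_0p^s\mathcal O_K$, and by the coset decomposition of $\Gal(K_{c_0p^\infty}/K)$ together with the computation following Definition~\ref{Lfct}, $\LL_{f,\psi}(\hat\phi)$ is a finite sum over $\mathfrak a\in\Pic(\mathcal O_{c_0})$ of twisted coefficients $[\psi_{\mathfrak p}\,\hat\phi|[\mathfrak a]]\,\hat f^{(p)}_{\mathfrak a}(t_{\mathfrak a})|_{t=1}$ (in the notation of \S\ref{meas}). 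Writing $\hat\phi$ on $\mathbb Z_p^\times$ as a finite character of $p$-power conductor times $u\mapsto u^n$, I would apply Proposition~\ref{3.3} to the finite part, equation~\eqref{x^mtheta} (i.e.\ $\theta^n=(t\,d/dt)^n$ acting on $t$-expansions) to the algebraic part, and Lemma~\ref{3.2} to evaluate the Serre--Tate coordinates that occur; this rewrites $\LL_{f,\psi}(\hat\phi)$ as an explicit nonzero factor — which packages $p^{-s}$, a Gauss sum, the local multiplier $\psi_{\mathfrak p}$, and the $p$-depletion Euler factor at $\mathfrak p$ coming from $f^{(p)}=f\mid(\id-UV)$ — times the finite sum $\sum_{\mathfrak a}\chi(\mathfrak a)N(\mathfrak a)^{-k/2-n}\theta^n\hat f^{(p)}\bigl(x(\mathfrak a)\bigr)$.

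Next I would pass from $\hat f^{(p)}$ back to the classical quaternionic form $f_B$. By the comparison of deformation theories in \S\ref{defQM} and the relation between geometric and $p$-adic modular forms in \S\ref{pmodform}, evaluating $\theta^n\hat f^{(p)}$ at the canonical lift $x(\mathfrak a)$ computes — up to the $p$-adic CM period attached to $A$ and an explicit algebraic constant depending only on $k$, $n$ and $D_K$ — the value at the CM abelian surface $A_{\mathfrak a}$ of the nearly holomorphic form $\delta_k^n f_B$ obtained from $f_B$ by the $n$-fold Shimura--Maass differential operator; this is the quaternionic analogue, due to \cite{Brooks}, of \cite[\S3]{BDP}. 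Hence the finite sum above is, after this normalization, a toric period $P_\chi(f_B)$, namely the integral of $f_B$ against $\chi^{-1}$ over $K^\times\hat{\mathbb Q}^\times\backslash\hat K^\times$ realized on the Shimura curve; here Shimura's reciprocity law recalled in \S\ref{CMpoints} identifies the Galois conjugates of $x(\mathfrak a)$ with the $\Pic$-action, so that the class-group sum is a genuine toric integral on $\hat B^\times$.

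Finally I would quote the explicit period formula of \cite{Hs}, with $\GL_2(\hat{\mathbb Q})$ replaced throughout by $\hat B^\times$: it yields $P_\chi(f_B)^2=\bigl(\prod_v I_v\bigr)\cdot L(f,\chi,k/2)/\Omega^{k+2n}$, where the $I_v$ are explicit local zeta integrals and $\Omega$ is a suitable complex CM period. All the $I_v$ are nonzero under hypotheses~(1)--(4) and the generalized Heegner hypothesis: at $v=\ell\mid N^-$ the local sign of $\pi_{f,\ell}\times\chi_\ell$ (special times unramified over the unramified quadratic extension $K_\ell/\mathbb Q_\ell$) equals $-1$, which is exactly why $B$ is ramified there and why the nonsplit local integral over the division algebra $B_\ell$ is nonzero, the evenness of the number of such primes making $\prod_{\ell\mid N^-}\epsilon_\ell=+1$ compatible with $B$ being indefinite; the archimedean integral is nonzero by the choice of the nearly holomorphic vector $\delta_k^n f_B$. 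Collecting the normalizing factors from the previous two paragraphs (powers of $\pi$, $\sqrt{D_K}$, $N(c_0p^s)$, the Gauss sum, the bad $I_\ell$, and the ratio of complex and $p$-adic CM periods that renders the product algebraic) into a single $C(f,\psi,\phi,K)$ then gives $\bigl(\LL_{f,\psi}(\hat\phi)\bigr)^2=C(f,\psi,\phi,K)\cdot L(f,\psi\phi,k/2)$; the algebraicity of $C(f,\psi,\phi,K)\cdot L(f,\psi\phi,k/2)$ is Shimura's algebraicity theorem for the critical Rankin--Selberg value together with the comparison of CM periods, the identity being understood via the fixed embedding $i_p:\overline{\mathbb Q}\hookrightarrow\mathbb C_p$.

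The main obstacle, I expect, is the $p$-adic-to-archimedean matching in the second step: verifying that the Serre--Tate $t$-expansion computation of $\theta^n\hat f^{(p)}$ at $x(\mathfrak a)$ reproduces exactly — up to the predicted power of the $p$-adic period and an explicit rational factor — the classical nearly holomorphic evaluation of $\delta_k^n f_B$ at $A_{\mathfrak a}$, and pinning down the precise $p$-depletion Euler factor at $\mathfrak p$. This is the technical heart, adapted from \cite{Brooks} and \cite{Hs}, and is what guarantees that the $p$-adic interpolation loses no information; the non-vanishing of the local integrals $I_\ell$ for $\ell\mid N^-$, though conceptually routine, is the other place where the evenness hypothesis on $N^-$ genuinely enters.
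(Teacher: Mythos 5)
You should first be aware that the paper does not actually prove Theorem \ref{interpolation-thm}: immediately after the statement it says ``A proof of this result will appear in a future project,'' and the only indication of strategy given in \S\ref{interp} is precisely the one you adopt, namely working on $\hat B^\times$ instead of $\GL_2(\hat{\mathbb Q})$ and adapting the computations of \cite{Hs}. So your outline coincides with the author's declared plan, and there is no written proof in the paper to compare it against step by step.

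Judged as a proof, however, your proposal has a genuine gap at exactly the point the paper defers. The quaternionic analogue of Hsieh's explicit Waldspurger/Rankin--Selberg formula --- the identity $P_\chi(f_B)^2=\bigl(\prod_v I_v\bigr)\cdot L(f,\chi,k/2)/\Omega^{k+2n}$ together with the nonvanishing of the local integrals $I_v$, in particular at the primes $\ell\mid N^-$ where $B$ ramifies and at the archimedean place for the nearly holomorphic vector $\delta_k^nf_B$ --- is quoted by analogy rather than established; this is not a routine citation, since \cite{Hs} works on $\GL_2$ and the local computations at the ramified places of $B$ must be redone on the division algebra $B_\ell$. Likewise, the $p$-adic-to-archimedean matching in your second step (that $\theta^n\hat f^{(p)}$ evaluated at the canonical lift $x(\mathfrak a)$ equals, up to the predicted power of $\Omega_p$ and an explicit rational factor, $\delta_k^nf_B$ evaluated at $A_{\mathfrak a}$, with the correct $p$-depletion Euler factor at $\mathfrak p$) is asserted with a pointer to \cite{Brooks} but not carried out; you yourself flag it as the technical heart. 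A further small point: Proposition \ref{3.3} and Lemma \ref{3.2} as stated in the paper handle finite-order (primitive Dirichlet) twists, so your decomposition of $\hat\phi$ on $\mathbb Z_p^\times$ into a finite-order part times $u\mapsto u^n$ needs the case of trivial or imprimitive finite part (e.g.\ $\phi$ unramified, $s=0$) treated separately, where the Gauss-sum manipulation degenerates and the $p$-depletion Euler factor takes a different shape. In short, your proposal is a faithful reconstruction of the intended argument, but it is an outline of a proof that the paper itself has not supplied, and the two substantive inputs it relies on are precisely the ones that remain to be proved.
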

A proof of this result will appear in a future project.

\section{Generalized Heegner cycles}

Recall that $K$ is an imaginary quadratic field, $N$ is a positive integer with a factorization $N = N^+ N^-$
where $N^+$ is a product of primes that are split in $K$ and $N^-$ is a square-free
product of an even number of primes that are inert in $K$, $B$ is again our indefinite rational quaternion algebra of discriminant $D=N^-$ and $p$ is an odd prime that splits in $K$ and $B$ and such that $(p,N)=1$.

Consider then our Shimura curve $Sh$, our fixed modular form $f$ in $S_k^{\text{new}}(\Gamma_0(N))$ of weight $k = 2r+2 \geq 4$, which can be seen by Jacquet--Langlands as a modular form on $Sh$, and the $r$-fold fiber product $\mathcal{A}^r$ of the universal QM abelian surface over $Sh$ with itself.

Following the work of Brooks, \cite{Brooks}, we want to define generalized Heegner cycles associated with $f$ lying over a Kuga--Sato variety over $Sh$.
Indeed, these cycles will live in the Chow groups of the generalized Kuga--Sato variety $\mathcal{X}_{r} = \mathcal{A}^{r} \times A^r$, where $A$ will be a fixed QM abelian surface. 
Then, to obtain cohomology classes from the generalized Heegner cycles, we will apply the $p$-adic Abel--Jacobi map.
We will construct in this way a system of generalized Heegner classes indexed by fractional ideals of $K$.

\subsection{Kuga--Sato varieties} \label{K-S}

Consider the $r$-fold fiber product $\mathcal{A}^r$ of the universal QM abelian surface $\mathcal{A}$ with itself over $Sh$, which is called the \textbf{$r^{\text{th}}$-Kuga--Sato variety} over $Sh$.

We define the action of the Hecke operator $T_{\ell}$, for $\ell \nmid N^+D$ on the
Kuga--Sato variety $\mathcal{A}^r$
as follows. 
Recall the interpretation of the Hecke operator $T_{\ell}$ on $Sh$ as correspondence.
Let $Sh(\ell)$ be the Shimura curve classifying quadruples $(A,\iota,\nu_{N^+},C)$, where
$(A,\iota,\nu_{N^+})$ is a QM abelian surface with $V_1(N^+)$-level structure endowed with a subgroup $C$ of $A[\ell]$ stable under the action of $\mathcal{O}_B$ and cyclic as $\mathcal{O}_B$-module. $A[\ell]$ has $\ell + 1$ such $\mathcal{O}_B$-submodules, all of them of order $\ell^2$.
Consider the natural forgetful morphism of Shimura curves $\alpha: Sh(\ell) \rightarrow Sh$ and the morphism $\beta: Sh(\ell) \rightarrow Sh$ given by $(A,\iota,\nu_{N^+},C) \mapsto (A/C,\iota_{\psi_C},\nu_{N^+,\psi_C})$, where $\iota_{\psi_C}$, $\nu_{N^+,\psi_C}$ are induced by the isogeny $\psi_C : A \twoheadrightarrow A/C$. Then $T_{\ell}$ is defined by the correspondence
\[
\begin{tikzcd}
 & Sh(\ell) \arrow[dr, "\beta"] \ar[ld, "\alpha", '] & 
\\
Sh &  & Sh,
\end{tikzcd}
\]
which means that $T_{\ell} = \alpha^* \circ \beta_*$, i.e., $T_{\ell} (x) = \sum_{y \in \alpha^{-1}(x)} \beta(y)$. In other words, we recover the definition given in \S \ref{shimura-chapter}.

Now take the fiber product $\mathcal{A}_{\ell} := \mathcal{A} \times_{Sh} Sh(\ell)$, which is the universal QM abelian surface over $Sh(\ell)$, equipped with a subgroup scheme $\mathcal{C}$ of $\mathcal{A}[\ell]$ that is an $\mathcal{O}_B$-module of order $\ell^2$. Consider the quotient $\mathcal{Q} := \mathcal{A}_{\ell}/\mathcal{C}$ with induced QM and level structure and the fiber products $\mathcal{A}_{\ell}^r$ and $\mathcal{Q}^r$ over $Sh(\ell)$. The action of the Hecke operator $T_{\ell}$ on the Kuga--Sato variety $\mathcal{A}^r$ is defined by
the commutative diagram
\[
\begin{tikzcd}
\mathcal{A}^r \arrow[d] & \mathcal{A}_{\ell}^r \arrow[dr] \arrow[rr, "\psi"]  \arrow[l, "p_1", '] && \mathcal{Q}^r \arrow[ld] \arrow[r, "p_2"]& \mathcal{A}^r \arrow[d]
\\
Sh && Sh(\ell) \arrow[ll, "\alpha", '] \arrow[rr, "\beta"] && Sh,
\end{tikzcd}
\]
where the two squares are cartesian, by the formula 
\[ T_{\ell} = {p_1}_* \circ \psi^* \circ p_2^*. \]
Write $\overline{\mathcal{A}}^r$ for the base change of $\mathcal{A}^r$ to $\overline{\mathbb Q}$. The correspondence $T_\ell$ just defined induces an endomorphism of the \'etale cohomology groups $H_{\et}^*(\overline{\mathcal{A}}^r, -)$, which will still be denoted by $T_{\ell}$.

The reader is advised to compare with \cite{Sch} for the definition of Hecke operators on Kuga--Sato varieties over modular curves and with \cite{EdVP} for the case of Shimura curves relative to ``$\Gamma_0$-type'' level structures.

\subsection{Generalized Kuga--Sato varieties}

Fix the QM abelian surface $A$ with $V_1(N^+)$-level structure and CM by $\mathcal{O}_K$ defined in \eqref{CMpoints}. Thanks to the assumption that $p$ splits in $K$, the surface $A$ is ordinary at
$p$. Our {\bf generalized Kuga--Sato variety} is the product $\mathcal{X}_{r} := \mathcal{A}^r \times A^r$. This enlarged Kuga--Sato variety will be the space where our arithmetic cycles will live. As a piece of notation, we shall write $\overline{\mathcal{X}}_{r}$ for the base change of $\mathcal{X}_{r}$ to $\overline{\mathbb Q}$.

The usual Hecke correspondence $T_{\ell}$ for a prime $\ell \nmid N^+D$ on the Kuga--Sato variety $\mathcal{A}^r$ induces a Hecke correspondence $T_{\ell} \times \id$ on $\mathcal{X}_{r}$, which will still be denoted by $T_{\ell}$. 

\subsection{Projectors on Kuga--Sato varieties} \label{projectors-sec}

We will define our algebraic cycles as graphs of morphisms of QM abelian surfaces. In order to make them homologically trivial, we will need to modify them by certain projectors associated with the generalized Kuga--Sato variety. Consider the projectors $P \in \text{Corr}_{Sh}(\mathcal{A}^r)$ and $\varepsilon_A \in \text{Corr}(A^r)$ defined in \cite[\S 6.1]{Brooks}.
Then
\[
PH^*_{\et}(\overline{\mathcal{A}}^r,\mathbb{Z}_p) \subseteq H^{k-1}_{\et}(\overline{\mathcal{A}}^r,\mathbb{Z}_p)
\]
and 
\[
\begin{split}
&\varepsilon_A H^i_{\et}(\overline{A}^{r},\mathbb{Z}_p) = 0\quad \text{if}\ i\neq k-2,\\[1mm] 
&\varepsilon_A H^{k-2}_{\et}(\overline{A}^{r},\mathbb{Z}_p)= \text{Sym}^{2r}eH^1(\overline{A},\mathbb{Z}_p).
\end{split}
\]
Consider the variety $\mathcal{X}_r$ together with the projector $\varepsilon = P \varepsilon_A \in \text{Corr}_{Sh}(\mathcal{X}_r)$.
Thanks to properties of the projectors, one has
\begin{equation} \label{eH}
\begin{split}
&\varepsilon H^{i}_{\et}\bigl(\overline{\mathcal{X}}_{r}, \mathbb{Z}_p\bigl) = 0 \quad \text{if}\ i\neq 2k-3, \\[1mm]
&\varepsilon H^{2k-3}_{\et}\bigl(\overline{\mathcal{X}}_{r}, \mathbb{Z}_p\bigr) \cong
P H^{k-1}_{\et}\bigl(\overline{\mathcal{A}}_{r},\mathbb{Z}_p\bigr) \otimes \text{Sym}^{2r}eH^1_{\et}\bigl(\overline{A},\mathbb{Z}_p\bigr).
\end{split}
\end{equation}
We can prove these relations using the K\"unneth decomposition. More precisely, the K\"unneth decomposition for $\mathcal{X}_{r} = \mathcal{A}_{r} \times A^{r}$ reads
\[
H^{i}_{\et}\bigl(\overline{\mathcal{X}}_{r}, \mathbb{Z}_p\bigr) \cong
\bigoplus_{n+s=i} H^{n}_{\et}\bigl(\overline{\mathcal{A}}_{r},\mathbb{Z}_p\bigr) \otimes H^s_{\et}\bigl(\overline{A}^{r},\mathbb{Z}_p\bigr).
\]
Indeed, the K\"unneth exact sequence for $\mathcal{A}_{r}$ and $A^{r}$ (for which we refer, for example, to \cite[Theorem 8.21]{Mi80} or \cite[\S 22]{LEC}) is
\[
\begin{split}
0 \longrightarrow \bigoplus_{n+s=i} H^{n}_{\et}\bigl(\overline{\mathcal{A}}_{r},\mathbb{Z}_p\bigr) &\otimes H^s_{\et}\bigl(\overline{A}^{r},\mathbb{Z}_p\bigr) \longrightarrow H^i_{\et}\bigl(\mathcal{X}_{r},\mathbb{Z}_p\bigr) \longrightarrow \\
&\longrightarrow \bigoplus_{n+s=i+1} \text{Tor}^{\mathbb{Z}_p}_1\Bigl(H^{n}_{\et}\bigl(\overline{\mathcal{A}}_{r},\mathbb{Z}_p\bigr), H^s_{\et}\bigl(\overline{A}^{r},\mathbb{Z}_p\bigr)\Bigr) \longrightarrow 0.
\end{split}
\]
Since $A$ is an abelian variety, by \cite[Theorem 15.1]{AV} (or \cite[Theorem 12.1]{AV1}), the $p$-adic cohomology of $\overline{A}^{r}$ is a free $\mathbb{Z}_p$-module, so the last term of the sequence above is $0$. As a consequence, we obtain the desired K\"unneth decomposition.

Now we want to apply the projector $\varepsilon$ and the twists. Since
$PH^{i}_{\et}(\overline{\mathcal{A}}_{r},\mathbb{Z}_p) = 0$ for $i \neq k-1$
and
$\varepsilon_A H^i_{\et}(\overline{A}^{r},\mathbb{Z}_p) = 0$ for $i\neq k-2$,
we deduce that 
\[
\varepsilon H^{i}_{\et}(\overline{\mathcal{X}}_{r}, \mathbb{Z}_p) = 0 \quad \text{for}\ i\neq 2k-3,
\]
as in this case all the terms on the right hand side of the K\"unneth decomposition vanish after applying the projectors. For $i=2k-3$ the only term in the sum in the right hand side of the K\"unneth decomposition that does not vanish after the application of the projectors is
$P H^{k-1}_{\et}(\overline{\mathcal{A}}^{r},\mathbb{Z}_p) \otimes \varepsilon_A H^{k-2}_{\et}(\overline{A}^{r},\mathbb{Z}_p)$, hence 
\[
\varepsilon H^{2k-3}_{\et}(\overline{\mathcal{X}}_{r}, \mathbb{Z}_p) =
P H^{k-1}_{\et}(\overline{\mathcal{A}}^{r},\mathbb{Z}_p) \otimes \text{Sym}^{2r}eH^1_{\et}(\overline{A},\mathbb{Z}_p),
\]
as desired.

\subsection{Galois representations and Kuga--Sato varieties over Shimura curves}

Let $V_f$ be the $2$-dimensional Galois representation attached to $f \in S_k^{\text{new}}(\Gamma_0(N))$ by Deligne (\cite{Del}) and let $V_f(k/2)$ be the self-dual twist of $V_f$. As explained, for example, in \cite[\S 2 and \S 3]{Nek92}, $V_f(k/2)$ can be realized as a direct summand of the $(k-1)$-st $p$-adic cohomology group of the Kuga--Sato variety over a certain modular curve.

Let $\phi$ be Euler's function and observe that the index of $\Gamma_{1,N^+}$ in $\Gamma_{0,N^+}$ divides $\phi(N^+)$ (see \cite[p. 4184]{Brooks}). From now on, we work under the following

\begin{assumption} \label{main-assumption}
$p\nmid N\phi(N^+)$.
\end{assumption}

Consider now the $r$-th Kuga--Sato variety $\mathcal{A}^{r}$ over the Shimura curve $Sh$. A similar construction can be performed to obtain the representation $V_f(k/2)$ from the \'etale cohomology group $H^{k-1}_{\et}(\overline{\mathcal{A}^{r}},\mathbb{Q}_p)(k/2)$ of $\mathcal{A}^{r}$. 
Namely, let $F$ be the finite extension of $\mathbb{Q}_p$ generated by the Fourier coefficients of $f$, whose valuation ring will be denoted by $\mathcal O_F$. Moreover, let $P$ be the projector from \S \ref{projectors-sec}. Under Assumption \ref{main-assumption}, one can define a Galois-equivariant surjection
\[
PH^{k-1}_{\et}\bigl(\overline{\mathcal{A}^{r}},\mathbb{Z}_p\bigr)(k/2) \longepi T,
\]
where $T$ is a suitable Galois-stable $\mathcal{O}_F$-lattice inside the $F$-vector space $V_f (k/2)$, whose definition can be found, for example, in \cite{Nek92} and \cite{Ota} (see also \cite{Nek93}). This can be done by adapting the arguments in \cite[\S 5 and Appendix 10.1]{IS} to our setting, which coincides with that of \cite{Brooks}. Since the modifications required are straightforward, we leave them to the interested reader. See also \cite[\S 3.3]{EdVP} for further details.

\subsection{The $p$-adic Abel--Jacobi map}
%%%
%%%
Recall that for a smooth projective variety $X$ of dimension $d$ over a field $L$ of characteristic $0$ one can associate the \emph{ $p$-adic \'etale Abel--Jacobi map}
\[
\AJ_{p,L}: {\CH^s(X/L)}_0 \longrightarrow 
H^1\Bigl(F,H^{2s-1}_{\et}(\overline{X},\mathbb{Z}_p)(s)\Bigr),
\]
where
$\CH^s(X/L)_0:= \ker(\text{cl}_{X})$ is
the group of homologically trivial cycles of codimension $s$  on $X$ defined over $L$ modulo rational equivalence, i.e. the kernel of the cycle map 
$
\text{cl}_X : \CH^s(X/L) \rightarrow H_{\et}^{2s}(\overline{X},\mathbb{Z}_p(s))^{G_L},
$
with $\mathbb{Z}_p(s)$ the $s$-th Tate twist of the $p$-adic sheaf $\mathbb{Z}_p$ and $G_L$ the absolute Galois group $\Gal(\overline{L}/L)$. See 
\cite[Chapter VI, \S 9]{Mi80} or \cite[\S 23]{LEC} for the definition of the cycle map and \cite[\S 3]{BDP}, for example, for the construction of the $p$-adic Abel--Jacobi map.

\subsection{Generalized Heegner cycles} \label{GHcycles}

For any morphism $\varphi: (A,i,\nu_{N^+}) \rightarrow (A',i',\nu'_{N^+})$ of abelian surfaces with QM by $\mathcal{O}_B$ and $V_1(N^+)$-level structure over a field $L\supseteq \mathbb{Q}$, we can consider its graph $\Gamma_{\varphi} \subseteq A \times A'$.
Consider the point $x$ in $Sh(L)$ corresponding to the class of $ (A',i',\nu'_{N^+})$. 
There is an embedding $A' = \mathcal{A}_{x} \hookrightarrow \mathcal{A}$ of the fiber of $\mathcal{A}$ above $x$ in $\mathcal{A}$ that induces an embedding $ i_{x} : (A')^{r} \hookrightarrow \mathcal{A}^{r}$. Via this embedding, we can view the $(r)$-power of the graph of $\varphi$ inside $\mathcal{A}^{r} \times A^r $:
\[
\Gamma_{\varphi}^{r} \subseteq A^r \times A'^r = A'^r \times A^r \stackrel{i_x \times \id}{\hookrightarrow}  \mathcal{A}^{r} \times A^r = \mathcal{X}_{r}.
\]
We define the \textbf{generalized Heegner cycle} $\Delta_{\varphi}$ associated with $\varphi$ as
\[
\Delta_{\varphi} := \varepsilon \Gamma_{\varphi}^{r} \in \text{CH}^{k-1}(\mathcal{X}_{r}/L),
\]
where $\varepsilon = P \varepsilon_A \in \text{Corr}^{r}_{Sh}(\mathcal{X}_{r},\mathcal{X}_{r})$ and $L$ is a field 
such that $ (A',i',\nu'_{N^+})$ and $\varphi$ are defined over $L$.
One needs to apply the projector $\varepsilon$ to make the cycle $\Delta_{\varphi}$ homologically trivial, that means that we want the image of $\Delta_{\varphi}$ via the cycle map 
$
\cl_{\mathcal{X}_{r}}: \text{CH}^{k-1}(\mathcal{X}_{r}/L)_{0} \rightarrow H^{2k-2}_{\et}(\overline{\mathcal{X}}_{r}, \mathbb{Z}_p(k-1))
$
to be trivial in order to apply the Abel--Jacobi map.
Indeed, the cycle $\Delta_{\varphi}$ is homologically trivial, because, thanks to equation (\ref{eH}), one has $\varepsilon H^{2k-2}_{\et}(\overline{\mathcal{X}}_{r}, \mathbb{Z}_p(k-1)) = 0$.
Therefore, we have that $\varepsilon \text{CH}^{k-1}(\mathcal{X}_{r}/L) \subseteq \text{CH}^{k-1}(\mathcal{X}_{r}/L)_{0}$
and we can consider the image of this cycle under the $p$-adic Abel--Jacobi map
\[
\text{AJ}_{p,L} : \text{CH}^{k-1}(\mathcal{X}_{r}/L)_{0} \longrightarrow H^1\big(L,H^{2k-3}_{\et}(\overline{\mathcal{X}}_{r}, \mathbb{Z}_p(k-1))\big).
\]
Applying the projector $\varepsilon$ in the construction of the Abel--Jacobi map (as in \cite[\S 3.1]{BDP} for Kuga--Sato varieties over modular curves) we obtain a $p$-adic Abel--Jacobi map
\[
\text{AJ}_{p,L} : \text{CH}^{k-1}(\mathcal{X}_{r}/L) \longrightarrow H^1\big(L,\varepsilon H^{2k-3}_{\et}(\overline{\mathcal{X}}_{r}, \mathbb{Z}_p(k-1))\big).
\]
Then, considering the twist in (\ref{eH}), one has
\[
\varepsilon H^{2k-3}_{\et}(\overline{\mathcal{X}}_{r}, \mathbb{Z}_p(k-1)) =
P H^{k-1}_{\et}(\overline{\mathcal{A}}^{r},\mathbb{Z}_p(k/2)) \otimes \text{Sym}^{2r}eH^1_{\et}(\overline{A},\mathbb{Z}_p)(r),
\]
so in the following we will see the Abel--Jacobi map as taking values in
\[
H^1\big(L,P H^{k-1}_{\et}(\overline{\mathcal{A}}^{r},\mathbb{Z}_p)(k/2) \otimes \text{Sym}^{2r}eH^1_{\et}(\overline{A},\mathbb{Z}_p)(r)\big).
\]

\subsection{A distinguished family of generalized Heegner cycles}

 With notation as in \S \ref{CMpoints}, start with the fixed QM abelian surface $A$. For any integer $c$ prime to $N^+$, take the multiplication-by-$c$ isogeny
\[
(A,i,\nu_{N^+}) \stackrel{\phi_c}{\longrightarrow} (A_c,i_c,\nu_{c,N^+}),
\]
which is an isogeny of QM abelian surfaces with $V_1(N^+)$-level structures.
For each class $[\mathfrak{a}]$ in $\Pic \mathcal{O}_c$, where the representative $\mathfrak{a}$ is chosen to be integral and prime to $N^+pc$, 
consider the isogeny
\[
\phi_{\mathfrak{a}} : A \longrightarrow A_{\mathfrak{a}},
\]
defined as the composition
\[
(A,i,\nu_{N^+}) \stackrel{\phi_c}{\longrightarrow} (A_c,i_c,\nu_{c,N^+}) \stackrel{\varphi_{\mathfrak{a}}}{\longrightarrow} (A_{\mathfrak{a}},i_{\mathfrak{a}},\nu_{\mathfrak{a},N^+}),
\]
and then the cycle 
\[
\Delta_{\phi_{\mathfrak{a}}} \in \text{CH}^{k-1}(\mathcal{X}_{r}/K_c).
\]
In fact, both $(A_{\mathfrak{a}},i_{\mathfrak{a}},\nu_{\mathfrak{a},N^+})$ and the isogeny $\phi_{\mathfrak{a}}$ are defined over the ring class field $K_c$ of $K$ of conductor $c$.

\subsection{Generalized Heegner classes} \label{GHclasses}

For any integer $c$ prime to $N^+$, consider the Abel--Jacobi map
\[
\text{AJ}_{p,K_c} : \text{CH}^{k-1}(\mathcal{X}_{r}/K_c)_0 \longrightarrow H^1\bigl(K_c,P H^{k-1}_{\et}(\overline{\mathcal{A}}^{r},\mathbb{Z}_p)(k/2) \otimes \text{Sym}^{2r}eH^1_{\et}(\overline{A},\mathbb{Z}_p)(r)\bigr).
\]
Because $V_{f}(k/2)$ can be realized as a quotient of $P H^{k-1}_{\et}(\overline{\mathcal{A}}^{r},\mathbb{Z}_p(k/2))$, then we can see the Abel--Jacobi map as a map
\[ \text{CH}^{k-1}(\mathcal{X}_{r}/K_c)_{0} \longrightarrow H^1(K_c,T \otimes \text{Sym}^{2r}eH^1_{\et}(\overline{A},\mathbb{Z}_p)(r)),
\]
where $T$ is the Galois stable lattice in $V_f(k/2)$.
Since
$\
eH^1_{\et}(\overline{A},\mathbb{Z}_p) \cong H^1_{\et}(\overline{E},\mathbb{Z}_p),
$
then we have a map
\[
\Phi_{K_c} : \text{CH}^{k-1}(\mathcal{X}_{r}/K_c) \longrightarrow H^1\bigl(K_c,T \otimes \text{Sym}^{2r}H^1_{\et}(\overline{E},\mathbb{Z}_p)(r)\bigr) \longrightarrow H^1\bigl(K_c,T \otimes S(E)\bigr),
\]
where, as in \cite[\S 4.2]{CH}, we set $S(E) := \text{Sym}^{2r}T_p(\overline{E})(-r)$.

We define the \textbf{generalized Heegner class} $z_{\mathfrak{a}}$, associated with an ideal $\mathfrak{a}$ of $\mathcal{O}_c$, to be
\[
z_{\mathfrak{a}} := \Phi_{K_c}(\Delta_{\phi_{\mathfrak{a}}}).
\]

\subsection{$\chi$-components} \label{chi-comp}

As in \cite{CH}, we want to define ``$\chi$-components'' of the generalized Heegner classes and construct classes $z_{c,\chi} \in H^1(K_c, T \otimes \chi)$, for $\chi$ an anticyclotomic Galois character. For this we will use $S(E) = \text{Sym}^{2r}T_p(\overline{E})(-r)$ appearing in the image of the Abel--Jacobi map and we will do the same work as \cite{CH}. 
So, closely following \cite[\S 4.4]{CH}, let us start with a positive integer $c_0$ prime to $pN^+$, and let
$\chi: \Gal(K_{c_0p^{\infty}}/K) \rightarrow \mathcal{O}_F^{\times}$ (possibly enlarging $F$ so that $\Ima(\chi) \subseteq \mathcal{O}_F^{\times}$)
be a locally algebraic anticyclotomic character of infinity type $(j, -j)$ with
$-k/2 < j < k/2$ and conductor $c_0p^s\mathcal{O}_K$.

Recall that $E = \mathbb{C}/\mathcal{O}_K$ (cf. \S 1.5); note that $E$ is denoted with $A$ in \cite{CH}. Consider the abelian variety
$W_{/K} := \Res_{K_1/K}E$,
obtained by Weil restriction of $E$ from $K_1$ to $K$, defined as the product
\[
W = \prod_{\sigma \in \Gal(K_1/K)} E^{\sigma},
\] 
where $E^{\sigma}$ is the curve determined by the polynomials obtained by applying $\sigma$ to the coefficients of the polynomials defining $E$.
The Weil restriction $W$ is again a CM abelian variety but over $K$ and of dimension $[K_1:K]$. The endomorphism ring of $W$, $M := \End_K(W) \otimes \mathbb{Q}$, is a product of CM fields over $K$ and $\dim W = [M : K] = [K_1 : K]$ (see \cite[\S 1]{Rub} and \cite{Wi} for a general introduction to the Weil restriction of abelian varities). Since
\[
T_p(W) = \prod_{\sigma \in \Gal(K_1/K)} T_p(E^{\sigma}) = \text{Ind}^{G_K}_{G_{K_1}}T_p(E),
\]
viewing Tate modules as Galois representations, where $\text{Ind}$ is the induced representation, we have an inclusion $T_p(E) \hookrightarrow T_p(W)$.
Define the $G_K$-module
\[
S(W):= \Sym^{2r} T_p(W)(-r) \otimes_{\mathbb{Z}_p} \mathcal{O}_F = \text{Ind}^{G_K}_{G_{K_1}} S^r(E) \otimes_{\mathbb{Z}_p} \mathcal{O}_F.
\]
By the discussion in \cite[\S 4.4]{CH}, there exists a finite order anticyclotomic character
$\chi_t$ such that $\chi$ can be realized as a direct summand of $S(W) \otimes \chi_t$ as $G_K$-modules. Denote by 
\[
e_{\chi}: S(W) \otimes \chi_t \longepi \chi
\]
the corresponding $G_K$-equivariant projection. The character $\chi_t$ is unique up to multiplication by a character of $\Gal(K_1/K)$ and it has the same conductor as $\chi$. 

Since $T_p(E) \hookrightarrow T_p(W)$ and then $S(E) \hookrightarrow S(W)$, we can see the classes $z_{\mathfrak{a}}$ as elements of $H^1(K_c,T \otimes S(W))$, where $\mathfrak{a}$ is a fractional ideal of $\mathcal{O}_c$ with $c$ divisible by $c_0p^s$.

For each integer $c$ divisible by the conductor $c_0p^s$ of $\chi$, put \[z_{c} \otimes \chi_t := z_c \in H^1(K_c,T \otimes S(W) \otimes \chi_t),\] through the map $H^1(K_c,T \otimes S(W)) \rightarrow H^1(K_c,T \otimes S(W) \otimes \chi_t)$ and define
\begin{equation} \label{chicomp}
z_{c,\chi} := (\id \otimes e_{\chi}) (z_{c} \otimes \chi_t) \in H^1(K_c,T \otimes \chi),
\end{equation} 
the \textbf{$\chi$-component of the class $z_c$}.

\subsection{Compatibility properties}

 First we study compatibility properties of the generalized Heegner classes defined in section \ref{GHclasses}, by examining the action of the Hecke operators, and proving results analoguos to 
\cite[Lemma 4.3 and Proposition 4.4]{CH}.

 Let $I(D_K)$ denote the group of fractional ideals of $K$ that are prime to $D_K$ and let
 $\tilde{\kappa}_E : I(D_K) \rightarrow M^{\times}$
 be the CM character associated to $W$ (as in \cite[\S 4.4]{CH}). 
Denote by $\kappa_E : G_K \rightarrow\mathcal{O}_F^{\times}$ the $p$-adic avatar of $\tilde{\kappa}_E$, possibly enlarging $F$ so that $M \subseteq F$.
 
 \begin{prop}
 	Let $\mathfrak{a}$, $\mathfrak{b}$ be fractional ideals in $\mathcal{O}_c$ prime to $cN^+D_K$. Suppose that $\mathfrak{a}\mathcal{O}_K$ is trivial in $\Pic \mathcal{O}_K$ and put $\alpha:= \tilde{\kappa}_E(\mathfrak{a}) \in K^{\times}$. % (see the properties of $\tilde{\kappa}_E$ in \cite[\S 4.4]{CH}). If , 
 	Then
 	\[
 	(\id \times \alpha)^*\Delta_{\phi_{\mathfrak{b}}}^{\sigma_{\mathfrak{a}}} = \Delta_{\phi_{\mathfrak{a}\mathfrak{b}}},
 	\]
 	where $\sigma_{\mathfrak{a}} \in \Gal(K^{ab}/K_1)$ corresponds to $\mathfrak{a}$ through the Artin reciprocity map.
 \end{prop}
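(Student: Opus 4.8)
This statement is the quaternionic analogue of \cite[Lemma 4.3]{CH}, and the plan is to follow that argument \emph{mutatis mutandis}, using the reciprocity law for CM points on $Sh$ recalled in \S\ref{shimura-chapter} (the equality $(A,\iota,\nu_{N^+},\nu_{p^\infty})^{\sigma_{\mathfrak a}}=\mathfrak a\star(A,\iota,\nu_{N^+},\nu_{p^\infty})$ from \cite[\S 2.5]{Brooks}) in place of the one for modular curves, together with the identification $eH^1_{\et}(\overline A,\mathbb Z_p)\cong H^1_{\et}(\overline E,\mathbb Z_p)$ of \S\ref{GHclasses}, which makes the bookkeeping on the $A^r$-factor the same as the one in \cite{CH}. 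First I would reduce the statement to the identity of cycles on $\mathcal X_r$
\[
(\id\times\alpha)^*\bigl(\Gamma_{\phi_{\mathfrak b}}^{r}\bigr)^{\sigma_{\mathfrak a}}=\Gamma_{\phi_{\mathfrak a\mathfrak b}}^{r},
\]
to which the projector $\varepsilon=P\varepsilon_A$ can then be applied: indeed $\varepsilon$ is defined over $\mathbb Q$, hence commutes with $\sigma_{\mathfrak a}$; $P$ operates only on the $\mathcal A^r$-factor while $(\id\times\alpha)^*$ operates only on the $A^r$-factor, so they commute; and $\varepsilon_A$ commutes with $(\id\times\alpha)^*$ because $\alpha\in\mathcal O_K$ commutes with the idempotent $e$ (the choices of $i_K$ and $\Phi_p$ being compatible, cf. \S\ref{defQM}), so that multiplication by $\alpha$ acts on $eH^1_{\et}(\overline A)=H^1_{\et}(\overline E)$ as an honest endomorphism of $\overline E$ and therefore commutes with the symmetrising projector defining $\varepsilon_A$.

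Next I would identify the two ends of the conjugated graph. The target of $\phi_{\mathfrak b}$ is $(A_{\mathfrak b},i_{\mathfrak b},\nu_{\mathfrak b,N^+})$, which is the point $\mathfrak b\star x(c)$; since $\sigma_{\mathfrak a}|_{K_c}$ is the Artin symbol of $[\mathfrak a]\in\Pic\mathcal O_c$ and the $\star$-action is a group action with $\mathfrak a\star(\mathfrak b\star x(c))=x(\mathfrak a\mathfrak b)$, Shimura's reciprocity law gives $(A_{\mathfrak b},i_{\mathfrak b},\nu_{\mathfrak b,N^+})^{\sigma_{\mathfrak a}}\cong(A_{\mathfrak a\mathfrak b},i_{\mathfrak a\mathfrak b},\nu_{\mathfrak a\mathfrak b,N^+})$, the isomorphism being realised by the quotient isogeny $\varphi_{\mathfrak a}\colon A_{\mathfrak b}\twoheadrightarrow A_{\mathfrak b}/A_{\mathfrak b}[\mathfrak a]=A_{\mathfrak a\mathfrak b}$ (on complex points the natural map $\mathbb C/\mathfrak b^{-1}\to\mathbb C/(\mathfrak a\mathfrak b)^{-1}$). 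The source $A=E\times E$ with $E=\mathbb C/\mathcal O_K$, its QM action $i$ and its $V_1(N^+)$-level structure has a model over the Hilbert class field $K_1$, which $\sigma_{\mathfrak a}$ fixes, so $(A,i,\nu_{N^+})^{\sigma_{\mathfrak a}}=(A,i,\nu_{N^+})$; here, however, transporting the analytic uniformization along $\sigma_{\mathfrak a}$ replaces the lattice $\mathcal O_K$ by $\mathfrak a^{-1}\mathcal O_K=\alpha^{-1}\mathcal O_K$ — this is where $\mathfrak a\mathcal O_K=\alpha\mathcal O_K$ with $\alpha=\tilde\kappa_E(\mathfrak a)$ enters — and this lattice is identified back with $\mathcal O_K$ via multiplication by $\alpha$.

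Combining these with the main theorem of complex multiplication — the analytic description of $\phi_{\mathfrak b}$ is unchanged under $\sigma_{\mathfrak a}$, being given by multiplication by an element of $K$ — I would conclude that, under the isomorphisms above, $\phi_{\mathfrak b}^{\sigma_{\mathfrak a}}=\phi_{\mathfrak a\mathfrak b}\circ[\alpha]_A^{-1}$, so that $\bigl(\Gamma_{\phi_{\mathfrak b}}^{r}\bigr)^{\sigma_{\mathfrak a}}=\Gamma_{\phi_{\mathfrak a\mathfrak b}\circ[\alpha]^{-1}}^{r}$ inside $\mathcal X_r$. Since $(\id\times\alpha)$ acts as $[\alpha]_A$ on the $A^r$-factor — which is the \emph{source} of the isogeny — and a direct computation of the pull-back of a graph gives $(\id\times\alpha)^*\Gamma_{\psi}^{r}=\Gamma_{\psi\circ[\alpha]}^{r}$ for any isogeny $\psi$, the factor $[\alpha]^{-1}$ is absorbed and one lands on $\Gamma_{\phi_{\mathfrak a\mathfrak b}}^{r}$; applying $\varepsilon$ yields the claim.

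The main obstacle, exactly as in \cite{BDP} and \cite{CH}, is the normalization bookkeeping: pinning down the isomorphism $A_{\mathfrak b}^{\sigma_{\mathfrak a}}\cong A_{\mathfrak a\mathfrak b}$ so that it matches the chosen $V_1(N^+)$-level structures (which uses the explicit level structures on the curves $E_{\mathfrak a}$ from \cite[\S 2.3 and \S 2.4]{CH} and the hypothesis $(\mathfrak a,N^+)=1$), and checking that it is precisely $\alpha$ — not $\alpha^{-1}$ nor $\overline\alpha$ — that appears, and that it acts on the correct ($A^r$, source) factor. The only genuinely new input beyond \cite{CH} is the compatibility of $[\alpha]_A$ with the idempotent $e$ and with the projectors $P$ and $\varepsilon_A$, which is immediate from the compatible choices of $i_K$ and $\Phi_p$ made in \S\ref{defQM}; everything else is formal.
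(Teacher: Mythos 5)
Your proposal is correct and follows essentially the same route as the paper: reduce to the graph identity $(\id\times\alpha)^*\bigl(\Gamma_{\phi_{\mathfrak b}}\bigr)^{\sigma_{\mathfrak a}}=\Gamma_{\phi_{\mathfrak a\mathfrak b}}$ by running the argument of \cite[Lemma 4.3]{CH} component by component (using that $\phi_{\mathfrak b}$ and $\alpha$ act diagonally on $A=E\times E$), invoke Shimura's reciprocity law to identify $x(\mathfrak b)^{\sigma_{\mathfrak a}}=x(\mathfrak a\mathfrak b)$ and hence the embeddings into $\mathcal A^r$, and then pass to $r$-th powers and apply $\varepsilon$. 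Your explicit verification that $P$, $\varepsilon_A$ and $(\id\times\alpha)^*$ commute is a point the paper leaves implicit, but it is not a different argument.
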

 
 \begin{proof}
Recall that the QM abelian surfaces $A$ and $A_{\mathfrak{a}}$ are respectively the self-products of elliptic curves $E \times E$ ad $E_{\mathfrak{a}} \times E_{\mathfrak{a}}$ for each fractional ideal $\mathfrak{a}$.
Note that the isogenies $\phi_{\mathfrak{a}} : A \twoheadrightarrow A_{\mathfrak{a}}$ of QM abelian surfaces are the self-product of the isogenies $E \twoheadrightarrow E_{\mathfrak{a}}$, denoted with $\varphi_{\mathfrak{a}}$ by Castella and Hsieh and used by them to define their Heegner cycles (cf. \cite[\S 4.1]{CH}), and also $\alpha \in K^{\times}$ acts on $A$ (and $A_{\mathfrak{a}}$) as the matrix multiplication by $\left( \begin{smallmatrix}
\alpha & 0 \\ 0 & \alpha
\end{smallmatrix}\right)$, hence as the multiplication by $\alpha$ in each component $E$ (and $E_{\mathfrak{a}}$). Because of this, the proof of \cite[Lemma 4.3]{CH} works also in our case, working component by component.  So we obtain that
\[
(\id \times \alpha)^*\Gamma_{\mathfrak{b}}^{\sigma_{\mathfrak{a}}} =  (\alpha \times \id)_*\Gamma_{\mathfrak{b}}^{\sigma_{\mathfrak{a}}} = \Gamma_{\alpha \circ \phi_{\mathfrak{b}}^{\sigma_{\mathfrak{a}}}} = \Gamma_{\phi_{\mathfrak{a}\mathfrak{b}}} = \Gamma_{\mathfrak{a}\mathfrak{b}}.
\]
Because $x(\mathfrak{b})^{\sigma_{\mathfrak{a}}} = (A_{\mathfrak{b}},i_{\mathfrak{b}},\nu_{\mathfrak{b},N^+})^{\sigma_{\mathfrak{a}}} = \mathfrak{a} \star (A_{\mathfrak{b}},i_{\mathfrak{b}},\nu_{\mathfrak{b},N^+}) = (A_{\mathfrak{ab}},i_{\mathfrak{ab}},\nu_{\mathfrak{ab},N^+}) = x_{\mathfrak{ab}}$ as points in $Sh$, we have that the immersions $i_{x(\mathfrak{b})^{\sigma_{\mathfrak{a}}}}$ and $i_{x(\mathfrak{ab})}$ are equal, thus, taking the $r$-power and applying the projector $\varepsilon$, we have that 
\[
(\id \times \alpha)^*\Delta_{\mathfrak{b}}^{\sigma_{\mathfrak{a}}} =  \Delta_{\mathfrak{a}\mathfrak{b}},
\]
and we are done.
\end{proof}
 
\begin{prop} \label{4.4}
Suppose that $p \nmid c$. For all $ n > 1$, one has
\[
T_p z_{cp^{n-1}} = p^{k-2} z_{p^{n-2}} + \Cor_{K_{cp^n}/K_{cp^{n-1}}}(z_{cp^n}).
\]
For $\ell \nmid c$ that is inert in $K$, one has
\[
T_{\ell}z_c = \Cor_{K_{c\ell}/K_c}(z_{c\ell}).
\]
\end{prop}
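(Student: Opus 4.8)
The strategy is to transport the corresponding compatibility statement from \cite[Proposition 4.4]{CH} across the Jacquet--Langlands correspondence and the equivalence of deformation/moduli pictures established in \S\ref{defQM} and \S\ref{CMpoints}. The key point is that the generalized Heegner cycle $\Delta_{\phi_{\mathfrak a}}$ on $\mathcal X_r = \mathcal A^r\times A^r$ is, component by component, the self-product (in each of the $r$ factors) of the graph of the isogeny $\varphi_{\mathfrak a}\colon E\to E_{\mathfrak a}$ between the underlying elliptic curves; this was already used in the proof of the previous proposition. Hence the action of the Hecke correspondences $T_p$ and $T_\ell$, which on $Sh$ and on $\mathcal A^r$ are defined (in \S\ref{K-S}) exactly as the sum over the $\ell+1$ (resp.\ $p+1$) cyclic $\mathcal O_B$-submodules of $A[\ell]$ (resp.\ $A[p]$), matches the classical $T_\ell$, $T_p$ on the Kuga--Sato variety over a modular curve in the way recorded in \S\ref{Heckeop}: an $\mathcal O_B$-stable cyclic submodule $C$ of $A[\ell]$ corresponds to a cyclic subgroup of $E[\ell]$ of order $\ell$, because $C = eC\oplus(1-e)C$ with $eC\subseteq E[\ell]$ and the $\mathcal O_B$-structure forcing the two summands to be matched up by $\alpha=\left(\begin{smallmatrix}0&1\\1&0\end{smallmatrix}\right)$.

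Concretely, I would first spell out this dictionary at the level of cycles: for a prime $\ell\nmid N^+D$, the isogenies $\psi_i\colon A\to A/C_i$ appearing in $T_\ell$ are the self-products of the $\ell+1$ degree-$\ell$ isogenies of $E$; composing with $\phi_{\mathfrak a}$ and pushing the graphs into $\mathcal X_r$ then gives precisely the image under $\Phi_{K_{c\ell}}$ (resp.\ the relevant corestriction) of the classes attached to the ideals $\mathfrak a\mathfrak l$ for $\mathfrak l\mid\ell$. This is where the splitting behaviour of $\ell$ enters: when $\ell$ is inert in $K$ and $\ell\nmid c$, there is (up to the correspondence) a single new ideal above $\ell$, which yields the clean formula $T_\ell z_c=\Cor_{K_{c\ell}/K_c}(z_{c\ell})$; when we run the $p$-adic tower with $p$ split and $p\nmid c$, one of the $p+1$ subgroups is the canonical one (giving the $U$-part, hence the term $\Cor_{K_{cp^n}/K_{cp^{n-1}}}(z_{cp^n})$) while the remaining ones collapse, modulo the norm relation in the ring class tower, to the ``$V$''-type contribution $p^{k-2}z_{p^{n-2}}$ — the exponent $k-2$ being the degree factor $\deg(\varphi)^{?}$ coming from the action of the isogeny on $\Sym^{2r}H^1$ (recall $k=2r+2$, and the two factors $\deg=\ell$ on $eH^1$ of each of the $r$ copies plus the twist give $\ell^{2r}\cdot\ell^{?}$; for $T_p$ the relevant weight-$k$ normalization produces $p^{k-2}$). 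I would make this bookkeeping precise by citing the weight-$k$ functoriality of the Abel--Jacobi map under isogenies, exactly as in \cite[\S 4.1--4.2]{CH}.

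Having fixed the dictionary, the proof reduces to quoting \cite[Proposition 4.4]{CH}: the left-hand sides $T_p z_{cp^{n-1}}$ and $T_\ell z_c$ in our setting are, under $\Phi_{K_c}$ and the Jacquet--Langlands identification of Hecke eigenvalues, the images of the corresponding Castella--Hsieh classes under the same Hecke operators, and the distribution relations among the CM points $x(\mathfrak a)$ (which follow from Shimura reciprocity, \S\ref{prodell}, and the $\Pic(\mathcal O_c)$-action, \S\ref{CMpoints}, matching those of \cite[\S 2.4]{CH}) are identical to theirs. I expect the main obstacle to be purely organizational rather than conceptual: one must verify that the projector $\varepsilon=P\varepsilon_A$ and the realization of $V_f(k/2)$ as a quotient of $PH^{k-1}_{\et}(\overline{\mathcal A}^r,\mathbb Z_p)(k/2)$ (Assumption \ref{main-assumption}) are compatible with the Hecke action in the sense that $T_\ell$ acts through the eigenvalue $a_\ell(f)$ on the relevant quotient — so that no spurious Eisenstein-type contributions survive — and that the degree factors in the isogeny-functoriality of $\AJ_p$ are normalized so as to produce exactly $p^{k-2}$ and not some other power of $p$; tracking these normalizations carefully through the $e$-component and the $\Sym^{2r}$ is the delicate part. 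Once that is checked, the two displayed identities follow formally.
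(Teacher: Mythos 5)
Your proposal follows essentially the same route as the paper's own proof: both reduce to \cite[Proposition 4.4]{CH} by observing that the $p+1$ (resp.\ $\ell+1$) $\mathcal O_B$-stable sublattices governing $T_p$ (resp.\ $T_\ell$) are determined by their $e$-components, hence correspond to index-$p$ sublattices of $\mathcal O_{cp^{n-1}}$, after which the Castella--Hsieh computation (with an analogue of their Lemma 4.2 supplying the degree factor $p^{k-2}$) carries over verbatim. One small correction: in the split tower it is the $p$ non-canonical subgroups (the $U$-part) that assemble into $\Cor_{K_{cp^n}/K_{cp^{n-1}}}(z_{cp^n})$, while the single remaining sublattice, with endomorphism ring $\mathcal O_{cp^{n-2}}$, produces the term $p^{k-2}z_{cp^{n-2}}$ — you have these two roles interchanged, though this does not affect the argument.
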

 
\begin{proof}
The operator $T_{p}$ acts on the cycle $\Delta_{\phi_{cp^{n-1}}}$ coming from the isogeny $\phi_{cp^{n-1}}: A \rightarrow A_{cp^{n-1}}$ of QM abelian surfaces with $V_1(N^+)$-level structure and CM by $K$ in the following way
\[
T_{p} \Delta_{\phi_{cp^{n-1}}} = \sum_{i=1}^{p+1} \Delta_{\phi_i},
\]
where the isogenies $\phi_i: A_i \rightarrow A_{cp^{n-1}}$ are $p^2$-isogenies of QM abelian surfaces with $V_1(N^+)$-level structures. 
These isogenies correspond to the $p + 1$ sublattices of $\mathcal{O}_{cp^{n-1}} \times \mathcal{O}_{cp^{n-1}}$ that are invariant under the action of $\mathcal{O}_B$. 
Since such a sublattice $L$ is determined by $eL$, one can work with sublattices of $\mathcal{O}_c$ of index $p$.
Therefore one can rearrange the computation in the proof of \cite[Proposition 4.4]{CH} to obtain the formula in the statement, using an analogue of \cite[Lemma 4.2]{CH} in this case.
The second part of the proposition can be proved analogously.
 \end{proof}
 
The following result is analogous to \cite[Proposition 4.5]{CH}.
\begin{prop} \label{4.5}
	Let $\mathfrak{a}$ be a fractional ideal of $\mathcal{O}_c$ prime to $cN^+D_K$. Then
\[
\chi_t(\sigma_{\mathfrak{a}}) (\emph{id} \otimes e_{\chi}) z_{c}^{\sigma_{\mathfrak{a}}} = \chi(\sigma_{\mathfrak{a}}) \chi_{\cyc}^{-r}(\sigma_{\mathfrak{a}}) (\emph{id} \otimes e_{\chi}) z_{\mathfrak{a}},
\]
where $\chi_{\cyc}$ is the $p$-adic cyclotomic character.
\end{prop}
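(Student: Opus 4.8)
The plan is to adapt the argument of \cite[Proposition~4.5]{CH}: express the Galois conjugate $z_c^{\sigma_{\mathfrak{a}}}$ in terms of $z_{\mathfrak{a}}$ by means of the graph compatibility proved above (the Proposition preceding Proposition~\ref{4.4}), and then track the action of the CM endomorphism $\tilde\kappa_E(\mathfrak{a})$ through the projection $e_{\chi}$ of \S\ref{chi-comp}. The conceptual point, and the reason the Weil restriction $W=\Res_{K_1/K}E$ was introduced, is that for a general ideal $\mathfrak{a}$ prime to $cN^+D_K$ the conjugate elliptic curve $E^{\sigma_{\mathfrak{a}}}$ need not be $E$ again, so $\Delta_{\phi_c}^{\sigma_{\mathfrak{a}}}$ a priori lives on a different generalized Kuga--Sato variety; passing to $S(W)=\operatorname{Ind}^{G_K}_{G_{K_1}}S(E)\otimes_{\mathbb{Z}_p}\mathcal{O}_F$, which as a $G_K$-module already contains (up to isogeny) all the conjugates of $S(E)$, is exactly what lets one compare $z_c^{\sigma_{\mathfrak{a}}}$ and $z_{\mathfrak{a}}$ inside the single group $H^1(K_c,T\otimes S(W))$.

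First I would compute $z_c^{\sigma_{\mathfrak{a}}}$. Since $z_c=\Phi_{K_c}(\Delta_{\phi_c})$, the $G_K$-equivariance of the $p$-adic \'etale Abel--Jacobi map and of the projectors entering $\Phi_{K_c}$ gives $z_c^{\sigma_{\mathfrak{a}}}=\Phi_{K_c}(\Delta_{\phi_c}^{\sigma_{\mathfrak{a}}})$, where $\Delta_{\phi_c}^{\sigma_{\mathfrak{a}}}=\Delta_{\phi_c^{\sigma_{\mathfrak{a}}}}$ is the generalized Heegner cycle attached to the conjugated isogeny; as $E=\mathbb{C}/\mathcal{O}_K$ is defined over $K_1$, this cycle is built from $E^{\sigma_{\mathfrak{a}}}$, and after the inclusion $S(E)\hookrightarrow S(W)$ of \S\ref{chi-comp} both $z_c^{\sigma_{\mathfrak{a}}}$ and $z_{\mathfrak{a}}$ are viewed in $H^1(K_c,T\otimes S(W))$.

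Next I would import the geometric identity. The element $\alpha:=\tilde\kappa_E(\mathfrak{a})\in M^{\times}\subseteq(\End_K(W)\otimes\mathbb{Q})^{\times}$ is, by construction of the CM character, a genuine endomorphism of $W$ for every $\mathfrak{a}$ coprime to $D_K$ — unlike $\tilde\kappa_E(\mathfrak{a})$ on $E$ alone when $\mathfrak{a}\mathcal{O}_K$ is nontrivial — and it acts diagonally on the self-product presentations $E_{\mathfrak{b}}\times E_{\mathfrak{b}}$ of the QM surfaces in play. Running the graph computation of the Proposition preceding Proposition~\ref{4.4} component by component, exactly as in the case $\mathfrak{a}\mathcal{O}_K$ trivial treated there, and using that $\Phi_{K_c}$ carries $(\id\times\alpha)^{\ast}$ on the $A$-component to the operator $\theta(\alpha):=\Sym^{2r}[\alpha](-r)$ on $S(W)$, I expect to obtain an identity
\[
\theta(\alpha)\,z_c^{\sigma_{\mathfrak{a}}} = z_{\mathfrak{a}}\qquad\text{in }H^1\bigl(K_c,\,T\otimes S(W)\bigr)
\]
(with the direction, $\alpha$ versus $\alpha^{-1}$, to be pinned down from the normalization of pullbacks). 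Twisting by $\chi_t$ and recalling \eqref{chicomp}, it then remains to evaluate the composite $e_{\chi}\circ\theta(\alpha)$ on the rank-one quotient $\chi$ of $S(W)\otimes\chi_t$: the defining property of $e_{\chi}$ together with the explicit shape of $\tilde\kappa_E(\mathfrak{a})$ — its $\mathfrak{p}$-component producing the factor $\chi(\sigma_{\mathfrak{a}})\chi_t(\sigma_{\mathfrak{a}})^{-1}$ on the relevant summand, and the Tate twist $(-r)$, in view of the relation between $\chi_{\cyc}(\sigma_{\mathfrak{a}})$ and $N(\mathfrak{a})$, producing $\chi_{\cyc}^{-r}(\sigma_{\mathfrak{a}})$ — should give
\[
e_{\chi}\circ\theta(\alpha)=\chi(\sigma_{\mathfrak{a}})\,\chi_{\cyc}^{-r}(\sigma_{\mathfrak{a}})\,\chi_t(\sigma_{\mathfrak{a}})^{-1}\cdot e_{\chi},
\]
whence multiplying through by $\chi_t(\sigma_{\mathfrak{a}})$ and applying $\id\otimes e_{\chi}$ yields the stated formula.

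The main obstacle will be the last step: the bookkeeping of the Weil restriction together with the precise normalizations of $\tilde\kappa_E$, $\chi_t$, $e_{\chi}$ and of the geometric reciprocity map, so that the scalar by which $\tilde\kappa_E(\mathfrak{a})$ acts on $\Sym^{2r}T_p(W)(-r)\otimes\chi_t$ comes out to be exactly $\chi(\sigma_{\mathfrak{a}})\chi_{\cyc}^{-r}(\sigma_{\mathfrak{a}})\chi_t(\sigma_{\mathfrak{a}})^{-1}$ and not a twist thereof. The quaternionic features intervene only through the identification $eH^1_{\et}(\overline A,\mathbb{Z}_p)\cong H^1_{\et}(\overline E,\mathbb{Z}_p)$ and the resulting description $S(E)=\Sym^{2r}T_p(\overline E)(-r)$ already recorded in \S\ref{GHclasses}, so once this identification is in place the computation should reduce verbatim to the elliptic-curve one of \cite[\S4.4]{CH}.
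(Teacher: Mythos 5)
Your proposal follows essentially the same route as the paper's proof: establish the geometric identity $(\id\times\varphi_{\mathfrak{a}\mathcal{O}_K})_*\Delta_{\phi_{\mathfrak{a}}}=\Delta_{\phi_c}^{\sigma_{\mathfrak{a}}}$ (hence $z_c^{\sigma_{\mathfrak{a}}}=(\id\times\varphi_{\mathfrak{a}\mathcal{O}_K})_*z_{\mathfrak{a}}$), identify the action of $\varphi_{\mathfrak{a}\mathcal{O}_K}$ on $\Sym^{2r}eT_p(\overline A)\cong\Sym^{2r}T_p(\overline E)$ with $[\tilde\kappa_E(\mathfrak{a}\mathcal{O}_K)]_*$ on $S(W)$, which induces the Galois action of $\sigma_{\mathfrak{a}}$, and then extract the scalar $\chi(\sigma_{\mathfrak{a}})\chi_{\cyc}^{-r}(\sigma_{\mathfrak{a}})\chi_t^{-1}(\sigma_{\mathfrak{a}})$ from the $G_K$-equivariance of $e_\chi$ applied to $S(W)\otimes\chi_{\cyc}^{r}\otimes\chi_t$. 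The direction ambiguity you flag resolves as the push-forward acting on $z_{\mathfrak{a}}$ to give $z_c^{\sigma_{\mathfrak{a}}}$, which yields exactly the stated formula.
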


\begin{proof}
Denote by $\sigma_{\mathfrak{a}} \in \Gal(K_c/K)$ the image of $\mathfrak{a}$ under the classical Artin reciprocity map. 
We have 
\[
%\begin{split}
(\text{id} \times \varphi_{\mathfrak{a}\mathcal{O}_K})_* \Gamma_{\phi_{\mathfrak{a}}} %&
= (\text{id} \times \phi_{\mathfrak{a}})_* \bigl\{(\phi_{\mathfrak{a}}(z),z) \mid z \in A \bigr\} 
%\\
%&
= \bigl\{ (\varphi_{\mathfrak{a}} \phi_c(z),\varphi_{\mathfrak{a}\mathcal{O}_K}z) \mid z \in A \bigr\}
%\\
%&
= \Gamma_{\phi_c}^{\sigma_{\mathfrak{a}}},
%\end{split}
\]
as $\sigma_{\mathfrak{a}}(z) = \varphi_{\mathfrak{a}\mathcal{O}_K}(z) $ for any $z \in A(\mathbb{C})$ and $\sigma_{\mathfrak{a}}(z) = \varphi_{\mathfrak{a}}(z) $ for any $z \in A_c(\mathbb{C})$.
Because $x_c^{\sigma_{\mathfrak{a}}} = x_{\mathfrak{a}}$, where  $x_{\mathfrak{a}} = (A_{\mathfrak{a}},i_{\mathfrak{a}},\nu_{\mathfrak{a},N^+})$ and $x_c=x_{\mathcal{O}_c}$, one has $i_{x_c^{\sigma_{\mathfrak{a}}}} = i_{x_{\mathfrak{a}}}$. Hence, applying $\varepsilon$ to $\big((\text{id} \times \varphi_{\mathfrak{a}\mathcal{O}_K})_* \Gamma_{\phi_{\mathfrak{a}}} \big)^r= \big(\Gamma_{\phi_c}^{\sigma_{\mathfrak{a}}}\big)^r$, we obtain 
\[
(\text{id} \times \varphi_{\mathfrak{a}\mathcal{O}_K})_* \Delta_{\phi_{\mathfrak{a}}} = \Delta_{\phi_c}^{\sigma_{\mathfrak{a}}}.
\]
Then
\[
z_{c}^{\sigma_{\mathfrak{a}}} = \Phi_{K_c}(\Delta_{\phi_c}^{\mathfrak{a}}) = \Phi_{K_c}((\text{id} \times \varphi_{\mathfrak{a}})_* \Delta_{\phi_{\mathfrak{a}}})= (\text{id} \times \varphi_{\mathfrak{a}\mathcal{O}_K})_* \Phi_{K_c}(\Delta_{\phi_{\mathfrak{a}}})
= (\text{id} \times \varphi_{\mathfrak{a}\mathcal{O}_K})_* z_{\mathfrak{a}}.
\]
Observe that $\varphi_{\mathfrak{a}\mathcal{O}_K}$ acts on $\text{Sym}^{2r}eT_p(\overline{A}) \cong \text{Sym}^{2r}T_p(\overline{E})$ as its first component $\lambda_{\mathfrak{a}\mathcal{O}_K} : E \rightarrow E/E[\mathfrak{a}\mathcal{O}_K]=E_{\mathfrak{a}\mathcal{O}_K}$. From the proof of \cite[Proposition 4.5]{CH}, we know that $\lambda_{\mathfrak{a}\mathcal{O}_K}$ acts on $\text{Sym}^{2r}H^1_{\et}(\overline{W},\mathbb{Z}_p))$ 
as the push-forward $[\tilde{\kappa}_E(\mathfrak{a}\mathcal{O}_K)]_*$ of $\tilde{\kappa}_E(\mathfrak{a}\mathcal{O}_K) \in \End(W)$, which in turn induces the Galois action $\sigma_{\mathfrak{a}}$.
Since $e_{\chi}$ commutes with the action of $G_K$, there are equalities
\[
\begin{split}
\chi_{\cyc}^{r}(\sigma_{\mathfrak{a}}) \chi_{t}(\sigma_{\mathfrak{a}}) e_{\chi}\big(\sigma_{\mathfrak{a}} \otimes \id \otimes \id \cdot y \big) &= e_{\chi}\big((\sigma_{\mathfrak{a}} \otimes  \chi_{\cyc}^{r}(\sigma_{\mathfrak{a}}) \otimes \chi_{t}(\sigma_{\mathfrak{a}})) (y)\big)\\ &= e_{\chi}(\sigma_{\mathfrak{a}} \cdot y) = \chi(\sigma_{\mathfrak{a}})e_{\chi}(y),
\end{split}
\]
for any $y = y \otimes 1 \otimes 1 \in S(W) \otimes \chi_t =
\text{Sym}^{2r}H^1_{\et}(\overline{W},\mathbb{Z}_p) \otimes \chi_{\cyc}^{r} \otimes \chi_t$.
Therefore, viewing $z_{c}^{\sigma_{\mathfrak{a}}}, z_{\mathfrak{a}} \in H^1(K_c,T \otimes S(E)) \subseteq H^1(K_c,T \otimes S(W))$ as in \S \ref{chi-comp} and letting $z_c^{\sigma_{\mathfrak{a}}} := z_c^{\sigma_{\mathfrak{a}}} \otimes 1$, $z_{\mathfrak{a}} := z_{\mathfrak{a}} \otimes 1 \in H^1(K_c,T \otimes S(W) \otimes \chi_t)$, we get
\[
\begin{split}
(\text{id} \otimes e_{\chi}) z_c^{\sigma_{\mathfrak{a}}} &= (\text{id} \otimes e_{\chi}) (\text{id} \otimes [\tilde{\kappa}_E(\mathfrak{a}\mathcal{O}_K)]_*) z_{\mathfrak{a}}
\\
&= \chi_{\cyc}^{-r}(\sigma_{\mathfrak{a}}) \chi_{t}^{-1}(\sigma_{\mathfrak{a}}) (\text{id} \otimes e_{\chi}) (\text{id} \otimes \sigma_{\mathfrak{a}}) z_{\mathfrak{a}}
\\
&= \chi_{\cyc}^{-r}(\sigma_{\mathfrak{a}}) \chi_{t}^{-1}(\sigma_{\mathfrak{a}}) \chi(\sigma_{\mathfrak{a}})(\text{id} \otimes e_{\chi}) z_{\mathfrak{a}},
\end{split}
\]
which completes the proof.
\end{proof}

Finally, we conclude with two more propositions. As one can rearrange the proofs of analogous results in \cite{CH} to work also in our case, as we did in the proofs of the previous results, we will skip details.

\begin{prop} \label{4.6}
Let $\tau$ be the complex conjugation. Then
\[
z_{c,\chi}^{\tau} = w_f \cdot \chi(\sigma) \cdot (z_{c,\chi^{-1}})^{\sigma},
\]
for some $\sigma \in \Gal(K_c/K)$, with $w_f = \pm 1$ the Atkin--Lehner eigenvalue of $f$.
\end{prop}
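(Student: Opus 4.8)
The plan is to mimic the proof of the corresponding statement in \cite[\S 4]{CH}, reducing everything to the elliptic-curve case via the product decompositions $A=E\times E$, $A_c=E_c\times E_c$ and the fact that $\phi_c$ is the self-product of the isogeny $E\twoheadrightarrow E_c$, exactly as in the proofs of Propositions \ref{4.4} and \ref{4.5}. The only genuinely new geometric input is the description of complex conjugation acting on the CM point $x(c)$, which is where an Atkin--Lehner involution enters.

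First I would record how $\tau$ acts on the data defining $\Delta_{\phi_c}$. Since $E_c=\mathbb C/\mathcal O_c$ has CM by the conjugation-stable order $\mathcal O_c$, one has $\overline{E_c}\cong E_c$; the $\Gamma_1(N^+p^\infty)^{\mathrm{arith}}$-level structure, however, is altered by $\tau$, and tracking this alteration through the Weil pairing identifies $x(c)^{\tau}$ with $W_N\bigl(x(c)\bigr)^{\sigma'}$ for a suitable $\sigma'\in\Gal(K_c/K)$. Here $W_N$ is the total Atkin--Lehner involution, i.e.\ the composite of the Atkin--Lehner involution at $N^+$ (acting through the level structure) with the intrinsic Atkin--Lehner involution of $Sh$ at $N^-$; because $f$ has trivial nebentypus, no residual diamond operator survives. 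Working component by component on the two elliptic-curve factors, just as in the proofs above, this yields an identity of the shape
\[
\Delta_{\phi_c}^{\tau}=W_N\cdot(\id\times\lambda)_*\,\Delta_{\phi_c}^{\sigma'}
\]
in $\CH^{k-1}(\mathcal X_r/K_c)_0$, where $\lambda$ is the CM-type map on the $A^r$-factor induced by complex conjugation on $E=\mathbb C/\mathcal O_K$.

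Next I would push this identity through the $p$-adic Abel--Jacobi map, which is Galois-equivariant, and through the projector $\varepsilon=P\varepsilon_A$, using \eqref{eH}. On the Kuga--Sato factor $PH^{k-1}_{\et}(\overline{\mathcal A}^r,\mathbb Q_p)(k/2)\cong V_f(k/2)$ the involution $W_N$ acts by the scalar $w_f=\pm1$: indeed the Jacquet--Langlands transfer is compatible with Atkin--Lehner involutions, and $w_f$ is by definition the Atkin--Lehner eigenvalue of the newform $f$. On the $A^r$-factor, $\lambda$ acts on $\Sym^{2r}eT_p(\overline A)\cong\Sym^{2r}T_p(\overline E)$ through the CM character $\tilde\kappa_E$, exactly as $\varphi_{\mathfrak a\mathcal O_K}$ does in the proof of Proposition \ref{4.5}. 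After twisting by $\chi_t$ and applying $e_{\chi}$ on the left-hand side and $e_{\chi^{-1}}$ on the right-hand side, and using that $\chi$ is anticyclotomic (so $\chi^{\tau}=\chi^{-1}$, whence $z_{c,\chi}^{\tau}$ naturally lands in $H^1(K_c,T\otimes\chi^{-1})$) together with the way $\chi_{\cyc}^{r}$ is already built into $S(W)$, the CM-character and cyclotomic contributions collapse to the single factor $\chi(\sigma)$. Collecting the two scalars produces $z_{c,\chi}^{\tau}=w_f\cdot\chi(\sigma)\cdot(z_{c,\chi^{-1}})^{\sigma}$ for the resulting $\sigma\in\Gal(K_c/K)$.

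The step I expect to be the main obstacle is pinning down exactly which Atkin--Lehner operator intervenes when $\tau$ acts on the $V_1(N^+)$-level structure of the CM abelian surface and verifying that it acts by precisely $w_f$ on the $f$-isotypic quotient with no spurious nebentypus twist; a secondary but delicate point is the bookkeeping of the various Galois elements — the element $\sigma'$ produced by the conjugation of the level structure (essentially the class of $(\sqrt{-D_K})$) and the element produced by $\tilde\kappa_E$ — so that they combine into the single $\sigma$ appearing in the statement. Both issues are dealt with in \cite{CH} in the modular-curve setting, and they transfer to ours because the cycles and the relevant correspondences are built componentwise from elliptic curves, just as in the proofs of Propositions \ref{4.4} and \ref{4.5}.
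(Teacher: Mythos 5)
Your proposal is correct and follows essentially the same route as the paper, whose proof of Proposition \ref{4.6} consists entirely of the instruction to proceed as in the proof of \cite[Lemma 4.6]{CH}; your reconstruction — reducing to the elliptic-curve components as in Propositions \ref{4.4} and \ref{4.5}, identifying $x(c)^{\tau}$ with an Atkin--Lehner translate of a Galois conjugate, using the compatibility of Jacquet--Langlands with Atkin--Lehner to get the scalar $w_f$, and collapsing the CM-character and cyclotomic contributions into $\chi(\sigma)$ — is exactly the intended adaptation. The two obstacles you flag (the precise Atkin--Lehner operator intervening at the level structure, and the bookkeeping of the Galois elements) are indeed the delicate points, and they are handled in \cite{CH} in a way that transfers componentwise to the quaternionic setting as you describe.
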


\begin{proof} Proceed as in the proof of \cite[Lemma 4.6]{CH}. \end{proof}

\begin{prop} \label{4.7}
Let $\ell$ be a prime inert in $K$ such that $\ell \nmid cND_K$. Let $\overline{\lambda}$ be a prime of $\overline{\mathbb{Q}}$ above $\ell$. Denote by $K_{c\ell,\lambda}$ and $K_{c,\lambda}$ the completions of $K_{c\ell}$ and $K_c$, respectively, at the prime above $\ell$ determined by $\overline{\lambda}$, and write $\loc_{\lambda}$ for the localization map. Then
\[
\loc_{\lambda}(z_{c\ell,\chi}) = \Res_{K_{c\ell,\lambda}/K_{c,\lambda}}\bigl(\loc_{\lambda}(z_{c,\chi})^{\frob_{\ell}}\bigr).
\]
\end{prop}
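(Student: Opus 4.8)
The plan is to follow the pattern of the proofs of Propositions~\ref{4.4}, \ref{4.5} and \ref{4.6}: first reduce the assertion to a statement about the underlying generalized Heegner cycles over the completion of $K_{c\ell}$ at $\lambda$, and then feed in the reduction theory of CM abelian surfaces at primes inert in $K$, which is precisely where the Frobenius twist comes from. As preliminary reductions, note that since $\ell$ is inert in $K$ and prime to $c$, the prime $\ell$ splits completely in $K_c/K$ (the Artin symbol of $\lambda$ corresponds to the principal $\mathcal{O}_c$-ideal $\ell\mathcal{O}_c$), so $K_{c,\lambda}=K_\lambda$ is the unramified quadratic extension of $\mathbb{Q}_\ell$ with residue field $\mathbb{F}_{\ell^2}$, while $\lambda$ is totally ramified in $K_{c\ell}/K_c$, the inertia group at $\lambda$ in $\Gal(K_{c\ell}/K_c)\cong\mathcal{O}_{K,\ell}^\times/(\mathbb{Z}_\ell+\ell\mathcal{O}_{K,\ell})^\times$ being the whole group of order $\ell+1$; hence $K_{c\ell,\lambda}/K_{c,\lambda}$ is totally ramified, the two local fields share the residue field $\mathbb{F}_{\ell^2}$, and $\frob_\ell$ is well defined on both. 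Moreover, since $\ell\nmid N^+D$, the curve $Sh$, the Kuga--Sato variety $\mathcal{A}^r$ and hence $\mathcal{X}_r=\mathcal{A}^r\times A^r$ have good reduction at $\lambda$, and since $\ell\nmid N$ and $\chi$ has conductor prime to $\ell$, the representations $T\otimes\chi$ and $T\otimes S(W)\otimes\chi_t$ are unramified at $\lambda$. Finally, $x(\mathcal{O}_{c\ell})$ extends to a section of the smooth proper integral model of $Sh$ over $\mathcal{O}_{K_{c\ell,\lambda}}$ and its fibre $A_{c\ell}$ to an abelian scheme, so $\Delta_{\phi_{c\ell}}$ extends integrally and $\loc_\lambda(z_{c\ell})\in H^1_{\mathrm{ur}}(K_{c\ell,\lambda},T\otimes S(W))$, and likewise $\loc_\lambda(z_c)\in H^1_{\mathrm{ur}}(K_{c,\lambda},T\otimes S(W))$; restriction along the totally ramified extension $K_{c\ell,\lambda}/K_{c,\lambda}$ then induces an isomorphism $H^1_{\mathrm{ur}}(K_{c,\lambda},-)\xrightarrow{\sim}H^1_{\mathrm{ur}}(K_{c\ell,\lambda},-)$.

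The geometric heart of the argument is the following. Write $\phi_{c\ell}\colon A\to A_{c\ell}$ as the composition of $\phi_c\colon A\to A_c$ with the degree-$\ell$ isogeny $A_c\to A_{c\ell}$ that is the self-product of the degree-$\ell$ isogeny $E_c=\mathbb{C}/\mathcal{O}_c\to E_{c\ell}=\mathbb{C}/\mathcal{O}_{c\ell}$ attached to the inclusion $\mathcal{O}_{c\ell}\subset\mathcal{O}_c$. Over the residue field $\mathbb{F}_{\ell^2}$ the reductions $\overline{E}_c,\overline{E}_{c\ell}$ are supersingular (because $\ell$ is inert in $K$), so the reduction of this degree-$\ell$ isogeny is purely inseparable and therefore factors as an isomorphism composed with the relative Frobenius $\overline{E}_c\to\overline{E}_c^{(\ell)}$. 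Consequently, compatibly with the quaternionic multiplication and the $V_1(N^+)$-level structures, the reduction of $x(\mathcal{O}_{c\ell})$ is identified with the Frobenius twist ${}^{\frob_\ell}\!\bigl(\text{reduction of }x(\mathcal{O}_c)\bigr)$, and correspondingly the reduction of the cycle $\Delta_{\phi_{c\ell}}$ is carried to the Frobenius transform of the reduction of $\Delta_{\phi_c}$. This is the quaternionic analogue of the computation in \cite[\S 4]{CH}, and I would carry it out with the Serre--Tate description of \S 3 (Lemma~\ref{3.2} and Proposition~\ref{defthm}), the good-reduction model of Theorem~\ref{moduli-thm}, the description of the Hecke correspondences of \S \ref{K-S}, and \cite{Brooks}.

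To finish, under the isomorphism $H^1_{\mathrm{ur}}(K_{c,\lambda},-)\xrightarrow{\sim}H^1_{\mathrm{ur}}(K_{c\ell,\lambda},-)$ both $\loc_\lambda(z_{c\ell})$ and $\loc_\lambda(z_c)$ are computed from the reductions of the corresponding cycles, so the identification of reductions above, combined with the compatibility of the $p$-adic Abel--Jacobi map with good reduction and with the Frobenius correspondence, gives $\loc_\lambda(z_{c\ell})=\Res_{K_{c\ell,\lambda}/K_{c,\lambda}}\bigl(\loc_\lambda(z_c)^{\frob_\ell}\bigr)$ in $H^1(K_{c\ell,\lambda},T\otimes S(W))$, the restriction accounting for the passage to the totally ramified $K_{c\ell,\lambda}$ and $\frob_\ell$ for the Frobenius twist. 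Applying the $G_K$-equivariant projection $\id\otimes e_\chi$ of \S \ref{chi-comp}, which commutes with $\loc_\lambda$, with $\Res$ and with the $\frob_\ell$-action, and using the definition \eqref{chicomp} of the $\chi$-components, yields the stated equality $\loc_\lambda(z_{c\ell,\chi})=\Res_{K_{c\ell,\lambda}/K_{c,\lambda}}\bigl(\loc_\lambda(z_{c,\chi})^{\frob_\ell}\bigr)$.

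The main obstacle is the geometric heart: proving cleanly, in the quaternionic setting, that the degree-$\ell$ isogeny $A_c\to A_{c\ell}$ reduces modulo $\lambda$ to an isomorphism composed with the relative Frobenius, compatibly with the QM- and $V_1(N^+)$-structures, and then tracking the normalizations so that the twist appears exactly as $(-)^{\frob_\ell}$, with no spurious scalar. Once that input is available, the passage to \'etale cohomology through the Abel--Jacobi map and then to $\chi$-components is formal, just as in the proofs of Propositions~\ref{4.5} and \ref{4.6}.
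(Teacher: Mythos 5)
Your proposal is correct and follows exactly the route the paper itself takes: the paper's entire proof is ``proceed as in the proof of \cite[Lemma 4.7]{CH}'', and your argument is precisely that adaptation, with the standard splitting/ramification behaviour of $\lambda$ in $K_{c\ell}/K_c/K$, the reduction to unramified local cohomology via good reduction of $\mathcal{X}_r$ and of the CM abelian surfaces at $\ell\nmid NDc$, and the identification of $\overline{x}(\mathcal{O}_{c\ell})$ with the Frobenius twist of $\overline{x}(\mathcal{O}_c)$ coming from the purely inseparable reduction of the degree-$\ell$ isogeny between supersingular CM fibres. The only cosmetic slip is calling the self-product isogeny $A_c\to A_{c\ell}$ ``degree $\ell$'' (it has degree $\ell^2$), which does not affect the argument.
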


\begin{proof} Proceed as in the proof of \cite[Lemma 4.7]{CH}. \end{proof}

\section{A $p$-adic Gross--Zagier formula}

We want to relate our $p$-adic $L$-function to generalized Heegner cycles. More precisely, we want our $p$-adic $L$-function to satisfy a $p$-adic Gross--Zagier formula that relates Bloch--Kato logarithms of generalized Heegner cycles associated with characters $\chi : \Gal(K_{p_{\infty}}/K)  \rightarrow \mathcal{O}_{\mathbb{C}_p}^{\times}$ of infinity type $(j,-j)$ with $ -k/2 < j < k/2$, with its values, i.e., we look for a formula of the shape
\[
\LL_f(\psi)(\hat{\phi}) = (\text{something}) \cdot \braket{\log(z_{\chi}), *},
\]
where $\hat{\phi} : \Gal(K_{p_{\infty}}/K)  \rightarrow \mathcal{O}_{\mathbb{C}_p}^{\times}$ is the $p$-adic avatar of a Hecke character $\phi$ of infinity type $(-k/2-j,k/2+j)$ and $\chi = \hat{\psi}^{-1} \hat{\phi}^{-1}$.
To establish a Gross--Zagier formula we will link our $p$-adic $L$-function to the differential operator $\theta = t \frac{d}{dt}$ on the Serre--Tate coordinates and then we will use some results of Brooks to obtain a key formula relating this operator $\theta$ applied to the modular form calculated on CM points with our generalized Heegner cycles.

\subsection{The Bloch--Kato logarithm map}\label{BKlog}

Let $F,L$ be finite extensions of $\mathbb{Q}_p$ and $V$ a $F$-vector space with a continuous linear $G_L := \Gal(\overline{L}/L)$-action.
Set $DR_L(V) := H^0(L,B_{\dR}\otimes V) = (B_{\dR} \otimes V)^{G_L}$, where $B_{\dR}$ is the Fontaine's de Rham periods ring.
If $V$ is a de Rham representation (i.e., $\dim_{F} V = \dim_L DR_L(V)$), then we can consider the Bloch--Kato exponential map
\[
\exp_{BK}: \frac{DR_L(V)}{\Fil^0DR_L(V)} \stackrel{\cong}{\longrightarrow} H^1_f(L,V),
\]
that is the connecting homomorphism of the long exact sequence in cohomology coming from the short exact sequence
\[
0 \longrightarrow V \longrightarrow B_{\text{crys}}^{\phi=1} \otimes V \oplus \Fil^0 B_{\dR} \otimes V \longrightarrow B_{\dR} \longrightarrow 0.
\]
See \cite[Definition 3.10, Corollary 3.8.4, Proposition 1.17]{BK}. Here $H^1_f(L,V)$ is the Bloch--Kato finite part
\[
H^1_f(L,V) := \ker\Big(H^1(L,V) \longrightarrow H^1_f(L,V \otimes B_{\crys})\Big),
\]
where $B_{\crys}$ is Fontaine's crystalline period ring.
Consider now the inverse of this map, the Bloch--Kato logarithm map
\[
\log_{BK}: H^1_f(L,V) \stackrel{\cong}{\longrightarrow} \frac{DR_L(V)}{\Fil^0DR_L(V)}.
\]
Since the long exact sequence in cohomology is functorial, the Bloch--Kato logarithm map is functorial as well, i.e., for any $F$-linear and $G_L$-equivariant morphism $V \rightarrow V'$ there is a commutative square
\[
\begin{tikzcd}
H^1_f(L,V) \arrow[r, "\log_{BK}"] \arrow[d] &  \frac{DR_L(V)}{\Fil^0DR_L(V)} \arrow[d]\\
H^1_f(L,V') \arrow[r, "\log_{BK}"] &  \frac{DR_L(V')}{\Fil^0DL_L(V')}.
\end{tikzcd}
\]
Denote by $V^*:=\Hom_F(V,F)$ the dual of $V$ and consider the perfect de Rham pairing
\[
\braket{-,-} : DR_L(V) \times DR_L\bigl(V^*(1)\bigr) \longrightarrow \mathbb{C}_p.
\]
Then we can consider the logarithm $\log_{BK}$ as a map
\begin{equation}\label{logdef}
\log_{BK}: H^1_f(L,V) \stackrel{\cong}{\longrightarrow} \frac{DR_L(V)}{\Fil^0DL_F(V)} \cong \Bigl(\Fil^{0} DR_L\bigl(V^*(1)\bigr)\Bigr)^{\vee},
\end{equation}
which is again functorial.

\subsection{The operator $\theta$} \label{theta}

Recall the differential operator $\theta := t \frac{d}{dt}$ on the power series ring $W[[t-1]]$. For a negative exponent $j$, one can define
\[
\theta^j:= \lim_{i \to \infty} \theta^{j+(p-1)p^i}.
\]
Indeed, this limit makes sense because of \cite[Proposition 4.18]{Brooks}, which implies that $\theta^{j+(p-1)p^m}F \equiv \theta^{j+(p-1)p^n}F \mod p^{n+1}$ for $m\geq n\gg0$ and $F \in W[[t-1]]$, so $\bigl| \theta^{j+(p-1)p^m}F - \theta^{j+(p-1)p^n}F \bigr|_p \leq 1/p^{n+1}$.

For a positive integer $m$ we know from equation (\ref{x^m}) that %\[\int_{\mathbb{Z}_p} x^m dF = \theta^m F \mid_{t=1},\]
\[[x^m]F = \theta^m F.\] 
%where $dF$ is the measure associated with the power series $F \in W[[t-1]]$,
The same formula can be obtained by direct computation also for $m$ negative. Furthermore, for any locally constant function $\phi \in \mathcal{C}(\mathbb{Z}_p,\W)$, the formula
\begin{equation} \label{x^mtheta_neg}
\int_{\mathbb{Z}_p^{\times}} \phi(x)x^m dF = {\theta ^m ([\phi]F)|} _{t=1}
\end{equation}
holds also for a negative integer $m$.

\subsection{CM periods and de Rham cohomology classes}\label{6.3}

Recall that $A$ has a model defined over $\mathcal{V} := \W \cap K^{\ab}$.
Fix a non-vanishing global section $\omega_A$ of the line bundle $e\Omega_{A/\mathcal{V}}$ on $A$, defined over $\mathcal{V}$.
Define a $p$-adic period $\Omega_p \in \mathbb{C}_p$ by the rule
\[
\omega_{A} = \Omega_p \hat{\omega}_{A}
\]
where $\hat{\omega}_A$ denote the section $\omega_P$ determined for $A$ as in the last lines of \ref{pmodform} (which depends upon the $p^{\infty}$- level structure on $A$).

Now, take a finite extension $L$ of $\mathbb{Q}_p$ containing
$K_1$ and recall the fixed non-vanishing differential $\omega_{A}$ in $eH^0(A,\Omega_{A/L})$. We can see it as an element of $eH^1_{\dR}(A/L)$. This determines another element $\eta_A$, as in the last lines of \cite[\S 2.8]{Brooks}, so that $\omega_A^i \eta_A^{2r-i}$ is a basis for $\Sym^{2r}eH^1_{dR}(A/L)$, when $i= 0, \dots, 2r$.
We can arrange $\omega_A$ and $\eta_A$ in a way such that they correspond to the elements $\omega_E, \eta_E \in H^1_{\dR}(E/L)$ defined \cite[\S 4.5]{CH}, through the isomorphism $eH^1_{\dR}(A/L) \cong H^1_{\dR}(E/L)$, keeping in mind that the elliptic curve $E$ is denoted by $A$ in \cite{CH}.
%
%
%%%%%%%%%%%%
%%%%%%%%%%%%
%
%

\subsection{A key formula} \label{keyform}

First we want to prove a formula that relates the operator $\theta$ applied to the modular form calculated on CM points with our generalized Heegner cycles.
The proof of this formula is the same as the proof of \cite[(4.9)]{CH} but with \cite[Proposition 3.24 and Lemmas 3.23, 3.22]{BDP} replaced by \cite[Theorem 7.3, Lemma 8.6 and Proposition 7.4]{Brooks}.

Consider the statement of \cite[Proposition 7.4]{Brooks}. In that statement, $F$ is a finite extension of $\mathbb{Q}_p$, containing
$K_1$, such that the cycle $\Delta_{\varphi}$ associated with an isogeny $\varphi$ (as in \ref{GHcycles}) is defined over $F$, $\omega_f$ is defined as in \cite[\S2.7]{Brooks} and can be seen as an element in $\Fil^{k-1}PH^{k-1}_{\dR}(\mathcal{A}^r/F)$ (as in \cite{Brooks}; see also \cite[Corollary 2.3]{BDP}).
Note that the map $\AJ_F$ in \cite[Proposition 7.4]{Brooks} is the Abel--Jacobi map 
	\[
	\AJ_F : CH^{k-1}(\mathcal{X}_r/F)_{0} \longrightarrow (\Fil^{k-1}\varepsilon H^{4r+1}_{\dR}\bigl(\mathcal{X}_r/F)\bigr)^{\vee}
	\]
	defined in \cite[\S 6.3]{Brooks}, which is the composition of the usual Abel--Jacobi map
	\[
	CH^{k-1}(\mathcal{X}_r/F)_{0} \longrightarrow H^1_f\Bigl(F,\varepsilon H^{4r+1}_{\et}(\overline{\mathcal{X}}_r,\mathbb{Q}_p)(k-1)\Bigr)
	\]
	with the Bloch--Kato logarithm map
	\[
	\log_{\BK}:H^1_f\Bigl(F,\varepsilon H^{4r+1}_{\et}(\overline{\mathcal{X}}_r,\mathbb{Q}_p)(k-1)\Bigr) \longrightarrow
	\frac{DR_F\Bigl(\varepsilon H^{4r+1}_{\et}(\overline{\mathcal{X}}_r,\mathbb{Q}_p)(k-1)\Bigr)}{\Fil^0\Bigl(DR_F\bigl(\varepsilon H^{4r+1}_{\et}(\overline{\mathcal{X}}_r,\mathbb{Q}_p)(k-1)\bigr)\Bigr)}
	\]
for the Galois representation $V = \varepsilon H^{4r+1}_{\et}\bigl(\overline{\mathcal{X}}_r,\mathbb{Q}_p\bigr)(k-1)$.
Actually, the image of the $p$-adic Abel--Jacobi map is contained in the subgroup \[H^1_f(F,\varepsilon H^{4r+1}_{\et}(\overline{\mathcal{X}}_r,\mathbb{Q}_p)(k-1))\] of $H^1(F,\varepsilon H^{4r+1}_{\et}(\overline{\mathcal{X}}_r,\mathbb{Q}_p)(k-1))$, see \cite[Theorem 3.1]{Nek00}.
Since the comparison isomorphism 
	\[
	\Phi:DR_F\bigl(\varepsilon H^{4r+1}_{\et}(\overline{\mathcal{X}}_r,\mathbb{Q}_p)\bigr) \stackrel{\cong}\longrightarrow \varepsilon H^{4r+1}_{\dR}(\mathcal{X}_r/F),
	\]
	for which we refer to
	\cite{Fa}, is compatible with the filtrations, and since the Tate twist shifts the filtration, there
	is a functorial isomorphism
	\[
	\Phi:\frac{DR_F\bigl(\varepsilon H^{4r+1}_{\et}(\overline{\mathcal{X}}_r,\mathbb{Q}_p)(k-1)\bigr)}{\Fil^0 DR_F\bigl(\varepsilon H^{4r+1}_{\et}(\overline{\mathcal{X}}_r,\mathbb{Q}_p)(k-1)\bigr)} \stackrel\cong\longrightarrow \frac{\varepsilon H^{4r+1}_{\dR}(\mathcal{X}_r/F)}{\Fil^{k-1}\varepsilon H^{4r+1}_{\dR}(\mathcal{X}_r/F)}.
	\]
	On the other hand, by Poincar\'e duality, there is an isomorphism 
	\begin{equation} \label{poincare-eq}
	\frac{\varepsilon H^{4r+1}_{\dR}(\mathcal{X}_r/F)}{\Fil^{k-1}\varepsilon H^{4r+1}_{\dR}(\mathcal{X}_r/F)} \cong \Fil^{k-1} \varepsilon H^{4r+1}_{\dR}(\mathcal{X}_r/F) ^{\vee}.
	\end{equation}
	Thus, writing $V := \varepsilon H^{4r+1}_{\et}(\overline{\mathcal{X}}_r,\mathbb{Q}_p)(k-1)$ in this case, we can view $\log_{BK}$ as a map
	\[
	\log_{BK}: H^1_f(F,V) \stackrel{\cong}{\longrightarrow} \frac{DR_F(V)}{\Fil^0DR_F(V)} \cong \Fil^{k-1} \varepsilon H^{4r+1}_{\dR}(\mathcal{X}_r/F) ^{\vee}.
	\]
For more details, see \cite[\S 6.3]{Brooks}.	
 
Recall the elements $\omega_A^i \eta_A^{2r-i}\in \Sym^{2r}eH^1_{dR}(A/F)$ for $i= 0, \dots, 2r$. Then 
\[
\omega_f \otimes \omega_A^i \eta_A^{2r-i} \in \Fil^{r+1}P H^{k-1}_{\dR}(\mathcal{A}^r/F) \otimes \Sym^{2r} eH^1_{\dR}(A/F) \cong \Fil^{k-1}\varepsilon H^{4r+1}_{\dR}(\mathcal{X}_r/F).
\]
See \cite{Brooks}, or \cite[pp. 1060--1062]{BDP} for an explanation in the case of modular curves.

\begin{rmk}
	We can restate \cite[Proposition 7.4]{Brooks} as
\[
\big\langle\log_{BK}(\xi_{\varphi}),\omega_f \otimes \omega_A^i \eta_A^{r-i}\big\rangle = d^i G_i(A',t',\omega'),
\]
	where $\xi_{\varphi}$ is the image of $\Delta_{\varphi}$ under the usual Abel--Jacobi map.
	Indeed, isomorphism \eqref{poincare-eq} sends $\log_{BK}(\xi_{\varphi}) \in \frac{\varepsilon H^{4r+1}_{\dR}(\mathcal{X}_r/F)}{\Fil^{k-1}\varepsilon H^{4r+1}_{\dR}(\mathcal{X}_r/F)}$ to $\big\langle\log_{BK}(\xi_{\varphi}),-\big\rangle$.
\end{rmk}

\begin{rmk} \label{log}
We want to apply the Bloch--Kato logarithm to the localization at $\mathfrak{p}$ of the generalized Heegner classes $z_{\mathfrak{a}}$ (defined in \S \ref{GHclasses}), regarded as elements of $H^1(K_c, T \otimes \chi)$ via the map $\id \otimes e_{\chi}$ (defined in \S \ref{chi-comp}), and also to the classes $z_{\chi} \in H^1(K,T \otimes \chi)$ that will be defined in \eqref{z.chi}. Recall the choice of the prime $\mathfrak{p}$ of $K$ over $p$. If $M$ is a finite extension of $K$, then we write $M_{\mathfrak{p}}$ for the completion of $M$ at the prime over $\mathfrak{p}$ determined by the fixed embedding $i_p: \overline{\mathbb{Q}} \hookrightarrow \mathbb{C}_p$ and denote by $\loc_{\mathfrak{p}}$ the localization map
\[
\loc_{\mathfrak{p}} : H^1(M,V) \longrightarrow H^1(M_{\mathfrak{p}}, V),
\]
for any representation $V$ of $G_M$.
Thus, denote by $\log_{\mathfrak{p}}$ the composition of the localization at $\mathfrak{p}$ with $\log_{BK}$, so that $\log_{\mathfrak{p}}(z_{\mathfrak{a}})$ (respectively, $\log_{\mathfrak{p}}(z_{\chi})$) is the image of the localization $ \loc_{\mathfrak{p}}(z_{\mathfrak{a}}) \in H^1_f(K_{c,\mathfrak{p}},T \otimes \chi)$ (respectively, of the localization $ \loc_{\mathfrak{p}}(z_{\chi}) \in H^1_f(K_{\mathfrak{p}},T \otimes \chi)$) via the Bloch--Kato logarithm. 
%
%%%%%%%%%%
%%%%%%%%%%
%
%
Since $V_f$ can be realized as a quotient of $P H^{k-1}_{\et}(\overline{\mathcal{A}}^{r},\mathbb{Z}_p)$, by the comparison isomorphism recalled above we get maps
\[
\begin{split}
&P H^{k-1}_{\dR}(\mathcal{A}^r/K_{c,\mathfrak{p}}) \cong \text{DR}_{K_{c,\mathfrak{p}}}\bigl(P H^{k-1}_{\et}(\overline{\mathcal{A}}^{r},\mathbb{Q}_p)\bigr) \longrightarrow \text{DR}_{K_{c,\mathfrak{p}}}(V_f),\\
&\Sym^{2r}eH^{1}_{\dR}(A/K_{c,\mathfrak{p}}) \cong \text{DR}_{K_{c,\mathfrak{p}}}\bigl(\Sym^{2r}eH^{1}_{\et}(A/K_{c,\mathfrak{p}})\bigr).
\end{split}
\]
Then, we can view the $\omega_f \otimes \omega_A^{j+k/2-1}\eta_A^{k/2 - j -1}$ as elements of $\Fil^{k-1}\text{DR}_{K_{c,\mathfrak{p}}}\bigl(V_f\otimes \Sym^{2r}eH^1_{\et}(\overline{A},\mathbb{Q}_p)\bigr)$, which we will denote in the same way. 
Let $t\in B_{\dR}$ denotes Fontaine's $p$-adic analogue of $2\pi i$ associated with the compatible sequence $\{i_p(\zeta_{p^n})\}$.
Hence, the elements $\omega_f \otimes \omega_A^{j+k/2-1}\eta_A^{k/2 - j -1} \otimes t^{1-k}$ live in $\Fil^0\text{DR}_F\bigl(V_f\otimes \Sym^{2r}eH^1_{\et}(\overline{A},\mathbb{Q}_p)(k-1)\bigr)$, so we can view them in $\Fil^0\text{DR}_F\bigl(V_f(k/2)\otimes \chi\bigr)$.
Because of the functoriality of the logarithm and because the comparison isomorphism preserves the dualities, there is an equality
\[
\big\langle\log_{\mathfrak{p}}(z_{\mathfrak{a}}), \omega_f \otimes \omega_A^{j+k/2-1}\eta_A^{k/2 - j -1} \otimes t^{1-k}\big\rangle = \big\langle\log_{\mathfrak{p}}(\xi_{\phi_{\mathfrak{a}}}),\omega_f \otimes \omega_A^{j+k/2-1}\eta_A^{k/2 - j -1}\big\rangle,
\]
where $\xi_{\phi_{\mathfrak{a}}}$ is the image of $\Delta_{\mathfrak{a}}$ through the usual Abel--Jacobi map $\AJ_{p,K_{c}}$, $\omega_f \otimes \omega_A^{j+k/2-1}\eta_A^{k/2 - j -1} \otimes t^{1-k} \in \Fil^0\text{DR}_{K_{c,\mathfrak{p}}}\bigl(V_f(k/2)\otimes \chi\bigr)$, $\omega_f \otimes \omega_A^{j+k/2-1}\eta_A^{k/2 - j -1} \in \Fil^{k-1} \varepsilon H^{4r+1}_{\dR}(\mathcal{X}_r/K_{c,\mathfrak{p}})$,
$\log_{\mathfrak{p}}(z_{\mathfrak{a}})$ is viewed in $\text{DR}_{K_{c,\mathfrak{p}}}\bigl(V_f(k/2)\otimes \chi\bigr)/\Fil^0\text{DR}_{K_{c,\mathfrak{p}}}\bigl(V_f(k/2)\otimes \chi\bigr)$ and $\log_{\mathfrak{p}}(\xi_{\mathfrak{a}})$ is viewed, as in \cite{Brooks}, in $\varepsilon H^{4r+1}_{\dR}(\mathcal{X}_r/K_{c,\mathfrak{p}})/\Fil^{k-1} \varepsilon H^{4r+1}_{\dR}(\mathcal{X}_r/K_{c,\mathfrak{p}})$
\end{rmk}

Evaluating the $p$-adic $L$-function at the $p$-adic avatar of an anticyclotomic Hecke character of infinity type
$(k/2+j, -k/2-j)$ with $ -k/2<j<k/2$ and conductor $p^n\mathcal{O}_K$ with $n \geq 1$, we get $\theta^{-j-k/2}\hat{f}^{(p)} (x(c_0p^n)^{\sigma_{\mathfrak{a}}})$, with $x(c_0p^n)$ the CM point defined in \S \ref{CMpoints}. Now we want to relate this expression to the image of the Abel--Jacobi map of certain Heegner classes.

\begin{lem} \label{keylem}
Set $z^{(p)}_{\mathfrak{a}} := z_{\mathfrak{a}} - a_p p^{2j-1} z_{\mathfrak{a}\mathcal{O}_{c_0p^{n-1}}} + p^{4j+k-3} z_{\mathfrak{a}\mathcal{O}_{c_0p^{n-2}}}$. Then 
\[
\begin{split}
\theta^{-j-k/2}\hat{f}^{(p)} (x(c_0p^n)^{\sigma_{\mathfrak{a}}}) = \frac{(c_0p^nN(\mathfrak{a}))^{-j-k/2+1}}{\Omega_p^{2j}(j+k/2-1)!}\cdot \big\langle\log_{\mathfrak{p}}(z^{(p)}_{\mathfrak{a}}), \omega_f \otimes \omega_A^{j+k/2-1}\eta_A^{k/2 - j -1} \otimes t^{1-k}\big\rangle.
\end{split}
\]
\end{lem}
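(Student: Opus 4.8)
The plan is to follow the proof of \cite[(4.9)]{CH} step by step, substituting the quaternionic ingredients of \cite{Brooks}---namely \cite[Theorem 7.3, Lemma 8.6 and Proposition 7.4]{Brooks}---for \cite[Proposition 3.24, Lemmas 3.23 and 3.22]{BDP}. First I would unwind the left-hand side. By \eqref{x^mtheta_neg} and the definition of the measure $\mu_{\hat f^{(p)}_{\mathfrak a}}$, passing to a character of infinity type $(k/2+j,-k/2-j)$ with $-k/2<j<k/2$ amounts to applying the Serre--Tate operator $\theta^{-j-k/2}$ to the $t$-expansion $\hat f^{(p)}_{\mathfrak a}(t_{\mathfrak a})$ and setting $t_{\mathfrak a}=1$. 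Using $\hat f^{(p)}_{\mathfrak a}(t) = \hat f^{(p)}(t^{N(\mathfrak a^{-1})\sqrt{-D_K}^{-1}})$, the chain rule for $\theta=t\frac{d}{dt}$ under this substitution, and Lemma \ref{3.2} to identify the relevant Serre--Tate coordinate at $x(c_0p^n)^{\sigma_{\mathfrak a}}$, this rewrites the left-hand side as a scalar multiple---by an explicit power of $c_0p^nN(\mathfrak a)$---of $\theta^{-j-k/2}\hat f^{(p)}(x(c_0p^n)^{\sigma_{\mathfrak a}})$.

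Next I would translate $\theta$ into de Rham cohomology. The key input is \cite[Theorem 7.3]{Brooks}, which identifies $\theta$ on $t$-expansions at an ordinary CM point with (the $e$-component of) the Gauss--Manin connection on $H^1_{\dR}$ of the universal QM abelian surface; iterating it $m=j+k/2-1$ times and invoking \cite[Lemma 8.6]{Brooks} (the analogue of \cite[Lemma 3.22]{BDP}) produces the factorial $(j+k/2-1)!$. Feeding this into \cite[Proposition 7.4]{Brooks}, restated as in the Remark preceding the lemma, and then into Remark \ref{log} to pass from the Abel--Jacobi class $\xi_{\phi_{\mathfrak a}}$ of $\Delta_{\phi_{\mathfrak a}}$ to the Galois class $z_{\mathfrak a}\in H^1(K_c,T\otimes\chi)$, I expect to obtain, for the \emph{geometric} modular form $f$,
\[
\theta^{-j-k/2}\hat f(x(\mathfrak a)) = \frac{N(\mathfrak a)^{-j-k/2+1}}{\Omega_p^{2j}(j+k/2-1)!}\,\big\langle\log_{\mathfrak p}(z_{\mathfrak a}),\ \omega_f\otimes\omega_A^{j+k/2-1}\eta_A^{k/2-j-1}\otimes t^{1-k}\big\rangle,
\]
the period $\Omega_p^{2j}$ entering through $\omega_A=\Omega_p\hat\omega_A$ as in \S\ref{6.3} (note $\eta_A$ scales inversely to $\omega_A$, so $\omega_A^{j+k/2-1}\eta_A^{k/2-j-1}$ picks up exactly $\Omega_p^{2j}$).

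Finally I would account for the $p$-depletion. Writing $\hat f^{(p)} = \hat f\mid(\id-UV) = \hat f\mid(\id - T_pV + \tfrac1p[p]V^2)$ and using the action of $V$, $[p]$ and $T_p$ on $t$-expansions recalled in \S\ref{pmodform}, together with the fact---already exploited in the proofs of Propositions \ref{4.4} and \ref{4.5}---that these operators on a QM abelian surface are determined component by component in the $e$-decomposition by the corresponding operators on the associated CM elliptic curve, the operator $\id - T_pV + \tfrac1p[p]V^2$ should transform $\theta^{-j-k/2}\hat f(x(c_0p^n)^{\sigma_{\mathfrak a}})$ into the same expression with $z_{\mathfrak a}$ replaced by $z_{\mathfrak a} - a_p p^{2j-1} z_{\mathfrak a\mathcal O_{c_0p^{n-1}}} + p^{4j+k-3} z_{\mathfrak a\mathcal O_{c_0p^{n-2}}} = z^{(p)}_{\mathfrak a}$, the powers $p^{2j-1}$ and $p^{4j+k-3}$ being forced by the weight-$(-2j)$ normalisations built into $V$, $[p]$ and the extra $p^{-1}$ in $[p]V^2$, and $a_p$ being the $T_p$-eigenvalue of $f$. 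Collecting the three displays and absorbing $N(\mathfrak a)^{-j-k/2+1}$ into $(c_0p^nN(\mathfrak a))^{-j-k/2+1}$ would give the claim.

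The hard part will be this last step: the precise bookkeeping of the $p$-power coefficients $a_pp^{2j-1}$ and $p^{4j+k-3}$ and the matching of the three terms of $\id-UV$ with the three Heegner classes $z_{\mathfrak a},\ z_{\mathfrak a\mathcal O_{c_0p^{n-1}}},\ z_{\mathfrak a\mathcal O_{c_0p^{n-2}}}$, which requires reconciling the weight factors in the definitions of $U$, $V$, $[p]$ with the degrees of the isogenies $\phi_{\mathfrak a}$ and with the normalisation of the Serre--Tate coordinate in Lemma \ref{3.2}, all carried out $e$-component by $e$-component so as to reduce to the elliptic-curve computation of \cite[\S 4.5]{CH}. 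A secondary subtlety is checking that the comparison isomorphism and the Poincaré duality used in Remark \ref{log} are compatible with the passage from $\omega_A$ to $\hat\omega_A$, so that $\Omega_p$ appears with exactly the exponent $2j$.
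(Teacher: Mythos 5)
Your proposal follows essentially the same route as the paper's proof: extract $\Omega_p^{-2j}$ from the weight $-2j$ of $\theta^{-j-k/2}f^{(p)}$ together with $\hat\omega_{c_0p^n}=\Omega_p^{-1}\omega_{c_0p^n}$, apply Brooks's Theorem 7.3 to pass to the component $\tilde g_{j+k/2-1}$ of the Coleman primitive (whence the factorial), Lemma 8.6 to expand the $p$-depleted primitive over the three CM points, and Proposition 7.4 together with Remark \ref{log} to land on the Bloch--Kato logarithms of $z_{\mathfrak{a}}$, $z_{\mathfrak{a}\mathcal{O}_{c_0p^{n-1}}}$, $z_{\mathfrak{a}\mathcal{O}_{c_0p^{n-2}}}$. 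The only adjustment is that your final ``hard part'' (tracking the action of $\id - T_pV + \tfrac1p[p]V^2$ on $t$-expansions by hand) is exactly what Brooks's Lemma 8.6 already packages --- it is the $p$-depletion step, not part of producing the factorial --- and the coefficients $-a_pp^{2j-1}$ and $p^{4j+k-3}$ then fall out mechanically by combining Lemma 8.6's coefficients with the conductor factors $(c_0p^{n-i}N(\mathfrak{a}))^{j+k/2-1}$ coming from Proposition 7.4.
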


\begin{proof}
We have
\[
\begin{split}
\theta^{-j-k/2}\hat{f}^{(p)}\bigl(x(c_0p^n)^{\sigma_{\mathfrak{a}}}\bigr) &= \theta^{-j-k/2}f^{(p)} (A_{c_0p^n}, \iota_{c_0p^n}, \nu_{N^+,c_0p^n}, \hat{\omega}_{c_0p^n})^{\sigma_{\mathfrak{a}}}\\
&= \left(\frac{1}{\Omega_p}\right)^{2j} \theta^{-j-k/2}f^{(p)} \bigl(\mathfrak{a} \star (A_{c_0p^n}, \iota_{c_0p^n}, \nu_{N^+,c_0p^n}, \omega_{c_0p^n})\bigr),
\end{split}
\]
as the form $\theta^{-j-k/2}f^{(p)}$ has weight $-2j$ and, by definition, $\hat{\omega}_{c_0p^n} = (1/\Omega_p) \omega_{c_0p^n}$, with $\omega_{c_0p^n}$ induced by $\omega_A$. Let $\tilde{g}_{j+k/2-1}^{(p)}$ be the ${j+k/2-1}$-st component of the Coleman primitive of $\omega_{f^{(p)}}$. Applying 
\cite[Theorem 7.3]{Brooks} yields
\[
\begin{split}
\theta^{-j-k/2}\hat{f}^{(p)} (x(c_0p^n)^{\sigma_{\mathfrak{a}}}) &= \frac{1}{\Omega_p^{2j}(j+k/2-1)!} \cdot\tilde{g}_{j+k/2-1}^{(p)} \bigl(\mathfrak{a} \star (A_{c_0p^n}, \iota_{c_0p^n}, \nu_{N^+,c_0p^n}, \omega_{c_0p^n})\bigr).
\end{split}
\]
Writing $x_{\mathfrak{a}}$ for $\mathfrak{a} \star(A_{c_0p^n}, \iota_{c_0p^n}, \nu_{N^+,c_0p^n}, \omega_{c_0p^n}) = (A_{\mathfrak{a}}, \iota_{\mathfrak{a}}, \nu_{N^+,\mathfrak{a}}, \omega_{\mathfrak{a}})$, by  
\cite[Lemma 8.6]{Brooks} one has
\[
\begin{split}
\theta^{-j-k/2}\hat{f}^{(p)}\bigl(x(c_0p^n)^{\sigma_{\mathfrak{a}}}\bigr) &= \frac{1}{\Omega_p^{2j}(j+k/2-1)!} \ \Big[ \ \tilde{g}_{j+k/2-1} (x_{\mathfrak{a}}) \\ &- \frac{a_pp^{-j-k/2}}{p^{-2j}} \ \tilde{g}_{j+k/2-1} (x_{\mathfrak{a}\mathcal{O}_{c_0p^{n-1}}}) \\ &+ \frac{1}{p^{-2j +1}} \ \tilde{g}_{j+k/2-1} (x_{\mathfrak{a}\mathcal{O}_{c_0p^{n-2}}}) \Big].
\end{split}
\]
Then, applying
\cite[Proposition 7.4]{Brooks}, and keeping Remark \ref{log} in mind, we get
\[
\begin{split}
&\theta^{-j-k/2}\hat{f}^{(p)}\bigl(x(c_0p^n)^{\sigma_{\mathfrak{a}}}\bigr) =\\ 
&=\frac{1}{\Omega_p^{2j}(j+k/2-1)!} \cdot \Big[ \ \frac{1}{(c_0p^{n}N(\mathfrak{a}))^{j+k/2-1}} \big\langle\log_{\mathfrak{p}}(z_{\mathfrak{a}}), \omega_f \otimes \omega_A^{j+k/2-1}\eta_A^{k/2 - j -1}\otimes t^{1-k}\big\rangle\\ 
&-\frac{a_p p^{j-k/2}}{(c_0p^{n-1}N(\mathfrak{a}))^{j+k/2-1}} \cdot \big\langle\log_{\mathfrak{p}}(z_{\mathfrak{a}\mathcal{O}_{c_0p^{n-1}}}), \omega_f \otimes \omega_A^{j+k/2-1}\eta_A^{k/2 - j -1}\otimes t^{1-k}\big\rangle \\ 
&+\frac{p^{2j-1}}{(c_0p^{n-2}N(\mathfrak{a}))^{j+k/2-1}} \cdot \big\langle\log_{\mathfrak{p}}(z_{\mathfrak{a}\mathcal{O}_{c_0p^{n-2}}}), \omega_f \otimes \omega_A^{j+k/2-1}\eta_A^{k/2 - j -1}\otimes t^{1-k}\big\rangle \Big].
\end{split}
\]
Finally, if we set
\[
z^{(p)}_{\mathfrak{a}} := z_{\mathfrak{a}} - a_p p^{2j-1} z_{\mathfrak{a}\mathcal{O}_{c_0p^{n-1}}} + p^{4j+k-3} z_{\mathfrak{a}\mathcal{O}_{c_0p^{n-2}}},
\]
then 
we obtain
\[
\begin{split}
\theta^{-j-k/2}\hat{f}^{(p)}\bigl(x(c_0p^n)^{\sigma_{\mathfrak{a}}}\bigr) = \frac{(c_0p^nN(\mathfrak{a}))^{-j-k/2+1}}{\Omega_p^{2j}(j+k/2-1)!}\cdot\big\langle\log_{\mathfrak{p}}(z^{(p)}_{\mathfrak{a}}), \omega_f \otimes \omega_A^{j+k/2-1}\eta_A^{k/2 - j -1} \otimes t^{1-k}\big\rangle,
\end{split}
\]
as was to be shown. \end{proof}

\subsection{A $p$-adic Gross--Zagier formula}

In this section we finally state and prove the $p$-adic Gross--Zagier formula that we are interested in, which relates our $p$-adic $L$-function to the Bloch--Kato logarithm of generalized Heegner classes.

First, we define a cohomology class $z_{\chi} \in H^1(K,T \otimes \chi)$ associated with $f$ and $\chi$, which will be linked with the $p$-adic $L$-function by the $p$-adic Gross--Zagier type formula. Recall that $f\in S_k^{\text{new}}(\Gamma_0(N))$ is our fixed modular form and
$\chi: \Gal(K_{c_0p^{\infty}}/K) \rightarrow \mathcal{O}_F^{\times}$
is a locally algebraic anticyclotomic character of infinity type $(j,-j)$ with $-k/2 < j < k/2$ and conductor $c_0p^n\mathcal{O}_K$, where $c_0$ is prime to $pN^+$. Put
\begin{equation} \label{z.chi}
\begin{split}
z_{\chi} :&= \Cor_{K_{c_0p^n}/K} z_{c_0p^n,\chi}%\\
%&
= \sum_{\sigma \in \Gal(K_{c_0p^n}/K)} \sigma \cdot (id \otimes e_{\chi})(z_{c_0p^n}\otimes \chi_t)\\
&= \sum_{\sigma \in \Gal(K_{c_0p^n}/K)} (id \otimes e_{\chi})\bigl( \chi_t(\sigma) z_{c_0p^n}^{\sigma}\bigr)%\\
%&
= \sum_{\mathfrak{a} \in \Pic\mathcal{O}_{c_0p^n}} (id \otimes e_{\chi})\bigl( \chi_t(\sigma_{\mathfrak{a}}\bigr) z_{c_0p^n}^{\sigma_{\mathfrak{a}}})\\
&= \sum_{\mathfrak{a} \in \Pic\mathcal{O}_{c_0p^n}} \chi \varepsilon_{cyc}^{-r}(\sigma_{\mathfrak{a}})(id \otimes e_{\chi})(z_{\mathfrak{a}}),
\end{split}
\end{equation}
where the last equality holds by Proposition \ref{4.5}.

It is convenient to use, in the statement of the following theorem, the symbol $\doteq$ to indicate that the claimed equality holds up to an explicit non-zero multiplicative factor that is comparatively less important than the main terms. Recall that $p\mathcal O_K= \mathfrak{p} \overline{\mathfrak{p}}$ with $ \mathfrak{p}$ and $\overline{\mathfrak{p}}$ distinct maximal ideals of $\mathcal O_K$.

\begin{teor} \label{GZ}
Let $\psi$ be an anticyclotomic Hecke character of infinity type $(k/2, -k/2)$ and conductor $c_0\mathcal{O}_K$ with $(c_0,Np) = 1$. If $\hat{\phi} : \Gal(K_{c_0p^{\infty}}/K) \rightarrow \mathcal{O}_{\mathbb{C}_{p}}^{\times}$ is the $p$-adic avatar of an anticyclotomic Hecke character $\phi$ of infinity type
$(k/2+j, -k/2-j)$ with $ -k/2<j<k/2$ and conductor $p^n\mathcal{O}_K$, $n \geq 1$, then 
	\begin{displaymath}
	\frac{\LL_{\mathfrak{p},\psi}(f)(\hat{\phi}^{-1})}{\Omega_p^{*}}\doteq  
	\big\langle\log_\mathfrak{p}(z_{f,\chi}),\omega_{f}\otimes \omega_A^{k/2 + j -1}\eta_A^{k/2 -j -1} \otimes t^{1-k}\big\rangle,
	\end{displaymath}
where, as before, $\chi := \hat{\psi}^{-1}\hat{\phi}$ and $ \log_{\mathfrak{p}} := \log_{BK} \circ \loc_{\mathfrak{p}}$.
\end{teor}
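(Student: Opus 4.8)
The plan is to reduce the statement, via the coset decomposition of $\Gal(K_{c_0p^{\infty}}/K)$ and the twisting formula \eqref{twistphi}, to the key formula of Lemma \ref{keylem}, and then to collect the contributions of all ideal classes into the single class $z_{f,\chi}$ of \eqref{z.chi}. First I would unwind the definition of $\LL_{f,\psi}$ from Definition \ref{Lfct} (in the reformulated shape displayed after it) evaluated at the locally algebraic character $\hat{\phi}^{-1}$, which has infinity type $(-k/2-j, k/2+j)$ and $p$-power conductor $p^n$. Writing $\hat{\phi}^{-1}$ on each coset $\mathfrak{a}^{-1}\mathbb{Z}_p^{\times}$ as a product of its value on $\mathfrak{a}$ times the character $u\mapsto u^{?}\cdot(\text{finite part})$ on $\mathbb{Z}_p^{\times}$, one sees that the integral over $\mathbb{Z}_p^{\times}$ against $\mu_{\hat f^{(p)}_{\mathfrak a}}$ is exactly a twisted $\theta$-evaluation: by \eqref{x^mtheta_neg} and Proposition \ref{3.3} it becomes a Gauss-sum-weighted sum of values $\theta^{-j-k/2}\hat f^{(p)}\bigl(x(\mathfrak{a})\star\alpha(u/p^n)\bigr)$, and by Lemma \ref{3.2} each such point is $x(c_0p^n)^{\sigma}$ for an appropriate $\sigma\in\Gal(K_{c_0p^n}/K)$. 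Thus the left-hand side is, up to explicit nonzero constants ($p$-powers, $N(\mathfrak{a})$-powers, the Gauss sum $G(\phi_{\text{fin}})$, and the period ratio $\Omega_p^{*}$), a sum over $\mathfrak{a}\in\Pic\mathcal{O}_{c_0p^n}$ of $\theta^{-j-k/2}\hat f^{(p)}\bigl(x(c_0p^n)^{\sigma_{\mathfrak a}}\bigr)$.

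Next I would feed each term of that sum into Lemma \ref{keylem}, which rewrites $\theta^{-j-k/2}\hat f^{(p)}\bigl(x(c_0p^n)^{\sigma_{\mathfrak a}}\bigr)$ as $\tfrac{(c_0p^nN(\mathfrak a))^{-j-k/2+1}}{\Omega_p^{2j}(j+k/2-1)!}\cdot\big\langle\log_{\mathfrak p}(z^{(p)}_{\mathfrak a}),\,\omega_f\otimes\omega_A^{j+k/2-1}\eta_A^{k/2-j-1}\otimes t^{1-k}\big\rangle$, with $z^{(p)}_{\mathfrak a}=z_{\mathfrak a}-a_pp^{2j-1}z_{\mathfrak a\mathcal O_{c_0p^{n-1}}}+p^{4j+k-3}z_{\mathfrak a\mathcal O_{c_0p^{n-2}}}$. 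Here the $N(\mathfrak a)$-dependent factor is precisely what is needed to cancel the $N(\mathfrak a)$-powers produced on the $L$-function side by the normalization $\psi(\mathfrak a)N(\mathfrak a)^{-k/2}$ in Definition \ref{Lfct} together with the scaling $\hat f^{(p)}_{\mathfrak a}(t)=\hat f^{(p)}\bigl(t^{N(\mathfrak a^{-1})\sqrt{-D_K}^{-1}}\bigr)$; I would check this bookkeeping and absorb the residual constants into $\doteq$. Then, by linearity of $\log_{\mathfrak p}$ and of the de Rham pairing, the sum of the $\big\langle\log_{\mathfrak p}(z^{(p)}_{\mathfrak a}),-\big\rangle$ — weighted by the $\chi$- and $\chi_{\cyc}^{-r}$-factors coming from the twist and from Proposition \ref{4.5} — equals $\big\langle\log_{\mathfrak p}\bigl(\sum_{\mathfrak a}\chi\chi_{\cyc}^{-r}(\sigma_{\mathfrak a})(\id\otimes e_\chi)z^{(p)}_{\mathfrak a}\bigr),-\big\rangle$. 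Finally, the $p$-depletion telescoping (the three-term combination $z^{(p)}_{\mathfrak a}$ being exactly the one induced by $\id-T_pV+p^{-1}[p]V^2$ on the level of cycles) together with the norm-compatibility Proposition \ref{4.4} and the corestriction formula \eqref{z.chi} identifies $\sum_{\mathfrak a}\chi\chi_{\cyc}^{-r}(\sigma_{\mathfrak a})(\id\otimes e_\chi)z^{(p)}_{\mathfrak a}$ with $(\text{Euler factor at }p)\cdot z_{f,\chi}$; this Euler factor is nonzero under our hypotheses and is folded into $\doteq$.

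The main obstacle I expect is the last step: tracking the $p$-depletion across the tower and verifying that the alternating sum $\sum_{\mathfrak a}(\id\otimes e_\chi)z^{(p)}_{\mathfrak a}$, after applying the Galois characters and corestricting, collapses to a scalar multiple of $z_{f,\chi}$ rather than to some genuinely different class. Concretely one must match the three shifted terms $z_{\mathfrak a},\,z_{\mathfrak a\mathcal O_{c_0p^{n-1}}},\,z_{\mathfrak a\mathcal O_{c_0p^{n-2}}}$ with the output of $\Cor_{K_{c_0p^n}/K_{c_0p^{n-1}}}$ and $\Cor_{K_{c_0p^{n-1}}/K_{c_0p^{n-2}}}$ using Proposition \ref{4.4}, keeping careful track of the powers of $\chi_{\cyc}$ and of $a_p$ (and of the $p$-ordinarity, which controls the unit/non-unit nature of $a_p$ and hence the invertibility of the Euler factor); this is the quaternionic analogue of the computation in \cite[\S 4.5]{CH}, and the delicate point is that the Serre--Tate coordinate normalizations $x(\mathfrak a)\star\alpha(u/p^n)$ of Lemma \ref{3.2} must line up consistently with the $\star$-action of $\Pic\mathcal O_{c_0p^n}$ used in \eqref{z.chi}. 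Once that identification is in hand, the remaining steps are purely formal manipulations with the measure-theoretic and functoriality statements already established.
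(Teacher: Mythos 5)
Your proposal follows essentially the same route as the paper: unwind $\LL_{f,\psi}(\hat{\phi}^{-1})$ coset by coset, pull out $\theta^{-k/2-j}$ via \eqref{x^mtheta_neg}, convert the twisted $t$-expansion into a Gauss-sum-weighted sum over the points $x(\mathfrak{a})\star\alpha(u/p^n)=x(c_0p^n)^{\sigma}$ using Proposition \ref{3.3} and Lemma \ref{3.2}, feed each term into Lemma \ref{keylem}, and reassemble via \eqref{z.chi}; the $N(\mathfrak{a})$-bookkeeping you describe (the identity $(N(\mathfrak{a})\sqrt{-D_K})^{k/2+j}(\theta^{-j-k/2}\hat f^{(p)})_{\mathfrak{a}}=\theta^{-j-k/2}\hat f^{(p)}_{\mathfrak{a}}$) is exactly what the paper does.

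The one step where your mechanism is off is the final collapse of $\sum_{\mathfrak{a}}\chi\chi_{\cyc}^{1-k/2}(\sigma_{\mathfrak{a}})\,z^{(p)}_{\mathfrak{a}}$ to a multiple of $z_{\chi}$. You propose to match the shifted terms $z_{\mathfrak{a}\mathcal{O}_{c_0p^{n-1}}}$ and $z_{\mathfrak{a}\mathcal{O}_{c_0p^{n-2}}}$ against corestrictions via Proposition \ref{4.4} and to absorb a resulting Euler factor at $p$, whose invertibility you tie to $p$-ordinarity. That is not what happens, and it introduces a hypothesis the theorem does not make (no ordinarity is assumed here). The correct, and simpler, argument — the one in \cite[Theorem 4.9]{CH} that the paper invokes — is that the two auxiliary terms vanish outright: since $\phi$ has conductor exactly $p^n\mathcal{O}_K$ with $n\geq 1$, the character $\chi\chi_{\cyc}^{1-k/2}$ is nontrivial on the kernel of $\Pic\,\mathcal{O}_{c_0p^n}\to\Pic\,\mathcal{O}_{c_0p^{n-1}}$, so grouping the sum by the image $\mathfrak{b}$ of $\mathfrak{a}$ in the smaller Picard group produces an inner character sum $\sum_{\mathfrak{a}\mapsto\mathfrak{b}}\chi\chi_{\cyc}^{1-k/2}(\sigma_{\mathfrak{a}})=0$ multiplying each $z_{\mathfrak{b}}$. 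No Euler factor appears, Proposition \ref{4.4} is not needed at this point, and the remaining term is exactly $\sum_{\mathfrak{a}}\chi\chi_{\cyc}^{1-k/2}(\sigma_{\mathfrak{a}})(\id\otimes e_{\chi})z_{\mathfrak{a}}=z_{\chi}$ by \eqref{z.chi} and Proposition \ref{4.5}. With that correction your argument closes; as written, the last paragraph describes a harder computation than is required and would mislead you into thinking the result is conditional on ordinarity.
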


\begin{proof}
By definition of $\LL_{f,\psi}$ (cf. Definition \ref{Lfct}), one has
	\[
	\LL_{f,\psi}(\hat{\phi}^{-1}) 
	= \sum_{\Pic\mathcal{O}_{c_0}} \psi(\mathfrak{a})N(\mathfrak{a})^{-k/2} \int_{\mathbb{Z}_p^{\times}} \psi_{\mathfrak{p}}\hat{\phi}^{-1} \big|[\mathfrak{a}]d\mu_{\hat{f}^{(p)}_{\mathfrak{a}}}.
	\]
	Here $\hat{\phi}^{-1} \big|[\mathfrak{a}] : \mathbb{Z}_p^{\times} \rightarrow \mathbb{C}_p^{\times}$ is given by $\bigl(\hat{\phi}^{-1}\big|[\mathfrak{a}]\bigr)(x):= \hat{\phi}^{-1}(x) \hat{\phi}^{-1}(\mathfrak{a})$, where $x$ is viewed as an element of $\hat{K}^{\times}$ via the chosen embedding $\mathbb{Z}_p^{\times} \cong \mathcal{O}_{K,\mathfrak{p}} \hookrightarrow K_{\mathfrak{p}}^{\times} \hookrightarrow \hat{K}^{\times}$. It follows that 
	\[
	\bigl(\hat{\phi}^{-1}\big|[\mathfrak{a}]\bigr)(x)= \phi^{-1}(x)x_{\mathfrak{p}}^{-k/2-j}x_{\overline{\mathfrak{p}}}^{k/2+j} \phi^{-1}(a) = \phi^{-1}_{\mathfrak{p}}(x)x^{-k/2-j} \phi^{-1}(a),
	\]
	as $a \in \hat{K}^{(c_0p)\times}$ satisfies $a\hat{\mathcal{O}}_K\cap K = \mathfrak{a}$. Therefore
	\[
	\begin{split}
	\LL_{f,\psi}(\hat{\phi}^{-1})
	&= \sum_{\Pic(\mathcal{O}_{c_0})} \psi(\mathfrak{a})\phi^{-1}(\mathfrak{a})N(\mathfrak{a})^{-k/2} \int_{\mathbb{Z}_p^{\times}} \psi_{\mathfrak{p}}\phi^{-1}_{\mathfrak{p}}(x)x^{-k/2-j} d\mu_{\hat{f}^{(p)}_{\mathfrak{a}}}
	\end{split}
	\] 
Then, in light of (\ref{x^mtheta_neg}), we can bring out the differential operator $\theta = t_{\mathfrak{a}} \frac{d}{dt_{\mathfrak{a}}}$ from the integral of $x^{-k/2-j}$ and, using the fact that $x(\mathfrak{a})$ is the canonical lifting of $x(\mathfrak{a})\otimes_{\W} \overline{\mathbb{F}}_p$, we get
\[
\begin{split}
\LL_{f,\psi}(\hat{\phi}^{-1}) = \sum_{\Pic\mathcal{O}_{c_0}} \psi(\mathfrak{a})\phi^{-1}(\mathfrak{a})N(\mathfrak{a})^{-k/2} \theta^{-k/2-j} [\psi_{\mathfrak{p}}\phi^{-1}_{\mathfrak{p}}]\hat{f}^{(p)}_{\mathfrak{a}}\bigl(t_{\mathfrak{a}}(x_{\mathfrak{a}})\bigr),
\end{split}
\]
where $t_{\mathfrak{a}}$ is the Serre--Tate coordinate around the reduction $x(\mathfrak{a})\otimes_{\W} \overline{\mathbb{F}}_p$.

Setting $\xi := \psi^{-1} \phi$, then we have
\[
\begin{split}
\LL_{f,\psi}(\hat{\phi}^{-1}) &=\sum_{\Pic(\mathcal{O}_{c_0})} \psi(\mathfrak{a})\phi^{-1}(\mathfrak{a})N(\mathfrak{a})^{-k/2}  [\psi_{\mathfrak{p}}\phi^{-1}_{\mathfrak{p}}]\theta^{-k/2-j}\hat{f}^{(p)}_{\mathfrak{a}}\bigl(t_{\mathfrak{a}}(x_{\mathfrak{a}})\bigr)\\
&= \sum_{\Pic(\mathcal{O}_{c_0})} \xi^{-1}(\mathfrak{a})N(\mathfrak{a})^{-k/2}  [\xi^{-1}_{\mathfrak{p}}]\theta^{-k/2-j}\hat{f}^{(p)}_{\mathfrak{a}}\bigl(t_{\mathfrak{a}}(x_{\mathfrak{a}})\bigr).
\end{split}
\]
Note that, since $\xi^{-1}$ is a character of conductor $c_0p^n\mathcal{O}_K$, $\xi_{\mathfrak{p}}^{-1}$ is a primitive Dirichlet character mod $p^n$ via the isomorphism $(\mathbb{Z}/p^n\mathbb{Z})^{\times} \cong(\mathcal{O}_{K,\mathfrak{p}}/\mathfrak{p}^n)^{\times} \cong \mathcal{O}_{K,\mathfrak{p}}^{\times}/1 + \mathfrak{p}^n$.
By Proposition \ref{3.3}, since $(N(\mathfrak{a})\sqrt{-D_K})^{k/2+j}(\theta^{-j-k/2}\hat{f}^{(p)})_{\mathfrak{a}} = \theta^{-j-k/2}\hat{f}^{(p)}_{\mathfrak{a}}$, we obtain
\[
\begin{split}
\LL_{f,\psi}(\hat{\phi}^{-1}) &=\sum_{\Pic(\mathcal{O}_{c_0})} \xi^{-1}(\mathfrak{a})N(\mathfrak{a})^{-k/2}
\bigl(N(\mathfrak{a})\sqrt{-D_K}\bigr)^{k/2+j} p^{-n}G(\xi_{\mathfrak{p}}^{-1})\\
&\cdot\sum_{u\in (\mathbb{Z}/p^n\mathbb{Z})^{\times}} \xi_{\mathfrak{p}}(u) \theta^{-k/2-j} \hat{f}^{(p)}\bigl(x_{\mathfrak{a}}\star \alpha(u/p^n)\bigr).\\
\end{split}
\]
 For positive exponents one obtains $\bigl(N(\mathfrak{a})\sqrt{-D_K}\bigr)^{-m}(\theta^{m}\hat{f}^{(p)})_{\mathfrak{a}} = \theta^{m}\hat{f}^{(p)}_{\mathfrak{a}}$ by an easy computation; because $\theta^{m} = \lim_{i \to \infty} \theta^{m+(p-1)p^i}$, one has the same formula for negative exponents.
Now, as in the proof of \cite[Theorem 4.9]{CH} and applying Lemma \ref{keylem}, we obtain
\[
\begin{split}
\frac{\LL_{f,\psi}(\hat{\phi}^{-1})}{\Omega_p^{-2j}} &=\frac{c_0^{-j-k/2+1}(\sqrt{-D_K})^{k/2+j} p^{n(-j-k/2)}G(\xi_{\mathfrak{p}}^{-1})\chi_{\mathfrak{p}}(p^n)}{(j+k/2-1)!}\\
&\cdot \sum_{\Pic(\mathcal{O}_{c_0p^n})} \chi \chi_{\cyc}^{1-k/2}(\sigma_{\mathfrak{a}})
\big\langle\log_{\mathfrak{p}}(z^{(p)}_{\mathfrak{a}}), \omega_f \otimes \omega_A^{j+k/2-1}\eta_A^{k/2 - j -1}\otimes t^{1-k}\big\rangle.
\end{split}
\]
Finally, by the argument in \cite[Theorem 4.9]{CH} and by (\ref{z.chi}), we get
\[
\frac{\LL_{f,\psi}(\hat{\phi}^{-1})}{\Omega_p^{-2j}}= C'\cdot
\big\langle\log_{\mathfrak{p}}(z_{\chi}), \omega_f \otimes \omega_A^{j+k/2-1}\eta_A^{k/2 - j -1} \otimes t^{1-k}\big\rangle,
\]
with
\[
C':=\frac{c_0^{-j-k/2+1}(\sqrt{-D_K})^{k/2+j} p^{n(-j-k/2)}G(\xi_{\mathfrak{p}}^{-1})\chi_{\mathfrak{p}}(p^n)}{(j+k/2-1)!},
\]
and the theorem is proved.
\end{proof}

\section{Reciprocity law and Selmer groups}

In this last section we want to extend to our setting the reciprocity law of \cite[Theorem 5.7]{CH}, relaxing the Heegner hypothesis and making use of generalized Heegner cycles on generalized Kuga--Sato varieties over Shimura curves and our $p$-adic $L$-function. This result will be important to prove (under certain assumptions) the vanishing of the Selmer group associated with the twisted representation $V_{f,\chi} := V_f(k/2) \otimes \chi$.

\subsection{The algebraic anticyclotomic $p$-adic $L$-function $\mathcal{L}_{\mathfrak{p},\psi}$}

We now construct an algebraic $p$-adic $L$-function as a sort of image of some Iwasawa cohomology class, coming from generalized Heegner classes, under a big logarithm map.
Assume for this section that our modular form $f\in S_k^{new}(\Gamma_0(N))$ is $p$-ordinary, i.e., that the $p$-th Fourier coefficient $a_p$ is a unit of $\mathcal{O}_F$.

\subsubsection{Perrin-Riou's big logarithm}

Let $G$ be a commutative compact $p$-adic Lie group and $L$ a complete discretely valued extension of $\mathbb{Q}_p$.  
Recall that a \emph{$p$-adic Lie group} is a group $G$ endowed with a structure of a manifold over $\mathbb{Q}_p$ such that the group operation is locally analytic. For the definitions of a manifold over $\mathbb{Q}_p$ and of a locally analytic map, the reader is referred to \cite{Schneider}.

Consider the noetherian topological
$O_L$-algebra $\mathcal{O}_L \llbracket G \rrbracket$; if $L/\mathbb{Q}_p$ is a finite extension then it is compact. Put $\Lambda_{L}(G) := L \otimes_{\mathcal{O}_L} \mathcal{O}_L \llbracket G \rrbracket$, which is also noetherian; it is isomorphic to
the continuous dual of the space $C(G, L)$ of continuous $L$-valued functions on $G$ (cf. \cite[\S 2.2]{LZ14}). 
Now let
$\mathcal{H}_{L}(G)$ denote the space of $L$-valued locally analytic distributions on $G$
i.e., the continuous
dual of the space $C^{la}(G, L)$ of $L$-valued locally analytic functions on $G$.
There is an injective algebra homomorphism \[\Lambda_L(G) \longmono \mathcal{H}_L(G)\]
(see \cite[Proposition 2.2.7]{Eme}), dual to the dense inclusion $C^{la}(G, L) \hookrightarrow C(G, L)$. We
endow $\mathcal{H}_L(G)$ with its natural topology as an inverse limit of Banach spaces, with
respect to which the map $\Lambda_L(G) \hookrightarrow \mathcal{H}_L(G)$ is continuous.

If $L$ is a finite unramified extension of $\mathbb{Q}_p$ and $G$ is the Galois group of a $p$-adic Lie extension $L_{\infty}=\cup_n L_n$ with $L_n/L$ finite and Galois, then define the Iwasawa cohomology group
\begin{displaymath}
H^1_{\Iw}(L_{\infty},V):=\Bigl(\varprojlim_n H^1(L_n,T)\Bigr)\otimes_{\mathbb{Z}_p} \mathbb{Q}_p,
\end{displaymath}
where $V$ is a $p$-adic $G_L$-representation and $T$ is a Galois stable lattice. The definition is independent from the choice of $T$. 
Now let $\hat{F}^{\text{ur}}$ denote the composite of $\hat{\mathbb{Q}}_p^{\text{ur}}$ with a finite extension $F$ of $\mathbb{Q}_p$.

Suppose that $V$ is a crystalline $F$-representation of $G_L$ with non negative Hodge--Tate weights (for us the $p$-adic cyclotomic character has Hodge--Tate weight $+1$) and that $V$ has no quotient isomorphic to the trivial representation. Let $\mathfrak{F}$ be a relative height one Lubin--Tate formal group over $\mathcal{O}_L/\mathbb{Z}_p$
and let $\Gamma := \Gal(L(\mathfrak{F}_{p^{\infty}})/L) \cong \mathbb{Z}_p^{\times}$.
Let $B_{\crys}$ be the Fontaine's crystalline period ring and put $\mathbf{D}_{\crys,L}(V):= (V \otimes_{\mathbb{Q}_p} B_{\crys})^{G_L}$. Assume that $V^{G_{L(\mathfrak{F}_{p^\infty})}} = 0$.

\begin{teor} \label{5.1}
	There exists a $\mathbb{Z}_p\llbracket \Gamma \rrbracket$-linear map
	\begin{displaymath}
	\mathcal{L}_V : H^1_{\Iw}(L(\mathfrak{F}_{p^{\infty}}),V) \longrightarrow \mathcal{H}_{\hat{F}^{ur}}(\Gamma) \otimes_L \mathbf{D}_{\crys,L}(V)
	\end{displaymath}
	such that for any $\mathbf{z} \in H^1_{\Iw}(L(\mathfrak{F}_{p^{\infty}}),V)$ and any locally algebraic character $\chi : \Gamma \rightarrow \overline{\mathbb{Q}}_p^{\times}$ of Hodge-Tate weight $j$ and conductor $p^n$ there is an equality
	\begin{displaymath}
	\mathcal{L}_V(\mathbf{z})(\chi)= \varepsilon(\chi^{-1}) \cdot  \frac{\Phi^n P(\chi^{-1},\Phi)}{P(\chi,p^{-1}\Phi^{-1})} \cdot 
	\begin{cases} 
	\frac{(-1)^{-j-1}}{(-j-1)!} \cdot \log_{L,V \otimes \chi^{-1}} (\mathbf{z^{\chi^{-1}}})\otimes t^{-j} & \text{if}\ j<0, \\[2mm]
	j! \cdot \exp^*_{L,(V \otimes \chi^{-1})^*(1)} (\mathbf{z^{\chi^{-1}}})\otimes t^{-j} & \text{if}\ j \geq 0,
	\end{cases}
	\end{displaymath}
	where
	\begin{itemize}
		\item $\varepsilon(\chi^{-1})$ and $P(\chi^{\pm1},-)$ are the $\varepsilon$-factor and the $L$-factor (see \cite[p. 8]{LZ14});
		\item $\Phi$ denotes the crystalline Frobenius operator on $\overline{\mathbb{Q}}_p \otimes_L \mathbf{D}_{\crys,L}(V)$ acting trivially on first factor;
		\item $\mathbf{z^{\chi^{-1}}} \in H^1(L,V \otimes \chi^{-1})$ is the specialization of $\mathbf{z}$ at $\chi^{-1}$.
	\end{itemize}
\end{teor}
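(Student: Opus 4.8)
The plan is to deduce this directly from the work of Loeffler and Zerbes, of which the statement is essentially a special case. The map $\mathcal{L}_V$ is the Perrin-Riou big logarithm (or \emph{regulator map}) attached to the crystalline $G_L$-representation $V$ and to the Lubin--Tate tower $L(\mathfrak{F}_{p^\infty})/L$: under the running hypotheses --- $V$ crystalline with non-negative Hodge--Tate weights, no quotient isomorphic to the trivial representation, and $V^{G_{L(\mathfrak{F}_{p^\infty})}}=0$ --- the construction of \cite{LZ14} produces a $\mathbb{Z}_p\llbracket\Gamma\rrbracket$-linear map with values in $\mathcal{H}_{\hat{F}^{\text{ur}}}(\Gamma)\otimes_L\mathbf{D}_{\crys,L}(V)$, and the displayed interpolation property is precisely the explicit reciprocity law proved there.

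First I would recall the construction. By the theory of $(\varphi,\Gamma)$-modules over the Robba ring one identifies $H^1_{\Iw}(L(\mathfrak{F}_{p^\infty}),V)$ with the module $\mathbf{D}(V)^{\psi=1}$; the Mellin transform then realises a suitable submodule of $\mathcal{H}_L(\Gamma)\otimes_L\mathbf{D}_{\crys,L}(V)$ as the image of $1-\varphi$, and composing these identifications --- after inverting $1-\varphi$ on $\mathbf{D}_{\crys,L}(V)$, which is legitimate because $V$ has no trivial quotient --- produces $\mathcal{L}_V$. Its $\mathbb{Z}_p\llbracket\Gamma\rrbracket$-linearity is built into the construction, and the vanishing $V^{G_{L(\mathfrak{F}_{p^\infty})}}=0$ ensures that the Iwasawa cohomology is well-behaved and that the target is the stated one.

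The substantive content is the interpolation formula, which computes the specialisation $\mathcal{L}_V(\mathbf{z})(\chi)$ at a locally algebraic character $\chi$ of Hodge--Tate weight $j$ and conductor $p^n$ in terms of the Bloch--Kato dual exponential $\exp^*$ for $j\geq 0$ and the Bloch--Kato logarithm $\log_{\BK}$ for $j<0$, each twisted by $t^{-j}$ and corrected by the factors $\varepsilon(\chi^{-1})$ and $\Phi^n P(\chi^{-1},\Phi)/P(\chi,p^{-1}\Phi^{-1})$ that encode the local $\varepsilon$- and $L$-factors at $\chi$. This is Perrin-Riou's explicit reciprocity conjecture, proved by Colmez in the cyclotomic case and extended to the Lubin--Tate setting in \cite{LZ14}; it is from that source that we take it. Were one to insist on a self-contained argument, the main obstacle would be re-deriving this reciprocity law, a substantial piece of $p$-adic Hodge theory, but in our situation it can be quoted off the shelf.
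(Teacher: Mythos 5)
Your proposal is correct and matches the paper's treatment: the paper's proof is a direct citation of Castella--Hsieh's Theorem 5.1, which is itself the specialization of the Loeffler--Zerbes regulator map and explicit reciprocity law that you describe, so quoting the result off the shelf (whether from \cite{CH} or from \cite{LZ14} one step further back) is exactly what is done here. Your sketch of the construction via $(\varphi,\Gamma)$-modules and the inversion of $1-\varphi$ is accurate background but not required for the paper's one-line proof.
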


\begin{proof} This is \cite[Theorem 5.1]{CH}. \end{proof}

In the statement of this result, $\exp^*_{L,(V \otimes \chi^{-1})^*(1)}$ is the dual exponential map
\[
\exp^*_{L,(V \otimes \chi^{-1})^*(1)}: H^1(L,V \otimes \chi^{-1})) \rightarrow \Fil^0DR_{L}(V \otimes \chi^{-1})
\]
of the Bloch--Kato exponential map of $V(\chi^{-1})^*(1)$.

We will apply Theorem \ref{5.1} to some representation $\mathscr{F}^+V$ attached to a twist of $V_f(k/2)$ to obtain a map $\mathcal{L}_{\mathscr{F}^+V}$.
Let $\psi$ be an anticyclotomic Hecke character of infinity type $(k/2,-k/2)$ and conductor $c\mathcal{O}_K$ with $p \nmid c$ and let $\hat{\psi}: \Gal(K_{cp^{\infty}}/K) \rightarrow \mathbb{C}_p^{\times}$ be its $p$-adic avatar. Let $F$ be a finite extension of $\mathbb{Q}_p$ containing the Fourier coefficients of $f$ and the image of $\hat{\psi}$, so $\hat{\psi} : \Gal(K_{cp^{\infty}}/K) \rightarrow \mathcal{O}_F^{\times}$.
Since $p \nmid N$, if $V_f$ is the $F$-linear Galois representation of $G_{\mathbb{Q}}$ associated with $f$, then $V_f\mid_{G_{\mathbb{Q}_p}}$ is crystalline.
Because $f$ is $p$-ordinary, there is an exact sequence of $G_{\mathbb{Q}_p}$-modules
\begin{displaymath}
0 \longrightarrow \mathscr{F}^+V_f \longrightarrow V_f \longrightarrow \mathscr{F}^-V_f \longrightarrow 0
\end{displaymath}
with $\mathscr{F}^{\pm}V_f \cong F$ and $\mathscr{F}^+V_f$ unramified (see \cite[Theorem 2.1.4]{Wiles} or \cite[\S 12.5.3]{NekSC}).
Recall that $T \subseteq V_f(k/2)$ is a Galois-stable lattice; set
\begin{displaymath}
\begin{split}
\mathscr{F}^+T &:= \mathscr{F}^+V_f(k/2) \cap T;\\
V &:= V_f(k/2) \otimes \hat{\psi}_{\mathfrak{p}}^{-1},\quad \text{where}\ \hat{\psi}_{\mathfrak{p}} := \hat{\psi} \mid_{K_{\mathfrak{p}}};\\
\mathscr{F}^{\pm}V &:= \mathscr{F}^{\pm}V_f(k/2) \otimes \hat{\psi}_{\mathfrak{p}}^{-1}.
\end{split}
\end{displaymath}
Consider the dual representation $V^* := \Hom_F(V,F)$ of $V$ and, with notation as above, define $\mathscr{F}^{\pm}V^* := \Hom_F(\mathscr{F}^{\mp}V, F)$.
Let $L_{\infty}/L$ be the $\mathfrak{p}$-adic completion of $K_{cp^{\infty}}/K_c$.
The big logarithm $\mathcal{L}_{\mathscr{F}^+V}$, obtained applying Theorem \ref{5.1} to $\mathscr{F}^+V$ as a representation of $G_L$, is a map
\begin{displaymath}
\mathcal{L}_{\mathscr{F}^+V} : H^1_{\Iw}\bigl(L(\mathfrak{F}_{p^{\infty}}),\mathscr{F}^+V\bigr) \longrightarrow \mathcal{H}_{\hat{F}^{\text{ur}}}\bigl(\Gal(L(\mathfrak{F}_{p^{\infty}})/L)\bigr) \otimes_L \mathbf{D}_{\crys,L}(\mathscr{F}^+V).
\end{displaymath}
Since $L_{\infty} \subseteq L(\mathfrak{F}_{p^{\infty}})$, we can restrict $\mathcal{L}_{\mathscr{F}^+V}$ to the Galois group $\Gamma:= \Gal(L_{\infty}/L) \cong \Gal(K_{cp^{\infty}}/K_c)$ to obtain a map 
\begin{displaymath}
H^1_{\Iw}(L_{\infty},\mathscr{F}^+V) \longrightarrow \mathcal{H}_{\hat{F}^{\text{ur}}}(\Gamma) \otimes_L \mathbf{D}_{\crys,L}(\mathscr{F}^+V).
\end{displaymath}
Recall the element $\omega_f \in DR_L(V_f)$ attached to $f$ as in \ref{keyform}.
Let $t\in B_{\dR}$ denotes again Fontaine's $p$-adic analogue of $2\pi i$ associated with the compatible sequence $\{i_p(\zeta_{p^n})\}$.
Define the class
\begin{displaymath}
\omega_{f,\psi} := \omega_f \otimes t^{-k} \otimes \omega_{\psi} \in \mathbf{D}_{\crys,L}(V^*(1)),
\end{displaymath}
where $\omega_{\psi}\in\mathbf{D}_{\crys,L}(\hat{\psi}_{\mathfrak{p}}(-k/2))$ is as in \cite[\S 5.3]{CH}.
Denote again by $\omega_{f,\psi}$ its image under the projection
$\mathbf{D}_{\crys,L}(V^*(1)) \twoheadrightarrow \mathbf{D}_{\crys,L}(\mathscr{F}^-V^*(1))$. 

There is a pairing
\begin{displaymath}
\braket{-,-} : \mathcal{H}_{\hat{F}^{\text{ur}}}(\Gamma) \otimes_L \mathbf{D}_{\crys, L}(\mathscr{F}^+V) \times \mathbf{D}_{\crys, L}(\mathscr{F}^-V^*(1)) \longrightarrow \mathcal{H}_{\hat{F}^{\text{ur}}}(\Gamma).
\end{displaymath}
Recall that $\mathbf{D}_{\crys, L}(\mathscr{F}^+V) = (B_{\crys} \otimes_{\mathbb{Q}_p} \mathscr{F^+}V)^{G_{L}}$ and $\mathbf{D}_{\crys, L}(\mathscr{F}^-V^*(1)) = (B_{\crys} \otimes_{\mathbb{Q}_p} \mathscr{F^-}V^*(1))^{G_{L}}$. 
Finally, the composition of $\mathcal{L}_{\mathscr{F}^+V}$ with the map
\begin{displaymath}
\braket{-, \omega_{f,\psi}}: \mathcal{H}_{\hat{F}^{\text{ur}}}(\Gamma) \otimes_L \mathbf{D}_{\crys, L}(\mathscr{F}^+V) \longrightarrow \mathcal{H}_{\hat{F}^{\text{ur}}}(\Gamma)
\end{displaymath}
has image contained in the Iwasawa algebra $\Lambda_{\hat{F}^{\text{ur}}}(\Gamma) := \mathcal{O}_{\hat{F}^{\text{ur}}}\llbracket \tilde{\Gamma} \rrbracket \otimes \hat{F}^{\text{ur}}$. For details, see \cite[Lemma 5.5]{CH}.

\subsubsection{Iwasawa classes associated with generalized Heegner classes}

Consider the Iwasawa cohomology group
\begin{displaymath}
H^1_{\Iw}(K_{cp^{\infty}},T):=\biggl(\varprojlim_n H^1(\Gal(K'/K_{cp^n}),T)\biggr)\otimes_{\mathbb{Z}_p} \mathbb{Q}_p,
\end{displaymath}
where $K'$ is the maximal extension of $K$ unramified outside the primes above $pNc$ (the representation $T$ is unramified outside the prime above $pN$). Let $\alpha$ denote the root of the Hecke polynomial $x^2 -a_px+p^{k-1}$ that is a $p$-adic unit. For each fractional ideal $\mathfrak{a}$ of $\mathcal{O}_c$ prime to $cNpD_K$, recall the cohomology class $z_{\mathfrak{a}}$ introduced in \S \ref{GHclasses} and define the class
\[
z_{\mathfrak{a},\alpha} := \begin{cases}
z_{\mathfrak{a}} - \frac{p^{k-2}}{\alpha} \cdot z_{\mathfrak{a}\mathcal{O}_{c/p}} \ &\text{if}\ p\,|\,c\\[3mm]
\frac{1}{\# \mathcal{O}_c^{\times}} \left( 1 - \frac{p^{k/2 -1}}{\alpha} \sigma_{\mathfrak{p}}\right)  \left( 1 - \frac{p^{k/2 -1}}{\alpha} \sigma_{\overline{\mathfrak{p}}}\right) \cdot z_{\mathfrak{a}}\ &\text{if}\ p \nmid c,
\end{cases}
\]
which lives in $H^1\bigl(K_c,T \otimes S(E)\bigr)$. Here, $\sigma_{\mathfrak{p}}, \sigma_{\overline{\mathfrak{p}}} \in \Gal(K_c/K)$ are the Frobenius elements of $\mathfrak{p}$ and $\overline{\mathfrak{p}}$. By Proposition \ref{4.4}, one knows that
\[
\Cor_{K_{cp}/K_c}(z_{cp, \alpha}) = \alpha \cdot z_{c,\alpha}.
\]
Now consider the projection $e_{\chi}$ for $\chi = \boldsymbol{1}$ and write $z_{\mathfrak{a}, \alpha,\boldsymbol{1}}$ for the image of $z_{\mathfrak{a},\alpha}$ under $\id \otimes e_{\boldsymbol{1}} : H^1\bigl(K_c,T \otimes S(W)\bigr) \rightarrow H^1(K_c,T)$.
Thus, it makes sense to consider the element 
\[
\boldsymbol{z}_{c, \alpha} := \varprojlim_n \alpha^{-n} z_{cp^n,\alpha, \boldsymbol{1}}
\]
in the Iwasawa cohomology group $H^1_{\Iw}(K_{cp^{\infty}},T)$.

There is an isomorphism
\begin{displaymath}
H^1_{\Iw}(K_{cp^{\infty}}, T) \cong H^1\bigl(K_c, T \otimes \mathcal{O}_F\llbracket \Gamma \rrbracket\bigr),
\end{displaymath}
where $\Gamma := \Gal(K_{cp^{\infty}}/K_c)$ (see the proof of \cite[Proposition 2.4.2]{LZ16}). 
Put $\tilde{\Gamma}_{c} := \Gal(K_{cp^{\infty}}/K)$ and consider the map
\begin{displaymath}
H^1(K_{cp^{\infty}}, T) \cong H^1\bigl(K_c, T \otimes \mathcal{O}_F\llbracket \Gamma \rrbracket\bigr) \longrightarrow H^1\bigl(K_c, T \otimes \mathcal{O}_F\llbracket \tilde{\Gamma}_{c} \rrbracket\bigr);
\end{displaymath}
we can view the classes $\boldsymbol{z}_{c,\alpha}$ as elements of $H^1\bigl(K_c, T \otimes \mathcal{O}_F\llbracket \tilde{\Gamma}_{c} \rrbracket\bigr)$. Then set
	\begin{equation} \label{zf}
	\boldsymbol{z}_f := \Cor_{K_c/K}(\boldsymbol{z}_{c,\alpha}) \in H^1\bigl(K, T \otimes \mathcal{O}_F\llbracket \tilde{\Gamma}_{c} \rrbracket\bigr),
	\end{equation}
where the subscript $f$ is meant to remind that the class above, like the others already defined, depends on it.

For any character $\chi: \tilde{\Gamma}_{c} \rightarrow \mathcal{O}_{\mathbb{C}_p}^{\times}$, we can consider the twist $\boldsymbol{z}_f ^{\chi} \in H^1(K,T \otimes \chi)$ of $\boldsymbol{z}_f$ through the $\chi$-specialization map
\[
H^1\bigl(K,T \otimes \mathcal{O}_F\llbracket \tilde{\Gamma}_{c} \rrbracket\bigr) \longrightarrow H^1\Bigl(K,T \otimes \mathcal{O}_F\llbracket \tilde{\Gamma}_{c} \rrbracket \otimes_{\mathcal{O}_F\llbracket \tilde{\Gamma}_{c} \rrbracket} \chi\Bigr) = H^1(K,T \otimes \chi),
\]
where $\chi$ is extended to $\chi: \mathcal{O}_F\llbracket \tilde{\Gamma}_{c} \rrbracket \rightarrow \mathcal{O}_{F}$ in the obvious way, possibly enlarging $F$ by adding the image of $\chi$.
Suppose that $\chi$ is non-trivial, of finite order and with conductor $cp^n$; then 
\begin{equation} \label{zchi}
\boldsymbol{z}_f^{\chi} = \alpha^{-n} z_{\chi},
\end{equation}
where $z_{\chi} \in H^1(K,T \otimes \chi)$ is as in (\ref{z.chi}). See \cite[Lemma 5.4]{CH} for details.

\subsubsection{The algebraic anticyclotomic $p$-adic $L$-function} \label{algL}

We want to apply the logarithm map $\mathcal{L}_{\mathscr{F}^+V}$ to the localization at $\mathfrak{p}$ of the classes $\boldsymbol{z}_{c,\alpha} \otimes \hat{\psi}^{-1}$, so we need to check that these classes actually lie in $H^1_{\Iw}(L_{\infty},\mathscr{F}^+V) = H^1_{\Iw}(K_{cp^{\infty},\mathfrak{p}},\mathscr{F}^+V) \hookrightarrow H^1_{\Iw}(K_{cp^{\infty},\mathfrak{p}},V)$.

Similarly to what we said in \S \ref{keyform}, by \cite{Nek00}, one knows that $z_{\mathfrak{a}}$ lies in the Bloch--Kato Selmer group $\Sel(K_{c}, T \otimes S(E))$, indeed $z_{\mathfrak{a}}$ is the image, through a morphism of $G_{K_{c}}$-modules, of a cohomology class in $\Sel(K_c,\varepsilon H^{4r+1}_{\et}(\overline{\mathcal{X}}_r,\mathbb{Z}_p)(k-1))$, which is the Abel--Jacobi image of the generalized Heegner cycle $\Delta_{\mathfrak{a}}$. Recall that the Bloch--Kato Selmer group $\Sel(F,M)$ of a $G_F$-representation $M$, with $F$ number field, is the subspace of elements $x$ of $H^1(G_F,M)$ such that for all finite place $v$ of $F$, the localization $\loc_v(x) \in H^1_f(F_v,M)$. See \S \ref{Sel} for more precise definitions.
Thus, $z_{cp^n,\alpha,\mathbf{1}} \in \Sel(K_{cp^n},T)$ as well, so $\loc_{\mathfrak{p}}(z_{cp^n,\alpha,\mathbf{1}}) \in H^1_f(K_{cp^n,\mathfrak{p}},T)$.
But $H^1_f(K_{cp^n,\mathfrak{p}},T)$ is identified with the image of the map $H^1(K_{cp^n,\mathfrak{p}},\mathscr{F}^+T) \rightarrow H^1(K_{cp^n,\mathfrak{p}},T)$ (cf. \cite[\S 5.5]{CH}), so we can view $\loc_{\mathfrak{p}}(z_{cp^n,\alpha,\mathbf{1}}) \in H^1(K_{cp^n,\mathfrak{p}},\mathscr{F}^+T)$.
Since $\boldsymbol{z}_{c,\alpha}$ is the inverse limit of the classes $\alpha^{-n} z_{cp^n,\alpha,\mathbf{1}}$, one has
\begin{displaymath}
\loc_{\mathfrak{p}}(\boldsymbol{z}_{c,\alpha}) \in 
H^1_{\Iw}(L_{\infty},\mathscr{F}^+T).
\end{displaymath}
We conclude that
\begin{displaymath}
\loc_{\mathfrak{p}}(\boldsymbol{z}_{c,\alpha}\otimes\hat{\psi}^{-1}) \in H^1_{\Iw}(L_{\infty},\mathscr{F}^+V).
\end{displaymath}
Now, using notation similar to that in \cite{CH}, we can define
\begin{displaymath}
\begin{split}
\mathcal{L}^*(\boldsymbol{z}_f \otimes \hat{\psi}^{-1}) &:= \Cor_{K_c/K}(\mathcal{L}_{\mathscr{F}^+V}(\loc_{\mathfrak{p}}(\boldsymbol{z}_{c,\alpha}\otimes\hat{\psi}^{-1})))\\
&= \sum_{\sigma \in \tilde{\Gamma}_c/\Gamma} \mathcal{L}_{\mathscr{F}^+V}(\loc_{\mathfrak{p}}(\boldsymbol{z}^{\sigma}_{c,\alpha}\otimes\hat{\psi}^{-1}))\hat{\psi}(\sigma^{-1})%\\
%&
\in \mathbf{D}_{\crys,L}(\mathscr{F}^+V) \otimes \Lambda_{\hat{F}^{\text{ur}}}(\tilde{\Gamma}_c),
\end{split}
\end{displaymath}
	where $\Lambda_{\hat{F}^{\text{ur}}}(\tilde{\Gamma}_c) = \mathcal{O}_{\hat{F}^{\text{ur}}}\llbracket \tilde{\Gamma}_c \rrbracket \otimes \hat{F}^{\text{ur}}$.
	Finally, consider the restriction
	\begin{equation}
	\mathcal{L}_{\psi}(\boldsymbol{z}_f) := \Res_{K_{p^{\infty}}}(\mathcal{L}^*(\boldsymbol{z}_f \otimes \hat{\psi}^{-1})) \in \mathbf{D}_{\crys, L}(\mathscr{F}^+V) \otimes \Lambda_{\hat{F}^{\text{ur}}}(\tilde{\Gamma}),
	\end{equation}
	where $ \Res_{K_{p^{\infty}}} : \tilde{\Gamma}_c = \Gal(K_{cp^{\infty}}/K) \rightarrow \tilde{\Gamma} = \Gal(K_{p^{\infty}}/K)$ is the restriction map.

\subsection{Reciprocity law}

We start by giving a sketch of the proof of the following theorem, which is analogous to that of \cite[Theorem 5.7]{CH}.

	\begin{teor} \label{5.7}
		Let $\psi : K^{\times} / \mathbb{A}^{\times}_K \rightarrow \mathbb{C}^{\times}$ be an anticyclotomic Hecke character of infinity type $(k/2, -k/2)$ and conductor $c\mathcal{O}_K$ with $ p \nmid c$ and suppose that $f$ is $p$-ordinary.
		Then
\begin{displaymath}
\big\langle\mathcal{L}_{\psi}(\boldsymbol{z}_f), \omega_f \otimes t^{-k}\big\rangle = -c^{k/2-1}(\sqrt{-D_K})^{-k/2} \cdot \LL_{f,\psi} \cdot \sigma_{-1,\mathfrak{p}} \in \Lambda_{\hat{F}^{\emph{ur}}}(\tilde{\Gamma}),
\end{displaymath}
		where $\sigma_{-1,\mathfrak{p}}:= \emph{rec}_{\mathfrak{p}(-1)|_{K{p^{\infty}}}} \in \tilde{\Gamma}$ is an element of order $2$.
	\end{teor}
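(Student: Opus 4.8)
The plan is to deduce Theorem \ref{5.7} by combining the interpolation property of Perrin-Riou's big logarithm (Theorem \ref{5.1}, applied to $\mathscr{F}^+V$) with the $p$-adic Gross--Zagier formula of Theorem \ref{GZ}, exactly as in \cite[\S 5.3]{CH}. Since the Iwasawa algebra $\Lambda_{\hat{F}^{\mathrm{ur}}}(\tilde\Gamma)$ embeds into its field of fractions and both sides lie in it, it suffices to check that the two sides agree after evaluation at a Zariski-dense set of characters; I would take this set to be the $p$-adic avatars $\hat\phi$ of anticyclotomic Hecke characters $\phi$ of infinity type $(k/2+j,-k/2-j)$ with $-k/2<j<k/2$ and $p$-power conductor $p^n\mathcal O_K$, $n\geq 1$, which by standard density arguments (as in \cite{CH}) suffice to pin down an element of $\Lambda_{\hat F^{\mathrm{ur}}}(\tilde\Gamma)$.

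First I would unwind the left-hand side at such a $\hat\phi$: by construction of $\mathcal L_\psi(\boldsymbol z_f)$ in \S \ref{algL} as a corestriction of $\mathcal L_{\mathscr F^+V}(\loc_\mathfrak p(\boldsymbol z_{c,\alpha}\otimes\hat\psi^{-1}))$ followed by restriction to $\tilde\Gamma$, evaluating $\langle\mathcal L_\psi(\boldsymbol z_f),\omega_f\otimes t^{-k}\rangle$ at $\hat\phi$ produces, via the interpolation formula of Theorem \ref{5.1} in the range $j<0$ (so that the $\log$-branch applies, after the twist by $\hat\psi_\mathfrak p^{-1}$ shifts the Hodge--Tate weight into the negative range), a sum over $\Pic\mathcal O_{c}$ of terms $\varepsilon(\cdot)\cdot\frac{\Phi^n P(\cdots)}{P(\cdots)}\cdot\frac{(-1)^{-j-1}}{(-j-1)!}\langle\log_{\mathfrak p}((\boldsymbol z_f\otimes\hat\psi^{-1})^{\chi^{-1}}),\omega_{f,\psi}\rangle$, where $\chi=\hat\psi^{-1}\hat\phi$. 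Using \eqref{zchi}, namely $\boldsymbol z_f^{\chi}=\alpha^{-n}z_\chi$, the specialization $(\boldsymbol z_f\otimes\hat\psi^{-1})^{\chi^{-1}}$ is (up to the explicit $\alpha^{-n}$ and the pairing-with-$\omega_\psi$ bookkeeping) the class $z_{f,\chi}$ appearing in Theorem \ref{GZ}, and the pairing $\langle\log_\mathfrak p(z_{f,\chi}),\omega_f\otimes\omega_A^{k/2+j-1}\eta_A^{k/2-j-1}\otimes t^{1-k}\rangle$ is precisely what $\langle-,\omega_{f,\psi}\rangle$ computes after the identification $eH^1_{\dR}(A)\cong H^1_{\dR}(E)$ and the normalization of $\omega_\psi$ from \cite[\S 5.3]{CH}.

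Next I would invoke Theorem \ref{GZ}, which identifies that same pairing with $\LL_{f,\psi}(\hat\phi^{-1})/\Omega_p^{*}$ up to the explicit constant $C'$; evaluating the right-hand side $-c^{k/2-1}(\sqrt{-D_K})^{-k/2}\cdot\LL_{f,\psi}\cdot\sigma_{-1,\mathfrak p}$ at $\hat\phi$ gives $-c^{k/2-1}(\sqrt{-D_K})^{-k/2}\hat\phi(\sigma_{-1,\mathfrak p})\LL_{f,\psi}(\hat\phi)$, and one must match $\LL_{f,\psi}(\hat\phi^{-1})$ against $\LL_{f,\psi}(\hat\phi)$ via the functional-equation/involution symmetry of the measure (the element $\sigma_{-1,\mathfrak p}$ of order $2$ accounts for the sign $z\mapsto -z$ entering the Serre--Tate coordinate in Lemma \ref{3.2}). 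The remaining work is purely a matter of bookkeeping: tracking the Euler-type factors $\varepsilon(\chi^{-1})$, $P(\chi^{-1},\Phi)$, $P(\chi,p^{-1}\Phi^{-1})$ and $\Phi^n$ against the Gauss sum $G(\xi_\mathfrak p^{-1})$, the power $\chi_\mathfrak p(p^n)$, the factorials $(j+k/2-1)!$ versus $(-j-1)!$, and the powers of $\Omega_p$, $c_0$, $\sqrt{-D_K}$, and $N(\mathfrak a)$, verifying they collapse to the clean constant $-c^{k/2-1}(\sqrt{-D_K})^{-k/2}$. I expect this constant-chasing — in particular the compatibility of the crystalline Frobenius eigenvalue $\alpha$ with the factor $\alpha^{-n}$ in \eqref{zchi} and the matching of the local $\varepsilon$- and $L$-factors at $\mathfrak p$ with the archimedean and Gauss-sum factors in Theorem \ref{interpolation-thm} and Theorem \ref{GZ} — to be the main obstacle; the structural input (density of the test characters, the two interpolation formulas, and the identity \eqref{zchi}) is already in place, so no new conceptual ingredient is needed beyond a careful adaptation of \cite[Theorem 5.7]{CH} to the quaternionic normalizations recorded in \S\S \ref{char}--\ref{keyform}.
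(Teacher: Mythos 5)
Your proposal follows the paper's own strategy: both sides are pinned down by evaluating at a dense family of characters $\hat\phi$, with Theorem \ref{5.1} unwinding $\big\langle\mathcal{L}_{\psi}(\boldsymbol{z}_f),\omega_f\otimes t^{-k}\big\rangle$ into a Bloch--Kato logarithm of $\boldsymbol{z}_f^{\chi}$ and Theorem \ref{GZ} together with \eqref{zchi} converting that into $\LL_{f,\psi}(\hat\phi^{-1})$, the factor $\sigma_{-1,\mathfrak p}$ arising from the sign in Lemma \ref{3.2}. The one adjustment you should make is to restrict your test set to $j=0$ (i.e.\ $\phi$ of infinity type $(k/2,-k/2)$ and $p$-power conductor, so that $\chi=\hat\psi^{-1}\hat\phi$ has finite order), as the paper does: the identity \eqref{zchi} relating $\boldsymbol{z}_f^{\chi}$ to $z_{\chi}$ is only established for non-trivial finite-order $\chi$, and that subfamily is already dense, so nothing is lost and no extension of \eqref{zchi} to locally algebraic characters is needed.
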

	
\begin{proof}[Sketch of proof]
For any $ n > 1$, let $ \hat{\phi} : \Gal(K_{p^{\infty}/K}) \rightarrow \mathbb{C}_{p}^{\times}$ be the $p$-adic avatar of a Hecke character $\phi$ of infinity type
$(k/2, -k/2)$ and conductor $p^n$. Moreover, define the finite order character $\chi := \hat{\psi}^{-1}\hat{\phi}$.
Recall that, by (\ref{zchi}), we have
\begin{displaymath}
\boldsymbol{z}^{\chi} = \alpha^{-n} \cdot z_{\chi}. 
\end{displaymath}
Now, since our $\omega_A$ and $\eta_A$ are chosen, in \S \ref{6.3}, to be compatible with the ones of \cite{CH}, so that $\omega_A\eta_A = t$ (see \cite[\S 5.3]{CH}), applying Theorem \ref{GZ} with $j = 0$ and \eqref{zchi} yields the expression $\big\langle\log_\mathfrak{p}(\boldsymbol{z}_{f}^{\chi}) \otimes t^{k/2},\omega_{f}\otimes t^{-k}\big\rangle$ from $\LL_{f,\psi}(\hat{\phi}^{-1})$. Now, performing the same computation as in the proof of \cite[Theorem 5.7]{CH} and applying Theorem \ref{5.1} to the expression $\big\langle\mathcal{L}_{\psi}(\boldsymbol{z}_f), \omega_f \otimes t^{-k}\big\rangle$, we obtain the formula of the statement evaluated at $\hat{\phi}^{-1}$ for any $p$-adic avatar $\hat{\phi}$ as above. By an argument that is formally identical to the one at the end of the proof of \cite[Theorem 5.7]{CH}, one gets the desired equality.
\end{proof}

Now we state the reciprocity law that is the counterpart of \cite[Corollary 5.8]{CH}.

\begin{teor}\label{rec}
Let $\chi : \Gal(K_{p^{\infty}}/K) \rightarrow \mathcal{O}_F^{\times}$ be a locally algebraic $p$-adic Galois character of infinity type $(j, -j)$ with $j \geq k/2$ and conductor $cp^n\mathcal{O}_K$ with $ p \nmid c$ and suppose that $f$ is $p$-ordinary. Then
\[
\braket{\exp^*(\loc_{\mathfrak{p}}(\boldsymbol{z}^{\chi^{-1}})), \omega_f \otimes \omega_A^{{-k/2-j}} \eta_A^{-k/2 - j}}^2 = D(f, \psi, \chi\psi^{-1}, K) \cdot L(f, \chi, k/2),
\]
where $D(f, \psi, \chi\psi^{-1}, K)$ is non-zero a constant depending on $f, \chi, K$ and $\psi$, which is a Hecke character of infinity type $(k/2, -k/2)$ and conductor $c$.
\end{teor}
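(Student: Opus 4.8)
The plan is to deduce the statement from three ingredients already at our disposal: the reciprocity law of Theorem~\ref{5.7}, the Perrin--Riou interpolation formula of Theorem~\ref{5.1}, and the interpolation property of $\LL_{f,\psi}$ recorded in Theorem~\ref{interpolation-thm}; the argument runs parallel to the proof of \cite[Corollary 5.8]{CH}, so I only describe how the pieces fit together. First fix $\psi$ as in the statement, of infinity type $(k/2,-k/2)$ and conductor exactly $c\mathcal{O}_K$ with $p\nmid c$, and set $\phi:=\chi\psi^{-1}$. Because $\chi$ has infinity type $(j,-j)$ with $j\geq k/2$, the character $\phi$ is an anticyclotomic Hecke character of infinity type $(n,-n)$ with $n:=j-k/2\geq 0$ and $p$-power conductor (the prime-to-$p$ parts of the conductors of $\chi$ and $\psi$ cancel), so $\hat\phi$ is an admissible test character for both Theorem~\ref{interpolation-thm} and Theorem~\ref{5.1}, and $\psi\phi=\chi$.

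The core of the argument is to evaluate the identity of Theorem~\ref{5.7},
\[
\big\langle\mathcal{L}_{\psi}(\boldsymbol{z}_f),\,\omega_f\otimes t^{-k}\big\rangle
=-c^{k/2-1}(\sqrt{-D_K})^{-k/2}\cdot\LL_{f,\psi}\cdot\sigma_{-1,\mathfrak{p}}\ \in\ \Lambda_{\hat{F}^{\mathrm{ur}}}(\tilde\Gamma),
\]
at the character $\hat\phi$. On the right-hand side the specialization is simply $-c^{k/2-1}(\sqrt{-D_K})^{-k/2}\,\LL_{f,\psi}(\hat\phi)\,\hat\phi(\sigma_{-1,\mathfrak{p}})$, with $\hat\phi(\sigma_{-1,\mathfrak{p}})=\pm1$ since $\sigma_{-1,\mathfrak{p}}$ has order two. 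On the left-hand side I would unwind the definition of $\mathcal{L}_\psi(\boldsymbol{z}_f)$ from \S\ref{algL} (the twist by $\hat\psi^{-1}$, the corestriction $\Cor_{K_c/K}$ and the restriction $\Res_{K_{p^{\infty}}}$) so as to identify the $\hat\phi$-specialization with $\loc_{\mathfrak{p}}(\boldsymbol{z}_f^{\chi^{-1}})$, and then apply Theorem~\ref{5.1} to the representation $\mathscr{F}^+V=\mathscr{F}^+V_f(k/2)\otimes\hat\psi_{\mathfrak{p}}^{-1}$ at $\hat\phi$. Here the hypothesis $j\geq k/2$ is used: it forces the relevant Hodge--Tate weight to be $\geq0$, placing us in the \emph{dual exponential} branch of the interpolation formula, which yields
\[
\big\langle\mathcal{L}_{\psi}(\boldsymbol{z}_f),\,\omega_f\otimes t^{-k}\big\rangle(\hat\phi)
=E_p(f,\psi,\phi)\cdot\big\langle\exp^*\!\big(\loc_{\mathfrak{p}}(\boldsymbol{z}_f^{\chi^{-1}})\big),\,\omega_f\otimes\omega_A^{-k/2-j}\eta_A^{-k/2-j}\big\rangle,
\]
where $E_p(f,\psi,\phi)$ collects the $\varepsilon$-factor, the ratio of Euler factors, the factorial and the powers of $t$ and $\Omega_p$ produced by Theorem~\ref{5.1}; passing from the de Rham datum $\omega_{f,\psi}=\omega_f\otimes t^{-k}\otimes\omega_\psi$ on the $\mathscr{F}^-$-side to the class $\omega_f\otimes\omega_A^{-k/2-j}\eta_A^{-k/2-j}$ uses the de Rham and Poincar\'e dualities recalled in \S\ref{keyform} together with the normalization $\omega_A\eta_A=t$ fixed in \S\ref{6.3}.

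Equating the two displayed expressions, squaring, and invoking Theorem~\ref{interpolation-thm} to replace $\LL_{f,\psi}(\hat\phi)^2$ by $C(f,\psi,\phi,K)\cdot L(f,\psi\phi,k/2)=C(f,\psi,\phi,K)\cdot L(f,\chi,k/2)$, gives the claimed identity with
\[
D(f,\psi,\chi\psi^{-1},K):=\frac{c^{k-2}(\sqrt{-D_K})^{-k}\,C(f,\psi,\phi,K)}{E_p(f,\psi,\phi)^2}\neq0,
\]
the non-vanishing following because $C(f,\psi,\phi,K)\neq0$ by Theorem~\ref{interpolation-thm}, because $\Omega_p\neq0$ by construction in \S\ref{6.3}, and because the $\varepsilon$- and Euler factors appearing in $E_p$ are non-zero in the $p$-ordinary case (the modified Euler factor in the denominator of Theorem~\ref{5.1} does not vanish at $\hat\phi$ since $\alpha$ is the unit root of the Hecke polynomial of $f$), exactly as in \cite{CH}. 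The step I expect to be the main obstacle is the identification on the left-hand side: checking that the $\hat\phi$-specialization of $\mathcal{L}_\psi(\boldsymbol{z}_f)$ is genuinely $\exp^*(\loc_{\mathfrak{p}}(\boldsymbol{z}_f^{\chi^{-1}}))$ up to the explicit factor $E_p$, and that the pairing against $\omega_{f,\psi}$ translates into the pairing against $\omega_f\otimes\omega_A^{-k/2-j}\eta_A^{-k/2-j}$ after all the Tate twists are accounted for. Once the compatibility $\omega_A\eta_A=t$ of \S\ref{6.3} and the dualities of \S\ref{keyform} are in place, this is the same bookkeeping as in \cite[proof of Corollary 5.8]{CH}, which transfers to the present quaternionic setting without essential change.
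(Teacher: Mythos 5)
Your proposal is correct and follows essentially the same route as the paper: evaluate the reciprocity law of Theorem \ref{5.7} at $\hat\phi$ with $\phi=\chi\psi^{-1}$, use the dual-exponential branch of Theorem \ref{5.1} (valid since $j\geq k/2$) to convert the left-hand side into the pairing against $\exp^*(\loc_{\mathfrak p}(\boldsymbol z_f^{\chi^{-1}}))$, then square and invoke the interpolation formula of Theorem \ref{interpolation-thm}, exactly as in the paper's adaptation of \cite[Corollary 5.8]{CH}. Your explicit description of $E_p$ and of the resulting constant $D$ is consistent with the formula the paper records.
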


\begin{proof}[Sketch of proof]
Let $ \hat{\psi} : \Gal(K_{p^{\infty}/K}) \rightarrow \mathbb{C}_{p}^{\times}$ be the $p$-adic avatar of a Hecke character $\psi$ of infinity type $(k/2, -k/2)$ and conductor $c$, so that $\hat{\phi} := \chi \hat{\psi}^{-1}$ is a locally algebraic character of infinity type $(j-k/2, -j + k/2)$ and conductor $p^n$.
The proof proceeds by using Theorem \ref{5.1} to extract the expression $\big\langle\exp^*(\loc_{\mathfrak{p}}(\boldsymbol{z}^{\chi^{-1}})), \omega_f \otimes \omega_A^{{-k/2-j}} \eta_A^{-k/2 + j }\big\rangle^2$ from $\braket{\mathcal{L}_{\mathfrak{p},\psi}(\boldsymbol{z}), \omega_f \otimes t^{-k}}(\hat{\phi})$. Now one can take the square and apply Theorem \ref{5.7} to recover the square of the $p$-adic $L$-function $\LL_{f,\psi}(\hat{\phi})$, and then use the interpolation formula of Theorem \ref{interpolation-thm} to obtain the statement. The constant $D(f,\psi, \chi\psi^{-1},K)$ turns out to be
\[
%\begin{split}
D(f,\psi, \chi\psi^{-1},K) =\; %& 
\alpha^{-2n} \epsilon(0,\phi_{\mathfrak{p}}^{-1}\psi_{\mathfrak{p}}^{-1})^{-2} p^{nk} (j-k/2+1)!^{-2}%\\
%&
\cdot\Omega^{-4j} c^{k-2} \sqrt{-D_K}^{-k}C(f,\psi, \chi\psi^{-1},K),
%\end{split}
\]
where $\Omega \in \W^{\times}$ is the constant denoted with $\Omega_p$ in the proof of \cite[Corollary 5.8]{CH}.
For the details of the computation, see the proof of \cite[Corollary 5.8]{CH}.
\end{proof}

\subsection{The anticyclotomic Euler system method}

In this section we apply the Kolyvagin-type method developed in \cite[Section 7]{CH} to our system of Heegner classes, in order to deduce results on the Selmer group of the representation $V_{f,\chi} := V_f(k/2)\mid_{G_K} \otimes \chi$, with $\chi : \Gal(K_{c_0p^{\infty}}/K) \rightarrow \mathcal{O}_F^{\times}$ a locally algebraic $p$-adic Galois character of infinity type $(j, -j)$ and conductor $c_0p^s\mathcal{O}_K$ with $c=c_0p^s$ and $ (pN,c_0)=1$. 

First of all, we introduce the objects and the properties of the Kolyvagin method employed in \cite{CH}. Then, we will apply it to our system of generalized Heegner cycles and, finally, we will deduce results on Selmer groups. As will be clear, we follow \cite{CH} closely.

\subsubsection{Anticyclotomic Euler systems}

Let $\mathcal{G}_n := \Gal(K_n/K)$ and let $H^{1}(K_n,-)$ denote the cohomology group with respect to $\Gal(K^{\Sigma_n}/K_n)$, where $\Sigma_n$ is the finite set containig the prime factors of $pNc_0n$ and $K^{\Sigma_n}$ is the maximal extension of $K$ unramified outside the primes above $\Sigma_n$.

By \cite[Proposition 3.1]{Nek92}, there is a $G_{\mathbb{Q}}$-equivariant $\mathcal{O}_F$-linear perfect pairing
\begin{displaymath}
\braket{-,-} : T \times T \longrightarrow \mathcal{O}_F(1)
\end{displaymath}
that induces for each local field $L$ the local Tate pairing
\begin{equation}
\braket{-,-}_L : H^1(L,T) \times H^1(L,T) \longrightarrow \mathcal{O}_F.
\label{Tate-pairing}
\end{equation}
Here $T$ is the $G_{\mathbb{Q}}$-stable $\mathcal{O}_F$-lattice inside $V_f(k/2)$ that was fixed before. Let $\varpi$ be a uniformizer of $\mathcal{O}_F$ and let $ \mathbb{F} := \mathcal{O}_F/(\varpi)$ be the residue field. For every integer $M\geq1$, set $T_M:= T / \varpi^MT$.

For us, $\ell$ will always denote a prime inert in $K$ and $\lambda$ will be the unique prime of $K$ above $\ell$; denote by $\frob_{\ell}$ the Frobenius element of $\lambda$ in $G_K$. Let $H^1_f(K_{\lambda},-)$ be the finite part of $H^1(K_{\lambda},-)$, where $K_{\lambda}$ is the completion of $K$ at $\lambda$, and denote by $\loc_{\ell} : H^1(K,-) \rightarrow H^1(K_{\lambda},-)$ the localization map at $\ell$.

Let $\mathcal{S}$ be the set of square-free products of primes $\ell$ inert in $K$ with $\ell \nmid 2pNc_0$. Let $\tau$ denote complex conjugation.

\begin{defi}
	An \textbf{anticyclotomic Euler system} for $T$ and $\chi$ is a collection $\mathbf{c} = \left\lbrace c_n\right\rbrace _{n \in \mathcal{S}}$ of classes $c_n \in H^1(K_{nc}, T \otimes \chi^{-1})$ such that for any $ n = m \ell \in \mathcal{S}$ the following properties hold:
	\begin{enumerate}[noitemsep]
		\item $\Cor_{K_{nc}/K_{mc}}(c_n) = a_{\ell}(f) \cdot c_m$;
		\item $\loc_{\ell}(c_n) = \Res_{K_{mc,\lambda}/K_{nc,\lambda}}(\loc_{\ell}(c_m)^{\frob_{\ell}})$;
		\item if $\chi^2 = 1$ then $c_n^{\tau} = w_f \cdot \chi(\sigma) \cdot c_n^{\sigma}$ for some $\sigma \in \Gal(K_{nc}/K)$,
	\end{enumerate}
	where $w_f \in \left\lbrace \pm 1 \right\rbrace $ is the Atkin--Lehner eigenvalue of $f$.
\end{defi}

\subsubsection{Kolyvagin's derivative classes}

Define the constant $\beta$ as in \cite[(7.2)]{CH}. For any integer $M\geq1$, denote by
$\mathcal{S}_M \subseteq \mathcal{S}$ the set of square-free products of primes $\ell$ such that
\begin{enumerate}[noitemsep]
	\item $\ell$ is inert in $K$;
	\item $\ell \nmid 2c_0Np$;
	\item $\varpi^M \mid \ell+1, a_{\ell}(f)$;
	\item $\varpi^{M+\beta+1} \nmid \ell + 1 \pm a_{\ell}(f) \ell^{1-k/2}$.
\end{enumerate}
A prime number satisfying all these conditions is called an \emph{$M$-admissible (Kolyvagin) prime}. By using the \v{C}ebotarev density theorem, it can be checked that there exist infinitely many $M$-admissible primes.

Put $G_n := \Gal(K_n/K_1) \cong \Gal(K_{nc}/K_c) \subseteq \mathcal{G}_{nc}$. Let $n \in \mathcal{S}_M$; since $n$ is square-free, there is a splitting $ G_n = \prod_{\ell \mid n} G_{\ell}$. Moreover, each $\ell\,|\,n$ is inert in $K$, so the group $G_{\ell} \cong \Gal(K_{\ell}/K_1)$ is cyclic of order $\ell + 1$. Fix a generator $\sigma_{\ell}$ for each $G_{\ell}$ and put
\begin{displaymath}
\begin{split}
D_{\ell} &:= \sum_{i=1}^{\ell} i \sigma_{\ell}^i \in \mathbb{Z}[G_{\ell}],\\
D_n &:= \prod_{\ell \mid n} D_{\ell} \in \mathbb{Z}[G_n] \subseteq \mathcal{O}_F[\mathcal{G}_{nc}].
\end{split}
\end{displaymath}
Now we choose a positive integer $M'$ such that $p^{M'}$ annihilates
\begin{enumerate}[noitemsep]
	\item the kernel and the cokernel of $\Res_{K/K_n} : H^1(K,T_M \otimes \chi^{-1}) \rightarrow H^1(K_n,T_M \otimes \chi^{-1})^{\mathcal{G}_n}$ for all $n,M \in \mathbb{Z}_+$;
	\item the local cohomology groups $H^1(K_v,T_M \otimes \chi^{-1})$ for all places $v \mid c_0N$.
\end{enumerate}
One can prove that such an integer exists as in \cite[Proposition 6.3, Corollary 6.4 and Lemma 10.1]{Nek92}. 
Consider an anticyclotomic Euler system $\mathbf{c} = \left\lbrace c_n \right\rbrace$ for $T$ and $\chi$. Denote by $\Red_M$ the reduction $H^1(-,T \otimes \chi^{-1}) \rightarrow H^1(-, T_M \otimes \chi^{-1})$. 
For $n \in \mathcal{S}_M$, we want to apply the derivative operators $D_n$ to the classes $c_n$.
For each $n \in \mathcal{S}_M$ there is a unique
class $\mathcal{D}_M(n) \in H^1(K_c,T_M \otimes \chi^{-1})$ such that
\begin{displaymath}
\Res_{K/K_{n}}(\mathcal{D}_M(n)) = p^{3M'} \Red_M(D_n c_n),
\end{displaymath}
because of the properties of $M'$. 
Define the derivative class by
\begin{equation} \label{P1}
P_{M,\chi^{-1}}(n) := \Cor_{K_c/K}(\mathcal{D}_M(n)) \in H^1(K,T_M \otimes \chi^{-1}).
\end{equation}

\subsubsection{Local conditions}
Now we introduce Selmer groups imposing local conditions at $p$ at the cohomology classes, through the choices of subspaces of 
$H^1(K_{\mathfrak{p}},V_f(k/2) \otimes \chi^{-1}) \oplus H^1(K_{\overline{\mathfrak{p}}},V_f(k/2) \otimes \chi^{-1})$.
Recall that $ p = \mathfrak{p} \overline{\mathfrak{p}}$ splits in $K$.

Let $\mathcal{F} \subseteq 
H^1(K_{\mathfrak{p}},V_f(k/2) \otimes \chi^{-1}) \oplus H^1(K_{\overline{\mathfrak{p}}},V_f(k/2) \otimes \chi^{-1})$
be an $F$-subspace and let $\mathcal{F}^*\subseteq
H^1(K_{\mathfrak{p}},V_f(k/2) \otimes \chi) \oplus H^1(K_{\overline{\mathfrak{p}}},V_f(k/2) \otimes \chi)$ be the orthogonal complement of $\mathcal{F}$ with respect to the local Tate pairing of equation (\ref{Tate-pairing}). Assume that $\mathcal{F}^*=\mathcal{F}$ if $\chi^2=1$.
Define $\mathcal{F}_T \subseteq  
H^1(K_{\mathfrak{p}},V_f(k/2) \otimes \chi^{-1}) \oplus H^1(K_{\overline{\mathfrak{p}}},V_f(k/2) \otimes \chi^{-1})$
to be the $F$-subspace obtained as the inverse image of $\mathcal{F}$ under the
direct sum of the maps
$H^1(K_{\mathfrak{p}},T \otimes \chi^{-1}) \rightarrow H^1(K_{\mathfrak{p}},V_f(k/2) \otimes \chi^{-1})$, $H^1(K_{\overline{\mathfrak{p}}},T \otimes \chi^{-1}) \rightarrow H^1(K_{\overline{\mathfrak{p}}},V_f(k/2) \otimes \chi^{-1})$
and $\mathcal{F}_M \subseteq 
H^1(K_{\mathfrak{p}},T_M \otimes \chi^{-1}) \oplus H^1(K_{\overline{\mathfrak{p}}},T_M \otimes \chi^{-1})$ 
as the image of $\mathcal{F}_T$ through the reduction map. Put
$Y_M := T_M \otimes \chi^{-1}$. 
Now define
\[
\Sel^{(n)}_{\mathcal{F}}(K,Y_M) := \Large \left\lbrace x \in H^1(K,Y_M) \mid 
\normalsize
\begin{aligned}
\displaystyle
&\loc_v(x) \in H^1_f(K_v,Y_M) && \text{if}\ v \nmid pn\\
&\loc_p \in \mathcal{F}_M && \text{if}\ p \nmid n
\end{aligned}
\Large\right\rbrace,
\]
where $\loc_p = \loc_{\mathfrak{p}} \oplus \loc_{\overline{\mathfrak{p}}}$.
Note that if $p \mid n$ then the choice of $\mathcal{F}_M$ is irrelevant.
If $n=1$ we abbreviate $\Sel_{\mathcal{F}}(K,Y_M) := \Sel^{(1)}_{\mathcal{F}}(K,Y_M)$. Define then
\begin{displaymath}
\Sel_{\mathcal{F}}(K, T \otimes \chi^{-1}) := \varinjlim_M \Sel_\mathcal{F}(K,Y_M).
\end{displaymath}
If $\mathbf{c} = \left\lbrace c_n\right\rbrace _{n \in \mathcal{S}}$ is an anticyclotomic Euler system for $T$ and $\chi$, let
\begin{displaymath}
\mathbf{c}_K := \Cor_{K_c/K}(c_1) \in H^1(K,T \otimes \chi^{-1}).
\end{displaymath}
Then
\begin{displaymath}
P_{M,\chi^{-1}}(1) = p^{3M'} \Red_{M}(\mathbf{c}_K),
\end{displaymath}
since the square
\\
\[
\begin{tikzcd}
H^1(K_c,T \otimes \chi^{-1}) \arrow[r, "\Cor_{K_c/K}"] \arrow[d, "\Red_M"] 
&  H^1(K, T \otimes \chi^{-1}) \arrow[d, "\Red_M"]
\\
H^1(K_c, T_M \otimes \chi^{-1}) \arrow[r, "\Cor_{K_c/K}"] 
&  H^1(K, T_M \otimes \chi^{-1})
\end{tikzcd}
\]
\\
is commutative.
Using \cite[Proposition 10.2]{Nek92} we obtain that 
\[P_{M,\chi^{-1}}(n) \in \Sel_{\mathcal{F}}^{(np)}(K,Y_M).\]
Note that $p\,|\,pn$, so this holds for any $\mathcal{F}$.

\begin{defi}
	An anticyclotomic Euler system $\mathbf{c} = \left\lbrace c_n \right\rbrace_{n \in \mathcal{S}}$ for $T$ and $\chi$ has \textbf{local condition $\mathcal{F}$} if it satisfies
	\begin{itemize}%[noitemsep]
		\item[4.] $\mathbf{c}_K \in \Sel_{\mathcal{F}}(K,T \otimes \chi^{-1})$ and $\mathbf{c}_K^{\tau} \in \Sel_{\mathcal{F^*}}(K,T \otimes \chi)$, that is $\loc_{p}(\mathbf{c}_K) \in \mathcal{F}_T$ and $\loc_{p}(\mathbf{c}_K^{\tau}) \in \mathcal{F}^*_T$;
		\item[5.] for every $M$ and $n \in \mathcal{S}_M$, one has \[P_{M,\chi^{-1}}(n) \in \Sel_{\mathcal{F}}^{(n)}(K,T_M \otimes \chi^{-1}),\] 
		that is, $\loc_p(P_{M,\chi^{-1}}(n)) \in \mathcal{F}_M$
		(these two conditions are equivalent because $P_{M,\chi^{-1}}(n) \in \Sel_{\mathcal{F}}^{(np)}(K,Y_M)$).
	\end{itemize}
\end{defi}

Now we state an important technical result.

\begin{teor} \label{keyth}
	Let $\mathbf{c} = \left\lbrace c_n \right\rbrace_{n \in \mathcal{S}}$ be an anticyclotomic Euler system for $T$ and $\chi$ with local condition $\mathcal{F}$. If $\mathbf{c}_K \neq 0$, then
	\begin{displaymath}
	\Sel_{\mathcal{F}^*}(K,V \otimes \chi) = F \cdot \mathbf{c}_K^{\tau}.
	\end{displaymath}
\end{teor}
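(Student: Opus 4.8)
The hypotheses here are purely Galois-cohomological: they involve only the lattice $T\subseteq V_f(k/2)$, the character $\chi$, and the local subspace $\mathcal{F}$. So this is the abstract anticyclotomic Euler system theorem, and the plan is to prove it by Kolyvagin's descent, following \cite[\S 7]{CH} and ultimately \cite{Nek92}. First I would dispatch the easy inclusion $F\cdot\mathbf{c}_K^{\tau}\subseteq\Sel_{\mathcal{F}^*}(K,V\otimes\chi)$: complex conjugation is an automorphism, so $\mathbf{c}_K\neq0$ forces $\mathbf{c}_K^{\tau}\neq0$; the fact that $\loc_v(\mathbf{c}_K^{\tau})\in H^1_f(K_v,-)$ for every $v\nmid p$ is the geometric (Bloch--Kato) nature of Euler system classes, and $\loc_p(\mathbf{c}_K^{\tau})\in\mathcal{F}_T^*$ is exactly local condition $(4)$; since $(T\otimes\chi^{-1})^{\tau}=T\otimes\chi$ for anticyclotomic $\chi$, the class $\mathbf{c}_K^{\tau}$ indeed lives in the relevant group.

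The substance is the reverse inclusion, equivalently the bound $\dim_F\Sel_{\mathcal{F}^*}(K,V\otimes\chi)\leq1$. Given $s\in\Sel_{\mathcal{F}^*}(K,V\otimes\chi)$, I would scale it into $\Sel_{\mathcal{F}^*}(K,T\otimes\chi)$ and work with its reductions $s_M\in\Sel_{\mathcal{F}^*}(K,T_M\otimes\chi)$. The engine is global (Poitou--Tate) reciprocity: for every $n\in\mathcal{S}_M$ one has $\sum_v\braket{\loc_v(s_M),\loc_v(P_{M,\chi^{-1}}(n))}_v=0$ for the derivative class of \eqref{P1}. At $v\nmid pn$ both local classes lie in the self-orthogonal finite part $H^1_f$; at $p$, conditions $(4)$ and $(5)$ together with membership of $s_M$ in $\Sel_{\mathcal{F}^*}$ put one class in $\mathcal{F}_M$ and the other in $\mathcal{F}_M^*$; so all those terms vanish, leaving $\sum_{\ell\mid n}\braket{\loc_\ell(s_M),\partial_\ell P_{M,\chi^{-1}}(n)}_\ell=0$. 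The Euler relations $(1)$--$(2)$, combined with the fact that at an $M$-admissible prime $\ell$ the groups $H^1_f(K_\lambda,T_M)$ and $H^1_{\mathrm{sing}}(K_\lambda,T_M)$ are free of rank one and in perfect local duality, identify $\partial_\ell P_{M,\chi^{-1}}(\ell m)$ up to a unit with the image of $\loc_\ell(P_{M,\chi^{-1}}(m))$ under the finite-to-singular comparison; in particular the case $m=1$, via $P_{M,\chi^{-1}}(1)=p^{3M'}\Red_M(\mathbf{c}_K)$, ties $\partial_\ell P_{M,\chi^{-1}}(\ell)$ to $\loc_\ell(\mathbf{c}_K)$.

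Next I would run Kolyvagin's inductive choice of primes. Using the \v{C}ebotarev density theorem together with the residual big-image hypotheses on $\bar\rho_f|_{G_K}$ inherited from the CH setup — these guarantee both the abundance of $M$-admissible primes and the finiteness of the $M$-independent exponent $M'$ controlling the kernels and cokernels of the global-to-local restriction maps — I would pick an $M$-admissible $\ell_1$ at which $\loc_{\ell_1}(\mathbf{c}_K)$ has maximal order in $H^1_f(K_{\ell_1},T_M)$ up to a bounded power of $\varpi$, which is possible because $\mathbf{c}_K\neq0$. Reciprocity with $n=\ell_1$ then confines $\loc_{\ell_1}(s_M)$ to a subgroup of bounded index; if $s_M$ is not, up to this bounded error, in the $\mathcal{O}_F/\varpi^M$-span of $\Red_M(\mathbf{c}_K^{\tau})$, a second admissible prime $\ell_2$ detects the discrepancy through $P_{M,\chi^{-1}}(\ell_2)$ or $P_{M,\chi^{-1}}(\ell_1\ell_2)$, and reciprocity closes the argument; axiom $(3)$ is invoked, when $\chi^2=1$, to pass between the $\chi$- and $\chi^{-1}$-Selmer groups so the self-dual case does not degenerate. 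The upshot is that $\Sel_{\mathcal{F}^*}(K,T_M\otimes\chi)$ is cyclic, generated by $\Red_M(\mathbf{c}_K^{\tau})$, modulo a power of $\varpi$ bounded independently of $M$; letting $M\to\infty$ and tensoring with $F$ gives $\Sel_{\mathcal{F}^*}(K,V\otimes\chi)=F\cdot\mathbf{c}_K^{\tau}$.

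The main obstacle is not in this theorem as such — being abstract, it transcribes essentially verbatim from \cite[\S 7]{CH} and \cite{Nek92} — but in the \v{C}ebotarev/local input: one must know $\bar\rho_f$ has large enough image over $K$ so that admissible primes exist and $M'$ is finite, and that local Tate duality and the finite/singular comparison at inert admissible primes behave exactly as there. These are precisely the facts imported from CH. The genuinely new work, carried out elsewhere in the paper, is verifying that the generalized Heegner classes on the Shimura curve satisfy axioms $(1)$--$(5)$, which is supplied by Propositions \ref{4.4}, \ref{4.5}, \ref{4.6}, \ref{4.7} and Theorem \ref{rec}.
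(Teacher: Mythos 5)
Your outline is correct and matches the paper's treatment: the paper proves this theorem simply by citing \cite[Theorem 7.3]{CH}, since the statement is purely Galois-cohomological and independent of the quaternionic setting, and your sketch is an accurate reconstruction of the Kolyvagin-descent argument behind that cited result (global reciprocity against the derivative classes $P_{M,\chi^{-1}}(n)$, \v{C}ebotarev selection of admissible primes, and the finite/singular comparison at inert primes). You also correctly identify that the genuinely new content of the paper lies not here but in verifying axioms $(1)$--$(5)$ for the quaternionic generalized Heegner classes.
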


\begin{proof} This is \cite[Theorem 7.3]{CH}. \end{proof}

\subsubsection{Construction of Euler systems for generalized Heegner cycles}

We keep notation and assumptions introduced at the beginning of this section, but now we assume that
\begin{itemize}
	\item $\chi$ has infinity type $(j,-j)$ with $j \geq k/2$;
	\item $f$ is ordinary at $p$.
\end{itemize}
Recall the cohomology class $\boldsymbol{z}_f \in H^1\bigl(K, T \otimes \mathcal{O}_F\llbracket \tilde{\Gamma}_{c} \rrbracket\bigr)$ defined in (\ref{zf}) and consider its $\chi$-specialization $\boldsymbol{z}_f^{\chi} \in H^1(K,T \otimes \chi)$.  
Let us consider for $v = \mathfrak{p},\overline{\mathfrak{p}}$ the subspace $\mathcal{L}_v$ of $H^1(K_v,V \otimes \chi)$ spanned by $\loc_v(\boldsymbol{z}_f^{\chi})$ and put 
\[ 
\mathcal{L}_{v,T} := \mathcal{L}_v \cap H^1(K,T \otimes \chi).
\]
Set $\mathcal{L}^* := \mathcal{L}_{\mathfrak{p}}^* \oplus \mathcal{L}_{\overline{\mathfrak{p}}}^*$.
Choose $M'$ large enough so that $p^{M'}H^1(K_v,T)_{\text{tor}}=0$ for $ v= \mathfrak{p},\overline{\mathfrak{p}}$.
Recall the cohomology classes $\boldsymbol{z}_{m,\alpha} \in H^1(K_{m^{\infty}},T)$ and for $n \in \mathcal{S}$ set
\begin{displaymath}
c_n := \boldsymbol{z}_{cn,\alpha}^{\chi^-1} \in H^1(K_{nc}, T \otimes \chi^{-1}),
\end{displaymath}
where $\boldsymbol{z}_{cn,\alpha}^{\chi^-1}$ is the specialization at $\chi^{-1}$ obtained via the map 
\[
H^1_{\Iw}(K_{ncp^{\infty}},T) \longrightarrow 
H^1(K_{nc},T \otimes \chi^{-1}).
\]
Finally, define
\begin{equation}
\mathbf{c}:= \left\lbrace c_n \right\rbrace_{n \in \mathcal{S}} = \left\lbrace \boldsymbol{z}_{cn,\alpha}^{\chi^-1}\right\rbrace _{n \in \mathcal{S}}.
\end{equation}
We would like to prove that this collection of cohomology classes is an anticyclotomic Euler system with local condition $\mathcal{L}^*$.
\begin{prop} \label{eulsys}
	The collection $\mathbf{c}:= \left\lbrace c_n \right\rbrace_{n \in \mathcal{S}}$ is an anticyclotomic Euler system for $T$ and $\chi$ with local condition $\mathcal{L}^*$. Moreover, $\mathbf{c}_K = \boldsymbol{z}_f^{\chi^{-1}}$.
\end{prop}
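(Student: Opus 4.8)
The plan is to verify, one at a time, the three defining axioms of an anticyclotomic Euler system, then the two clauses of the local-condition requirement for $\mathcal{F}=\mathcal{L}^*$, and finally to read off the identity $\mathbf{c}_K=\boldsymbol{z}_f^{\chi^{-1}}$. That last point is the quickest: by construction $c_1=\boldsymbol{z}_{c,\alpha}^{\chi^{-1}}$, while $\boldsymbol{z}_f=\Cor_{K_c/K}(\boldsymbol{z}_{c,\alpha})$ by \eqref{zf}; since corestriction commutes with the $\chi^{-1}$-specialization map $H^1_{\Iw}(K_{cp^{\infty}},T)\to H^1(K_c,T\otimes\chi^{-1})$, one obtains $\mathbf{c}_K=\Cor_{K_c/K}(c_1)=\boldsymbol{z}_f^{\chi^{-1}}$.

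For the Euler system axioms the idea is to realize each as the $\chi^{-1}$-specialization of a statement already available for the classes $\boldsymbol{z}_{cn,\alpha}$. First I would deduce axiom (1), $\Cor_{K_{nc}/K_{mc}}(c_n)=a_\ell(f)\,c_m$ for $n=m\ell$ with $\ell$ inert, from the inert-prime norm relation of Proposition \ref{4.4} (according to which $T_\ell z_c=\Cor_{K_{c\ell}/K_c}(z_{c\ell})$, with $T_\ell$ acting by $a_\ell(f)$ on the $f$-isotypic quotient), together with the relation $\Cor_{K_{cp}/K_c}(\boldsymbol{z}_{cp,\alpha})=\alpha\cdot\boldsymbol{z}_{c,\alpha}$ used to assemble the Iwasawa classes; one applies $\mathrm{id}\otimes e_{\chi^{-1}}$ throughout, noting that $T_\ell$ commutes with the $\sigma_{\mathfrak p},\sigma_{\overline{\mathfrak p}}$-operators defining the $\alpha$-stabilization since $\ell\ne p$. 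Axiom (2) is Proposition \ref{4.7} applied at each $p$-layer, passed to the inverse limit and specialized; axiom (3), needed only when $\chi^2=1$ (hence $\chi=\chi^{-1}$, a case that does not even occur in the range $j\ge k/2$ treated here), is literally Proposition \ref{4.6}. Each of these is a transcription of the corresponding step in \cite[\S 7]{CH}.

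The local condition has two clauses. To see that $\mathbf{c}_K\in\Sel_{\mathcal{L}^*}(K,T\otimes\chi^{-1})$ and $\mathbf{c}_K^{\tau}\in\Sel_{\mathcal{L}}(K,T\otimes\chi)$, I would argue that away from $p$ the class $\boldsymbol{z}_f^{\chi^{-1}}$, being a specialization of the Abel--Jacobi image of generalized Heegner cycles, localizes into $H^1_f$ at every $v\nmid p$ by \cite{Nek00} (cf. \S\ref{algL}), and likewise for $\mathbf{c}_K^{\tau}$ via Proposition \ref{4.6}; while at $v=\mathfrak{p},\overline{\mathfrak{p}}$ one uses that $\loc_{\mathfrak{p}}(\boldsymbol{z}_{c,\alpha})\in H^1_{\Iw}(L_\infty,\mathscr{F}^+T)$, as recorded in \S\ref{algL}, so that the line $\mathcal{L}_v$ is contained in $H^1_f(K_v,V_f(k/2)\otimes\chi)$. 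Since by local Tate duality $H^1_f(K_v,V_f(k/2)\otimes\chi)$ and $H^1_f(K_v,V_f(k/2)\otimes\chi^{-1})$ are exact orthogonal complements, this gives $\mathcal{L}_v^{*}\supseteq H^1_f(K_v,V_f(k/2)\otimes\chi^{-1})\ni\loc_v(\boldsymbol{z}_f^{\chi^{-1}})$, hence $\loc_p(\mathbf{c}_K)\in\mathcal{L}^{*}_T$; the dual statement for $\mathbf{c}_K^{\tau}$ then follows from \cite[Lemma 4.6]{CH} together with the Galois-invariance of $\mathcal{L}_v$. For the second clause, that each derivative class satisfies $P_{M,\chi^{-1}}(n)\in\Sel_{\mathcal{L}^*}^{(n)}(K,T_M\otimes\chi^{-1})$, I would start from \cite[Proposition 10.2]{Nek92}, which already places it in $\Sel_{\mathcal{L}^*}^{(np)}(K,Y_M)$, so only the condition at $p$ with $p\nmid n$ remains; there $\loc_p(P_{M,\chi^{-1}}(n))$ is obtained by applying $D_n\in\mathbb{Z}[G_n]$ (with $G_n=\Gal(K_n/K_1)$ unramified at $p$) and $\Cor_{K_c/K}$ to the reduction mod $\varpi^M$ of $\loc_p(\boldsymbol{z}_{cn,\alpha})$, which again lands in the reduction of $H^1_f\subseteq\mathcal{L}^*_T$, i.e.\ in $\mathcal{L}^*_M$; this is where the auxiliary integer $M'$ is used, to control the relevant reduction maps.

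I expect the main obstacle to be the bookkeeping in passing between the Iwasawa classes $\boldsymbol{z}_{cn,\alpha}$ at composite levels $cn$ and their $\chi^{-1}$-specializations: one has to keep track simultaneously of the $\alpha$-stabilization, the projector $e_{\chi^{-1}}$ of \S\ref{chi-comp}, the integral lattice $T\subseteq V$, and the twist $\chi$ versus $\chi^{-1}$, exactly as in \cite[\S 7]{CH}. Since every geometric and $p$-adic-analytic input required here (Propositions \ref{4.4}, \ref{4.6}, \ref{4.7}, the construction of $\boldsymbol{z}_f$ in \S\ref{algL}, and the local results of \cite{Nek00}) has already been established in the quaternionic setting, nothing conceptually new arises and the proof reduces to adapting the argument of \cite{CH}.
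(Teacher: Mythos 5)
Your verification of the Euler system axioms and of the identity $\mathbf{c}_K=\boldsymbol{z}_f^{\chi^{-1}}$ follows the same route as the paper: the identity is read off from \eqref{zf} and the commutation of corestriction with $\chi^{-1}$-specialization, and axioms (1)--(3) are obtained from Propositions \ref{4.4}, \ref{4.6} and \ref{4.7} (the paper passes through the intermediate identity $\boldsymbol{z}_{n,\alpha}^{\psi}=\alpha^{-t}\cdot z_{n,\psi}$ for finite-order $\psi$ and then specializes at $\chi^{-1}$, which is the same computation you describe). For the local condition the paper simply defers to \cite[Proposition 7.8]{CH}; you attempt to spell it out, and there the argument breaks.

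The problem is your step at $v=\mathfrak{p},\overline{\mathfrak{p}}$: you assert that $\mathcal{L}_v\subseteq H^1_f\bigl(K_v,V_f(k/2)\otimes\chi\bigr)$ and conclude $\mathcal{L}_v^*\supseteq H^1_f\bigl(K_v,V_f(k/2)\otimes\chi^{-1}\bigr)\ni\loc_v(\boldsymbol{z}_f^{\chi^{-1}})$. For $\chi$ of infinity type $(j,-j)$ with $j\geq k/2$ one has (see the display quoted from \cite[(6.2)]{CH} in the proof of Proposition \ref{loc}) $H^1_f\bigl(K_{\mathfrak{p}},V_f(k/2)\otimes\chi\bigr)=H^1\bigl(K_{\mathfrak{p}},V_f(k/2)\otimes\chi\bigr)$ and $H^1_f\bigl(K_{\overline{\mathfrak{p}}},V_f(k/2)\otimes\chi\bigr)=0$, hence dually $H^1_f\bigl(K_{\mathfrak{p}},V_f(k/2)\otimes\chi^{-1}\bigr)=0$. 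So at $\mathfrak{p}$ your conclusion would force $\loc_{\mathfrak{p}}(\boldsymbol{z}_f^{\chi^{-1}})=0$ — exactly the vanishing that Theorem \ref{rec} and Proposition \ref{loc} must exclude when $L(f,\chi,k/2)\neq0$ — while at $\overline{\mathfrak{p}}$ the premise $\mathcal{L}_{\overline{\mathfrak{p}}}\subseteq H^1_f=\{0\}$ fails. The source of the error is identifying the image of $H^1\bigl(K_v,\mathscr{F}^+(\cdot)\bigr)$ with the Bloch--Kato subspace $H^1_f$: these coincide for $T$ itself (as used in \S\ref{algL}), but not after specializing at a character of infinity type $(j,-j)$ with $j\ge k/2$; indeed $\exp^*$ annihilates $H^1_f$, whereas Theorem \ref{rec} computes $\exp^*$ of precisely this class. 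The argument of \cite[Proposition 7.8]{CH} instead uses that both $\loc_{\mathfrak{p}}(\boldsymbol{z}_f^{\chi})$ and $\loc_{\mathfrak{p}}(\boldsymbol{z}_f^{\chi^{-1}})$ lie in the image of $H^1\bigl(K_{\mathfrak{p}},\mathscr{F}^+(\cdot)\bigr)$ and that $\mathscr{F}^+V_f(k/2)$ is \emph{isotropic} for the self-duality pairing $V_f(k/2)\times V_f(k/2)\to F(1)$, so that these two images annihilate each other under local Tate duality; this yields $\loc_{\mathfrak{p}}(\mathbf{c}_K)\in\mathcal{L}_{\mathfrak{p}}^*$ directly, and the same isotropy handles the derivative classes $P_{M,\chi^{-1}}(n)$. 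With that replacement, the remainder of your verification (the unramified conditions away from $p$ via \cite{Nek00}, the role of $D_n$ and of the auxiliary integer $M'$, and the clause for $\mathbf{c}_K^{\tau}$ via Proposition \ref{4.6}) goes through as in the paper.
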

\begin{proof}
	If $\mathbf{c}$ is an anticyclotomic Euler system, then
\[
\mathbf{c}_K = \Cor_{K_c/K}(c_1) = \Cor_{K_c/K}(\boldsymbol{z}_{c,\alpha}^{\chi^{-1}}) =\Cor_{K_c/K}(\boldsymbol{z}_{c,\alpha})^{\chi^{-1}} = \boldsymbol{z}_f^{\chi^{-1}}.
\]
	Analogously to (\ref{zchi}), we obtain 
	\begin{displaymath}
	\boldsymbol{z}_{n,\alpha}^{\psi} = \alpha^{-t} \cdot z_{n,\psi}
	\end{displaymath}
	for each non-trivial finite order character $\psi : \Gal(K_{c_0 p^{\infty}}/K_{c_0}) \rightarrow \mathcal{O}_{\mathbb{C}_p}^{\times}$ of conductor $c=c_0p^s$.
	By Propositions \ref{4.4}, \ref{4.6}, \ref{4.7}, we know that
	\begin{enumerate}[noitemsep]
		\item $\Cor_{K_{nc}/K_{mc}}(z_{nc,\psi}) = a_{\ell}(f) \cdot z_{mc,\psi}$;
		\item $\loc_{\ell}(z_{nc,\psi}) = \Res_{K_{mc,\lambda}/K_{nc,\lambda}}(\loc_{\ell}(z_{mc,\psi})^{\frob_{\ell}})$;
		\item $z_{nc,\psi}^{\tau} = w_f \cdot \psi(\sigma) \cdot z_{nc,\psi^{-1}}^{\sigma}$ for some $\sigma \in \Gal(K_{nc}/K)$.
	\end{enumerate}
	Upon taking $\psi=\mathbf{1}$, we deduce these properties for the classes $\boldsymbol{z}_{nc,\alpha}$ and then for $\boldsymbol{z}_{nc,\alpha}^{\chi^{-1}}$ by specializing the relations at $\chi^{-1}$. This proves that $\mathbf{c}$ is an anticyclotomic Euler system. The last thing we need to show is that $\mathbf{c}$ has local condition $\mathcal{L}^*$, which can be checked as in the proof of \cite[Proposition 7.8]{CH}.
\end{proof}

\begin{prop} \label{loc}
	If $\loc_\mathfrak{p}\Bigl(\boldsymbol{z}_f^{\chi^{-1}}\Bigr) \neq 0$, then $\Sel\bigl(K,V_f(k/2) \otimes \chi\bigr) =0$.
\end{prop}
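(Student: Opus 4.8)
The plan is to feed the system $\mathbf{c}$ of Proposition \ref{eulsys} into the anticyclotomic Euler system machine and then descend the resulting information on $\Sel_{\mathcal{L}}$ to the genuine Bloch--Kato Selmer group, following the corresponding deduction in \cite[\S 7]{CH}.

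First I would invoke Proposition \ref{eulsys}: the collection $\mathbf{c}=\{\boldsymbol{z}_{cn,\alpha}^{\chi^{-1}}\}_{n\in\mathcal{S}}$ is an anticyclotomic Euler system for $T$ and $\chi$ with local condition $\mathcal{L}^{*}$, and $\mathbf{c}_{K}=\boldsymbol{z}_{f}^{\chi^{-1}}$. The hypothesis $\loc_{\mathfrak{p}}(\boldsymbol{z}_{f}^{\chi^{-1}})\neq 0$ gives in particular $\mathbf{c}_{K}\neq 0$, so Theorem \ref{keyth}, applied with $\mathcal{F}=\mathcal{L}^{*}$ (whence $\mathcal{F}^{*}=(\mathcal{L}^{*})^{*}=\mathcal{L}$), produces the one-dimensionality statement
\[
\Sel_{\mathcal{L}}\bigl(K,V_{f}(k/2)\otimes\chi\bigr)=F\cdot\mathbf{c}_{K}^{\tau}=F\cdot\bigl(\boldsymbol{z}_{f}^{\chi^{-1}}\bigr)^{\tau}.
\]
Since complex conjugation interchanges $\mathfrak{p}$ and $\overline{\mathfrak{p}}$, the hypothesis also forces $\loc_{\overline{\mathfrak{p}}}\bigl((\boldsymbol{z}_{f}^{\chi^{-1}})^{\tau}\bigr)\neq 0$; in particular the generator of $\Sel_{\mathcal{L}}$ is not locally trivial at $\overline{\mathfrak{p}}$.

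The second and decisive step is to descend from $\Sel_{\mathcal{L}}$ to the Bloch--Kato group $\Sel(K,V_{f}(k/2)\otimes\chi)$, the two differing only through the local conditions at the two primes of $K$ above $p$. Here the assumption $j\geq k/2$ enters through a Hodge--Tate weight computation for $V_{f}(k/2)\otimes\chi$ restricted to the decomposition groups at $\mathfrak{p}$ and $\overline{\mathfrak{p}}$: with $\mathfrak{p}$ the prime singled out by $i_{p}$, all Hodge--Tate weights are non-negative at $\mathfrak{p}$ and all are negative at $\overline{\mathfrak{p}}$, so that $H^{1}_{f}(K_{\mathfrak{p}},V_{f}(k/2)\otimes\chi)=0$ (whence $\Sel(K,V_{f}(k/2)\otimes\chi)$ imposes the strict condition at $\mathfrak{p}$) while $H^{1}_{f}(K_{\overline{\mathfrak{p}}},V_{f}(k/2)\otimes\chi)=H^{1}(K_{\overline{\mathfrak{p}}},V_{f}(k/2)\otimes\chi)$. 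One then identifies, by local Tate duality, the Bloch--Kato local conditions with the orthogonal complements of the lines $\mathcal{L}_{\mathfrak{p}},\mathcal{L}_{\overline{\mathfrak{p}}}$ cut out by the localizations of the ordinary Iwasawa class $\boldsymbol{z}_{f}$, runs the Poitou--Tate global duality formula, and combines the outcome with the first step and with the non-triviality at $\overline{\mathfrak{p}}$ established above; this forces $\Sel(K,V_{f}(k/2)\otimes\chi)=0$, exactly as in the corresponding deduction in \cite{CH}, which I would transcribe essentially verbatim.

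The main obstacle is precisely this comparison: $\Sel(K,V_{f}(k/2)\otimes\chi)$ and $\Sel_{\mathcal{L}}(K,V_{f}(k/2)\otimes\chi)$ do not have nested local conditions at $\mathfrak{p}$ and $\overline{\mathfrak{p}}$, so the descent is not a plain inclusion but requires a careful count of dimensions through local and global duality; the crucial input is that the non-vanishing hypothesis makes the one-dimensional $\Sel_{\mathcal{L}}$ be spanned by a class that is visibly non-Selmer at one of the two primes above $p$. Modulo these local computations the argument is formal, given Theorem \ref{keyth} and Proposition \ref{eulsys}, so I would reproduce it closely following \cite{CH}.
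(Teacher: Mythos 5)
Your proposal follows the paper's proof essentially step for step: Proposition \ref{eulsys} together with Theorem \ref{keyth} (applied with $\mathcal{F}=\mathcal{L}^*$) yields $\Sel_{\mathcal{L}}\bigl(K,V_f(k/2)\otimes\chi\bigr)=F\cdot\boldsymbol{z}_f^{\chi}$, the non-vanishing of $\loc_{\overline{\mathfrak{p}}}(\boldsymbol{z}_f^{\chi})$ obtained by applying $\tau$ to the hypothesis forces $H^1_{\mathcal{L}_{\mathfrak{p}},0}\bigl(K,V_f(k/2)\otimes\chi\bigr)=0$, and a Poitou--Tate sequence — in which the surjectivity of $\loc_{\mathfrak{p}}$ onto $\mathcal{L}_{\mathfrak{p}}^*$ is exactly the hypothesis $\loc_{\mathfrak{p}}(\boldsymbol{z}_f^{\chi^{-1}})\neq0$ — then identifies the Bloch--Kato Selmer group with the vanishing group $H^1_{\emptyset,0}$. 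Two small corrections to your write-up. First, you have the degenerate local conditions at $p$ backwards: by the paper's citation of \cite{CH}, for $j\geq k/2$ one has $H^1_f\bigl(K_{\mathfrak{p}},V_f(k/2)\otimes\chi\bigr)=H^1\bigl(K_{\mathfrak{p}},V_f(k/2)\otimes\chi\bigr)$ and $H^1_f\bigl(K_{\overline{\mathfrak{p}}},V_f(k/2)\otimes\chi\bigr)=0$, so that $\Sel(K,V_{f,\chi})=H^1_{\emptyset,0}$ and not $H^1_{0,\emptyset}$; this orientation is forced by the argument, since with your assignment the Poitou--Tate step would need non-vanishing of the localization of $\boldsymbol{z}_f^{\chi^{-1}}$ at $\overline{\mathfrak{p}}$, which is not what the hypothesis provides. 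Second, the intermediate comparison is not an identification of the Bloch--Kato conditions with the orthogonal complements of the lines $\mathcal{L}_v$; it is simply the inclusion $H^1_{\mathcal{L}_{\mathfrak{p}},0}\subseteq\Sel_{\mathcal{L}}=F\cdot\boldsymbol{z}_f^{\chi}$ combined with $\loc_{\overline{\mathfrak{p}}}(\boldsymbol{z}_f^{\chi})\neq0$. Neither point changes the strategy, which is the one the paper carries out.
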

\begin{proof}
	The proof is completely analogous to that of \cite[Theorem 7.9]{CH}, so we will briefly sketch the arguments. For each choice of subspaces $\mathcal{F}_v \subset H^1\bigl(K_v,V_f(k/2) \otimes \chi\bigr)$ with $v = \mathfrak{p}, \overline{\mathfrak{p}}$, consider the ''generalized Selmer group'' given by
	{%\footnotesize{	
	\begin{displaymath}
H^1_{\mathcal{F}_{\mathfrak{p}}, \mathcal{F}_{\overline{\mathfrak{p}}}}\bigl(K, V_f(k/2) \otimes \chi\bigr) := 
	\Large \left\lbrace x \in H^1(K, V_f(k/2) \otimes \chi) \;\,\bigg|\;\, 
	\normalsize
	\begin{aligned}
	&\loc_v(x) \in H^1_f(K_v, V_f(k/2) \otimes \chi) && \text{if}\ v \nmid p\\
	&\loc_v(x) \in \mathcal{F}_v && \text{if}\ v \,|\, p
	\end{aligned}
	\Large \right\rbrace.
	\end{displaymath}}%}
	\!\!Thanks to Proposition \ref{eulsys}, we know that $\mathbf{c}$ is an anticyclotomic Euler system for $T$ and $\chi$ with local condition $\mathcal{L}^*$ such that $\mathbf{c}_K = \boldsymbol{z}_f^{\chi^{-1}}$. Since $\loc_{\mathfrak{p}}(\boldsymbol{z}_f^{\chi^{-1}}) \neq 0$, it follows that $\boldsymbol{z}_f^{\chi^{-1}} \neq 0$, so Theorem \ref{keyth} ensures that \[H^1_{\mathcal{L}_{\mathfrak{p}},\mathcal{L}_{\overline{\mathfrak{p}}}}\bigl(K, V_f(k/2) \otimes \chi\bigr) = \Sel_{\mathcal{L}}\bigl(K,V_f(k/2) \otimes \chi\bigr) = F \cdot (\boldsymbol{z}_f^{\chi^{-1}})^{\tau} = F \cdot \boldsymbol{z}_f^{\chi}.\]
	We have that 
	\begin{displaymath}
	H^1_{\mathcal{L}_{\mathfrak{p}},0}\bigl(K, V_f(k/2) \otimes \chi\bigr) \subseteq H^1_{\mathcal{L}_{\mathfrak{p}},\mathcal{L}_{\overline{\mathfrak{p}}}}\bigl(K, V_f(k/2) \otimes \chi\bigr) = F \cdot \boldsymbol{z}_f^{\chi}
	\end{displaymath}
	Since $ \loc_{\overline{\mathfrak{p}}}(\boldsymbol{z}_f^{\chi})^{\tau} = \loc_{\mathfrak{p}}(\mathbf{z}_f^{\chi^{-1}})$, also $\loc_{\overline{\mathfrak{p}}}(\boldsymbol{z}_f^{\chi})^{\tau} \neq 0$, hence $\boldsymbol
	{z}_f^{\chi} \notin H^1_{\mathcal{L}_{\mathfrak{p}},0}(K, V_f(k/2) \otimes \chi)$. It follows that 
	\begin{displaymath}
	H^1_{\mathcal{L}_{\mathfrak{p}},0}(K, V_f(k/2) \otimes \chi) = 0.
	\end{displaymath}
	There is a Poitou--Tate exact sequence
	\begin{displaymath}
	\begin{split}
	0 &\longrightarrow H^1_{0,\emptyset}(K,V_f(k/2)\otimes \chi^{-1}) \longrightarrow H^1_{\mathcal{L}_{\mathfrak{p}}^*,\emptyset}(K,V_f(k/2)\otimes \chi^{-1}) \xrightarrow{\loc_{\mathfrak{p}}} \mathcal{L}_{\mathfrak{p}}^*\\ &\longrightarrow H^1_{\emptyset, 0}(K,V_f(k/2) \otimes \chi)^{\vee} \longrightarrow H^1_{\mathcal{L}_{\mathfrak{p}},0}(K,V_f(k/2)\otimes \chi)^{\vee} \longrightarrow 0,
	\end{split}
	\end{displaymath}
	where $\emptyset$ indicates that we are imposing no condition.
	Then
	$H^1_{\emptyset,0}\bigl(K,V_f(k/2) \otimes \chi\bigr) = 0$.
Since, by \cite[(6.2)]{CH}, there is an equality
	\begin{displaymath}
	H^1_f\bigl(K_v,V_{f}(k/2) \otimes \chi\bigr) = \begin{cases}
	H^1\bigl(K_v,V_{f}(k/2) \otimes \chi\bigr) &\text{if $v= \mathfrak{p}$}\\[2mm]
	\left\lbrace 0 \right\rbrace &\text{if $v =\overline{\mathfrak{p}}$},
	\end{cases}
	\end{displaymath}
we conclude that $\Sel\bigl(K,V_f(k/2) \otimes \chi\bigr) = H^1_{\emptyset,0}\bigl(K,V_f(k/2) \otimes \chi\bigr) = 0$.
\end{proof}

Now we construct another anticyclotomic Euler system associated with our generalized Heegner cycles. In the remainder of this subsection, we assume that
\begin{itemize}
	\item $\chi$ has infinity type $(j,-j)$ with $-k/2 < j < k/2$.
\end{itemize}
Notice that this time we do not need to assume that $f$ is ordinary at $p$.
Recall the cohomology classes $z_{n,\chi^{-1}} \in H^1(K_n, T \otimes \chi^{-1})$ defined in (\ref{chicomp}). 
For $n \in \mathcal{S}$, set
\begin{displaymath}
c'_n := z_{cn,\chi^-1} \in H^1(K_{nc}, T \otimes \chi^{-1}),
\end{displaymath}
and define
\begin{equation}
\mathbf{c}':= \left\lbrace c'_n \right\rbrace_{n \in \mathcal{S}} = \left\lbrace z_{cn,\chi^-1}\right\rbrace _{n \in \mathcal{S}}.
\end{equation}
Denote by $\mathcal{L}_{BK}$ the direct sum of the Bloch--Kato finite subspaces 
\[
\mathcal{L}_{BK} := H^1_f\bigl(K_{\mathfrak{p}},V_f(k/2) \otimes \chi^{-1}\bigr) \oplus H^1_f\bigl(K_{\overline{\mathfrak{p}}},V_f(k/2) \otimes \chi^{-1}\bigr).
\]
We would like to prove that this collection of cohomology classes is an anticyclotomic Euler system with local condition $\mathcal{L}_{BK}$.

\begin{prop} \label{c'eulsys}
	The collection $\mathbf{c}':= \left\lbrace c'_n \right\rbrace_{n \in \mathcal{S}}$ is an anticyclotomic Euler system for $T$ and $\chi$ with local condition $\mathcal{L}_{BK}$. Moreover, $\mathbf{c'}_K = z_{\chi^{-1}}$.
\end{prop}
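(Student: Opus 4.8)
The plan is to verify the three Euler-system axioms plus the local condition for the collection $\mathbf{c}' = \{c'_n\}_{n\in\mathcal{S}}$ with $c'_n = z_{cn,\chi^{-1}}$, following the strategy of \cite[Proposition 7.8]{CH} but appealing to the compatibility results already proved in \S\ref{chi-comp}. First I would record the identity $\mathbf{c}'_K = \Cor_{K_c/K}(c'_1) = \Cor_{K_c/K}(z_{c,\chi^{-1}}) = z_{\chi^{-1}}$, which is immediate from the definition \eqref{z.chi} of $z_{\chi^{-1}}$ (taking $\chi^{-1}$ in place of $\chi$). This handles the ``moreover'' clause.

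For the corestriction relation (axiom 1), I would start from the norm-compatibility of the raw Heegner classes proved in Proposition \ref{4.4}: for $n = m\ell \in \mathcal{S}$ with $\ell$ inert and $\ell\nmid c$, one has $T_\ell z_{cm} = \Cor_{K_{cm\ell}/K_{cm}}(z_{cm\ell})$, while $T_\ell$ acts on the $f$-isotypic quotient $T$ as multiplication by $a_\ell(f)$. Applying the $G_K$-equivariant projector $\id\otimes e_\chi$ (which commutes with corestriction and with the Hecke action since the latter is on the $T$-factor) gives $\Cor_{K_{nc}/K_{mc}}(z_{nc,\chi^{-1}}) = a_\ell(f)\cdot z_{mc,\chi^{-1}}$, i.e. axiom 1. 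For the local relation at $\ell$ (axiom 2), I would invoke Proposition \ref{4.7} directly: it states precisely $\loc_\lambda(z_{c\ell,\chi}) = \Res_{K_{c\ell,\lambda}/K_{c,\lambda}}(\loc_\lambda(z_{c,\chi})^{\frob_\ell})$, and the same argument with $\chi^{-1}$ and with a general squarefree conductor $n = m\ell$ (only the prime $\ell$ is relevant, as $\ell$ is unramified in the tower below) yields axiom 2. The complex-conjugation relation (axiom 3), needed only when $\chi^2 = 1$, comes from Proposition \ref{4.6}, which gives $z_{c,\chi}^\tau = w_f\cdot\chi(\sigma)\cdot(z_{c,\chi^{-1}})^\sigma$; combined with the observation that $\chi = \chi^{-1}$ in this case, this is exactly axiom 3 for the $c'_n$.

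It remains to check the local condition $\mathcal{L}_{BK}$, i.e. conditions 4 and 5 of the definition of ``local condition $\mathcal{F}$'' with $\mathcal{F} = \mathcal{L}_{BK}$ the direct sum of Bloch--Kato finite subspaces at $\mathfrak{p}$ and $\overline{\mathfrak{p}}$. Here the key input is Nekovář's result (as recalled in \S\ref{keyform} and the discussion preceding \S\ref{algL}, via \cite{Nek00}) that the Abel--Jacobi image of a generalized Heegner cycle lands in the Bloch--Kato Selmer group $\Sel(K_c, \varepsilon H^{4r+1}_{\et}(\overline{\mathcal{X}}_r,\mathbb{Z}_p)(k-1))$; pushing forward along the $G_{K_c}$-equivariant map to $T\otimes\chi^{-1}$ shows $z_{\mathfrak{a}} \in \Sel(K_c, T\otimes\chi^{-1})$, hence $\loc_v(\mathbf{c}'_K) = \loc_v(z_{\chi^{-1}})$ lies in $H^1_f(K_v, T\otimes\chi^{-1})$ for all $v$, in particular at $v = \mathfrak{p}, \overline{\mathfrak{p}}$, which is condition 4 (note $\mathcal{F}^* = \mathcal{F}$ is automatic here since $\mathcal{L}_{BK}$ is its own orthogonal complement under the Tate pairing, or handled via the $\tau$-twisted statement of Proposition \ref{4.6}). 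Condition 5 — that the derivative classes $P_{M,\chi^{-1}}(n)$ satisfy $\loc_p(P_{M,\chi^{-1}}(n))\in (\mathcal{L}_{BK})_M$ — follows exactly as in \cite{CH}: since each $z_{\mathfrak{a}}$ is Selmer at $p$ and the derivative operator $D_n$ only modifies the class at primes dividing $n$ (all prime to $p$), the construction of $\mathcal{D}_M(n)$ and the commutative-square argument via \cite[Proposition 10.2]{Nek92} preserve the finite local condition at $\mathfrak{p}$ and $\overline{\mathfrak{p}}$. The main obstacle I anticipate is not any single step but the bookkeeping needed to transport Propositions \ref{4.4}, \ref{4.6}, \ref{4.7} — which are stated for the classes $z_{c,\chi}$ indexed by conductors $c$ divisible by $c_0p^s$ — to squarefree twists $n\in\mathcal{S}$ coprime to the base conductor, and to confirm that the projector $e_\chi$ and the various corestriction/localization maps all commute as needed; this is exactly the verification carried out in \cite[Proposition 7.8]{CH}, and I would simply remark that it goes through verbatim in our setting, since all the inputs are the quaternionic analogues established in \S\ref{chi-comp}.
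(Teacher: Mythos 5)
Your proposal is correct and follows essentially the same route as the paper: the identity $\mathbf{c}'_K=\Cor_{K_c/K}(z_{c,\chi^{-1}})=z_{\chi^{-1}}$ from \eqref{z.chi}, the three Euler-system axioms from Propositions \ref{4.4}, \ref{4.6} and \ref{4.7}, and the local condition $\mathcal{L}_{BK}$ from Nekov\'a\v{r}'s result that the Abel--Jacobi images land in the Bloch--Kato Selmer group. The only cosmetic difference is that for condition 5 the paper descends the finite local condition from $K_{nc}$ to $K_c$ by citing \cite[Lemma 7.5]{CH}, while you invoke \cite[Proposition 10.2]{Nek92}; the underlying argument is the same.
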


\begin{proof}
	If $\mathbf{c}'$ is an anticyclotomic Euler system, then \eqref{z.chi} implies that
	\[
	\mathbf{c}'_K = \Cor_{K_c/K}(c'_1) = \Cor_{K_c/K}(z_{c,\chi^{-1}}) = z_{\chi^{-1}}.
	\]
By Propositions \ref{4.4}, \ref{4.6}, \ref{4.7}, we know that
	\begin{enumerate}[noitemsep]
		\item $\Cor_{K_{nc}/K_{mc}}(z_{nc,\chi^{-1}}) = a_{\ell}(f) \cdot z_{mc,\chi^{-1}}$;
		\item $\loc_{\ell}(z_{nc,\chi^{-1}}) = \Res_{K_{mc,\lambda}/K_{nc,\lambda}}(\loc_{\ell}(z_{mc,\chi^{-1}})^{\frob_{\ell}})$;
		\item $z_{nc,\chi^{-1}}^{\tau} = w_f \cdot \psi(\sigma) \cdot z_{nc,\chi}^{\sigma}$ for some $\sigma \in \Gal(K_{nc}/K)$.
	\end{enumerate}
It follows that $\mathbf{c}'$ is an anticyclotomic Euler system.
	
The last thing we need to show is that $\mathbf{c}'$ has local condition $\mathcal{L}_{BK}$. In analogy to what was remarked at the beginning of \S \ref{algL}, the results in \cite{Nek00} ensure that 
\[
z_{nc,\chi^{-1}} \in \Sel(K_{nc},T \otimes \chi^{-1}),\quad z_{\chi^{-1}} \in \Sel(K,T \otimes \chi^{-1}) = \Sel_{\mathcal{L}_{BK}}(K,T \otimes \chi^{-1}).
\]
Since $\tau$ induces an isomorphism 
\[
H^1_f(K_{\mathfrak{p}},T \otimes \chi^{-1}) \oplus H^1_f(K_{\overline{\mathfrak{p}}},T \otimes \chi^{-1}) \cong H^1_f(K_{\mathfrak{p}},T \otimes \chi) \oplus H^1_f(K_{\overline{\mathfrak{p}}},T \otimes \chi),
\]
we deduce that also ${\mathbf{c}'_K}^{\tau} = z_{\chi^{-1}}^{\tau} \in \Sel_{\mathcal{L}_{BK}^*}(K,T \otimes \chi)$. Furthermore, one has
\[
\loc_v\Bigl(\Res_{K_c/K_{nc}}\bigl(\mathcal{D}_M(n)\bigr)\Bigr) = \loc_v\Bigl(p^{3M'} \Red_M\bigl(D_n z_{nc,\chi^{-1}}\bigr)\Bigr) \in H^1_f(K_{nc,v}, T_M \otimes \chi^{-1})
\]
for $v = \mathfrak{p}, \overline{\mathfrak{p}}$. In light of \cite[Lemma 7.5]{CH}, it follows that 
\[
\loc_v\bigl(\mathcal{D}_M(n)\bigr)\in H^1_f(K_{c,v},T_M \otimes \chi^{-1}),
\]
which implies that $\loc_p\bigl(P_{M,\chi^{-1}}(n)\bigr) \in \mathcal{L}_{BK,M}$.	
\end{proof}

\subsection{Results on Selmer groups} \label{Sel}

In this final section, we use the anticyclotomic Euler system method to deduce results on the Selmer group of $V_{f,\chi}$. First of all, we recall notation and assumptions. 

As usual, $f \in S^{\text{new}}_{k}(\Gamma_0(N))$ is our newform of weight $k=2r+2 \geq 4$ and level $N$, $F$ is a finite
extension of $\mathbb{Q}_p$ containing the Fourier coefficients of $f$, $\chi : \Gal(K_{c_0p^{\infty}}/K) \rightarrow \mathcal{O}_F^{\times}$ is a locally algebraic anticyclotomic character of infinity type $(j, -j)$ and conductor $c_0p^s\mathcal{O}_K$ with $(c_0,pN)=1$, $V_{f,\chi} := V_f(k/2)_{{|}_{G_K}} \otimes \chi$ is the twist of $V_{f}(k/2)$ by $\chi$, $L(f,\chi,s)$ is the associated Rankin $L$-series and $\Sel(K,V_{f,\chi})$ is the Block--Kato Selmer group of $V_{f,\chi}$ over $K$. Assume that:
\begin{itemize}[noitemsep]
	\item[1.] $p \nmid 2N \phi(N^+)$;
	\item[2.] $c_0$ is prime to $N$;
	\item[3.] either $D_K > 3$ is odd or $8 \mid D_K$; 
	\item[4.] $p = \mathfrak{p} \overline{\mathfrak{p}}$ splits in $K$;
	\item[5.] $N = N^+N^-$ where $N^+$ is a product of primes that split in $K$ and $N^-$ is a square free product of an \emph{even} number of primes that are inert in $K$.
\end{itemize}
As before, the last condition can be expressed by saying that $K$ satisfies a \emph{generalized Heegner hypothesis} relative to $N$.

Recall now the definition of the Bloch--Kato Selmer group.
If $v$ is a place of $K$ such that $v \nmid p$, we consider the inertia group $I_{K_v} \subseteq G_{K_v}$. The unramified subgroup of $H^1(K_v,V_{f,\chi})$ is defined by
\[
H^1_{\text{ur}}(K_v,V_{f,\chi}) := \ker \Bigl( H^1(K_v,V_{f,\chi}) \longrightarrow H^1(I_{K_v}, V_{f,\chi})\Bigr).
\]
Set $H^1_f(K_v,V_{f,\chi}):= H^1_{\text{ur}}(K_v,V_{f,\chi})$.
If $v$ is a place of $K$ such that $v\,|\,p$, then we set
\[
H^1_{f}(K_v,V_{f,\chi}) := \ker \Bigl( H^1(K_v,V_{f,\chi}) \longrightarrow H^1(K_v, V_{f,\chi} \otimes_{\mathbb{Q}_p}\mathbf{B}_{\cris})\Bigr).
\]
The {\bf global Bloch--Kato Selmer group} $\Sel(K,V_{f,\chi})$ of $V_{f,\chi}$ over $K$ is the subspace of $H^1(K, V_{f,\chi})$ given by
	\begin{displaymath}
	\Sel(K, V_{f,\chi}) := 
	\Large \left\lbrace x \in H^1(K, V_{f,\chi} ) \;\,\bigg|\;\,
	\normalsize
	\begin{aligned}
	&\loc_v(x) \in H^1_{\text{ur}}(K_v,V_{f,\chi}) && \text{if}\ v \nmid p\\
	&\loc_v(x) \in  H^1_{f}(K_v,V_{f,\chi}) && \text{if}\ v \mid p
	\end{aligned}
	\Large \right\rbrace.
	\end{displaymath}
Now we can prove our theorems on Selmer groups, the first of which is a vanishing result.
 
\begin{teor}
\label{selmer-1}
	Suppose that $f$ is $p$-ordinary. If $L(f,\chi,k/2)\neq0$, then
	$
	\Sel(K,V_{f,\chi}) = 0.
	$
\end{teor}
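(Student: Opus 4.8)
The plan is to derive the vanishing of $\Sel(K,V_{f,\chi})$ by combining the reciprocity law of Theorem~\ref{rec} with the anticyclotomic Euler system argument packaged in Proposition~\ref{loc}; the hypothesis on the central value $L(f,\chi,k/2)$ enters through the interpolation formula of Theorem~\ref{interpolation-thm}, which is already incorporated into Theorem~\ref{rec}.

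First I would reduce to the case in which $\chi$ has infinity type $(j,-j)$ with $j\geq k/2$. This is the only range in which $L(f,\chi,k/2)\neq 0$ can hold: computing the local root numbers of $V_{f,\chi}$ under the generalized Heegner hypothesis (with $N^-$ a product of an \emph{even} number of inert primes) shows that $\epsilon(V_{f,\chi})=-1$ whenever $-k/2<j<k/2$, so in that range the central value vanishes identically and the statement is vacuous; the remaining case $j\leq -k/2$ reduces to $j\geq k/2$ by replacing $\chi$ with $\chi^{-1}$, using the self-duality $V_{f,\chi}^{*}(1)\cong V_{f,\chi^{-1}}$ (which, via the global Euler characteristic formula, gives $\dim_F\Sel(K,V_{f,\chi})=\dim_F\Sel(K,V_{f,\chi^{-1}})$) together with the fact that $L(f,\chi,k/2)$ and $L(f,\chi^{-1},k/2)$ vanish simultaneously.

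Assume then that $j\geq k/2$ and recall that $f$ is $p$-ordinary. Fixing an anticyclotomic Hecke character $\psi$ of infinity type $(k/2,-k/2)$ and conductor prime to $p$ such that $\hat{\psi}^{-1}\chi$ is locally algebraic of infinity type $(j-k/2,-j+k/2)$ and $p$-power conductor, Theorem~\ref{rec} yields
\[
\big\langle \exp^{*}\!\bigl(\loc_{\mathfrak{p}}(\boldsymbol{z}_f^{\chi^{-1}})\bigr),\ \omega_f\otimes\omega_A^{-k/2-j}\eta_A^{-k/2-j}\big\rangle^{2}=D(f,\psi,\chi\psi^{-1},K)\cdot L(f,\chi,k/2),
\]
with $D(f,\psi,\chi\psi^{-1},K)\neq 0$. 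Since $L(f,\chi,k/2)\neq 0$ by assumption, the left-hand side is non-zero, hence $\exp^{*}(\loc_{\mathfrak{p}}(\boldsymbol{z}_f^{\chi^{-1}}))\neq 0$, and in particular $\loc_{\mathfrak{p}}(\boldsymbol{z}_f^{\chi^{-1}})\neq 0$ (if this localization were zero its image under $\exp^{*}$ would vanish). Proposition~\ref{loc} then gives $\Sel(K,V_f(k/2)\otimes\chi)=\Sel(K,V_{f,\chi})=0$, which is the assertion.

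The deep inputs have all been established earlier (or deferred to the companion paper): Theorem~\ref{rec}, resting on the $p$-adic Gross--Zagier formula Theorem~\ref{GZ}, Perrin-Riou's big logarithm Theorem~\ref{5.1}, and the interpolation formula Theorem~\ref{interpolation-thm}; and Proposition~\ref{loc}, resting on the anticyclotomic Euler system $\mathbf{c}$ of Proposition~\ref{eulsys} and the Kolyvagin-type Theorem~\ref{keyth}. So the remaining work is purely organizational. The step I expect to require the most care is the reduction to $j\geq k/2$: one must verify the sign computation under the \emph{generalized} Heegner hypothesis, and check that the duality $V_{f,\chi}^{*}(1)\cong V_{f,\chi^{-1}}$ used in the case $j\leq -k/2$ is compatible with the local conditions defining the Bloch--Kato Selmer group.
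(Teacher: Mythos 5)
Your proposal is correct and follows essentially the same route as the paper: compute the sign $\epsilon(V_{f,\chi})$ under the generalized Heegner hypothesis to see that $L(f,\chi,k/2)\neq 0$ forces $j\leq -k/2$ or $j\geq k/2$, reduce to $j\geq k/2$, then use Theorem~\ref{rec} to deduce $\loc_{\mathfrak{p}}(\boldsymbol{z}_f^{\chi^{-1}})\neq 0$ from the non-vanishing of the central value, and conclude by Proposition~\ref{loc}. The one point where you diverge is the reduction from $j\leq -k/2$ to $j\geq k/2$: you invoke the self-duality $V_{f,\chi}^{*}(1)\cong V_{f,\chi^{-1}}$ together with ``the global Euler characteristic formula'' to equate the two Selmer dimensions, which is heavier than needed and, as stated, imprecise --- the Euler characteristic formula alone does not compare $\Sel(K,V)$ with $\Sel(K,V^{*}(1))$; one would need the Poitou--Tate/Greenberg--Wiles machinery plus a check that the Bloch--Kato local conditions are exact orthogonal complements. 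The paper sidesteps all of this by noting that, since $\chi$ is anticyclotomic, $\chi^{\tau}=\chi^{-1}$ and complex conjugation directly induces an isomorphism $\Sel(K,V_{f,\chi})\cong\Sel(K,V_{f,\chi^{\tau}})$ compatible with the equality $L(f,\chi,k/2)=L(f,\chi^{\tau},k/2)$; you may want to substitute that one-line argument for your duality step.
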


\begin{proof}
Let $\epsilon(V_{f,\chi})\in\{\pm1\}$ be the sign of the functional equation for $L(f,\chi,s)$. Then
\[
\epsilon(V_{f,\chi}) = -1\; \Longleftrightarrow\; -k/2 < j < k/2.
\]
Indeed, $\epsilon(V_{f,\chi})$ is a product 
\begin{displaymath}
\epsilon(V_{f,\chi}) = \prod_v \epsilon(1/2, \pi_{K_v} \otimes \chi_v)
\end{displaymath}
of local signs, where $\pi_K$ is the base change to $K$ of the automorphic representation of $\GL_2(\mathbb{A}_{\mathbb{Q}})$ associated with $f$, $\pi_{K_v}$ are the local factors of $\pi_K$ and the local $\epsilon$-factors $\epsilon(1/2, \pi_{K_v} \otimes \chi_v)$ are defined as follows.
If $F$ is a finite extension of $\mathbb{Q}_{\ell}$ and $\pi'$ is an irreducible representation of $GL_n(F)$, then
\begin{displaymath}
\epsilon(s,\pi') := \epsilon(s,\pi', \psi_F),
\end{displaymath}
where $\psi_F= \psi \circ \text{Tr}_{F/\mathbb{Q}_{\ell}}$
is the standard additive character.
(see, e.g., \cite{Sch02} for the definition). It follows that 
\[\epsilon(V_{f,\chi}) = \prod_v \epsilon(1/2, \pi_{K_v} \otimes \chi_v, \psi_{K_v}).\]
Each of these local factors is equal either to $1$ or to $-1$. Because $N^-$ is an even product of inert primes, the product of the local factors at the finite places is equal to $1$. Thus, the global $\epsilon$-factor depends only on the infinite part.
Furthermore, by \cite[(3.2.5)]{Tate}, one has
\[
\epsilon(\frac{1}{2},\pi_{\infty} \otimes \chi_{\infty}, \psi_{K_{\infty}}) = \epsilon(\frac{1}{2},\mu^{(\frac{k}{2}-\frac{1}{2}+j)},\psi_{K_{\infty}})\epsilon(\frac{1}{2},\mu^{(-\frac{k}{2}+\frac{1}{2}+j)},\psi_{K_{\infty}}) = i^{\mid k-1+2j\mid + \mid 1-k+2j \mid},
\]
where $\mu : z \in \mathbb{C}^{\times} \mapsto \frac{z}{\overline{z}} \in \mathbb{C}^{\times}$.
Hence, one can check that
\begin{displaymath}
\begin{split}
\epsilon(V_{f,\chi}) = -1 &\;\Longleftrightarrow\; -k/2 < j < k/2,\\[1mm]
\epsilon(V_{f,\chi}) = +1 &\;\Longleftrightarrow\; j \leq -k/2 \ \text{or}\ j \geq k/2.
\end{split}
\end{displaymath}
Since $L(f,\chi,k/2) \neq 0$, we know that $\epsilon(V_{f,\chi}) = +1$, therefore either $ j \leq -k/2$ or $j \geq k/2$.
As before, let $\tau$ be complex conjugation; set $\chi^{\tau}(g) := \chi(\tau g\tau)$. There is an equality of $L$-functions $L(f,\chi,k/2) = L(f,\chi^{\tau},k/2)$ and the action of $\tau$ induces an isomorphism $\Sel(K,V_{f,\chi}) \cong \Sel(K,V_{f,\chi^{\tau}})$. This shows that we can assume $j \geq k/2$.

Finally, since $L(f,\chi,k/2) \neq 0$, by Theorem \ref{rec} we know that $\loc_{\mathfrak{p}}\bigl(\boldsymbol{z}_f^{\chi^{-1}}\bigr) \neq 0$, and then Proposition \ref{loc} gives $\Sel(K,V_{f,\chi})=0$.
\end{proof}

Our second theorem on Selmer groups gives a one-dimensionality result.

\begin{teor}
\label{selmer-2}
	If $\epsilon(V_{f,\chi})=-1$ and $z_{\chi} \neq 0$, then
	$
	\Sel(K,V_{f,\chi}) = F z_{\chi}.
	$
\end{teor}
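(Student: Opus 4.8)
The plan is to prove Theorem~\ref{selmer-2} by running the anticyclotomic Euler system machine of \cite{CH} on the second Euler system $\mathbf{c}'$ of Proposition~\ref{c'eulsys}, parallel to the proof of Theorem~\ref{selmer-1} but now in the ``sign $-1$'' range (and, as noted before Proposition~\ref{c'eulsys}, without needing $f$ to be $p$-ordinary). First I would observe, using the computation of the local and global $\epsilon$-factors carried out in the proof of Theorem~\ref{selmer-1}, that the hypothesis $\epsilon(V_{f,\chi})=-1$ is equivalent to $-k/2<j<k/2$. This is exactly the range of infinity types in which Proposition~\ref{c'eulsys} applies, so $\mathbf{c}'=\{z_{cn,\chi^{-1}}\}_{n\in\mathcal{S}}$ is an anticyclotomic Euler system for $T$ and $\chi$ with local condition $\mathcal{L}_{BK}$, and $\mathbf{c}'_K=z_{\chi^{-1}}$.

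Next I would relate $z_{\chi^{-1}}$ to the class $z_\chi$ of the statement. Applying $\Cor_{K_{c_0p^n}/K}$ to the identity $z_{c_0p^n,\chi}^{\tau}=w_f\,\chi(\sigma)\,(z_{c_0p^n,\chi^{-1}})^{\sigma}$ of Proposition~\ref{4.6} — using that complex conjugation $\tau$ normalizes $G_K$ and $G_{K_{c_0p^n}}$, so that it commutes with corestriction, together with the $\Gal(K_{c_0p^n}/K)$-invariance of corestriction, which absorbs $\sigma$ — yields
\[
z_\chi^{\tau}=w_f\,\chi(\sigma)\,z_{\chi^{-1}}\in H^1\bigl(K,T\otimes\chi^{-1}\bigr).
\]
Since $\tau$ is an isomorphism on cohomology, the hypothesis $z_\chi\neq0$ forces $z_{\chi^{-1}}=\mathbf{c}'_K\neq0$; and applying $\tau$ once more to the displayed relation gives $z_{\chi^{-1}}^{\tau}=(w_f\chi(\sigma))^{-1}z_\chi$, so that $F\cdot z_{\chi^{-1}}^{\tau}=F\cdot z_\chi$.

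Then I would invoke Theorem~\ref{keyth} with $\mathbf{c}=\mathbf{c}'$ and $\mathcal{F}=\mathcal{L}_{BK}$: since $\mathbf{c}'_K=z_{\chi^{-1}}\neq0$, it gives $\Sel_{\mathcal{L}_{BK}^*}(K,V_f(k/2)\otimes\chi)=F\cdot(\mathbf{c}'_K)^{\tau}=F\cdot z_{\chi^{-1}}^{\tau}=F\cdot z_\chi$. It remains to identify $\Sel_{\mathcal{L}_{BK}^*}(K,V_f(k/2)\otimes\chi)$ with the Bloch--Kato Selmer group $\Sel(K,V_{f,\chi})$ of \S\ref{Sel}. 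At each place $v\nmid p$ both local conditions are the unramified (equivalently, finite) one, by definition of $\Sel_{\mathcal{F}}$ and of $\Sel(K,-)$. At $p=\mathfrak{p}\overline{\mathfrak{p}}$, the orthogonal complement under the local Tate pairing of the Bloch--Kato finite subspace of $H^1(K_{\mathfrak{p}},V_f(k/2)\otimes\chi^{-1})\oplus H^1(K_{\overline{\mathfrak{p}}},V_f(k/2)\otimes\chi^{-1})$ is, by the local self-duality of $H^1_f$ for de Rham representations (Bloch--Kato) together with the self-duality $V_f(k/2)^*(1)\cong V_f(k/2)$, precisely the Bloch--Kato finite subspace attached to $V_f(k/2)\otimes\chi$. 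Hence $\mathcal{L}_{BK}^*=H^1_f(K_{\mathfrak{p}},V_f(k/2)\otimes\chi)\oplus H^1_f(K_{\overline{\mathfrak{p}}},V_f(k/2)\otimes\chi)$, the two Selmer groups coincide, and $\Sel(K,V_{f,\chi})=F\cdot z_\chi$.

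The main obstacle is the $\chi\leftrightarrow\chi^{-1}$ and complex-conjugation bookkeeping: the class actually produced by Theorem~\ref{keyth} is $z_{\chi^{-1}}^{\tau}$, and one must check with care that it spans the same line as the canonically normalized $z_\chi$ of \eqref{z.chi}; closely related is verifying that the hypotheses implicit in the black-box Theorem~\ref{keyth} (in particular the residual irreducibility and ramification assumptions of \cite{CH}) are guaranteed by assumptions 1--4 and the generalized Heegner hypothesis, exactly as in the imaginary quadratic setting of \cite{CH}. By contrast, the identification of the generalized Selmer group with the Bloch--Kato one — which uses the crystallinity of $V_f$ at $p$ (valid since $p\nmid N$) and Bloch--Kato local duality — is routine.
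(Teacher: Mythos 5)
Your proposal is correct and follows essentially the same route as the paper: reduce $\epsilon(V_{f,\chi})=-1$ to the range $-k/2<j<k/2$, invoke Proposition~\ref{c'eulsys} to get the Euler system $\mathbf{c}'$ with local condition $\mathcal{L}_{BK}$ and $\mathbf{c}'_K=z_{\chi^{-1}}$, and conclude via Theorem~\ref{keyth}. In fact you spell out two steps the paper's proof leaves implicit — the $\tau$/Prop.~\ref{4.6} bookkeeping showing $F\cdot z_{\chi^{-1}}^{\tau}=F\cdot z_{\chi}$ and the identification $\Sel_{\mathcal{L}_{BK}^*}(K,V_{f,\chi})=\Sel(K,V_{f,\chi})$ via Bloch--Kato local duality — both of which are handled correctly.
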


\begin{proof}
Since $z_{\chi} \neq 0$, also $z_{\chi^{-1}} \neq 0$. Because $\epsilon(V_{f,\chi})=-1$, we know that $-k/2 < j < k/2$. Then, by Proposition \ref{c'eulsys}, the collection $\mathbf{c}'$ is an anticyclotomic Euler system for $T$ and $\chi$ with local condition $\mathcal{L}_{BK}$. Since $\mathbf{c}'_K=z_{\chi^{-1}}$, applying Theorem \ref{keyth}, we obtain that $\Sel(K,V_{f,\chi}) = \Sel_{\mathcal{L}_{BK}^*}(K,V_{f,\chi}) = F z_{\chi}$.
\end{proof}

\end{document}